\documentclass[10pt,a4paper]{article}
\usepackage[utf8]{inputenc}
\usepackage{amsmath}
\usepackage{pifont}
\usepackage{bm}
\usepackage[american]{babel}
\usepackage[all]{xy}
\usepackage{enumitem}
\usepackage{anysize}
\usepackage[dvipsnames]{xcolor}
\usepackage{graphicx}
\usepackage{stmaryrd}
\usepackage{amsfonts,amstext,amssymb,amsmath,amsthm,pspicture,bigints}
\usepackage{dsfont}

\usepackage{fullpage}
\usepackage{moreverb}
\usepackage{pifont}
\usepackage{pst-all}
\usepackage[left=2cm,right=2cm,top=2cm,bottom=2cm]{geometry}
\usepackage{esint}
\usepackage{fourier-orns}
\usepackage{mathrsfs}
\usepackage{array}

\usepackage{hyperref}

\usepackage{authblk}
\newcolumntype{C}[1]{>{\centering\arraybackslash}b{#1}}
\newcolumntype{R}[1]{>{\raggedleft\arraybackslash}b{#1}}
\newcolumntype{L}[1]{>{\raggedright\arraybackslash}b{#1}}
\newcolumntype{M}[1]{>{\centering}m{#1}}
\newtheorem{theo}{Theorem}[section]
\newtheorem{defin}{Definition}[section]
\newtheorem{lem}{Lemma}[section]
\newtheorem{prop}{Proposition}[section]

\newtheorem{remark}{Remark}[section]
\newtheorem{hypothesis}{Hypothesis}[section]

\usepackage{bbm}
\numberwithin{equation}{section}
\usepackage{tikz}

\newcommand{\Ind}{\mathds{1}}

\newcommand{\tend}[2]{\underset{#1\to #2}{\longrightarrow} }

\newcommand{\ntriple}[1]{{\left\vert\kern-0.25ex\left\vert\kern-0.25ex\left\vert #1 
    \right\vert\kern-0.25ex\right\vert\kern-0.25ex\right\vert}}

\newcommand{\dist}{\mathrm{dist \, }}

\newcommand{\der}[2]{\frac{\dd #1}{\dd #2}}

\newcommand{\dd}{\mathrm{d}}
\newcommand{\Diff}{\mathrm{D}}

\newcommand{\supp}{\mathrm{supp}\hspace{1mm}}

\newcommand{\eps}{\varepsilon}

\newcommand{\R}{\mathbb{R}}

\renewcommand{\tilde}{\widetilde}

\title{Confinement results near point vortices on the rotating sphere}
\author{Martin Donati \qquad Emeric Roulley}
\date{}

\begin{document}

\maketitle
\begin{abstract}
    We study the Euler equation on the rotating sphere in the case where the absolute vorticity is initially sharply concentrated around several points. We follow the literature already concerning vorticity confinement for the planar Euler equations, and obtain similar results on the rotating sphere, with new challenges due to the geometry. More precisely, we show the improbability of collisions for point-vortices, logarithmic in time absolute vorticity confinement for general configurations, the optimality of this last result in general, and the existence of configurations with power-law long confinement. We take this opportunity to write a unified, self-contained, and improved version of all the proofs, previously scattered across multiple papers on the planar case, with detailed exposition for pedagogical clarity.
\end{abstract}
\tableofcontents

\section{Introduction}

To study large-scale atmospheric or oceanic dynamics, one has to take into account the natural presence of vortices of very high circulations. Going back to the work of Helmholtz \cite{Helmholtz1}, a first approximation of the motion of such vortices is given by the so called \emph{point-vortex} dynamics, which corresponds to assuming that the vortices are singular: Dirac masses of vorticity. Numerous works studied this dynamics in many different context, and we refer the interested reader to \cite{Aref_2007} for a review on point-vortex dynamics in the plane. An other important question is the measure how good of an approximation the point-vortex system is to a solution of the Euler equations consisting of sharply concentrated vortices. This question can be answered in many different ways and in this paper we will mainly focus on the problem of localization and confinement: given an initial data sharply concentrated around several points, how long does the solution of the Euler equation remains both concentrated and following the prediction of the point-vortex model ?

This question, first answered in \cite{MP83}, lead to an important series of work, and in the scope of the present article, we put the emphasis on \cite{BM18}, which is the starting point of the present paper. To the best of our knowledge, all known results on vorticity confinement are established in various fluids models, mostly the planar Euler equations, but none on the rotating sphere.

The aim of this paper is to establish equivalent results to \cite{BM18,D24a,Donati_2025_Crystal} about vorticity confinement but for the Euler equations on the rotating sphere. In addition, to complete this work, we prove a necessary property of the point-vortex dynamics on the sphere, which is the improbability of finite-time collisions.

\subsection{Point-vortices and vorticity confinement on the plane}
    At first, let us contextualize our present work on the rotating sphere by recalling the planar point-vortex system for Euler equations and the associated results. We consider the bidimensional homogeneous and incompressible Euler equations in the plane
    \begin{equation}\label{2DEuler}
        \partial_t\omega+u\cdot\nabla\omega=0,\qquad u=\nabla^{\perp}\psi,\qquad\Delta\psi=\omega.
    \end{equation}
    Solving the Laplace equation gives the following integral representation for the stream function $\psi$
    $$\psi(t,z)=\int_{\mathbb{R}^2}G_{\mathbb{R}^2}(z,\xi)\omega(t,\xi)d\xi,\qquad G_{\mathbb{R}^2}(z,\xi)\triangleq\frac{1}{2\pi}\ln\big(|z-\xi|\big).$$
    The point-vortex dynamics describes the evolution of formal solutions to the 2D Euler equations where the vorticity is concentrated on points, namely
    \begin{equation}\label{vortPV2D}
        \omega(t)=\sum_{i=1}^{N}\Gamma_i\delta_{z_i(t)},\qquad N\in\mathbb{N}^*,\qquad\Gamma_i\in\mathbb{R}^*,\qquad z_i(t)\in\mathbb{R}^2.
    \end{equation}
    Plugging \eqref{vortPV2D} inside \eqref{2DEuler}, we get formally the following set of ordinary differential equations called \textit{point-vortex system}
    $$\dot{z_i}(t)=\sum_{j=1\atop j\neq i}^{N}\frac{\Gamma_j}{2\pi}\frac{\big(z_i(t)-z_j(t)\big)^\perp}{|z_i(t)-z_j(t)|^2}=\frac{1}{\Gamma_i}\nabla_{z_i}^{\perp}H\big(z_1(t),\ldots,z_N(t)\big),\qquad H(z_1,\ldots,z_N)\triangleq\sum_{1\leqslant i,j\leqslant N\atop i\neq j}\frac{\Gamma_i\Gamma_j}{4\pi}\ln\big(|z_i-z_j|\big).$$

    Due to the singularity in the velocity when $z_i(t) = z_j(t)$, all initial data our not admissible for this dynamics, and even admissible data can lead to non-trivial blow-up of solution, which we call vortex \emph{collapses}. It happens when there exists a time $T^* < \infty$, and two indices $i\neq j\in\{1,\ldots,N\}$ such that $$\liminf_{t \to T^*} |z_i(t)-z_j(t)| = 0.$$
    However, the collisions are known to be \emph{improbable}, in the sense the the measure of the set of initial data leading to a collapse is 0. This result was proved in the torus in \cite{DurrPulvirenti_1982}, in bounded domains in \cite{Donati_2021}, and in unbounded domains such as the whole plane in \cite{Marchioro_Pulvirenti_1984} and for other models of point-vortices in \cite{Geldhauser_Romito_2020,GC22}, under an additional condition on the vortex intensities $(\Gamma_i)_{1 \leqslant i \leqslant N}$. In the case of the sphere, we prove the same result, without the additional assumption due to the fact that the sphere is a compact manifold.

    We now introduce the problems of localization and confinement of vorticity. Given a set of pairwise distinct points $z_1^0,\ldots,z_N^0$, a set of intensities $\Gamma_1,\ldots,\Gamma_N$ such that the associated solution to the point-vortex dynamics has a global solution $\big(t \mapsto z_i(t)\big)_{1\leqslant i \leqslant N}$, let us consider, for every $\eps > 0$, an initial datum $\omega_0^\eps$ for the Euler equations satisfying the following assumptions.
    \begin{hypothesis}\label{hyp:omega0}
        There exist constants $M,\varepsilon_0>0$ and $\eta \geqslant 2$ such that for every $\eps\in(0,\varepsilon_0)$,
        \begin{itemize}
        \item $\omega_0^\eps\in L^1 \cap L^\infty(\R^2)$, with $\|\omega_0^\eps\|_{L^\infty(\R^2)} \leqslant M \eps^{-\eta}$,
        \item $\displaystyle \omega_0^\eps = \sum_{i=1}^N \omega_{0,i}^\eps \quad $ where $\displaystyle \quad \frac{\omega_{0,i}^\eps}{\Gamma_i} \geqslant 0,\quad$ $\Gamma_i\triangleq\displaystyle\int_{\R^2}\omega_{0,i}^\eps(x)\dd x\quad$, 
        \item $\supp \omega_{0,i}^{\eps} \subset B(z_i^0,\eps)$.
    \end{itemize}
    \end{hypothesis}
    In view of these hypotheses, one has that $\omega_{i,0}^\eps \tend{\eps}{0} \delta_{z_i^0}$ in the sense of measures. Since $\omega$ solves the (nonlinear) transport equation~\eqref{2DEuler} by a velocity field $u$ that is divergence free, then for all times, we have that the solution $\omega^\eps$ of the Euler equations such that $\omega^\eps(0) = \omega_0^\eps$ satisfies for all times $ t\geqslant 0$ that
    \begin{itemize}
        \item $\omega^\eps(t,\cdot)\in L^1 \cap L^\infty(\R^2)$, with $\|\omega^\eps(t,\cdot)\|_{L^\infty(\R^2)} \leqslant M \eps^{-\eta}$,
        \item $\displaystyle \omega^\eps(t,\cdot) = \sum_{i=1}^N \omega_{i}^\eps(t,\cdot) \quad $ where $\displaystyle \quad \frac{\omega_{i}^\eps(t,\cdot)}{\Gamma_i} \geqslant 0,\quad$ $\displaystyle\int_{\R^2}\omega_{i}^\eps(t,x)\dd x = \Gamma_i$.
    \end{itemize}
    Moreover, the maps $\omega_i^\eps(t)$ are all compactly supported, at all times, however the information that we loose \emph{a priori} is both the localization and growth of the support. The naive bound on the growth of support would be linear in $t$, and for a single blob of positive vorticies, this bound can be improved to $(t \ln t)^{1/4}$ (\cite{ISG99}). When considering several sharply concentrated vortices, by assuming for instance Hypothesis~\ref{hyp:omega0}, the vortices remain concentrated for a time of order $\mathcal{O}(|\ln \eps|)$ around the point-vortex system. More precisely, let us consider for any $\beta <1/2$ and $\eps > 0$:
     \begin{equation*}
    \tau_{\eps,\beta}\triangleq\sup \left\{ t \geqslant 0 \quad\textnormal{s.t.}\quad\forall s \in [0,t],\quad \supp\big(\omega^\eps(s,\cdot)\big) \subset \bigcup_{i=1}^N B\big(z_i(s),\eps^\beta\big) \right\}.
    \end{equation*}
    This time $\tau_{\eps,\beta}$ is the first time the support of the vorticity exits the reunion of balls of radius $\eps^\beta$ around the point-vortex solution. Since $\eps^\beta \ll 1$ for $\eps$ small enough, $\tau_{\eps,\beta}$ is a time during which both the solution remains a sum of sharply concentrated vortices, and that those vortices are located near the solution of the point-vortex dynamics. Regarding this time $\tau_{\eps,\beta}$,
    Buttà and Marchioro proved the following.
    \begin{theo}[\cite{BM18}]\label{theo:BM18general}
        Let $(z_i^0)_{1\leqslant i\leqslant N}$ be $N$ pairwise distinct points of $\R^2$ and $(\Gamma_i)_{1\leqslant i \leqslant N}$ be some non-vanishing intensities such that the solution of the point-vortex dynamics~\eqref{PVS} with initial datum $(z_i^0)_{1\leqslant i\leqslant N}$ is global in time and satisfies the following distance condition for some $d_0>0,$
        \begin{equation*}
            \forall\,1\leqslant i,j\leqslant N, \quad i\neq j\quad\Rightarrow\quad\inf_{t \geqslant 0}|z_i(t) - z_j(t)| \geqslant d_0.
        \end{equation*}
        Let $\omega_0^\eps$ satisfying Hypothesis~\ref{hyp:omega0}. Then for every $\beta<1/2$ there exists $\eps_0\triangleq\varepsilon_0(\beta,d_0)>0$ and $\alpha\triangleq\alpha(\beta,d_0)>0$ such that for every $\eps \in (0,\eps_0)$ the solution $\omega^\eps$ of \eqref{2DEuler} with initial condition $\omega_0^\eps$ satisfies
        \begin{equation*}
            \tau_{\eps,\beta} \geqslant \alpha|\ln \eps|.
        \end{equation*}
    \end{theo}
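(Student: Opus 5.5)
We follow the Buttà–Marchioro strategy: a bootstrap on $[0,\tau_{\eps,\beta}]$ that controls, for each blob $\omega_i^\eps$, first its center of vorticity and then the mass it carries far from that center. Throughout, fix $\beta<1/2$ and work on a time interval $[0,T]$ with $T\leqslant\tau_{\eps,\beta}$. On such an interval the blobs stay at mutual distance at least $d_0/2$ once $\eps^\beta\ll d_0$. It is also convenient to reparametrize the supports: the solution of the point-vortex system is global with separation $d_0$, so $\supp\omega_i^\eps(t)\subset B(z_i(t),\eps^\beta)$ is the natural bootstrap assumption.

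\emph{Step 1 (center of vorticity).} Define $B_i^\eps(t)=\Gamma_i^{-1}\int x\,\omega_i^\eps(t,x)\,\dd x$. Its time derivative is $\Gamma_i^{-1}\int u\,\omega_i^\eps$. The self-interaction part vanishes by antisymmetry of the Biot–Savart kernel. The remaining part is the field generated by the blobs $j\neq i$, evaluated on $\supp\omega_i^\eps$. That field is smooth there, with derivatives bounded in terms of $d_0$. Taylor expanding around $B_j^\eps$ therefore gives
\[
\dot B_i^\eps=\sum_{j\neq i}\frac{\Gamma_j}{2\pi}\frac{(B_i^\eps-B_j^\eps)^\perp}{|B_i^\eps-B_j^\eps|^2}+\mathcal{O}(\eps^{2\beta}).
\]
The first-order terms vanish because of the centering. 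The remainder involves second moments, which are at most $\eps^{2\beta}$ by the bootstrap assumption. The vector field of the point-vortex system is Lipschitz with constant $L(d_0)$ on configurations with separation $d_0/2$. Gronwall then yields $|B_i^\eps(t)-z_i(t)|\leqslant C(\eps+\eps^{2\beta})e^{Lt}$.

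\emph{Step 2 (moment of inertia).} Set $I_i(t)=\int|x-B_i^\eps(t)|^2\omega_i^\eps/\Gamma_i$. Its derivative again loses the self-interaction term by symmetry. The external term is controlled by the gradient of the external field, since the translation part is absorbed by the motion of $B_i^\eps$. This gives $\dot I_i\leqslant C I_i$, hence $I_i(t)\leqslant\eps^2e^{Ct}$. As long as $t\leqslant\alpha|\ln\eps|$ with $\alpha$ small, $I_i$ stays below $\eps^{2-\delta}$, so most of the mass lies in a ball of radius $\eps^{1-\delta/2}\ll\eps^\beta$.

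\emph{Step 3 (no filaments reach $\eps^\beta$; main obstacle).} This step is the heart of the argument. We introduce the mass outside a ball, $m_t(R)=\int_{|x-B_i^\eps|>R}\omega_i^\eps/\Gamma_i$, and use a smoothed cutoff $W_R$. We differentiate $\int W_R(x-B_i^\eps)\omega_i^\eps$. The self-induced velocity contributes only through pairs with one point inside and one outside. Exploiting its radial component as in Marchioro's iterative lemma, we get a bound of the form $\frac{\dd}{\dd t}m_t(R)\leqslant A(R,h)\,m_t(R-h)$ plus terms controlled by $I_i/R^4$. Here $A(R,h)$ involves $R^{-3}I_i$ and $h^{-1}$ times the tangential external contribution, which is negligible. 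Iterating this inequality $n\sim|\ln\eps|$ times over radii decreasing from $\eps^\beta$ to $\eps^\beta/2$ gives $m_t(\eps^\beta/2)\leqslant\eps^{\ell}$ for arbitrarily large $\ell$, provided $t\leqslant\alpha|\ln\eps|$.

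To conclude, we bound the radial velocity of a particle at distance $\geqslant\eps^\beta/2$ from $B_i^\eps$. It is at most $C(I_i/|x|^3+\sqrt{M\eps^{-\eta}m_t})$, where the second term uses the $L^\infty$ bound from Hypothesis~\ref{hyp:omega0} in the standard estimate $\int_A|x-y|^{-1}\leqslant C\sqrt{|A|}$. This is much smaller than $\eps^\beta/T$ for $\beta<1/2$, which is exactly where the constraint $\beta<1/2$ enters through $I_i\lesssim\eps^{2-\delta}$ against $\eps^{3\beta}$. Hence no particle can travel from radius $\eps^\beta/2$ to $\eps^\beta$ before $\alpha|\ln\eps|$. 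Combined with Step 1, which gives $|B_i^\eps-z_i|\ll\eps^\beta$, this closes the bootstrap and yields $\tau_{\eps,\beta}\geqslant\alpha|\ln\eps|$.

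The delicate point is the careful bookkeeping of the exponents in this iteration, so that all constants depend only on $\beta$ and $d_0$.
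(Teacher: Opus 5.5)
\textbf{Verdict: genuine gap.} Your Step~3 declares the external contribution to the radial motion negligible, and that is false for a general configuration. The rest of the outline (Steps~1 and~2) is fine.

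\textbf{Why the external term is not negligible.} Relative to the moving center, the field of the other blobs is
$F_i(x)-\dot B_i^\eps = DF_i(B_i^\eps)(x-B_i^\eps)+\mathcal{O}\big(|x-B_i^\eps|^2+I_i\big)$.
Near blob $i$ this field is both divergence-free and curl-free, so $DF_i$ is symmetric and traceless, i.e.\ a pure strain. Its radial component at distance $R$ has size up to $DR$. It vanishes only for special configurations, which is exactly the situation of Hypothesis~\ref{hyp:supstable} that yields the power-law bound. You used this very term in Step~2 to get $\dot I_i\leqslant CI_i$, and it does not disappear in Step~3. It breaks two of your claims:
\begin{itemize}
\item[(i)] In the iterative inequality the strain contributes about $(R/h)\,m_t(R)$ to $\frac{\dd}{\dd t}\mu_t$. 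With $h\sim R/n$, iterating $n$ times produces a factor $(Cnt)^n/n!\sim (Ct)^n$. This is large for $t\sim\alpha|\ln\eps|$, so you do not obtain $m_t\leqslant\eps^\ell$ for arbitrary $\ell$.
\item[(ii)] In your final step, the radial velocity at distance $\geqslant\eps^\beta/2$ contains the term $D|x-B_i^\eps|\sim\eps^\beta$. This is not $\ll\eps^\beta/T$ when $T=\alpha|\ln\eps|$. The strain alone can double the radius in a time of order one, so the claim that no particle travels from $\eps^\beta/2$ to $\eps^\beta$ before $\alpha|\ln\eps|$ does not follow.
\end{itemize}

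\textbf{The missing idea: keep the strain and integrate it.} The paper does this through Lemma~\ref{lem:vitesse_radiale} and the comparison function $f_i$ of \eqref{EDOfi}, which satisfies $(f_i^4)'\leqslant C(f_i^4+\eps^{2-\delta})$.
\begin{itemize}
\item The support radius may then grow like $e^{Ct}$, which costs a factor $\eps^{-C\alpha}$ on $[0,\alpha|\ln\eps|]$.
\item That loss is affordable only if the bootstrap starts at a smaller radius $\eps^{\beta'}$, with $\beta<\beta'<1/2$ and $\beta'-C\alpha/4>\beta$. This is where the logarithmic time scale genuinely enters.
\item Starting there requires weak confinement $\mathtt{m}(r)\leqslant\eps^{5+\nu}r^{-6}$ down to $r\geqslant\eps^{\beta'}/2$, in spite of the strain. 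The paper gets this from the $4n$-th moments (Lemma~\ref{lem:4n_estExt}). There the strain appears only as the term $4nD_\eps m_{n,i}$, which is absorbed by taking $\alpha$ small depending on $n$.
\item If you keep Marchioro's iterative scheme instead, let the cutoff radius grow like $R_0e^{Ct}$ (or restart on $O(1)$ time windows with geometrically increasing radii), so that $\partial_t W$ absorbs the strain flux.
\end{itemize}

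\textbf{Minor point.} Your iteration runs in the wrong direction. Iterating outward from a smaller radius bounds the mass outside the larger radius, not the smaller one. Also, the radial-velocity bound at distance $R$ needs the mass outside $R/2$, so you need control of the mass outside $\eps^\beta/4$, not $\eps^\beta/2$.
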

    In conclusion, Theorem~\ref{theo:BM18general} proves that the point-vortex dynamics is a good approximation of the Euler equations for a time at least of order $\mathcal{O}(|\ln \eps|)$, where $\eps$, which describes how concentrated the initial vortices are. One can then wonder whether this logarithmic bound is optimal. In general, it is the case, as proved in \cite{D24a} using unstable configurations of point-vortices. However, there exist particular conditions under which this bound can be improved. In \cite{BM18}, in addition to Theorem~\ref{theo:BM18general}, the authors prove that three situations lead to an estimate of the form $\tau_{\eps,\beta} \geqslant \eps^{-\alpha}$, with $\alpha > 0$. The first case is taking $N=1$, a single vortex, in which case this bound becomes a rescaled version of the result of \cite{ISG99}. Then, taking a self-similar expanding configuration of vortices (with an explicit example for $N=3$), where the growth of the distances between the point-vortices is enough to enhance the bound on $\tau_{\eps,\beta}$. Last, they prove that power-law bound holds for a single vortex placed at the center of a circular rigid boundary. In \cite{DI21}, it is then proved that other bounded domains can be constructed satisfying the existence of a point around which a concentrated vortex satisfies the enhanced confinement bound. In \cite{Donati_2025_Crystal}, it is this time with special configurations for arbitrary $N \geqslant 4$ that this bound was obtained. With new techniques, \cite{Meyer} proved that for general configurations, but assuming in addition the initial vortices $\omega_{i,0}^\eps$ to be nearly radial functions, then the power-law bound holds, with larger $\alpha$ as the blobs are closer to be radial.

    This problem can be formulated for more complicated fluid equations, such as the SQG equations (see \cite{Cavallaro_Garra_Marchioro_2021}), the lake equations \cite{Hientzsch_Lacave_Miot_2022_Dynamics_of_PV_for_the_lake_eq}, and the three-dimensional Euler equations with axial (\cite{Butta_Cavallaro_Marchioro_2022_Global_Time_Evolution,Mar24,DHLM25:leapfrogging}) or helical (\cite{DLM24},\cite{Guo_Zhao}) symmetry, with a range of new difficulties arising due to changes in the Biot-Savart law. The present paper aims to have the same discussion in the case of the Euler equations on the rotating sphere.

    \medskip

We end this paragraph by mentioning some constructions of periodic (or more general) solutions near special point vortex configurations. In the plane, a single point vortex stays immobile, placed at the origine, and due to the symmetries of Euler equations, it is possible to find via bifurcation techniques periodic solutions performing a uniform rotation around it \cite{B82,GHM23,HHHM15,HHMV16,K74}. The search for quasi-periodic solutions is more delicate. In this regime, one encounters small divisors and time–space resonances, which prevent a direct perturbative construction and instead require techniques from infinite-dimensional Hamiltonian dynamics, notably KAM theory and Nash–Moser schemes. In particular, quasi-periodic motions near the Rankine vortex were first established in 2021 in the second author’s PhD thesis for the quasi-geostrophic shallow-water equations \cite{HR21}, and in the same period for generalized SQG models by Hassainia-Hmidi-Masmoudi \cite{HHM21}. Related results were obtained for the Euler equations near Kirchhoff ellipses by Berti-Hassainia-Masmoudi \cite{BHM22}, near Rankine vortices in the unit disc by Hassainia-Roulley \cite{HR22} and near annuluar patches by Hassaini-Hmidi-Roulley \cite{HHR23}. Also with weak Birkhoff normal forms, the authors in \cite{GSIP23} could find quasi-periodic vortex patches for the very singular generalized SQG equations.

The contour dynamics desingularization approach has proved to be a powerful tool for constructing families of periodic vortex patch solutions exhibiting either uniform rotation or uniform translation and following more complicated point vortex dynamics. The first results in this direction were obtained by Hmidi and Mateu \cite{HM17}, who established the existence of symmetric pairs of patches (with equal or opposite strengths). The asymmetric case was later treated by Hmidi and Hassainia \cite{HH21}. These local bifurcation results were subsequently complemented by global bifurcation analyses by Garc\'ia-Haziot \cite{GH22}, which revealed the global structure of the solution branches. Extensions to configurations with more vortices were developed by Garc\'ia, both near classical von K\'arm\'an vortex streets \cite{G20} and near Thomson polygon equilibria \cite{G21}. Finally, Hassainia and Wheeler \cite{HW22} addressed the general setting of non-degenerate point-vortex equilibria, providing a unified desingularization framework. We also mention some related works using others approaches like variational techniques \cite{CLZ21,CWWZ21,SVS10,T85} or gluing methods \cite{DDMW20}.

Beyond perturbations of steady or rigidly rotating states, the construction of vortex patches near genuinely time-dependent point-vortex motions remained largely open. A breakthrough in this direction was achieved with the periodic desingularization of the four-vortex leapfrogging configuration by Hassainia, Hmidi, and Masmoudi \cite{HHM25}, providing the first example of patch dynamics shadowing a nontrivial periodic point-vortex orbit. More recently, the longstanding problem of constructing periodic patch motions in general bounded simply connected domains was resolved by Hassainia, Hmidi, and Roulley \cite{HHR24}, through the desingularization of periodic orbits of a single point vortex.
    
	\subsection{Barotropic model}
    Our next goal is to introduce the set of equations governing the vorticity evolution in a two-dimensional layer of ideal fluid with uniform density distributed over a spherical surface, a framework known as the \textit{barotropic model}. Despite its apparent simplicity, this representation is far from trivial. In fact, it has already yielded, in physics literature, significant understanding of phenomena such as the development of polar vortices \cite{JM87,W92}. More broadly, the barotropic model occupies a key position in geophysical fluid dynamics, offering a foundational approach for interpreting large-scale atmospheric motion and supporting the modeling of atmospheric processes on Earth as well as on other planets \cite{HH04}.\\
    
    Now and for the rest of the paper, we work on the unit sphere $\mathbb{S}^2$ defined by
	$$\mathbb{S}^2\triangleq\Big\{(x_1,x_2,x_3)\in\mathbb{R}^3\quad\textnormal{s.t.}\quad x_1^2+x_2^2+x_3^2=1\Big\},$$ 
	performing a uniform rotation around the vertical axis with constant angular speed $\gamma\in\mathbb{R}$.
    Throughout the document, we denote $|\cdot|_{\mathbb{R}^3}$ the Euclidean norm in $\mathbb{R}^3$, namely
    $$\forall(x_1,x_2,x_3)\in\mathbb{R}^3,\quad |(x_1,x_2,x_3)|_{\mathbb{R}^3}^2\triangleq x_1^2+x_2^2+x_3^2$$
    and we shall use the following notation for $x\in\mathbb{R}^3$ and $r>0,$
    $$B(x,r)\triangleq\Big\{y\in\mathbb{R}^3\quad\textnormal{s.t.}\quad|x-y|_{\mathbb{R}^3}<r\Big\}.$$
    Then, we consider an homogeneous and incompressible fluid on $\mathbb{S}^2$ described by its velocity field $u$ and its pressure $P$. The 2D Euler equations on the rotating sphere is
	\begin{equation}\label{eq vorticity}
\partial_{t}\omega(t,\mathbf{x})+u(t,\mathbf{x})\cdot\nabla\big(\omega(t,\mathbf{x})+2\gamma\,x_3\big)=0.
	\end{equation}
	The divergence-free property of the velocity field and the compactness of the manifold $\mathbb{S}^2$ implies that the vorticity should satisfy the so-called Gauss constraint, namely
	$$\int_{\mathbb{S}^2}\omega(t,\mathbf{x})\dd\boldsymbol{\sigma}(\mathbf{x})=0.$$
	Finally, we define the absolute vorticity through the relation
	$$\zeta(t,\mathbf{x})\triangleq\omega(t,\mathbf{x})+2\gamma\, x_3.$$
	According to \eqref{eq vorticity}, it is a solution to the following active scalar equation
	\begin{equation}\label{EulerAV}
	    \partial_{t}\zeta(t,\mathbf{x})+u(t,\mathbf{x})\cdot\nabla \zeta(t,\mathbf{x})=0.
	\end{equation}
	Moreover, the Gauss constraint is also satisfied by the absolute vorticity
	\begin{equation*}
		\int_{\mathbb{S}^2}\zeta(t,\mathbf{x})\dd\boldsymbol{\sigma}(\mathbf{x})=0.
	\end{equation*}
	The fact that $u$ is solenoidal implies the existence of a stream function $\Psi$ such that
	$$u(t,\mathbf{x})=\nabla^{\perp}\Psi(t,\mathbf{x}).$$
	The stream function solves the Poisson equation
	$$\Delta\Psi(t,\mathbf{x})=\omega(t,\mathbf{x})$$
	and therefore is linked to the vorticity via the following integral formula, see \cite{BD15}
	$$\Psi(t,\mathbf{x})=\Psi[\omega](t,\mathbf{x})\triangleq\int_{\mathbb{S}^2}G_{\mathbb{S}^2}(\mathbf{x},\mathbf{y})\omega(t,\mathbf{y})\dd\boldsymbol{\sigma}(\mathbf{y}),\qquad G_{\mathbb{S}^2}(\mathbf{x},\mathbf{y})\triangleq\frac{1}{2\pi}\ln\left(|\mathbf{x}-\mathbf{y}|_{\R^3}\right).$$
	The norm $|\cdot|_{\R^3}$ is the usual Euclidean norm in $\mathbb{R}^3.$ 
    In terms of absolute vorticity, we have
	$$u(t,\mathbf{x})=\nabla^{\perp}\big(\Psi[\zeta](\mathbf{x})+\gamma x_3\big)=\nabla^{\perp}\Psi[\zeta](t,\mathbf{x})+\gamma\nabla^{\perp}(\mathbf{x}\cdot\mathbf{e}_3).$$
    According to Lemma \ref{lem:gradlog}, the Biot-Savart law on the rotating sphere is 
    $$u(t,\mathbf{x})=\frac{1}{2\pi}\int_{\mathbb{S}^2}\frac{\mathbf{x}\wedge\mathbf{y}}{|\mathbf{x}-\mathbf{y}|^2_{\mathbb{R}^3}}\zeta(t,\mathbf{y})d\boldsymbol{\sigma}(\mathbf{y})+\gamma\,\mathbf{e}_3\wedge\mathbf{x}.$$
    Throughout the document, we shall denote the Biot-Savart kernel as
    \begin{equation}\label{BS-kernel}
        K_{\mathbb{S}^2}(\mathbf{x},\mathbf{y})\triangleq\frac{\mathbf{x}\wedge\mathbf{y}}{\,\,|\mathbf{x}-\mathbf{y}|_{\mathbb{R}^3}^2}\cdot
    \end{equation}

\subsection{Point-vortices on the rotating sphere}

An absolute vorticity point vortex distribution is a formal solution of \eqref{eq vorticity} in the form
$$\zeta(t,\mathbf{x})=\sum_{i=1}^{N}\Gamma_i\delta_{\mathbf{x}_i(t)},$$
where $N\in\mathbb{N}\setminus\{0,1\}$ is the number of points, $\mathbf{x}_1(t),\ldots,\mathbf{x}_N(t)\in\mathbb{S}^2$ are the points at time $t\geqslant0$ and $\Gamma_1,\ldots,\Gamma_N\in\mathbb{R}$ are the intensities subject to the Gauss constraint
    \begin{equation}\label{Gauss:PV}
        \sum_{i=1}^{N}\Gamma_i=0.
    \end{equation}
    In what follows, we denote
    $$\Gamma\triangleq\{\Gamma_1,\ldots,\Gamma_N\}.$$
    The point-vortex system on the rotating unit 2-sphere is the time evolution law for the points, namely
    \begin{equation}\label{PVS}
    \forall\, 1\leqslant i\leqslant N,\quad\begin{cases}
        \displaystyle \der{}{t} \mathbf{x}_i(t)=\sum_{j=1\atop j\neq i}^{N}\frac{\Gamma_j}{2\pi}\frac{\mathbf{x}_i(t)\wedge\mathbf{x}_j(t)}{|\mathbf{x}_i(t)-\mathbf{x}_j(t)|_{\R^3}^2}+\gamma\,\mathbf{e}_3\wedge\mathbf{x}_i(t), \vspace{1mm}\\
        \mathbf{x}_i(0) = \mathbf{x}_i^0.
    \end{cases}
    \end{equation}
    The model, introduced in \cite{bogomolov77}, was later studied in many works, and we refer the interested reader to \cite{KN98}, and in particular, the study of relative equilibria in \cite{CB03,LP05,LPMR11}, or in a more physically relevant context, \cite{MOKHOV2013}.
Let us mention that the dynamics \eqref{PVS} is Hamiltonian 
$$\forall\, 1\leqslant i\leqslant N,\quad\der{}{t}\mathbf{x}_i(t)=\frac{1}{\Gamma_i}\nabla_{\mathbf{x}_i}^{\perp}\mathcal{H}\big(\mathbf{x}_1(t),\ldots,\mathbf{x}_N(t)\big),$$
associated to the energy $\mathcal{H}$ (cf. Lemma \ref{lem:gradlog} and \eqref{gradperp:scalarprod}) related to the kinetic energy and center of mass, which are two conserved quantities
$$\mathcal{H}(\mathbf{x}_1,\ldots,\mathbf{x}_N)\triangleq\sum_{1\leqslant i,j\leqslant N\atop i\neq j} \frac{\Gamma_i\Gamma_j}{4\pi} \ln\big(|\mathbf{x}_i-\mathbf{x}_j|_{\R^3}\big) + \gamma \mathbf{e}_3 \cdot \sum_{i=1}^N \Gamma_i\mathbf{x}_i.$$
In \cite{CLW25}, the authors desingularized vortex pairs on the sphere at rest and briefly mention how to treat the rotating case. Later Sakajo and Sun studied the $C^1$ and patch-type regularization of Von-K\'arm\'an vortex streets \cite{SZ24,SZ25}. The bifurcation of one and two-interface vortex caps from zonal (i.e. longitude independent) solutions has been obtained in \cite{GHR25}. The filamentation phenomenon with linear growth of the perimeter near monotone zonal vortex caps has been studied in \cite{MR25} exploiting the stability result of monotone zonal flows of Caprino-Marchioro \cite{CM88} based on the conservation of the momentum with respect to the vertical axis.

On surfaces of non constant curvature, the point-vortex dynamics has a very different behaviour as the first order term is then given by the derivative of the curvature, see \cite{BD15}. One would expect on such manifold that the confinement results that can be obtained would be similar to the three-dimensional-with-symmetry cases, as previously discussed, coaxial vortex rings and helical filaments, or in the lake equation for instance. On the sphere, the constant curvature means that instead, the result that we obtain are similar to the two-dimensional results, resulting in the presence of non-trivial geometry, without the singularity of motion observed in the curved cases.

\subsection{Main results}

Our first result is the improbability of point-vortex collisions, necessary to justify that most initial data have global solutions. It reads informally as follows. 
    \begin{theo}\label{theo:improbabilite}\textbf{(Improbability of point vortex collision)}\\
        Let $N\in\mathbb{N}\setminus\{0,1\}.$ Then, for almost every initial conditions $(\mathbf{x}_i^0)_{1\leqslant i \leqslant N}\in(\mathbb{S}^2)^N$, the point vortex system \eqref{PVS} has a global solution.
    \end{theo}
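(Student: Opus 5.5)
We follow the Dürr–Pulvirenti / Marchioro–Pulvirenti strategy, which becomes simpler here because $\mathbb{S}^2$ is compact and carries a finite invariant measure. Let $\mathcal{M}\triangleq\{(\mathbf{x}_i)\in(\mathbb{S}^2)^N : \mathbf{x}_i\neq\mathbf{x}_j \text{ for } i\neq j\}$, with the product surface measure $\mu=\boldsymbol{\sigma}^{\otimes N}$.

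\textbf{Regularization.} For $\delta>0$ we replace the kernel $K_{\mathbb{S}^2}$ by a smooth cut-off kernel $K_\delta$ that agrees with $K_{\mathbb{S}^2}$ when $|\mathbf{x}-\mathbf{y}|_{\R^3}\geqslant\delta$. We choose it in Hamiltonian form $K_\delta(\mathbf{x},\mathbf{y})=\nabla^\perp_{\mathbf{x}}G_\delta(|\mathbf{x}-\mathbf{y}|_{\R^3})$, where $G_\delta$ is a smooth radial truncation of $\frac{1}{2\pi}\ln$. The regularized system is globally well-posed on $(\mathbb{S}^2)^N$. Each component of its vector field is a tangential $\nabla^\perp$ field in $\mathbf{x}_i$, so it is divergence-free for $\boldsymbol{\sigma}$. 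The Liouville theorem then gives that the flow $\Phi^\delta_t$ preserves $\mu$. The rotation term $\gamma\,\mathbf{e}_3\wedge\mathbf{x}_i$ is also divergence-free on the sphere, since it is a Killing field. The true and regularized flows coincide as long as all mutual distances stay $\geqslant\delta$.

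\textbf{Lyapunov-type functional.} Set
$$L(\mathbf{X})\triangleq-\sum_{i\neq j}\ln|\mathbf{x}_i-\mathbf{x}_j|_{\R^3}\geqslant -N^2\ln 2,$$
and let $L_\delta$ be the same expression with $\ln$ replaced by $\ln$ truncated at $\delta$. First, $L\in L^1(\mu)$, because $\ln|\mathbf{x}-\mathbf{y}|$ is integrable on $\mathbb{S}^2\times\mathbb{S}^2$. The key step is to estimate $\frac{d}{dt}L_\delta(\Phi^\delta_t\mathbf{X})$, which is a sum over pairs of terms of the form
$$\nabla\ln|\mathbf{x}_i-\mathbf{x}_j|\cdot(\dot{\mathbf{x}}_i-\dot{\mathbf{x}}_j).$$
The self-interaction of the pair $(i,j)$ cancels, because $\nabla\ln|\cdot|\cdot\nabla^\perp\ln|\cdot|=0$. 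This cancellation needs checking in the $\mathbb{R}^3$ formulation, using $(\mathbf{x}\wedge\mathbf{y})\cdot(\mathbf{x}-\mathbf{y})=0$. The rotation terms give a bounded contribution, since the rotation is an isometry and preserves $|\mathbf{x}_i-\mathbf{x}_j|$; in fact they contribute exactly zero. What remains are three-body terms of the form
$$\frac{1}{|\mathbf{x}_i-\mathbf{x}_j|}\big(|K_\delta(\mathbf{x}_i,\mathbf{x}_k)|+|K_\delta(\mathbf{x}_j,\mathbf{x}_k)|\big)\lesssim\frac{1}{|\mathbf{x}_i-\mathbf{x}_j|\,|\mathbf{x}_i-\mathbf{x}_k|}+\dots$$
with $k\neq i,j$. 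Using invariance of $\mu$ under $\Phi^\delta_s$, we get
$$\int\sup_{t\in[0,T]}L_\delta(\Phi_t^\delta\mathbf{X})\,d\mu\leqslant\int L\,d\mu+T\int\Big|\frac{d}{dt}L_\delta\Big|\,d\mu.$$
The last integral is bounded uniformly in $\delta$, since $\int\!\!\int\!\!\int\frac{d\boldsymbol{\sigma}^3}{|\mathbf{x}-\mathbf{y}||\mathbf{x}-\mathbf{z}|}<\infty$: each factor $1/|\cdot|$ is integrable on the two-dimensional sphere, and the integrals decouple after integrating in $\mathbf{y}$ and $\mathbf{z}$ separately. Note that no condition on the $\Gamma_i$ is needed: the Gauss constraint is irrelevant here, and unlike the plane no control of escape to infinity is required because of compactness. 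This is where the planar additional assumption disappears.

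\textbf{Conclusion.} If the true trajectory from $\mathbf{X}$ reaches distance $<\delta$ before time $T$, then at the first such time it coincides with the regularized trajectory, and $L_\delta\geqslant-\ln\delta-C$ there. By Markov's inequality,
$$\mu\{\mathbf{X}: \text{some distance}<\delta \text{ on } [0,T]\}\leqslant\frac{C(1+T)}{|\ln\delta|}\tend{\delta}{0}0.$$
The set of initial data that collapse before $T$ is contained in each of these sets, so it is null. Taking the union over $T\in\mathbb{N}$, together with the null set $(\mathbb{S}^2)^N\setminus\mathcal{M}$, gives the result.

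The main obstacle is the derivative computation in the sphere's embedded coordinates. One must verify the exact cancellation of the pair term and the pointwise three-body bound with constants independent of $\delta$, including the smoothing region, and then check the uniform integrability. A fallback, if the cancellation is messy, is to use $\mu$-integrability of $|\mathbf{x}_i-\mathbf{x}_j|^{-1}|\dot{\mathbf{x}}_i-\dot{\mathbf{x}}_j|$ directly: the pair term is then of order $|\mathbf{x}_i-\mathbf{x}_j|^{-2}$, which is not integrable, so the cancellation really is needed.
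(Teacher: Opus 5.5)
Your proposal is correct and follows the same D\"urr--Pulvirenti scheme as the paper: a Hamiltonian regularization that agrees with the true flow until the first $\delta$-approach, Liouville invariance to get $\int\sup_{[0,T]}L_\delta\le\int L+T\int|\dot L_\delta|$, then Markov. The differences are minor: you use the logarithmic functional $-\sum\ln|\mathbf{x}_i-\mathbf{x}_j|_{\R^3}$, which gives the rate $C(1+T)/|\ln\delta|$, whereas the paper uses $\sum|\mathbf{x}_i-\mathbf{x}_j|_{\R^3}^{-\eta}$ with $\eta\in(0,1)$ and gets $C\eps^\eta(1+\tau)$; you also only show the collision set lies in a null set rather than proving it is Borel, which suffices for ``almost every''. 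Your explicit pair cancellation $(\mathbf{x}_i\wedge\mathbf{x}_j)\cdot(\mathbf{x}_i-\mathbf{x}_j)=0$ is genuinely needed: the paper's displayed triple-sum bound tacitly relies on it, since its $k=j$ terms $|\mathbf{x}_i-\mathbf{x}_j|_{\R^3}^{-(2+\eta)}$ would otherwise not be integrable.
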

    The precise statement and proof Theorem~\ref{theo:P0} are given in Section~\ref{sec:P0}. We then turn to the localization and confinement problems. Analogously to the planar case, for given pairwise-distinct points on the sphere $\mathbf{x}_1^0,\ldots,\mathbf{x}_N^0$, and intensities satisfying the Gauss condition \eqref{Gauss:PV}, let us consider a family $(\zeta_0^\eps)_{\eps > 0}$ of initial data to the Euler equation in absolute vorticity~\eqref{EulerAV} satisfying the following assumptions.
    \begin{hypothesis}\label{hyp:zeta}
    There exist constants $M,\varepsilon_0>0$ and $\eta \geqslant 2$ such that for every $\eps\in(0,\varepsilon_0)$,
    \begin{itemize}
        \item $\zeta_0^\eps\in L^\infty(\mathbb{S}^2)$, with $\|\zeta_0^\eps\|_{L^\infty(\mathbb{S}^2)} \leqslant M \eps^{-\eta}$.
        \item $\displaystyle \zeta_0^\eps = \sum_{i=1}^N \zeta_{0,i}^\eps \quad $ where $\displaystyle \quad \frac{\zeta_{0,i}^\eps}{\Gamma_i} \geqslant 0,\quad$ $\Gamma_i\triangleq\displaystyle\int_{\mathbb{S}^2}\zeta_{0,i}^\eps(\mathbf{x})\dd\boldsymbol{\sigma}(\mathbf{x})\quad$ and $\quad\displaystyle\sum_{i=1}^{N}\Gamma_i=0,$ 
        \item $\supp \zeta_{0,i}^{\eps} \subset B(\mathbf{x}_i^0,\eps)$,
    \end{itemize}
    These initial conditions provide a solution to equation~\eqref{EulerAV} denoted $\zeta^{\varepsilon}(t,\mathbf{x}).$
\end{hypothesis}
    For the same reasons that in the planar case, the decomposition as a sum of complactly supported vortices of circulation $\Gamma_i$ remains true at all time (these facts are given in details in Section~\ref{sec:estimates}). Then, define for any $\beta <1/2$ and $\eps > 0$ the exit time
     \begin{equation}\label{def:exit-time}
    \tau_{\eps,\beta}\triangleq\sup \left\{ t \geqslant 0 \quad\textnormal{s.t.}\quad\forall s \in [0,t],\quad \supp\big(\zeta^\eps(s,\cdot)\big) \subset \bigcup_{i=1}^N B\big(\mathbf{x}_i(s),\eps^\beta\big) \right\},
    \end{equation}

    We then prove the following result, analogous to Theorem~\ref{theo:BM18general}.
    \begin{theo}\label{theo:general_conf}\textbf{(Logarithmic time scale for vorticity confinement)}\\
        Let $(\mathbf{x}_i^0)_{1\leqslant i\leqslant N}$ be $N$ pairwise distinct points of $\mathbb{S}^2$ and $(\Gamma_i)_{1\leqslant i \leqslant N}$ be some non-vanishing intensities satisfying the Gauss condition \eqref{Gauss:PV}. Assume that such that the solution of the point-vortex dynamics~\eqref{PVS} with initial datum $(\mathbf{x}_i^0)_{1\leqslant i\leqslant N}$ is global in time and satisfies the following distance condition for some $d_0>0,$
        \begin{equation}\label{dist-min-points}
            \forall\,1\leqslant i,j\leqslant N, \quad i\neq j\quad\Rightarrow\quad\inf_{t \geqslant 0}|\mathbf{x}_i(t) - \mathbf{x}_j(t)|_{\R^3} \geqslant d_0.
        \end{equation}
        Then for every $\beta<1/2$ there exists $\eps_0\triangleq\varepsilon_0(\beta,d_0)>0$ and $\alpha\triangleq\alpha(\beta,d_0)>0$ such that for every $\eps \in (0,\eps_0)$ the solution $\zeta^\eps$ of \eqref{EulerAV} with initial condition $\zeta_0^\eps$ subjected to the Hypothesis \ref{hyp:zeta} near the points $(\mathbf{x}_i^0)_{1\leqslant i\leqslant N}$ satisfies
        \begin{equation*}
            \tau_{\eps,\beta} \geqslant \alpha|\ln \eps|.
        \end{equation*}
    \end{theo}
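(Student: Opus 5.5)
We adapt the Buttà–Marchioro strategy to the sphere. Write $\zeta^\eps=\sum_i\zeta_i^\eps(t)$, where $\zeta_i^\eps$ is transported by the full velocity $u^\eps$. Each $\zeta_i^\eps$ keeps its sign, its mass $\Gamma_i$, and the bound $\|\zeta_i^\eps\|_{L^\infty}\leqslant M\eps^{-\eta}$. The argument works on the maximal time interval on which the supports stay in $B(\mathbf{x}_i(t),\eps^\beta)$, and shows this interval lasts at least $\alpha|\ln\eps|$. On that interval, the supports of different blobs are separated by at least $d_0-2\eps^\beta\geqslant d_0/2$.

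\textbf{Step 1: the center of vorticity stays close to the point vortex.} Define the center of vorticity $B_i(t)\triangleq\Gamma_i^{-1}\int\mathbf{y}\,\zeta_i^\eps(t,\mathbf{y})\dd\boldsymbol{\sigma}(\mathbf{y})\in\R^3$. This point lies within $O(\eps^{2\beta})$ of $\mathbb{S}^2$, and we compare it with $\mathbf{x}_i(t)$. Its time derivative is $\Gamma_i^{-1}\int u^\eps\zeta_i^\eps$, and we split $u^\eps$ into three parts.
\begin{itemize}
\item The self-interaction part: using the antisymmetry $K_{\mathbb{S}^2}(\mathbf{x},\mathbf{y})=-K_{\mathbb{S}^2}(\mathbf{y},\mathbf{x})$, which holds since $\mathbf{x}\wedge\mathbf{y}=-\mathbf{y}\wedge\mathbf{x}$, the double integral $\iint K_{\mathbb{S}^2}(\mathbf{x},\mathbf{y})\zeta_i(\mathbf{x})\zeta_i(\mathbf{y})$ vanishes exactly.
\item The rotation term: $\gamma\,\mathbf{e}_3\wedge\mathbf{x}$ is linear in $\mathbf{x}$, so it gives exactly $\gamma\,\mathbf{e}_3\wedge B_i$.
\item The interaction with the other blobs: $K_{\mathbb{S}^2}$ is smooth away from the diagonal, so a Taylor expansion around $B_j$ gives a first-order error of $O(\eps^{2\beta})$ from the moments of inertia $\int|\mathbf{y}-B_j|^2|\zeta_j|$, plus Lipschitz terms in $|B_j-\mathbf{x}_j|$.
\end{itemize}
A Grönwall argument then gives $|B_i(t)-\mathbf{x}_i(t)|\leqslant C\eps^{\beta'}e^{Ct}$ for some exponent $\beta'>\beta$. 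The inertia bound is the crude one, $O(\eps^{2\beta})$, on the confinement interval.

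\textbf{Step 2: moment of inertia.} Track $I_i(t)=\int|\mathbf{y}-B_i(t)|^2|\zeta_i^\eps|$. Its derivative again involves only the external field: the self-interaction cancels because $(\mathbf{x}-\mathbf{y})\cdot(\mathbf{x}\wedge\mathbf{y})=0$, and the rigid rotation cancels as well. This yields $I_i(t)\leqslant C\eps^2e^{Ct}$, a sharper bound that we feed back into Step 1 to get $|B_i-\mathbf{x}_i|\leqslant C\eps e^{Ct}$.

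\textbf{Step 3: control of the radial velocity (main obstacle).} Consider the vorticity mass outside a ball, $m_i(t,R)=\int_{|\mathbf{y}-B_i|>R}|\zeta_i^\eps|$, smoothed with a radial cutoff as in \cite{BM18}. Follow a point $\mathbf{x}$ in the support at distance $R\sim\eps^\beta$ from $B_i$. Its radial velocity $\frac{\mathbf{x}-B_i}{|\mathbf{x}-B_i|}\cdot u^\eps(\mathbf{x})$ splits into four contributions:
\begin{itemize}
\item the external field, which contributes $O(R)$ after subtracting the motion of $B_i$;
\item the rotation, which contributes $O(|B_i|\text{-defect})$;
\item the near self-interaction, bounded using $\|\zeta\|_\infty$ and $m_i(R/2)$;
\item the far self-interaction, bounded by $C I_i/R^3$.
\end{itemize}
Iterating the mass estimate in the Buttà–Marchioro fashion gives $m_i(t,R)\leqslant\eps^{k}$ for any $k$ as long as $t\leqslant\alpha|\ln\eps|$. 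Since $\eta$ is fixed, taking $k$ large makes the near self-interaction term negligible, so the radial velocity is $o(\eps^\beta)$ per unit time. The support therefore cannot reach $\eps^\beta$ before time $\alpha|\ln\eps|$.

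The spherical geometry creates three complications, and this is where most of the work goes:
\begin{itemize}
\item $B_i$ lies off the sphere, so distances must be compared via $|\mathbf{x}-\mathbf{y}|_{\R^3}$ on $\mathbb{S}^2$.
\item The kernel is $\frac{\mathbf{x}\wedge\mathbf{y}}{|\mathbf{x}-\mathbf{y}|^2}$ rather than the planar one. Locally it equals the planar kernel plus a smooth remainder, since $\mathbf{x}\wedge\mathbf{y}=\mathbf{x}\wedge(\mathbf{y}-\mathbf{x})$ is tangent to the sphere at first order.
\item The Gauss constraint forces mixed signs between blobs, but within each blob the sign is constant, which is all the estimates need.
\end{itemize}

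Finally, we close the bootstrap: choose $\alpha$ so that $C\eps e^{C\alpha|\ln\eps|}\ll\eps^\beta$ and the mass iteration holds on $[0,\alpha|\ln\eps|]$. A continuity argument then gives $\tau_{\eps,\beta}\geqslant\alpha|\ln\eps|$.
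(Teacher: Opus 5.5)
Your Steps 1 and 2 and your four-term splitting of the radial velocity agree with the paper (Lemmas~\ref{lem:control_I}, \ref{lem:c} and~\ref{lem:vitesse_radiale}). The rotation term in fact contributes exactly zero, since $\gamma\,\mathbf{e}_3\wedge(\mathbf{x}-B_i)$ is orthogonal to $\mathbf{x}-B_i$. Replacing the paper's $4n$-th moments by a Buttà--Marchioro cutoff iteration is a legitimate alternative route to weak confinement.

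The gap is in how you close Step 3. You conclude that at $R\sim\eps^\beta$ the radial velocity is $o(\eps^\beta)$ per unit time. But the first contribution on your own list, the external strain, is of size $DR\sim D\eps^\beta$ and cannot be made smaller. Moreover, even a velocity that is $o(\eps^\beta)$ but not $o(\eps^\beta/|\ln\eps|)$ would allow a displacement much larger than $\eps^\beta$ over $[0,\alpha|\ln\eps|]$. This strain term is exactly what restricts confinement to logarithmic times. It has to be integrated, not neglected: it lets the support radius grow like $e^{Dt}$, i.e.\ by a factor $\eps^{-D\alpha}$ up to time $\alpha|\ln\eps|$. Started at scale $\eps^\beta$, the argument cannot close. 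Your final choice of $\alpha$ only guarantees $C\eps e^{C\alpha|\ln\eps|}\ll\eps^\beta$ and never accounts for this growth.

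What is missing is a two-scale argument.
\begin{itemize}
\item Fix $\beta'\in(\beta,\tfrac12)$ (the paper takes $\beta'=\frac{\beta+1/2}{2}$).
\item Prove the mass bound for \emph{all} radii $r\geqslant\eps^{\beta'}/2$, not only at $r\sim\eps^\beta$. In the paper this is $\mathtt{m}_{t,i}^\eps(r)\leqslant\eps^{5+\nu}r^{-6}$ (Lemma~\ref{lem:4n_estExt}), and it is where $\beta<\tfrac12$ enters.
\item As long as the support radius exceeds $\eps^{\beta'}$, the near self-interaction is then negligible and the radius satisfies $R'\leqslant C(R+\eps^{2-\delta}R^{-3})$.
\item Since $R_i^\eps$ is only continuous, compare it with a supersolution $f_i$ through a first-touching-time argument.
\item Integrate $(f_i^4)'\leqslant C(f_i^4+\eps^{2-\delta})$ from the last time $t_1$ with $f_i(t_1)=\eps^{\beta'}$ before $f_i$ first reaches $\eps^\beta/2$. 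This gives $f_i^4\leqslant\eps^{4\beta'-C\alpha}+C\eps^{2-\delta-C\alpha}$ up to time $\alpha|\ln\eps|$.
\item This stays below $(\eps^\beta/2)^4$ precisely because $\alpha$ is chosen with $C\alpha<4(\beta'-\beta)$.
\end{itemize}

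The same strain issue affects your mass iteration. With a fixed cutoff radius $R$ and width $h$, the external field contributes to the time derivative of the smoothed outer mass a term bounded only by $C\frac{R}{h}$ times the mass beyond $R-h$. Iterating $n$ times with $nh\leqslant R/2$ then produces factors of size at least $(ct)^n$, which is useless for $t\sim|\ln\eps|$. So either the cutoff radii must be allowed to grow like $e^{Ct}$, or you must switch to a method such as the paper's moments. Growing radii cost a factor $\eps^{-C\alpha}$, which the gap between $\beta'$ and $\tfrac12$ absorbs. In the paper this loss appears as the $e^{Cnt}$ growth of $m_{n,i}^\eps$ in Lemma~\ref{lem:moments_generaux}, compensated by taking $\alpha$ small depending on $n$.
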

    This bound is optimal, as we show in the following result, analogous to the one obtained in \cite{D24a}.
    \begin{theo}\label{theo:optimal}\textbf{(Optimality of the logarithmic time confinement)}
    There exists a choice of $(\mathbf{x}_1,\ldots,\mathbf{x}_N)$ and intensities $\Gamma_1,\ldots,\Gamma_N$ satisfying \eqref{Gauss:PV} and \eqref{dist-min-points} such that there exists $\beta_0 < 1/2$, $\eta \geqslant 2$ such that for any $\beta \in (\beta_0,1)$, there exists $\alpha_0 > 0$ such that for any $\eps > 0$ small enough, there exists $\zeta_0^\eps$ satisfying Hypothesis~\ref{hyp:zeta} such that
    \begin{equation*}
        \tau_{\eps,\beta} \leqslant \alpha_0 |\ln \eps|.
    \end{equation*}
    \end{theo}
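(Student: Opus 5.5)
We will construct the configuration from a \emph{linearly unstable relative equilibrium} of \eqref{PVS}, following the strategy of \cite{D24a}.

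\textbf{Choice of configuration.} The first candidate is a pair $\Gamma_1=-\Gamma_2=\Gamma$ placed symmetrically about the $\mathbf{e}_3$ axis on the same latitude circle, together with auxiliary vortices arranged so that the configuration is a relative equilibrium: it rotates uniformly about $\mathbf{e}_3$ and keeps $\inf_t|\mathbf{x}_i(t)-\mathbf{x}_j(t)|_{\R^3}=d_0>0$. A more robust alternative is a ring of $n$ identical vortices on a latitude circle, compensated by vortices at the poles so that the Gauss condition holds; for $n$ large (for instance $n\geqslant 7$, as in the planar Thomson polygon), such a ring is linearly unstable. Write the dynamics in the frame rotating with the equilibrium, linearize, and exhibit an eigenvalue $\lambda$ of the linearized matrix with $\mathrm{Re}\,\lambda=\lambda_0>0$. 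The Hamiltonian structure guarantees such a pair $\pm\lambda$ once instability holds. Since \eqref{dist-min-points} holds for the equilibrium itself, Theorem~\ref{theo:general_conf} applies to it.

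\textbf{Construction of $\zeta_0^\eps$.} Take $\zeta_{0,i}^\eps=\Gamma_i\,|B_i|^{-1}\Ind_{B_i}$, where $B_i$ is a small spherical cap of radius $\eps$. Its center is displaced from $\mathbf{x}_i^0$ by $\delta\,v_i$, with $v$ the unstable eigenvector and $\delta=\eps^{\kappa}$ for some $\kappa\in(\beta,1)$. Then $\supp \zeta_{0,i}^\eps\subset B(\mathbf{x}_i^0,C\eps^{\min(1,\kappa)})$. After renaming $\eps'\sim\eps^{\kappa}$, Hypothesis~\ref{hyp:zeta} holds with $\eta=2/\kappa\geqslant 2$; this is the role of the parameter $\eta$.

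\textbf{Main estimate.} Let $B_i(t)=\Gamma_i^{-1}\int \mathbf{x}\,\zeta_i^\eps(t,\mathbf{x})\,\dd\boldsymbol{\sigma}$ be the centers of vorticity, projected to the sphere. The proof of Theorem~\ref{theo:general_conf} shows that, up to time $\tau_{\eps,\beta}$, the vector $(B_i(t))_i$ solves \eqref{PVS} up to an error of order $\eps^{a}$ for some $a>0$. The reason is that the self-interaction term vanishes by antisymmetry of $K_{\mathbb{S}^2}$, up to a curvature correction of order $\eps^{\beta}$ to be checked, while the mutual interactions are Lipschitz at distance $\geqslant d_0/2$. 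We then argue by contradiction and assume $\tau_{\eps,\beta}>\alpha_0|\ln\eps|$ with $\alpha_0>\kappa'/\lambda_0$. On $[0,\tau_{\eps,\beta}]$ all vorticity stays within $\eps^\beta$ of $\mathbf{x}_i(t)$, so $|B_i(t)-\mathbf{x}_i(t)|\leqslant \eps^\beta$. On the other hand, the deviation $B(t)-\mathbf{x}(t)$ obeys the linearized equation plus a quadratic error plus an $\eps^a$ forcing. A Grönwall-type argument on the unstable component (projecting onto the unstable eigenspace, with the quadratic term controlled as long as the deviation is $\leqslant \eps^{\beta}$) gives growth like $\delta e^{\lambda_0 t}-C\eps^{a}e^{\lambda_0 t}$. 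We choose $\kappa<a$ so that the initial displacement $\delta$ dominates the forcing. The unstable component then reaches size $\eps^{\beta'}$ with $\beta'<\beta$, a contradiction, at time $t\approx (\kappa-\beta')\lambda_0^{-1}|\ln\eps|$. We take $\beta_0$ to absorb the constraints relating $\beta$, $\kappa$ and $a$.

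\textbf{Main obstacle.} The hardest step is the existence, on the sphere, of an explicit relative equilibrium that is genuinely linearly unstable while satisfying the Gauss constraint, with the rotation term $\gamma\,\mathbf{e}_3\wedge\mathbf{x}$ handled by moving to the co-rotating frame. I would first try the polar-compensated latitude ring and compute its spectrum via the discrete Fourier transform along the ring. A secondary difficulty is the ordering of exponents: $\delta$ must beat the $\eps^a$ error from the confinement estimate. This forces $\beta$ close to $1$ and explains the restriction $\beta\in(\beta_0,1)$ in the statement.
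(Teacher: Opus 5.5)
Your architecture matches the paper's: a linearly unstable equilibrium, blobs concentrated near a slightly displaced configuration, centers of vorticity shadowing the point-vortex flow, and a contradiction with confinement of the support. But the step that carries the existential content of the statement is missing. You never exhibit a configuration on the sphere, compatible with the Gauss constraint, whose linearization has an eigenvalue with positive real part. The analogy with the planar Thomson polygon is not a proof; moreover, the planar threshold is $n\geqslant 8$, since the $7$-gon is only neutrally stable. On the sphere, the stability of a latitude ring depends on its latitude, on the compensating polar vortices and on $\gamma$, so it must be checked.

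The paper does this explicitly with $N=4$:
\begin{itemize}
\item positions $\mathbf{x}_1=\mathbf{e}_3$, $\mathbf{x}_2=-\mathbf{e}_3$, $\mathbf{x}_{3,4}=(\pm a,0,\sqrt{1-a^2})$;
\item intensities $\Gamma_3=\Gamma_4=1$, $\Gamma_1=\kappa$ fixed by stationarity (Lemma~\ref{lem:stationary}), $\Gamma_2=-(2+\kappa)$;
\item for $a=1$, $\gamma=\tfrac12$, the characteristic polynomial of $\Diff\mathcal{F}(\mathbf{X}^*)$ factors and has the positive root $\sqrt{\mu_+}$.
\end{itemize}
If you prefer your ring, a rigorous route is the near-pole limit. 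Rescale time by $a^2$; the antipodal vortex, the Coriolis term and the curvature then contribute only $o(1)$, since rigid rotations are absorbed in the co-rotating frame. The linearization therefore converges to the planar one (e.g.\ Thomson's $n$-gon with $n\geqslant 8$), and the unstable eigenvalue persists by continuity. This argument has to be written out.

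The paper then turns linear instability into an exact escaping point-vortex trajectory via the unstable manifold theorem (Proposition~\ref{prop:instable_implique_hyp}), and afterwards needs only a crude Gronwall comparison with that trajectory. Your direct bootstrap on the unstable mode around the equilibrium is a legitimate alternative, provided you take $\lambda_0$ of maximal real part so the other modes stay controlled.

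The second problem is the exponent bookkeeping, which as written leads to a wrong conclusion. The error in $\der{}{t}c_i^\eps-\mathcal{F}_i(\vec{c}^{\,\eps})$ is not a fixed $\eps^a$. By Lemmas~\ref{lem:control_I} and~\ref{lem:c} it is $C\sqrt{I(t)}\leqslant C\sqrt{I(0)}\,e^{Dt}$, growing at the rate $D$ of \eqref{maj:F_Lip}, which need not be smaller than $\lambda_0$. Work in the units of Hypothesis~\ref{hyp:zeta}: displacement $\sim\eps$, exit radius $\eps^\beta$, escape time $\approx(1-\beta)\lambda_0^{-1}|\ln\eps|$. Over that time the error is amplified by $\eps^{-D(1-\beta)/\lambda_0}$. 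You therefore need blobs of radius $\eps^{\eta/2}$ with $\eta/2>\beta+D(1-\beta)/\lambda_0$, up to constants; this is the paper's condition $\frac{\eta/2-\beta}{2D}>\mu_\beta$.

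That is a largeness condition on $\eta$, and it can be met uniformly in $\beta\in(0,1)$. This matters because the statement chooses $\eta$ before $\beta$. It does not force $\beta$ close to $1$; if it did, your $\beta_0$ would violate the requirement $\beta_0<1/2$. The confusion comes from your renaming. After $\eps'\sim\eps^\kappa$, the relevant exit time is $\tau_{\eps',\beta}$, whose radius is $(\eps')^{\beta}=\eps^{\kappa\beta}$, not $\eps^\beta$. In consistent units your $\kappa$ is simply $2/\eta$ and must be taken small, not in $(\beta,1)$.

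Finally, keep $c_i^\eps$ unprojected in $\R^3$ and extend $\mathcal{F}$ there, as the paper does. The self-interaction then cancels exactly by antisymmetry, and there is no curvature correction to check.
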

    Differently said, there exist configurations that realize a logarithmic exit time. However, under certain conditions, discussed in Section~\ref{sec:confinement_ameliore} and Section~\ref{sec:configurations}, the bound can be improved.
    \begin{theo}\label{theo:strong_conf}\textbf{(Improved confinement time for special configurations)}\\
        There exists a choice of $(\mathbf{x}_1,\ldots,\mathbf{x}_N)$ and intensities $\Gamma_1,\ldots,\Gamma_N$ satisfying \eqref{Gauss:PV} and \eqref{dist-min-points} such that for every $\beta<1/2$ and $\alpha < \min(\beta,2-4\beta)$, there exists $\eps_0>0$  such that for every $\eps \in (0,\eps_0)$, the solution $\zeta^\eps$ of \eqref{EulerAV} with initial condition $\zeta_0^\eps$ subjected to the Hypothesis \ref{hyp:zeta} near the points $(\mathbf{x}_i^0)_{1\leqslant i\leqslant N}$ satisfies
        \begin{equation*}
            \tau_{\eps,\beta} \geqslant \eps^{-\alpha}.
        \end{equation*}
    \end{theo}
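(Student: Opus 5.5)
We will prove Theorem~\ref{theo:strong_conf} by adapting the crystal-configuration strategy of \cite{Donati_2025_Crystal} and the rotating-disc argument of \cite{BM18}. The idea is to pick a configuration in which every vortex sits at a point where the \emph{external} velocity field has vanishing strain. The only mechanism that can move or deform vortex $i$ beyond the trivial self-interaction is then second order in the distance to $\mathbf{x}_i(t)$.

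\textbf{Choice of configuration.} A natural candidate is a highly symmetric relative equilibrium. One option is the $N=2$ antipodal pair $\mathbf{x}_1=\mathbf{e}_3$, $\mathbf{x}_2=-\mathbf{e}_3$ with $\Gamma_1=-\Gamma_2$; this satisfies the Gauss condition, both points are fixed by \eqref{PVS}, and $d_0=2$. Another option is a regular polyhedron (e.g.\ octahedron) configuration with a vortex at each pole, which has rotational symmetry of order at least $3$ about each vortex axis. The key property we need is the following. Let $F_i(\mathbf{x})\triangleq\gamma\,\mathbf{e}_3\wedge\mathbf{x}+\sum_{j\neq i}\frac{\Gamma_j}{2\pi}K_{\mathbb{S}^2}(\mathbf{x},\mathbf{x}_j)$ be the field felt by vortex $i$. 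Then, in a frame rotating with the equilibrium, $F_i$ should be a rigid rotation about $\mathbf{x}_i$ up to $O(|\mathbf{x}-\mathbf{x}_i|^2)$, i.e.\ its symmetric gradient vanishes at $\mathbf{x}_i$. For the pole case, symmetry under rotation about the $\mathbf{e}_3$ axis forces $DF_i(\mathbf{x}_i)$ to commute with rotations, so it is a multiple of the identity plus a rotation. The tangential divergence-free condition then kills the identity part. The Gauss constraint is compatible with this choice.

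\textbf{Energy/moment of inertia estimate.} We work near $\mathbf{x}_i$ in stereographic or local geodesic coordinates and define the center of vorticity $B_i(t)=\frac1{\Gamma_i}\int\mathbf{y}\,\zeta_i^\eps\,\dd\boldsymbol\sigma$, projected to the sphere, and the moment of inertia $I_i(t)=\frac1{\Gamma_i}\int|\mathbf{y}-\mathbf{x}_i(t)|^2\zeta_i^\eps\,\dd\boldsymbol\sigma$. Using the antisymmetry of $K_{\mathbb{S}^2}$, the self-interaction drops out of $\frac{\dd}{\dd t}B_i$ up to curvature corrections of order $\eps^{2\beta}$. Using the vanishing strain, the external contribution is $O(I_i)$ plus $O(\eps^{2\beta})$ terms. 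This yields $\frac{\dd}{\dd t}I_i\lesssim \eps^{3\beta}$-type bounds on $[0,\tau_{\eps,\beta}]$, and hence $I_i(t)\lesssim \eps^2+t\,\eps^{\kappa}$.

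\textbf{Controlling the support.} Following the standard iterative argument of \cite{BM18} (which I would also use for Theorem~\ref{theo:general_conf}), we track a point of maximal distance $R_t$ from $\mathbf{x}_i(t)$. Its radial velocity splits into three pieces: the far-field part, which is $O(R_t^2)$ by the strain cancellation; the near part of the self-induced field, controlled by the mass outside a ball; and that exterior mass, which is controlled by $I_i$ via a Chebyshev-type bound with an improved weighted moment. Running the bootstrap until $R_t=\eps^\beta$ gives the time $\eps^{-\alpha}$ with $\alpha<\min(\beta,2-4\beta)$. The $2-4\beta$ comes from $I_i\le\eps^2$ versus the required $\eps^{4\beta}$ mass-decay estimate, and the $\beta$ comes from the quadratic remainder $R^2\approx\eps^{2\beta}$ of the external field acting over times $\eps^{-\alpha}$.

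The main obstacle is this last step: the mass-outside-ball lemma and the radial-velocity estimate must be redone with the spherical Biot--Savart kernel \eqref{BS-kernel}. The symmetry $K(\mathbf{x},\mathbf{y})\cdot(\mathbf{x}-\mathbf{y})=0$ survives, but the geometry only gives approximate cancellations, with errors of size the curvature times the distance. I would first try to bound these errors by $O(\eps^{2\beta})$ uniformly and check that they are absorbed within the exponent range above.
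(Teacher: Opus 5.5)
\textbf{Gap: localization of the vortex centers is not established.} Your choice of configuration, the antipodal pair $(\mathbf{e}_3,-\mathbf{e}_3)$ with intensities $(\Gamma,-\Gamma)$, is the one the paper uses. Your strain-cancellation property is exactly Hypothesis~\ref{hyp:supstable}; it gives $D_\eps\leqslant D\eps^\beta$ (Lemma~\ref{lem:D'eps}), which controls the spreading of each blob \emph{about its own center} $c_i^\eps$. The problem is the other step: keeping $c_i^\eps(t)$ within $\eps^\beta/2$ of $\mathbf{x}_i(t)$ up to $t=\eps^{-\alpha}$. Vanishing strain does not give this, and your claim that the external contribution is $O(I_i)+O(\eps^{2\beta})$ does not hold.

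By \eqref{derivee de c}, $\der{}{t}c_i^\eps$ contains the term
\[
F_i^\eps(\mathbf{x}_i)=\tfrac{\Gamma_j}{2\pi}\Diff_2K_{\mathbb{S}^2}(\mathbf{x}_i,\mathbf{x}_j)[c_j^\eps-\mathbf{x}_j]+O(I_j).
\]
This is first order in the displacement of the \emph{other} vortex; it corresponds to the nonzero off-diagonal blocks $\pm\frac{\Gamma}{8\pi}S$ of \eqref{diffF2pt}, and vanishing strain at $\mathbf{x}_i$ says nothing about them. For your moment about the pole one computes exactly
\[
\der{}{t}I_1=-\frac{1}{\pi\Gamma}\iint\frac{\mathbf{e}_3\cdot(\mathbf{y}\wedge\mathbf{z})}{|\mathbf{y}-\mathbf{z}|_{\R^3}^2}\,\zeta_1^\eps(t,\mathbf{y})\,\zeta_2^\eps(t,\mathbf{z})\,\dd\boldsymbol\sigma(\mathbf{y})\,\dd\boldsymbol\sigma(\mathbf{z}).
\]
Its leading part is $\frac{\Gamma}{4\pi}\,\mathbf{e}_3\cdot\big((c_1^\eps-\mathbf{e}_3)\wedge(c_2^\eps+\mathbf{e}_3)\big)$, of size $\sqrt{I_1I_2}$. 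Applying Gronwall to each $I_i$ separately then only gives $e^{Ct}$ growth, i.e.\ the logarithmic time of Theorem~\ref{theo:general_conf}. Your exponents are also inconsistent: $\der{}{t}I_i\lesssim\eps^{3\beta}$ would give $I_i\sim\eps^{3\beta-\alpha}\gg\eps^2$ at $t=\eps^{-\alpha}$, while your exterior-mass step needs $I_i\lesssim\eps^2$.

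\textbf{What is missing, and how to supply it.} You need a stability property of the \emph{coupled} point-vortex configuration.
\begin{itemize}
\item \emph{The paper's route.} It separates two things. The spreading about $c_i^\eps$ satisfies $|\der{}{t}I_i^\eps|\leqslant 2D\eps^\beta I_i^\eps$, so $I_i^\eps\leqslant 5\eps^2$ for $t\leqslant\eps^{-\alpha}$ with $\alpha<\beta$. The centers obey $\der{}{t}\vec c^{\,\eps}=\mathcal{F}(\vec c^{\,\eps})+O(\eps)$ (Lemma~\ref{lem:c}) and stay close to $\mathbf{X}(t)$ by Hypothesis~\ref{hyp:stable}. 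That hypothesis is obtained through Proposition~\ref{prop:stable_implique_hypstable} from two facts: the full matrix \eqref{diffF2pt} is skew-symmetric (including $A_{21}=-A_{12}^{T}$), and the normal component of the deviation is quadratic.
\item \emph{A fix inside your own framework.} There is an exact conservation law you did not use. Since $\Gamma_2=-\Gamma_1$ and $|\mathbf{y}\mp\mathbf{e}_3|^2_{\R^3}=2\mp 2y_3$ on $\mathbb{S}^2$, one has $I_1+I_2=4-\frac{2}{\Gamma}\int_{\mathbb{S}^2}x_3\zeta^\eps\,\dd\boldsymbol\sigma$. The quantity $\int x_3\zeta^\eps\,\dd\boldsymbol\sigma$ is conserved by \eqref{EulerAV}, by antisymmetry of the kernel and $(\mathbf{e}_3\wedge\mathbf{x})\cdot\mathbf{e}_3=0$. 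Hence $I_1(t)+I_2(t)\leqslant 2\eps^2$ for all times, which is the true spherical analogue of the Butt\`a--Marchioro single-vortex argument.
\end{itemize}
With either route, the support bootstrap proceeds as you describe; the $4n$-th moments of Lemma~\ref{lem:4n} then give the constraint $\alpha<2-4\beta$. Your octahedral alternative would similarly need a check of the coupled linearization, not only of the local symmetry at each vertex.
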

    Here note that it is only the choice of the configuration and intensities that ensure that \emph{any} initial datum satisfying~\eqref{hyp:zeta} leads to a solution having a power-law exit time.

    \subsection{Comments on the particularity of the sphere and main new ideas}

    In the plane, both the Euler equations and the point-vortex dynamics satisfy a very important property, which is the conservation of the vortex impulse, respectively defined as
    \begin{equation*}
        m(t) = \int_{\R^2} x \omega(t,x) \dd x,
    \end{equation*}
    for the Euler equations and analogously
    \begin{equation*}
        m(t) = \sum_{i=1}^n a_i z_i(t),
    \end{equation*}
    for the point-vortex dynamics. On the (non-rotating) sphere, those quantities are also conserved for the absolute vorticity, but so defined, the center of absolute vorticity does \emph{not} lie on the sphere. Therefore, all usual estimates for the effect of an exterior Lipschitz-continuous velocity field which can be estimated by controlling quantities of the form $F(x) - F(b(t))$ cannot be written as $F$ is not defined \emph{a priori} outside of the sphere. However, a remarkable property of the Green Function of the sphere 
    \begin{equation*}
        G_{\mathbb{S}^2}(\mathbf{x},\mathbf{y}) = \frac{1}{2\pi} \ln \big(|\mathbf{x}-\mathbf{y}|_{\R^3}\big)
    \end{equation*}
    is that it can trivially be extended to $\R^3 \times \R^3 \setminus \{ \mathbf{x} = \mathbf{y} \}$, so that exterior velocity fields created by absolute vorticity can themselves be extended to $\R^3$. In the proof of our main result, we thus control the motion of centers of absolute vorticity that do not lie on the sphere, which adds some difficulties that we found to be mostly technical.

    In addition, the Gauss constraint prevents us from studying the case $N=1$, as we only consider blobs of absolute vorticity with constant signs. Finally, both the geometry and the rotation of the sphere influence vortex equilibria, so that the particular configurations that we study to prove Theorems~\eqref{theo:optimal} and~\ref{theo:strong_conf} slightly differ from the similar configuration used in the planar case. Nevertheless, we are able to discuss the case of polar vortex crystals in Section~\ref{sec:polar_VC}, which is of particular interest, for which we recover properties of the planar case at the limit when vortices are close to the pole.

    \medskip

    The paper is organized as follows. In Section~\ref{sec:P0}, we prove Theorem~\ref{theo:improbabilite}. This section is completely independent of the rest of the paper. Then in Section~\ref{sec:estimates}, we study general properties of the solution to equation~\eqref{EulerAV} with initial datum satisfying Hypothesis~\eqref{hyp:zeta}. With these estimates established, in Section~\ref{sec:confinement_general}, we prove Theorem~\ref{theo:general_conf}. Proofs of Theorems~\ref{theo:optimal} and~\ref{theo:strong_conf} are done in Sections~\ref{sec:confinement_general} and~\ref{sec:confinement_ameliore} respectively, conditionally to the existence of suitable configurations of point-vortices. Then in Section~\ref{sec:configurations}, we prove the existence of these configurations, closing the proofs of these theorems, and discuss various examples.

\medskip

\noindent \textbf{Acknowledgements :} Part of this work was conducted when Martin Donati was supported by the grant BOURGEONS ANR-23-CE40-0014-01 of the French National Research Agency. Emeric Roulley is supported by the ERC STARTING GRANT 2021 “Hamiltonian Dynamics, Normal Forms and Water Waves” (HamDyWWa), project Number: 101039762. Both authors thank Matthieu Brachet for the numerical simulations of vortices on the sphere.

\section{Improbability of point vortex collisions: proof of Theorem~\ref{theo:improbabilite}}\label{sec:P0}
This section is devoted to the proof of Theorem \ref{theo:improbabilite}.\\

We denote $\mathcal{B}(\mathbb{S}^2)$ the Borel $\sigma$-algebra of the unit sphere $\mathbb{S}^2.$  Let us recall that $\boldsymbol{\sigma}$ is the surface measure on $\big(\mathbb{S}^2,\mathcal{B}(\mathbb{S}^2)\big).$ It is well-known that the area of the unit sphere -- that can be recovered by simple integration -- is $\boldsymbol{\sigma}(\mathbb{S}^2)=4\pi.$ Therefore, the measure
$$\mathbb{P}\triangleq\frac{\boldsymbol{\sigma}}{4\pi}$$
is a probability measure on $\big(\mathbb{S}^2,\mathcal{B}(\mathbb{S}^2)\big).$  Fix $N\in\mathbb{N}\setminus\{0,1\}$, then the Borel $\sigma$-algebra on $(\mathbb{S}^2)^N$ is
$$\mathcal{B}\left((\mathbb{S}^2)^N\right)\triangleq\underbrace{\mathcal{B}(\mathbb{S}^2)\otimes\ldots\otimes\mathcal{B}(\mathbb{S}^2)}_{N\textnormal{ times}}.$$
We define the product probability measure on $\left((\mathbb{S}^2)^N,\mathcal{B}\left((\mathbb{S}^2)^N\right)\right)$ via
\begin{equation*}
            \mathbb{P}_N\triangleq\underbrace{\mathbb{P}\otimes\ldots\otimes\mathbb{P}}_{N\textnormal{ times}}.
\end{equation*}
Now, let us consider the set of admissible initial positions for the $N$-point-vortex dynamics.
\begin{equation}\label{def:admissible-set}
    \mathcal{A}_N\triangleq\left\{ \mathbf{X}=(\mathbf{x}_1,\ldots,\mathbf{x}_N)\in(\mathbb{S}^2)^N \quad\textnormal{s.t.}\quad\forall\, 1\leqslant i,j\leqslant N,\quad i\neq j\quad\Rightarrow\quad\mathbf{x}_i\neq\mathbf{x}_j\right\}.
\end{equation}
The set $\mathcal{A}_N$ is an open subset of $(\mathbb{S}^2)^N.$ Therefore, $\mathcal{A}_N\in\mathcal{B}\left((\mathbb{S}^2)^N\right).$ In addition, it is easy to show that it is of full measure, namely
$$\mathbb{P}_N(\mathcal{A}_N)=1.$$
The Borel $\sigma$-algebra of $\mathcal{A}_N$ is obtained by the trace topology
$$\mathcal{B}(\mathcal{A}_N)\triangleq\mathcal{A}_N\cap\mathcal{B}\left((\mathbb{S}^2)^N\right).$$
We define the conditional probability measure $\mathbb{P}_N(\cdot|\mathcal{A}_N)$ with respect to the almost sure event $\mathcal{A}_N$ by
$$\forall A\in\mathcal{B}\left((\mathbb{S}^2)^N\right),\quad\mathbb{P}_N(A|\mathcal{A}_N)\triangleq\frac{\mathbb{P}_N\left(A\cap\mathcal{A}_N\right)}{\mathbb{P}_N(\mathcal{A}_N)}=\mathbb{P}_N(A\cap\mathcal{A}_N).$$
Notice that $\mathbb{P}_N$ and $\mathbb{P}_N(\cdot|\mathcal{A}_N)$ coincide on $\mathcal{B}(\mathcal{A}_N).$ Therefore, in what follows, we still denote $\mathbb{P}_N$ restriction of $\mathbb{P}_N(\cdot|\mathcal{A}_N)$ to the $\sigma$-algebra $\mathcal{B}(\mathcal{A}_N)$ and we will work with the probability space $\left(\mathcal{A}_N,\mathcal{B}(\mathcal{A}_N),\mathbb{P}_N\right).$
For every $\mathbf{X}=\left(\mathbf{x}_1,\ldots,\mathbf{x}_N\right) \in \mathcal{A}_N$, by a classical application of the local Cauchy-Lipschitz theory, the point-vortex system~\eqref{PVS} with initial configuration $\mathbf{X}$ has a unique solution $t\mapsto S_t(\mathbf{X})\triangleq\big(\mathbf{x}_1(t),\ldots,\mathbf{x}_N(t)\big)$ continuous on a maximal time interval that we denote by $[0,T(\mathbf{X}))$, with $T(\mathbf{X}) \in  (0,+\infty]$. Let us observe that $T(\mathbf{X})<+\infty$ if and only if there is a collision (in the broad sense).

Let us denote by $\mathscr{C}_N$ the \emph{set of collisions}, namely the set of initial data leading to a finite time collision in the point-vortex dynamics
\begin{equation}\label{def:CN}
    \mathscr{C}_N \triangleq \{ \mathbf{X}\in \mathcal{A}_N \quad\textnormal{s.t.}\quad T(\mathbf{X}) < +\infty \}.
\end{equation}
\begin{remark}
    A priori, it is not immediate that the function $\mathbf{X}\mapsto T(\mathbf{X})$ is measurable and therefore one cannot a priori state that the set $\mathscr{C}_N$ is measurable.
\end{remark} 
The Theorem \ref{theo:improbabilite} can be reformulated as follows.
\begin{theo}\label{theo:P0}
    Let $N\in\mathbb{N}\setminus\{0,1\}.$ Then, 
    $$\mathscr{C}_N\in\mathcal{B}(\mathcal{A}_N)\qquad\textnormal{and}\qquad\mathbb{P}_N(\mathscr{C}_N)=0.$$
\end{theo}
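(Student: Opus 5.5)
We follow the classical Dürr–Pulvirenti / Marchioro–Pulvirenti strategy, adapted to the compact setting of the sphere: regularize the interaction, use Liouville's theorem for the regularized Hamiltonian flow, and control the expected time integral of a suitable singular functional.

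\emph{Measurability.} For $\delta>0$ set
\[
\mathscr{C}_N^{\delta,T}=\Big\{\mathbf{X}\in\mathcal{A}_N \;:\; T(\mathbf{X})\leqslant T \text{ or } \min_{s\in[0,T]}\min_{i\neq j}|\mathbf{x}_i(s)-\mathbf{x}_j(s)|_{\R^3}<\delta\Big\}.
\]
Since the vector field in \eqref{PVS} is bounded on $\{\min_{i\neq j}|\mathbf{x}_i-\mathbf{x}_j|\geqslant\delta/2\}$, the flow is continuous in $\mathbf{X}$ up to the first time the minimal distance drops below $\delta$. Hence $\mathscr{C}_N^{\delta,T}$ is open, and in particular Borel. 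We then write
\[
\mathscr{C}_N=\bigcup_{T\in\mathbb{N}}\bigcap_{\delta\in\mathbb{Q}_+}\mathscr{C}_N^{\delta,T}.
\]
This requires that $T(\mathbf{X})<\infty$ iff $\liminf_{t\to T^*}\min_{i\neq j}|\mathbf{x}_i-\mathbf{x}_j|=0$, which holds on the compact manifold $(\mathbb{S}^2)^N$. It follows that $\mathscr{C}_N\in\mathcal{B}(\mathcal{A}_N)$. It then suffices to show $\mathbb{P}_N\big(\bigcap_\delta\mathscr{C}_N^{\delta,T}\big)=0$ for each fixed $T$, that is, $\mathbb{P}_N(\mathscr{C}^{\delta,T}_N)\to0$ as $\delta\to0$.

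\emph{Regularized flow.} Replace $\ln|\mathbf{x}-\mathbf{y}|$ in $\mathcal{H}$ by a smooth function $\ln_\delta$ that agrees with $\ln$ for $|\mathbf{x}-\mathbf{y}|\geqslant\delta$. The regularized system is Hamiltonian on $(\mathbb{S}^2)^N$ with the symplectic form $\sum_i\Gamma_i\,\dd\boldsymbol{\sigma}_i$; this requires that the $\Gamma_i$ be nonzero, and vortices with $\Gamma_i=0$ are passive and handled separately. The regularized flow $S^\delta_t$ is global and, by Liouville's theorem (each single-vortex vector field $\mathbf{x}_i\mapsto\nabla^\perp_{\mathbf{x}_i}$ is divergence free on $\mathbb{S}^2$), it preserves $\mathbb{P}_N$. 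Before the minimal distance first reaches $\delta$, $S^\delta_t$ coincides with $S_t$. Therefore
\[
\mathscr{C}_N^{\delta,T}\subset\Big\{\min_{[0,T]}\min_{i\neq j}|\mathbf{x}^\delta_i-\mathbf{x}^\delta_j|\leqslant\delta\Big\}.
\]

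\emph{Key estimate.} Consider the Lyapunov-type functional
\[
L(\mathbf{X})=-\sum_{i\neq j}\ln_\delta|\mathbf{x}_i-\mathbf{x}_j|_{\R^3},
\]
which is bounded below because the sphere has diameter $2$. On the event above, $L$ reaches a value at least $|\ln\delta|-C$ at some time $t\leqslant T$, so
\[
\sup_{[0,T]}L(S^\delta_t\mathbf{X})\geqslant|\ln\delta|-C.
\]
We bound this supremum by
\[
L(\mathbf{X})+\int_0^T\Big|\frac{\dd}{\dd t}L(S^\delta_t\mathbf{X})\Big|\,\dd t .
\]
Taking expectations and using invariance of $\mathbb{P}_N$ gives
\[
\mathbb{E}\sup_{[0,T]} L\leqslant \mathbb{E}L+T\,\mathbb{E}|\nabla L\cdot V^\delta| .
\]

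The main obstacle is showing that $\mathbb{E}|\nabla L\cdot V^\delta|$ is bounded uniformly in $\delta$. The difficulty is that the pair term is of size $|\mathbf{x}_i-\mathbf{x}_j|^{-2}$. The mutual $(i,j)$ contribution cancels exactly: $\frac{\dd}{\dd t}|\mathbf{x}_i-\mathbf{x}_j|^2$ involves only $(\mathbf{x}_i-\mathbf{x}_j)\cdot(\mathbf{x}_i\wedge\mathbf{x}_j)=0$, and the rotation term $\gamma\mathbf{e}_3\wedge\cdot$ preserves the distance as well. What remains are three-body terms bounded by
\[
\frac{1}{|\mathbf{x}_i-\mathbf{x}_j|\,|\mathbf{x}_i-\mathbf{x}_k|},
\]
which belong to $L^1\big((\mathbb{S}^2)^N\big)$ uniformly in $\delta$, since $\int_{\mathbb{S}^2}|\mathbf{x}-\mathbf{y}|^{-2+}\,\dd\boldsymbol{\sigma}<\infty$ and Hölder/Fubini apply. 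Also $\mathbb{E}L<\infty$. Markov's inequality then yields
\[
\mathbb{P}_N(\mathscr{C}_N^{\delta,T})\leqslant C_T/|\ln\delta|\to0 .
\]
Compactness removes the need for the extra condition on the $\Gamma_i$ required in the plane: there is no escape to infinity, and the measure is finite.
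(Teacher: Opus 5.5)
Your proposal is correct and is essentially the paper's argument. You regularize $\ln$ near $0$, use that the regularized flow preserves $\mathbb{P}_N$, and apply Markov's inequality to the supremum over $[0,\tau]$ of a singular pair functional. That supremum is bounded by its initial value plus $\tau$ times the $L^1$ norm of its time derivative at $t=0$, which is integrable uniformly in the regularization because the mutual term $(\mathbf{x}_i-\mathbf{x}_j)\cdot(\mathbf{x}_i\wedge\mathbf{x}_j)$ vanishes and only three-body terms remain.

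The differences are minor:
\begin{itemize}
\item You use $-\sum_{i\neq j}\ln_\delta|\mathbf{x}_i-\mathbf{x}_j|_{\R^3}$, which gives the rate $C_T/|\ln\delta|$. The paper uses $\sum_{i\neq j}|\mathbf{x}_i-\mathbf{x}_j|_{\R^3}^{-\eta}$ with $\eta\in(0,1)$ and gets $C\eps^\eta(1+\tau)$.
\item You derive measurability directly from the open sets $\mathscr{C}_N^{\delta,T}$. The paper instead writes $\{T\leqslant\tau\}=\bigcap_n\{T_{\eps_n}\leqslant\tau\}$, using the regularized stopping times $T_{\eps_n}\uparrow T$.
\end{itemize}
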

\begin{proof} 
We consider the notion of almost collision and its associated regularized dynamics.
Fix $\varepsilon\in(0,1)$ and consider $\ln_\eps\in C^\infty\big([0,+\infty),\R\big)$ a non-decreasing function satisfying the conditions
\begin{equation}\label{def:log_eps}
    \begin{aligned}
        \forall r \in [\eps,+\infty), \quad \ln_\eps(r)&=\ln(r),\\
        \forall r>0,\quad|\ln_{\varepsilon}(r)|&\leqslant |\ln(r)|,\\
        \forall r>0,\quad\ln_{\varepsilon}'(r)&\leqslant \frac{1}{r}\cdot
    \end{aligned}
\end{equation}
Then, we define the regularized energy $\mathcal{H}_{\varepsilon}:(\mathbb{S}^2)^N\to\mathbb{R}$ by
\begin{equation*}
    \mathcal{H}_\eps(\mathbf{x}_1,\ldots,\mathbf{x}_N)\triangleq\sum_{1\leqslant i,j\leqslant N\atop i\neq j} \frac{\Gamma_i\Gamma_j}{4\pi} \ln_\eps\big(|\mathbf{x}_i-\mathbf{x}_j|_{\R^3}\big) + \gamma \mathbf{e}_3 \cdot \sum_{i=1}^N\Gamma_i\mathbf{x}_i 
\end{equation*}
leading, by virtue of \eqref{gradperp:scalarprod}, to the Hamiltonian regularized dynamics
\begin{equation}\label{eq:regularized}
    \forall\,1\leqslant i\leqslant N,\quad\der{}{t} \mathbf{x}^\eps_i(t) =\frac{1}{\Gamma_i}\nabla_{\mathbf{x}_i}^{\perp}\mathcal{H}_{\varepsilon}\big(\mathbf{x}_1^{\varepsilon}(t),\ldots,\mathbf{x}_N^{\varepsilon}(t)\big)=\sum_{j=1\atop j\neq i}^{N}\frac{\Gamma_j}{2\pi}\nabla_{\mathbf{x}_i}^\perp \ln_\eps \big(|\mathbf{x}_i^\eps(t)-\mathbf{x}_j^\eps(t)|_{\R^3}\big)+\gamma\,\mathbf{e}_3\wedge\mathbf{x}_i^\eps(t).
\end{equation}
Since $\ln_\eps$ is smooth on $[0,+\infty)$, by a trivial application of the Cauchy-Lipshitz theory, this dynamics has a global smooth solution for every initial data in $(\mathbb{S}^2)^N$. We denote by $(t,\mathbf{X})\mapsto S_t^{\varepsilon}(\mathbf{X})\triangleq\big(\mathbf{x}_1^{\varepsilon}(t),\ldots,\mathbf{x}_N^{\varepsilon}(t)\big)$ the flow of the regularized system~\eqref{eq:regularized} that is continuous is both variables and therefore defines a stochastic process $(S_t^{\varepsilon})_{t\geqslant0}$ (i.e. a continuous family of measurable functions) over $\big((\mathbb{S}^2)^N,\mathcal{B}\big((\mathbb{S}^2)^N\big)\big).$ Let us define for $\mathbf{X}\in\mathcal{A}_N$ the first time of $\eps$-collision $T_\eps(\mathbf{X})$ given by
\begin{equation}\label{def:Teps}
    T_\eps(\mathbf{X}) \triangleq \inf \{ t >0 \quad\textnormal{s.t.}\quad \exists\, 1\leqslant i_0,j_0\leqslant N,\quad i_0\neq j_0 \quad\textnormal{and}\quad |\mathbf{x}_{i_0}^{\varepsilon}(t)-\mathbf{x}_{j_0}^{\varepsilon}(t)|_{\R^3} \leqslant \eps \}.
\end{equation}
Observe that we can write
$$T_{\varepsilon}(\mathbf{X})=\inf\left\{t\geqslant0\quad\textnormal{s.t.}\quad S_t^{\varepsilon}(\mathbf{X})\in(\mathbb{S}^2)^N\setminus\mathcal{A}_N^{\varepsilon}\right\},$$
where
$$\mathcal{A}_N^{\varepsilon}\triangleq\left\{ \mathbf{X}=(\mathbf{x}_1,\ldots,\mathbf{x}_N)\in(\mathbb{S}^2)^N \quad\textnormal{s.t.}\quad\forall\, 1\leqslant i,j\leqslant N,\quad i\neq j\quad\Rightarrow\quad|\mathbf{x}_i-\mathbf{x}_j|_{\mathbb{R}^3}>\varepsilon\right\}.$$
With this expression, thanks to the measurability of $(S_t^{\varepsilon})_{t\geqslant0},$ we can say that $T_{\varepsilon}$ is a random variable (i.e. a measurable function) over $\big(\mathcal{A}_N,\mathcal{B}(\mathcal{A}_N)\big),$ that is a stopping time adapted to the natural filtration $(\mathcal{F}_t^{\varepsilon})_{t\geqslant0}$ of $(S_t^{\varepsilon})_{t\geqslant0}$ defined by
$$\mathcal{F}_t^{\varepsilon}\triangleq\sigma(S_s^{\varepsilon},\,0\leqslant s\leqslant t).$$
By construction of $\ln_\eps$ and $T_{\varepsilon}$ in \eqref{def:log_eps}-\eqref{def:Teps} and uniqueness of the local solutions, we have that
\begin{align}
&\forall \, \mathbf{X}\in\mathcal{A}_N,\quad T_{\varepsilon}(\mathbf{X})\leqslant T(\mathbf{X}),\label{TeminT}\\
    &\forall \, \mathbf{X} \in \mathcal{A}_N, \quad \forall t \leqslant T_\eps(\mathbf{X}), \quad S_t(\mathbf{X})=S_t^\eps(\mathbf{X}),\label{coincide}
\end{align}
Because of \eqref{coincide}, the solution $(t,\mathbf{X})\mapsto S_t^{\varepsilon}(\mathbf{X})$ is called \emph{regularized dynamics until $\eps$-collisions} since it coincides with the real point-vortex dynamics until the first $\eps$-collision.
The set $\mathscr{C}_N$ defined in \eqref{def:CN} can be written
\begin{equation*}
    \mathscr{C}_N=\bigcup_{\tau\in\mathbb{N}^*}\{\mathbf{X}\in\mathcal{A}_N\quad\textnormal{s.t.}\quad T(\mathbf{X})\leqslant\tau\}.
\end{equation*}
Fix $\tau\in\mathbb{N}^*$, then the inequality \eqref{TeminT} implies the inclusion
\begin{equation*}
    \{\mathbf{X}\in\mathcal{A}_N\quad\textnormal{s.t.}\quad T(\mathbf{X})\leqslant\tau\}\subset\{ \mathbf{X} \in \mathcal{A}_N  \quad\textnormal{s.t.}\quad T_\eps(\mathbf{X})\leqslant\tau\}.
\end{equation*}
Now, we fix $\eta \in (0,1)$ and define the continuous function $\phi_\eps : (\mathbb{S}^2)^N \to \R_+^*$ through
\begin{equation*}
    \phi_\eps(\mathbf{X})\triangleq\sum_{1\leqslant i,j\leqslant N\atop i\neq j} \exp\left(- \eta \ln_\eps \big(|\mathbf{x}_i - \mathbf{x}_j|_{\mathbb{R}^3}\big)\right).
\end{equation*}
Assume now that $\mathbf{X}\in\mathcal{A}_N$ is such that $T_\eps(\mathbf{X}) < +\infty$. Then, there exists $(i_0,j_0)\in\llbracket 1,N\rrbracket^2$ with $i_0\neq j_0$ such that
$$\left|\mathbf{x}_{i_0}\big(T_{\varepsilon}(\mathbf{X})\big)-\mathbf{x}_{j_0}\big(T_{\varepsilon}(\mathbf{X})\big)\right|_{\mathbb{R}^3}=\varepsilon.$$
Since all the terms in the sum defining the function $\phi_{\varepsilon}$ are positive, then
\begin{equation*}
    \phi_\eps\big(S_{T_\eps(\mathbf{X})}^\eps(\mathbf{X})\big) \geqslant \exp\left(-\eta\ln_{\varepsilon}\left(\left|\mathbf{x}_{i_0}\big(T_{\varepsilon}(\mathbf{X})\big)-\mathbf{x}_{j_0}\big(T_{\varepsilon}(\mathbf{X})\big)\right|_{\mathbb{R}^3}\right)\right)=\eps^{-\eta}.
\end{equation*}
The previous estimate implies the following inclusion
\begin{equation*}
   \{ \mathbf{X} \in \mathcal{A}_N\quad\textnormal{s.t.}\quad T_\eps(\mathbf{X}) \leqslant \tau \} \subset \left\{ \mathbf{X}\in (\mathbb{S}^2)^N \quad\textnormal{s.t.}\quad \sup_{t \in [0,\tau]} \phi_\eps\big(S_t^\eps(\mathbf{X})\big) \geqslant \eps^{-\eta}\right\}.
\end{equation*}
Applying Markov inequality yields
\begin{equation*}
    \mathbb{P}_N\Big(\{ \mathbf{X} \in \mathcal{A}_N \quad\textnormal{s.t.}\quad T_{\varepsilon}(\mathbf{X}) \leqslant \tau \}  \Big) \leqslant \eps^\eta \int_{(\mathbb{S}^2)^N} \sup_{t \in [0,\tau]} \phi_\eps\big(S_t^\eps (\mathbf{X})\big) \dd \mathbb{P}_N(\mathbf{X}).
\end{equation*}
Then we claim the following.
\begin{lem}\label{lem:bound_improb}
    There exists a constant $C$ depending only on $N$, the intensities $\Gamma_1,\ldots,\Gamma_N$, and $\eta$ such that for every $\tau\in\mathbb{N}^*$,
    \begin{equation*}
        \int_{(\mathbb{S}^2)^N} \sup_{t \in [0,\tau]} \phi_\eps(S_t^\eps \mathbf{X}) \dd \mathbb{P}_N(\mathbf{X}) \leqslant C(1+\tau).
    \end{equation*}
\end{lem}
We delay the proof of Lemma \ref{lem:bound_improb} for the time being, to conclude that
\begin{equation}\label{bnd:probTE}
    \mathbb{P}_N\Big(\{ \mathbf{X} \in \mathcal{A}_N\quad\textnormal{s.t.}\quad T_{\varepsilon}(\mathbf{X}) \leqslant\tau \} \Big) \leqslant C\eps^\eta(1+\tau).
\end{equation}
We consider a sequence $(\varepsilon_n)_{n\in\mathbb{N}}\in(0,1)^{\mathbb{N}}$ decreasing and converging to $0.$ 
Therefore, by virtue of \eqref{bnd:probTE},
\begin{equation}\label{meas:Ten}
    \forall n\in\mathbb{N},\quad\mathbb{P}_N\Big(\{ \mathbf{X} \in \mathcal{A}_N\quad\textnormal{s.t.}\quad T_{\varepsilon_n}(\mathbf{X}) \leqslant\tau \} \Big) \leqslant C\eps_n^\eta(1+\tau).
\end{equation}
By decreasing property, the $\varepsilon_n$-collision must happen before the $\varepsilon_{n+1}$-collision. Hence, we have
\begin{equation}\label{croi:Ten}
    \forall\, \mathbf{X}\in\mathcal{A}_N,\quad\forall n\in\mathbb{N},\quad T_{\varepsilon_n}(\mathbf{X})\leqslant T_{\varepsilon_{n+1}}(\mathbf{X}).
\end{equation}
Invoking the monotone convergence theorem, the upper bound \eqref{TeminT} implies the convergence of the increasing sequence $\big(T_{\varepsilon_n}(\mathbf{X})\big)_{n\in\mathbb{N}}.$ Since the solution lives on the sphere, by contraposition of the principle of a priori majoration, one must have
$$\lim_{n\to+\infty}T_{\varepsilon_n}(\mathbf{X})=T(\mathbf{X}).$$
The property \eqref{croi:Ten} implies that the family
$$\big(\{\mathbf{X}\in\mathcal{A}_N\quad\textnormal{s.t.}\quad T_{\varepsilon_n}(\mathbf{X})\leqslant\tau\}\big)_{n\in\mathbb{N}}$$
is decreasing for the inclusion operation. Besides, as a countable intersection, the set
$$\{\mathbf{X}\in\mathcal{A}_N\quad\textnormal{s.t.}\quad T(\mathbf{X})\leqslant\tau\}=\bigcap_{n\in\mathbb{N}}\{\mathbf{X}\in\mathcal{A}_N\quad\textnormal{s.t.}\quad T_{\varepsilon_n}(\mathbf{X})\leqslant\tau\}\in\mathcal{B}(\mathcal{A}_N).$$
Then, by continuity of the measure and \eqref{meas:Ten}, we get
\begin{align*}
    \mathbb{P}_N\big(\{\mathbf{X}\in\mathcal{A}_N\quad\textnormal{s.t.}\quad T(\mathbf{X})\leqslant\tau\}\big)&=\mathbb{P}_N\left(\bigcap_{n\in\mathbb{N}}\{\mathbf{X}\in\mathcal{A}_N\quad\textnormal{s.t.} \quad T_{\varepsilon_n}(\mathbf{X})\leqslant\tau\}\right)\\
    &=\lim_{n\to+\infty}\mathbb{P}_N\left(\{\mathbf{X}\in\mathcal{A}_N\quad\textnormal{s.t.}\quad T_{\varepsilon_n}(\mathbf{X})\leqslant\tau\right)=0.
\end{align*}
As a countable union,
$$\mathscr{C}_N=\bigcup_{\tau\in\mathbb{N}^*}\{\mathbf{X}\in\mathcal{A}_N\quad\textnormal{s.t.}\quad T(\mathbf{X})\leqslant\tau\}\in\mathcal{B}(\mathcal{A}_N)$$
and by Boole inequality
$$0\leqslant\mathbb{P}_N(\mathscr{C}_N)\leqslant\sum_{\tau\in\mathbb{N}^*}\mathbb{P}_N\big(\{\mathbf{X}\in\mathcal{A}_N\quad\textnormal{s.t.}\quad T(\mathbf{X})\leqslant\tau\}\big)=0,\qquad\textnormal{i.e.}\qquad\mathbb{P}_N(\mathscr{C}_N)=0.$$
This concludes the proof of Theorem \ref{theo:P0}.
\end{proof}
We are left to prove Lemma~\ref{lem:bound_improb}.
\begin{proof}[Proof of Lemma \ref{lem:bound_improb}]
Let $\tau\in\mathbb{N}^*.$ We define
    \begin{equation*}
    \Phi_\eps(t,\mathbf{X})\triangleq\phi_\eps\big(S_t^\eps(\mathbf{X})\big).
\end{equation*}
Then, since the system~\eqref{eq:regularized} is autonomous, then for any $s,t\in[0,T(\mathbf{X}))$ with $t+s\in[0,T(\mathbf{X})),$ we have
$$S_{t+s}^{\varepsilon}(\mathbf{X})=S_t^{\varepsilon}\big(S_s^{\varepsilon}(\mathbf{X})\big).$$
Hence,
\begin{equation*}
    \Phi_\eps(t,\mathbf{X}) = \phi_{\varepsilon}\big(S_t^{\varepsilon}(\mathbf{X})\big)=\phi_{\varepsilon}\Big(S_0^{\varepsilon}\big(S_t^{\varepsilon}(\mathbf{X})\big)\Big)=\Phi_\eps\big(0,S_t^{\varepsilon}(\mathbf{X})\big).
\end{equation*}
Therefore,
\begin{align*}
    \phi_\eps\big(S_t^\eps(\mathbf{X})\big)&=\phi_{\varepsilon}\big(S_0^{\varepsilon}(\mathbf{X})\big)+\int_{0}^{t}\partial_{t}\phi_{\varepsilon}\big(S_s^{\varepsilon}(\mathbf{X})\big)\dd s\\
    &=\phi_\eps(\mathbf{X}) + \int_0^t \partial_t \Phi_\eps\big(0,S_s^\eps(\mathbf{X})\big)\dd s.
\end{align*}
Consequently,
\begin{equation*}
    \sup_{t \in [0,\tau]} \phi_\eps(S_t^\eps \mathbf{X})  \leqslant \phi_\eps(\mathbf{X}) + \int_0^\tau \left|\partial_t \Phi_\eps\big(0,S_s^\eps(\mathbf{X})\big)\right|\dd s.
\end{equation*}
By using the Fubini-Tonelli Theorem and the fact that the flow $t\mapsto S_t^\eps$ is Hamiltonian, we infer
\begin{align*}
    \int_{(\mathbb{S}^2)^N} \sup_{t \in [0,\tau]} \phi_\eps(S_t^\eps \mathbf{X}) \dd \mathbb{P}_N(\mathbf{X}) &  \leqslant\int_{(\mathbb{S}^2)^N} \phi_\eps(\mathbf{X}) \dd \mathbb{P}_N(\mathbf{X}) + \int_0^\tau \int_{(\mathbb{S}^2)^N} \left|\partial_t \Phi_\eps\big(0,S_s^\eps(\mathbf{X})\big)\right|\dd \mathbb{P}_N(\mathbf{X})\dd s \\
    & = \int_{(\mathbb{S}^2)^N} \phi_\eps(\mathbf{X})\dd \mathbb{P}_N(\mathbf{X}) + \tau \int_{(\mathbb{S}^2)^N} |\partial_t \Phi_\eps(0,\mathbf{X})|\dd \mathbb{P}_N(\mathbf{X}).
\end{align*}
This new expression only involves the computation of properties of the flow at time 0, meaning that at this point the regularization is not needed anymore. Indeed, for $\mathbf{X} \in \mathcal{A}_N$, therefore in particular for almost every $\mathbf{X}\in(\mathbb{S}^2)^N$, using the conditions \eqref{def:log_eps}, we get
\begin{equation*}
    \phi_\eps(\mathbf{X})\leqslant\phi(\mathbf{X}), \qquad |\partial_t \Phi_\eps(0,\mathbf{X})| \leqslant|\partial_t \Phi(0,\mathbf{X})|,
\end{equation*}
where 
\begin{equation*}
    \phi(\mathbf{X})\triangleq\begin{cases}
        \displaystyle\sum_{1\leqslant i,j\leqslant N\atop i\neq j} |\mathbf{x}_i-\mathbf{x}_j|_{\R^3}^{-\eta}, & \textnormal{if }\mathbf{X}\in\mathcal{A}_N,\\
        0, & \textnormal{otherwise}
    \end{cases}\qquad\textnormal{and}\qquad\Phi(t,\mathbf{X})\triangleq\begin{cases}
        \phi\big(S_t(\mathbf{X})\big), & \textnormal{if }\mathbf{X}\in\mathcal{A}_N,\\
        0, & \textnormal{otherwise.}
    \end{cases}
\end{equation*}
First remark that by Fubini-Tonelli Theorem
\begin{align*}
    \int_{(\mathbb{S}^2)^N} \phi(\mathbf{X}) \dd \mathbb{P}_N(\mathbf{X}) &= \frac{1}{(4\pi)^N}\sum_{1\leqslant i,j\leqslant N\atop i \neq j} \int_{\mathbb{S}^2} \ldots \int_{\mathbb{S}^2} |\mathbf{x}_i-\mathbf{x}_j|_{\R^3}^{-\eta} \dd \boldsymbol{\sigma}(\mathbf{x}_1) \ldots \dd \boldsymbol{\sigma}(\mathbf{x}_N)\\
    &=\frac{1}{(4\pi)^N}\sum_{1\leqslant i,j\leqslant N\atop i \neq j} \int_{\mathbb{S}^2}\ldots\int_{\mathbb{S}^2}\left(\int_{\mathbb{S}^2} |\mathbf{x}_i-\mathbf{x}_j|_{\R^3}^{-\eta} \dd \boldsymbol{\sigma}(\mathbf{x}_j)\right)\dd \boldsymbol{\sigma}(\mathbf{x}_1)\ldots\dd \boldsymbol{\sigma}(\mathbf{x}_{j-1})\dd \boldsymbol{\sigma}(\mathbf{x}_{j+1})\ldots\dd \boldsymbol{\sigma}(\mathbf{x}_N).
\end{align*}
But, using the rotation invariance and spherical coordinates, for any $\alpha>0$ and any $1\leqslant i\leqslant N,$ we have (recall that $\mathtt{N}$ is the north pole)
\begin{align*}
    \int_{\mathbb{S}^2} |\mathbf{x}_i-\mathbf{x}_j|_{\R^3}^{-\alpha} \dd \boldsymbol{\sigma}(\mathbf{x}_j)&=\int_{\mathbb{S}^2} |\mathtt{N}-\mathbf{x}_j|_{\R^3}^{-\alpha} \dd \boldsymbol{\sigma}(\mathbf{x}_j)\\
    &=\int_{0}^{2\pi}\int_{0}^{\pi}\frac{\sin(\theta)}{|\mathtt{N}-\psi_1(\theta,\varphi)|_{\mathbb{R}^3}^{\alpha}}d\theta d\varphi\\
    &=2^{2-\alpha}\pi\int_{0}^{\pi}\frac{\cos\left(\frac{\theta}{2}\right)}{\sin^{\alpha-1}\left(\frac{\theta}{2}\right)}d\theta\triangleq 4\pi C_{\alpha}.
\end{align*}
Consequently, the previous integral is independant of $i$ and by comparison with Riemann integrals, we get
\begin{equation}\label{integrability_cond}
    \int_{\mathbb{S}^2} |\mathbf{x}_i-\mathbf{x}_j|_{\R^3}^{-\alpha} \dd \boldsymbol{\sigma}(\mathbf{x}_j)<\infty\qquad\textnormal{iff}\qquad\alpha<2.
\end{equation}
Since $\eta\in(0,1)$, we get the integrability and
$$\int_{(\mathbb{S}^2)^N} \phi(\mathbf{X}) \dd \mathbb{P}_N(\mathbf{X})=N(N-1)C_{\eta}<+\infty.$$
Besides, 
$$\partial_t\Phi(0,\mathbf{X})=\nabla\phi(\mathbf{X})\cdot\begin{pmatrix}
    \dot{\mathbf{x}}_1(0)\\
    \vdots\\
    \dot{\mathbf{x}}_N(0)
\end{pmatrix}.$$
By \eqref{PVS}, \eqref{eq:maj_x_wedge_y}, and Cauchy-Schwarz inequality, we infer
\begin{align*}
    |\partial_t \Phi(0,\mathbf{X})| & = \eta\left|\sum_{i=1}^N \sum_{j=1\atop j\neq i}^{N} \sum_{k=1\atop k\neq i}^{N} \left(\frac{\Gamma_i}{2\pi} \frac{\mathbf{x}_i \wedge \mathbf{x}_k}{|\mathbf{x}_i -\mathbf{x}_k|^2_{\R^3}} +\gamma \mathbf{e}_3 \wedge \mathbf{x}_i \right) \cdot\frac{\mathbf{x}_i-\mathbf{x}_j}{|\mathbf{x}_i-\mathbf{x}_j|_{\R^3}^{\eta+2}} \right| \\
    & \leqslant C \sum_{i=1}^N \sum_{j=1\atop j\neq i}^{N} \sum_{k=1\atop k\neq i}^{N} \frac{1}{|\mathbf{x}_i - \mathbf{x}_k|_{\R^3}} \frac{1}{|\mathbf{x}_i-\mathbf{x}_j|_{\R^3}^{\eta+1}}\cdot
\end{align*}
Since $\eta+1<2$, proceeding as before and using \eqref{integrability_cond}, we can conclude
\begin{equation*}
    \int_{(\mathbb{S}^2)^N} |\partial_t \Phi(0,\mathbf{X})| \dd \mathbb{P}_N(\mathbf{X}) < +\infty.
\end{equation*}
Combining the foregoing calculation ends the proof of Lemma~\ref{lem:bound_improb}.
\end{proof}

\section{Estimates on vortex evolutions}\label{sec:estimates}

In this section, we establish some important properties on the solution to equations \eqref{EulerAV} with initial datum $\zeta_0^{\varepsilon}$ satisfying Hypothesis~\ref{hyp:zeta}. The initial positions $\mathbf{x}_1^0,\ldots,\mathbf{x}_N^0 \in \mathbb{S}^2$ and intensities $\Gamma_1,\ldots,\Gamma_N \in \R^*$ are fixed once and for all satisfying the Gauss constraint~\ref{Gauss:PV} such that the associated solution of the point-vortex dynamics~\eqref{PVS} satisfies~\eqref{dist-min-points}. We denote constants whose value is not important by $C$, and those constants are allowed to depend on $N$, $\mathbf{x}_1^0,\ldots,\mathbf{x}_N^0$ and intensities $\Gamma_1,\ldots,\Gamma_N$.

Let us consider an initial datum satisfying the Hypothesis~\ref{hyp:zeta}, namely a superposition of compactly supported blobs with disjoint supports. We denote $(t,\mathbf{x})\mapsto\zeta^\eps(t,\mathbf{x})$ the unique global-in -time associated weak solution of equations \eqref{EulerAV} provided by the Yudovich theory \cite{Y63}. Due to the transport nature of the equation \eqref{EulerAV}, the blob structure is preserved (at least locally in time) that is the solution decomposes as 
$$\zeta^\eps = \sum_{i=1}^N \zeta_i^\eps,$$
with for any $i\in\{1,\ldots,N\},$ the $\zeta_i^{\eps}$ being a blob that satisfies the Lagrangian property
$$\zeta_i^{\varepsilon}(t,\mathbf{x})=\zeta_{0,i}^{\varepsilon}\big(\phi_{t}^{-1}(\mathbf{x})\big),\qquad\partial_{t}\phi_t(\mathbf{x})=u\big(t,\phi_t(\mathbf{x})\big),\qquad\phi_0(\mathbf{x})=\mathbf{x}.$$
For the rest of this section, we fix an index $i\in\{1,\ldots,N\}$. Since (at least for short time) the supports of the blobs are disjoints, then the blob $\zeta_i^{\varepsilon}$ solves the following problem (locally in time)
\begin{equation}\label{pb:zetai}
\begin{cases}    \partial_{t}\zeta_i^{\varepsilon}+(u_i^{\varepsilon}+F_i^{\varepsilon})\cdot\nabla\zeta_i^{\varepsilon}=0,\vspace{1mm}\\
    \displaystyle u_i^{\varepsilon}(t,\mathbf{x})\triangleq\frac{1}{2\pi}\int_{\mathbb{S}^2} \frac{\mathbf{x}\wedge\mathbf{y}}{|\mathbf{x}-\mathbf{y}|^2_{\R^3}} \zeta_i^\eps(t,\mathbf{y})\dd\boldsymbol{\sigma}(\mathbf{y}), 
\end{cases}
\end{equation}
where the perturbation field $F_i^{\varepsilon}$ is defined by
\begin{equation}\label{def:Fi}
     F_i^{\varepsilon}(t,\mathbf{x})\triangleq\frac{1}{2\pi}\sum_{j=1\atop j\neq i}^{N}\int_{\mathbb{S}^2}\frac{\mathbf{x}\wedge\mathbf{y}}{|\mathbf{x}-\mathbf{y}|_{\mathbb{R}^3}^2}\zeta_{j}^{\varepsilon}(t,\mathbf{y})\dd\boldsymbol{\sigma}(\mathbf{y})+\gamma \mathbf{e}_3\wedge\mathbf{x}.
\end{equation}
Doing so, we simply denote the influence of the other blobs as an exterior velocity field $F_i^\eps$. Since for any $\mathbf{y}\in\mathbb{S}^2$, we have $\mathbf{x}\cdot(\mathbf{x}\wedge\mathbf{y})=0,$ then $F_i^{\varepsilon}(t,\mathbf{x})\in T_{\mathbf{x}}\mathbb{S}^2=\mathtt{span}^{\perp}(\mathbf{x})$ where the orthogonal is understood in the sense of the usual scalar product in $\mathbb{R}^3.$ By construction of the exit time $\tau_{\varepsilon,\beta}$ in \eqref{def:exit-time}, for $\varepsilon$ small enough, we obtain from the minimal distance assumption \eqref{dist-min-points} that for every $t \leqslant \tau_{\eps,\beta}$ and for every $i\neq j \in \{1,\ldots,N\}$,
\begin{equation}\label{distance-supports-timet}
    \dist \Big( B\big(\mathbf{x}_i(t),\eps^\beta\big) \, , \, \supp\big(\zeta_j^\eps(t,\cdot)\big)\Big) \geqslant \frac{d_0}{2},
\end{equation}
where by definition for two subsets $A,B\subset\mathbb{R}^3,$
$$\dist(A,B)\triangleq\inf_{a\in A\atop b\in B}\|a-b\|_{\mathbb{R}^3}.$$
As a consequence, the function $F_i$ can be extended into
\begin{equation}\label{extension:Fi}
    F_i^\eps\in C^0\Big([0,\tau_{\varepsilon,\beta}]\,,\,C^\infty\big( B\big(\mathbf{x}_i(t),\eps^\beta\big),\mathbb{R}^3\big)\Big).
\end{equation}
Due to \eqref{extension:Fi}, there exists a constant $D$ independent of $\eps$ such that
\begin{equation}\label{maj:F_Lip}
    \max_{t \in [0,\tau_{\eps,\beta}]} \max_{1 \leqslant i \leqslant N} \sup_{\mathbf{x},\mathbf{y}\in B(\mathbf{x}_i(t),\eps^\beta)} \frac{|F_i^\eps(t,\mathbf{x})-F_i^\eps(t,\mathbf{y})|}{|\mathbf{x}-\mathbf{y}|} \leqslant D.
\end{equation}
\begin{defin}\label{def:Dprime} Let us denote by $D_\eps$ the smallest constant such that for all $t\leqslant \tau_{\eps,\beta}$, for all $i \in \{1,\ldots,N\}$, there holds both that
\begin{itemize}
    \item for all $x,y \in  B(\mathbf{x}_i(t),\eps^\beta)$ such that $x\neq y$,
    \begin{equation}\label{eq:Flip1}
     \frac{\Big|\big(F_i^\eps(t,x)-F_i^\eps(t,y)\big)\cdot(x-y\big)\Big|}{|x-y|^2}  \leqslant D_\eps
\end{equation}
    \item for all $x \in B(\mathbf{x}_i(t),\eps^\beta)$,
    \begin{equation}\label{eq:Flip2}
    \left| \left(F_i^\eps(t,x) -\int_{\mathbb{S}^2} F_i^\eps(t,\mathbf{y}) \frac{\zeta_i^\eps(t,\mathbf{y})}{\Gamma_i}\dd\boldsymbol{\sigma}(\mathbf{y})\right) \cdot(x-c_i^\eps(t)\big)\right| \leqslant D_\eps |x-c_i^\eps(t)| \int_{\mathbb{S}^2} |x-\mathbf{y}|\frac{\zeta_i^\eps(t,\mathbf{y})}{\Gamma_i}\dd\boldsymbol{\sigma}\cdot
\end{equation}
\end{itemize}
\end{defin}
One can easily check that by relation~\eqref{maj:F_Lip}, $D_\eps \leqslant D$, which in particular ensures that $D_\eps < \infty$. However, in the proof of Theorem~\ref{theo:strong_conf}, the particular choice of configuration will lead to prove that $D_\eps = \mathcal{O}(\eps^\beta)$ as $\eps \to0$ in the more precise bounds~\eqref{eq:Flip1} and~\eqref{eq:Flip2}, which will be a crucial tool to obtain longer confinement times.

\subsection{Estimates on the vorticity moments}
A key point for describing the mass spreading is the control of the moments. Indeed, the first moment, the center of mass, is used as a \emph{localization} property, namely where in space is located the support of the absolute vorticity. Higher-order moments are a measure of concentration, which we call \emph{weak confinement}: when those moments are small, it means that the support of the vorticity is \emph{mostly} concentrated. Concluding the proof of Theorem~\ref{theo:BM18general} cannot rely only on weak confinement, as it requires that support is completely controlled: in the end we obtain what we call this time \emph{strong confinement} by controlling each particle trajectories carrying non vanishing absolute vorticity, using the weak confinement estimates. This is the now standard method introduced by Marchioro and Pulvirenti (see for instance \cite{MP93b}) and refined over time in many works. In particular, in this paper we will obtain weak confinement estimates through the control of higher-order vorticity moments, as introduced in \cite{ISG99}, then used in \cite{Donati_2021} in the context of concentrated vortices.

Before defining the moments let us first reprove that the mass is a conserved quantity. By applying the divergence Theorem and invoking the divergence-free property of $u_i^{\varepsilon}+F_i^{\varepsilon}$, we infer
\begin{align*}
    \frac{d}{dt}\int_{\mathbb{S}^2}\zeta_i^{\varepsilon}(t,\mathbf{x})\dd\boldsymbol{\sigma}(\mathbf{x})&=\int_{\mathbb{S}^2}\partial_{t}\zeta_i^{\varepsilon}(t,\mathbf{x})\dd\boldsymbol{\sigma}(\mathbf{x})\\
    &=-\int_{\mathbb{S}^2}\big(u_i^{\varepsilon}(t,\mathbf{x})+F_i^{\varepsilon}(t,\mathbf{x})\big)\cdot\nabla\zeta_i^{\varepsilon}(t,\mathbf{x})\dd\boldsymbol{\sigma}(\mathbf{x})\\
    &=\int_{\mathbb{S}^2}\nabla\cdot\big(u_i^{\varepsilon}(t,\mathbf{x})+F_i^{\varepsilon}(t,\mathbf{x})\big)\zeta_i^{\varepsilon}(t,\mathbf{x})\dd\boldsymbol{\sigma}(\mathbf{x})\\
    &=0.
\end{align*}
Therefore,
\begin{equation}\label{mass conserv}
    \forall t\geqslant0,\quad\int_{\mathbb{S}^2}\zeta_i^{\varepsilon}(t,\mathbf{x})\dd\boldsymbol{\sigma}(\mathbf{x})=\int_{\mathbb{S}^2}\zeta_i^{\varepsilon}(0,\mathbf{x})\dd\boldsymbol{\sigma}(\mathbf{x})=\int_{\mathbb{S}^2}\zeta_{i,0}^{\varepsilon}(\mathbf{x})\dd\boldsymbol{\sigma}(\mathbf{x})=\Gamma_i.
\end{equation}
Now, we define the center of vorticity (in $\R^3$) of $\zeta_i^\eps$ by 
    \begin{equation}\label{center of vort}
    c_i^{\varepsilon}(t)\triangleq\frac{1}{\Gamma_i}\int_{\mathbb{S}^2}\mathbf{x}\zeta_i^{\varepsilon}(t,\mathbf{x})\dd\boldsymbol{\sigma}(\mathbf{x})
    \end{equation}
    and its renormalized second moment
    \begin{equation}\label{def moment}
        I_i^{\varepsilon}(t)\triangleq\frac{1}{\Gamma_i}\int_{\mathbb{S}^2}|\mathbf{x}-c_i^{\varepsilon}(t)|_{\mathbb{R}^3}^2\zeta_i^{\varepsilon}(t,\mathbf{x})\dd\boldsymbol{\sigma}(\mathbf{x}).
    \end{equation}
    Please note that, unlike the planar case, with our definition the center of vorticity does not lie within $\mathbb{S}^2$, but instead within its convex envelope in $\R^3$ which is the unit ball. Then, we draw inspiration from \cite[page 19]{ISG99} in the planar case, by defining higher-order moments of $\zeta^\eps$, for every $n\in\mathbb{N}^*$, by
    \begin{equation}\label{def moment 4n}
        m_{n,i}^\eps(t) \triangleq \frac{1}{\Gamma_i} \int_{\mathbb{S}^2} |\mathbf{x}-c_i^\eps(t)|_{\R^3}^{4n} \zeta_i^\eps(t,\mathbf{x}) \dd \boldsymbol{\sigma}(\mathbf{x}).
    \end{equation}
    We start with the estimates at the initial time.
    \begin{lem}\label{lem:initestim}
       We have that
            $$|c_i^\eps(0) - \mathbf{x}_i^0|_{\mathbb{R}^3} \leqslant \eps, \qquad   I_i^\eps(0) \leqslant 4 \eps^2, \qquad m_{n,i}^\eps(0) \leqslant 16^n \eps^{4n}. $$ 
    \end{lem}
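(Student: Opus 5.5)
The argument only uses the initial-time properties of Hypothesis~\ref{hyp:zeta}: the probability measure $\mu_i \triangleq \frac{\zeta_{0,i}^\eps}{\Gamma_i}\dd\boldsymbol{\sigma}$ is nonnegative with total mass one (by the definition of $\Gamma_i$), and it is supported in $\mathbb{S}^2\cap B(\mathbf{x}_i^0,\eps)$. All three estimates then reduce to averaging elementary pointwise bounds against $\mu_i$.

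\textbf{Center of vorticity.} Since $\mu_i$ has mass one, we can write
$$c_i^\eps(0)-\mathbf{x}_i^0=\int_{\mathbb{S}^2}(\mathbf{x}-\mathbf{x}_i^0)\,\dd\mu_i(\mathbf{x}).$$
Taking the Euclidean norm in $\R^3$, we use Jensen's inequality (or simply the triangle inequality for vector-valued integrals against a probability measure). This gives $|c_i^\eps(0)-\mathbf{x}_i^0|_{\R^3}\leqslant \sup_{\mathbf{x}\in\supp\mu_i}|\mathbf{x}-\mathbf{x}_i^0|_{\R^3}\leqslant\eps$. Equivalently, $c_i^\eps(0)$ lies in the closed convex hull of the support, which is contained in the closed ball $\overline{B}(\mathbf{x}_i^0,\eps)$ since balls are convex.

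\textbf{Second and higher moments.} For every $\mathbf{x}$ in the support of $\mu_i$, the triangle inequality gives
$$|\mathbf{x}-c_i^\eps(0)|_{\R^3}\leqslant|\mathbf{x}-\mathbf{x}_i^0|_{\R^3}+|\mathbf{x}_i^0-c_i^\eps(0)|_{\R^3}\leqslant 2\eps.$$
Raising this to the power $2$ and integrating against $\mu_i$ gives $I_i^\eps(0)\leqslant 4\eps^2$. Raising it to the power $4n$ gives $m_{n,i}^\eps(0)\leqslant (2\eps)^{4n}=16^n\eps^{4n}$.

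The sharper bound $I_i^\eps(0)\leqslant\eps^2$ also holds, because the center minimizes $c\mapsto\int|\mathbf{x}-c|^2\dd\mu_i$, but it is not needed. The statement involves no dynamics, so there is no genuine obstacle. The only point requiring care is that $c_i^\eps(0)$ lives in $\R^3$ rather than on $\mathbb{S}^2$, and this is harmless because all distances are Euclidean distances in $\R^3$.
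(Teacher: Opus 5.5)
Your proposal is correct and follows essentially the same route as the paper. The paper also places $c_i^\eps(0)$ in the convex hull of the support, which lies inside $B(\mathbf{x}_i^0,\eps)$, then applies the triangle inequality to get the pointwise bound $2\eps$ and integrates it against the normalized blob to obtain both moment bounds. Your side remark that the sharper bound $I_i^\eps(0)\leqslant \eps^2$ holds, by the minimizing property of the barycenter, is also correct.
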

    \begin{proof}
        
    In view of Hypothesis \ref{hyp:zeta}, we have
    $$\supp\big(\zeta_i^{\varepsilon}(0,\cdot)\big)\subset B(\mathbf{x}_i^0,\eps).$$
    Since $B(\mathbf{x}_i^0,\eps)$ is a convex set in $\mathbb{R}^3$, then by definition of the convex hull, we obtain
    \begin{equation*}
        \textnormal{Conv}_{\mathbb{R}^3}\Big(\supp\big(\zeta_i^{\varepsilon}(0,\cdot)\big)\Big)\subset B(\mathbf{x}_i^0,\eps).
    \end{equation*}
    Besides, by Hypothesis \ref{hyp:zeta} and \eqref{center of vort}, the initial center of mass is a barycenter with positive coefficients of elements in $\supp\big(\zeta_i^{\varepsilon}(0,\cdot)\big).$ Consequently,
    \begin{equation*}
        c_i^{\varepsilon}(0)\in\textnormal{Conv}_{\mathbb{R}^3}\Big(\supp\big(\zeta_i^{\varepsilon}(0,\cdot)\big)\Big),
    \end{equation*}
    and thus
    \begin{equation}\label{init:ecenter}
        \left|c_i^{\varepsilon}(0)-\mathbf{x}_i^0\right|\leqslant\varepsilon.
    \end{equation}
    This is the first desired estimate. For the second one, we observe that \eqref{init:ecenter} implies, by means of triangular inequality, that
    \begin{equation}\label{init:distc}
        \max_{\mathbf{x}\in\supp\big(\zeta_i^{\varepsilon}(0,\cdot)\big)}|\mathbf{x}-c_i^{\varepsilon}(0)|_{\R^3}\leqslant |\mathbf{x}_i^0-c_i^{\varepsilon}(0)|_{\R^3}+\max_{\mathbf{x}\in\supp\big(\zeta_i^{\varepsilon}(0,\cdot)\big)}|\mathbf{x}-\mathbf{x}_i^0|_{\R^3}\leqslant2\varepsilon.
    \end{equation}
    Plugging the estimate \eqref{init:distc} into \eqref{def moment} and using \eqref{mass conserv} yields
    $$I_i^\eps(0) \leqslant 4 \eps^2.$$ 
    Plugging the estimate \eqref{init:distc} into \eqref{def moment 4n} and using \eqref{mass conserv} yields
    $$m_{n,i}^\eps(0) \leqslant 16^n \eps^{4n}.$$ 
    This concludes the proof of Lemma \ref{lem:initestim}.
    \end{proof}

    We now estimate the growth of the vorticity moments.
    \begin{lem}\label{lem:control_I}
        For every $t\leqslant  \tau_{\eps,\beta}$ and every $i \in \{1,\ldots,N\}$,
        \begin{equation*}
            \left| \der{}{t} I_i^{\varepsilon}(t)\right|\leqslant 2D_\eps I_i^{\varepsilon}(t).
        \end{equation*}
    \end{lem}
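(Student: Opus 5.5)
The plan is to differentiate $I_i^\eps$ in time using the transport equation \eqref{pb:zetai}, integrate by parts on $\mathbb{S}^2$, and then remove the self-interaction contribution by antisymmetry, so that only the exterior field $F_i^\eps$ is left; that term is then bounded with \eqref{eq:Flip1}.

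First, the definition of $c_i^\eps$ gives $\int (\mathbf{x}-c_i^\eps)\zeta_i^\eps\,\dd\boldsymbol{\sigma}=0$. Hence the term coming from $\dot c_i^\eps$ in $\der{}{t}I_i^\eps$ vanishes. For any smooth function $g$ on $\R^3$ restricted to the sphere, the weak formulation of \eqref{pb:zetai} together with the divergence-free property of $u_i^\eps+F_i^\eps$ gives $\der{}{t}\int g\,\zeta_i^\eps = \int (u_i^\eps+F_i^\eps)\cdot\nabla g\,\zeta_i^\eps$. Both fields are tangent to the sphere, so the tangential gradient may be replaced by the ambient gradient $2(\mathbf{x}-c)$. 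This yields
\begin{equation*}
\der{}{t}I_i^\eps(t)=\frac{2}{\Gamma_i}\int_{\mathbb{S}^2}\big(u_i^\eps+F_i^\eps\big)(t,\mathbf{x})\cdot\big(\mathbf{x}-c_i^\eps(t)\big)\zeta_i^\eps(t,\mathbf{x})\dd\boldsymbol{\sigma}(\mathbf{x}).
\end{equation*}

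Next, I handle the self-induced part. Substituting the Biot-Savart formula for $u_i^\eps$ produces a double integral of $\frac{(\mathbf{x}\wedge\mathbf{y})\cdot(\mathbf{x}-c)}{|\mathbf{x}-\mathbf{y}|^2}\zeta_i^\eps(\mathbf{x})\zeta_i^\eps(\mathbf{y})$. Since $(\mathbf{x}\wedge\mathbf{y})\cdot\mathbf{x}=0$, the numerator equals $-(\mathbf{x}\wedge\mathbf{y})\cdot c$. This is antisymmetric under exchanging $\mathbf{x}$ and $\mathbf{y}$, while the denominator and the product of densities are symmetric, so the integral vanishes. The singularity is integrable because the kernel is $O(|\mathbf{x}-\mathbf{y}|^{-1})$ and $\zeta_i^\eps\in L^\infty$, so Fubini applies.

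For the exterior part, I again use $\int(\mathbf{x}-c)\zeta_i^\eps=0$ to rewrite
\begin{equation*}
\int F_i^\eps(\mathbf{x})\cdot(\mathbf{x}-c)\zeta_i^\eps(\mathbf{x}) = \frac{1}{2\Gamma_i}\iint \big(F_i^\eps(\mathbf{x})-F_i^\eps(\mathbf{y})\big)\cdot(\mathbf{x}-\mathbf{y})\,\zeta_i^\eps(\mathbf{x})\zeta_i^\eps(\mathbf{y}).
\end{equation*}
This identity follows by expanding $\mathbf{x}-\mathbf{y}=(\mathbf{x}-c)-(\mathbf{y}-c)$ and symmetrizing. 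For $t\leqslant\tau_{\eps,\beta}$ the support of $\zeta_i^\eps$ lies in $B(\mathbf{x}_i(t),\eps^\beta)$, so \eqref{eq:Flip1} bounds the integrand by $D_\eps|\mathbf{x}-\mathbf{y}|^2$ times the densities. Finally, $\frac{1}{\Gamma_i^2}\iint|\mathbf{x}-\mathbf{y}|^2\zeta_i^\eps\zeta_i^\eps = 2I_i^\eps$, since the cross term vanishes. Gathering the constants gives $|\der{}{t}I_i^\eps|\leqslant 2D_\eps I_i^\eps$. Throughout, $\zeta_i^\eps/\Gamma_i\geqslant0$ keeps every weight nonnegative.

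The main obstacle is justifying the differentiation, since $\zeta_i^\eps$ is only a Yudovich weak solution. I would treat it through the Lagrangian representation $\int g\,\zeta_i^\eps(t)=\int g(\phi_t(\mathbf{x}))\zeta_{0,i}^\eps(\mathbf{x})$, using the log-Lipschitz flow to differentiate in time. A second point is checking that the supports of the blobs remain disjoint up to $\tau_{\eps,\beta}$, so that the decomposition \eqref{pb:zetai} is valid there; this follows from \eqref{distance-supports-timet}.
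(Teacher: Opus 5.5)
Your proof is correct. Up to the last step it follows the paper's argument: the $\der{}{t}c_i^\eps$ term vanishes, you integrate by parts, and the self-induced part cancels by $\mathbf{x}\cdot(\mathbf{x}\wedge\mathbf{y})=0$ and antisymmetry.

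The difference is in the final estimate of the exterior term.

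\begin{itemize}
\item The paper uses $\int(\mathbf{x}-c_i^\eps)\zeta_i^\eps=0$ to subtract the constant $F_i^\eps\big(t,c_i^\eps(t)\big)$. It then applies \eqref{eq:Flip1} with $y=c_i^\eps(t)$. Since $c_i^\eps(t)$ generally lies off the sphere, this needs two facts: the $\R^3$-extension \eqref{extension:Fi} of $F_i^\eps$, and the observation that $c_i^\eps(t)$, as a barycenter of the support, belongs to the convex set $B(\mathbf{x}_i(t),\eps^\beta)$.
\item You symmetrize into $\frac{1}{2\Gamma_i}\iint\big(F_i^\eps(\mathbf{x})-F_i^\eps(\mathbf{y})\big)\cdot(\mathbf{x}-\mathbf{y})\zeta_i^\eps\zeta_i^\eps$ and use $\frac{1}{\Gamma_i^2}\iint|\mathbf{x}-\mathbf{y}|^2\zeta_i^\eps\zeta_i^\eps=2I_i^\eps$. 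This gives the same constant $2D_\eps$.
\end{itemize}

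Your route invokes \eqref{eq:Flip1} only for pairs of support points on the sphere. It therefore avoids evaluating the field off the sphere, which is the geometric complication the paper stresses. The paper's version is shorter, since the extension is needed elsewhere anyway (Lemma~\ref{lem:c}, \eqref{eq:Flip2}).

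Your remarks on justifying the time derivative through the Lagrangian representation, and on the blobs staying separated up to $\tau_{\eps,\beta}$, are extra rigor that the paper leaves implicit.
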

    \begin{proof}
    Differentiating in time the moment \eqref{def moment} leads to
    \begin{align*}
        \der{}{t} I_i^{\varepsilon}(t)&=\frac{2}{\Gamma_i}\der{}{t} c_i^{\varepsilon}(t)\cdot\int_{\mathbb{S}^2}\big(\mathbf{x}-c_i^{\varepsilon}(t)\big)\zeta_i^{\varepsilon}(t,\mathbf{x})\dd\boldsymbol{\sigma}(\mathbf{x})+\frac{1}{\Gamma_i}\int_{\mathbb{S}^2}|\mathbf{x}-c_i^{\varepsilon}(t)|_{\mathbb{R}^3}^2\partial_{t}\zeta_i^{\varepsilon}(t,\mathbf{x})\dd\boldsymbol{\sigma}(\mathbf{x})\\
        &\triangleq J_1^{\varepsilon,i}(t)+J_{2}^{\varepsilon,i}(t).
    \end{align*}
    First observe that from \eqref{mass conserv}, we obtain
    \begin{equation}\label{center cancel}
        \begin{aligned}
        \int_{\mathbb{S}^2}\big(\mathbf{x}-c_i^{\varepsilon}(t)\big)\zeta_i^{\varepsilon}(t,\mathbf{x})\dd\boldsymbol{\sigma}(\mathbf{x})&=\int_{\mathbb{S}^2}\mathbf{x}\zeta_i^{\varepsilon}(t,\mathbf{x})\dd\boldsymbol{\sigma}(\mathbf{x})-c_i^{\varepsilon}(t)\int_{\mathbb{S}^2}\zeta_i^{\varepsilon}(t,\mathbf{x})\dd\boldsymbol{\sigma}(\mathbf{x})\\
        &=\Gamma_i c_i^{\varepsilon}(t)-\Gamma_i c_i^{\varepsilon}(t)\\
        &=0.
    \end{aligned}
    \end{equation}
    Therefore, $J_1^{\varepsilon,i}(t)=0.$ Now, we turn to the estimation of $J_2^{\varepsilon,i}(t).$ From \eqref{pb:zetai}, we can write
    \begin{align*}
        J_2^{\varepsilon,i}(t)&=-\frac{1}{\Gamma_i}\int_{\mathbb{S}^2}|\mathbf{x}-c_i^{\varepsilon}(t)|_{\mathbb{R}^3}^2\big(u_i^{\varepsilon}(t,\mathbf{x})+F_i^{\varepsilon}(t,\mathbf{x})\big)\cdot\nabla\zeta_i^{\varepsilon}(t,\mathbf{x})\dd\boldsymbol{\sigma}(\mathbf{x}).
    \end{align*}
    Using the divergence Theorem together with the divergence-free property of the vector field $u_i^{\varepsilon}+F_i^{\varepsilon}$, we infer
    \begin{align*}
        J_2^{\varepsilon,i}(t)&=\frac{2}{\Gamma_i}\int_{\mathbb{S}^2}\big(\mathbf{x}-c_i^{\varepsilon}(t)\big)\cdot\big(u_i^{\varepsilon}(t,\mathbf{x})+F_i^{\varepsilon}(t,\mathbf{x})\big)\zeta_i^{\varepsilon}(t,\mathbf{x})\dd\boldsymbol{\sigma}(\mathbf{x})\\
        &=\frac{2}{\Gamma_i}\int_{\mathbb{S}^2}\big(\mathbf{x}-c_i^{\varepsilon}(t)\big)\cdot u_i^{\varepsilon}(t,\mathbf{x})\zeta_i^{\varepsilon}(t,\mathbf{x})\dd\boldsymbol{\sigma}(\mathbf{x})+\frac{2}{\Gamma_i}\int_{\mathbb{S}^2}\big(\mathbf{x}-c_i^{\varepsilon}(t)\big)\cdot F_i^{\varepsilon}(t,\mathbf{x})\zeta_i^{\varepsilon}(t,\mathbf{x})\dd\boldsymbol{\sigma}(\mathbf{x}).
    \end{align*}
    Inserting the expression of $u_i^{\varepsilon}$ in \eqref{pb:zetai} into the first term of the right hand-side above gives
    \begin{align*}
        \int_{\mathbb{S}^2}\big(\mathbf{x}-c_i^{\varepsilon}(t)\big)\cdot u_i^{\varepsilon}(t,\mathbf{x})\zeta_i^{\varepsilon}(t,\mathbf{x})\dd\boldsymbol{\sigma}(\mathbf{x})&=\frac{1}{2\pi}\int_{\mathbb{S}^2}\int_{\mathbb{S}^2}\frac{\big(\mathbf{x}-c_i^{\varepsilon}(t)\big)\cdot(\mathbf{x}\wedge\mathbf{y})}{|\mathbf{x}-\mathbf{y}|_{\mathbb{R}^3}^2}\zeta_i^{\varepsilon}(t,\mathbf{x})\dd\boldsymbol{\sigma}(\mathbf{y})\dd\boldsymbol{\sigma}(\mathbf{x})\\
        &=-\frac{1}{2\pi}c_i^{\varepsilon}(t)\cdot\int_{\mathbb{S}^2}\int_{\mathbb{S}^2}\frac{\mathbf{x}\wedge\mathbf{y}}{|\mathbf{x}-\mathbf{y}|_{\mathbb{R}^3}^2}\zeta_i^{\varepsilon}(t,\mathbf{y})\zeta_i^{\varepsilon}(t,\mathbf{x})\dd\boldsymbol{\sigma}(\mathbf{y})\dd\boldsymbol{\sigma}(\mathbf{x}).
    \end{align*}
    We have used the fact that the vectors $\mathbf{x}$ and $\mathbf{x}\wedge\mathbf{y}$ are orthogonal. Now, by a anti-symmetry of the role of $\mathbf{x}$ and $\mathbf{y}$, we find
    \begin{equation}\label{simpli1}
    \int_{\mathbb{S}^2}\int_{\mathbb{S}^2}\frac{\mathbf{x}\wedge\mathbf{y}}{|\mathbf{x}-\mathbf{y}|_{\mathbb{R}^3}^2}\zeta_i^{\varepsilon}(t,\mathbf{x})\zeta_i^{\varepsilon}(t,\mathbf{y})\dd\boldsymbol{\sigma}(\mathbf{x})\dd\boldsymbol{\sigma}(\mathbf{y})=0.
    \end{equation}
    Hence,
    \begin{align*}
        J_2^{\varepsilon,i}(t)&=\frac{2}{\Gamma_i}\int_{\mathbb{S}^2}\big(\mathbf{x}-c_i^{\varepsilon}(t)\big)\cdot F_i^{\varepsilon}(t,\mathbf{x})\zeta_i^{\varepsilon}(t,\mathbf{x})\dd\boldsymbol{\sigma}(\mathbf{x}).
    \end{align*}
    Using \eqref{center cancel}, we add a vanishing term in this expression to obtain that
    \begin{align*}
    J_2^{\varepsilon,i}(t)&=\frac{2}{\Gamma_i}\int_{\mathbb{S}^2}\big(\mathbf{x}-c_i^{\varepsilon}(t)\big)\cdot\Big(F_i^{\varepsilon}(t,\mathbf{x})-F_i^{\varepsilon}\big(t,c_i^{\varepsilon}(t)\big) \Big)\zeta_i^{\varepsilon}(t,\mathbf{x})\dd\boldsymbol{\sigma}(\mathbf{x}),
    \end{align*}
    where $F_i^{\varepsilon}\big(t,c_i^\eps(t)\big)$ must be understood in the sense of the extension \eqref{extension:Fi}. Since the blob is of constant sign, by using Cauchy-Schwarz inequality together with Definition~\ref{def:Dprime}, more precisely of equation~\eqref{eq:Flip1}, we get
    \begin{align*}
        |J_2^{\varepsilon,i}(t)|&\leqslant 2 D_\eps \int_{\mathbb{S}^2}|\mathbf{x}-c_i^{\varepsilon}(t)|^2_{\mathbb{R}^3}\frac{\zeta_i^{\varepsilon}(t,\mathbf{x})}{\Gamma_i}\dd\boldsymbol{\sigma}(\mathbf{x})\\
        &=2D_\eps I_i^\eps(t).
    \end{align*}
    Combining the foregoing calculations gives
    \begin{align*}
        \left| \der{}{t} I_i^{\varepsilon}(t)\right|\leqslant 2D_\eps I_i^{\varepsilon}(t).
    \end{align*}
    This achieves the proof of Lemma \ref{lem:control_I}.
\end{proof}
We now turn to evolution of the center of vorticity. For this, we introduce the notations
\begin{equation}\label{def:vecC}
    \Vec{c}^{\,\varepsilon}(t)\triangleq\big(c_1^{\varepsilon}(t),\ldots,c_{N}^{\varepsilon}(t)\big)
\end{equation}
and the point vortex vector field $\mathcal{F}$ given by
\begin{equation}\label{def:PVvectorfield}
    \mathcal{F}\triangleq(\mathcal{F}_1,\ldots,\mathcal{F}_N),\qquad\mathcal{F}_i(\mathbf{x}_1,\ldots,\mathbf{x}_N)\triangleq \frac{1}{\Gamma_i} \nabla^\perp_{\mathbf{x}_i} \mathcal{H}(\mathbf{x}_1,\ldots,\mathbf{x}_N) = \frac{1}{2\pi}\sum_{j=1\atop j\neq i}^{N}\Gamma_j\frac{\mathbf{x}_i\wedge\mathbf{x}_j}{\,\,|\mathbf{x}_i-\mathbf{x}_j|_{\mathbb{R}^3}}+\gamma\mathbf{e}_3\wedge\mathbf{x}_i.
\end{equation}
A priori $\mathcal{F}_i:\mathcal{A}_N\to T\mathbb{S}^2$ is well-defined on the admissible set $\mathcal{A}_N$ introduced in \eqref{def:admissible-set}. However, for later purposes, we may rather consider $\mathcal{F}_i$ as in \eqref{def:PVvectorfield} but defined on the set
$$\mathscr{D}_N\triangleq\prod_{k=1}^{N}\bigcup_{t\in[0,\tau_{\varepsilon,\beta}]}\textnormal{Conv}_{\mathbb{R}^3}\Big(\supp\big(\zeta_k^{\varepsilon}(t,\cdot)\big)\Big).$$
By continuity of the trajectories, the set $\mathscr{D}_N$ is closed in $\mathbb{R}^3$. By construction, $\mathcal{F}_i$ is Lipschitz (actually smooth) on $\mathscr{D}_N.$

\begin{lem}\label{lem:c}
    There exists a constant $C$ such that for every $t \leqslant \tau_{\eps,\beta}$ and every $i\in\{1,\ldots,N\}$, we have
    \begin{equation*}
        \left|\der{}{t}c_i^\eps(t) - \mathcal{F}_i\big(\vec{c}^{\,\eps}(t)\big)\right|_{\mathbb{R}^3} \leqslant C \sum_{j=1}^N \sqrt{I_j^\eps(t)},
    \end{equation*}
    where $\vec{c}^{\,\varepsilon}(t)$ and $\mathcal{F}_i$ have been introduced in \eqref{def:vecC} and \eqref{def:PVvectorfield}, respectively.
\end{lem}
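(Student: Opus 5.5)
Differentiating \eqref{center of vort} in time and using \eqref{pb:zetai} with the divergence theorem (as in the proof of Lemma~\ref{lem:control_I}, the coordinate functions $\mathbf{x}\mapsto x_k$ having surface gradient equal to the tangential projection of $\mathbf{e}_k$, and the velocity being tangent) gives
\begin{equation*}
    \der{}{t}c_i^\eps(t)=\frac{1}{\Gamma_i}\int_{\mathbb{S}^2}\big(u_i^\eps(t,\mathbf{x})+F_i^\eps(t,\mathbf{x})\big)\zeta_i^\eps(t,\mathbf{x})\dd\boldsymbol{\sigma}(\mathbf{x}).
\end{equation*}
The self-interaction term vanishes by the antisymmetry identity \eqref{simpli1}. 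We are thus left with the average of $F_i^\eps$ against the probability density $\zeta_i^\eps/\Gamma_i$. Using \eqref{def:Fi}, this average equals
\begin{equation*}
    \sum_{j\neq i}\frac{\Gamma_j}{2\pi}\iint K_{\mathbb{S}^2}(\mathbf{x},\mathbf{y})\frac{\zeta_i^\eps(t,\mathbf{x})}{\Gamma_i}\frac{\zeta_j^\eps(t,\mathbf{y})}{\Gamma_j}\dd\boldsymbol{\sigma}(\mathbf{x})\dd\boldsymbol{\sigma}(\mathbf{y})+\gamma\,\mathbf{e}_3\wedge c_i^\eps(t),
\end{equation*}
where linearity makes the rotation term exact. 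Here I read $\mathcal{F}_i$ with the kernel $K_{\mathbb{S}^2}$ of \eqref{BS-kernel}, i.e.\ with the squared distance in the denominator, consistent with \eqref{PVS}.

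The main step is to compare each double integral with $K_{\mathbb{S}^2}(c_i^\eps,c_j^\eps)$, where $K_{\mathbb{S}^2}$ is extended to $\R^3\times\R^3$ off the diagonal by the same formula. For $t\leqslant\tau_{\eps,\beta}$, \eqref{distance-supports-timet} shows that $\supp\zeta_i^\eps$ and $\supp\zeta_j^\eps$ lie in balls of radius $\eps^\beta$ around $\mathbf{x}_i(t)$ and $\mathbf{x}_j(t)$, which are at distance at least $d_0$. Their convex hulls, which contain $c_i^\eps$ and $c_j^\eps$ and the segments joining them to points of the supports, therefore stay at mutual distance at least $d_0/2$ for $\eps$ small. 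On this region $K_{\mathbb{S}^2}$ is smooth with $\eps$-independent Lipschitz constant $L\lesssim d_0^{-2}$. This is precisely where working in $\R^3$ matters, because the centers are off the sphere; it is the only genuinely new point compared with the plane.

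Then
\begin{equation*}
    |K_{\mathbb{S}^2}(\mathbf{x},\mathbf{y})-K_{\mathbb{S}^2}(c_i^\eps,c_j^\eps)|\leqslant L\big(|\mathbf{x}-c_i^\eps|+|\mathbf{y}-c_j^\eps|\big).
\end{equation*}
Integrating against the two probability densities and applying Cauchy--Schwarz gives a bound $L\big(\sqrt{I_i^\eps}+\sqrt{I_j^\eps}\big)$. A first-order Taylor expansion would give a better error, since the linear term vanishes by \eqref{center cancel}, but the Lipschitz bound already suffices for the claimed estimate. Summing over $j\neq i$ with weights $|\Gamma_j|/2\pi$ yields
\begin{equation*}
    \Big|\der{}{t}c_i^\eps-\mathcal{F}_i(\vec c^{\,\eps})\Big|\leqslant C\sum_j\sqrt{I_j^\eps}.
\end{equation*}

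The expected obstacle is only bookkeeping: justifying the time derivative for weak Yudovich solutions, done via the Lagrangian representation or a mollification, and checking that the region containing the segments stays uniformly away from the diagonal, which is handled by the convexity argument above.
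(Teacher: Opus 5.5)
Your proposal is correct and follows essentially the paper's route: you derive \eqref{derivee de c} via the divergence theorem and \eqref{simpli1}, then bound the gap between the $\zeta_i^\eps/\Gamma_i$-average of $F_i^\eps$ and $\mathcal{F}_i(\vec c^{\,\eps})$ by Lipschitz estimates on the $\R^3$-extended kernel combined with Cauchy--Schwarz. The differences are cosmetic. You use a single joint Lipschitz bound on $K_{\mathbb{S}^2}$ in both variables, whereas the paper splits this into two steps: first \eqref{maj:F_Lip} for $F_i^\eps$ in $\mathbf{x}$, to move to $c_i^\eps$, and then the Lipschitz bound of $K_{\mathbb{S}^2}(c_i^\eps,\cdot)$ in $\mathbf{y}$. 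Your reading of \eqref{def:PVvectorfield} with the squared distance in the denominator is the intended one.
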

\begin{proof}
    Differentiating in time $c_i^{\varepsilon}$, we obtain from \eqref{center of vort} and \eqref{pb:zetai} that
    \begin{align*}
        \der{}{t} c_i^\eps(t)&=\frac{1}{\Gamma_i}\int_{\mathbb{S}^2}\mathbf{x}\partial_{t}\zeta_i^{\varepsilon}(t,\mathbf{x})\dd\boldsymbol{\sigma}(\mathbf{x})\\
        &=-\frac{1}{\Gamma_i}\int_{\mathbb{S}^2}\mathbf{x}\big(u_i^{\varepsilon}(t,\mathbf{x})+F_i^{\varepsilon}(t,\mathbf{x})\big)\cdot\nabla\zeta_i^{\varepsilon}(t,\mathbf{x})\dd\boldsymbol{\sigma}(\mathbf{x}).
    \end{align*}
    Applying, once again the divergence Theorem, we get
\begin{align*}
    \der{}{t} c_i^\eps(t)&=\frac{1}{\Gamma_i}\int_{\mathbb{S}^2}\big(u_i^{\varepsilon}(t,\mathbf{x})+F_i^{\varepsilon}(t,\mathbf{x})\big)\zeta_i^{\varepsilon}(t,\mathbf{x})\dd\boldsymbol{\sigma}(\mathbf{x})\\
    &=\frac{1}{\Gamma_i}\int_{\mathbb{S}^2}F_i^{\varepsilon}(t,\mathbf{x})\zeta_i^{\varepsilon}(t,\mathbf{x})\dd\boldsymbol{\sigma}(\mathbf{x})+\frac{1}{2\pi\Gamma_i}\int_{\mathbb{S}^2}\int_{\mathbb{S}^2}\frac{\mathbf{x}\wedge\mathbf{y}}{|\mathbf{x}-\mathbf{y}|_{\mathbb{R}^3}^2}\zeta_i^{\varepsilon}(t,\mathbf{x})\zeta_i^{\varepsilon}(t,\mathbf{y})\dd\boldsymbol{\sigma}(\mathbf{x})\dd\boldsymbol{\sigma}(\mathbf{y}).
\end{align*}
    From \eqref{simpli1}, we infer
    \begin{equation}\label{derivee de c}
        \der{}{t} c_i^\eps(t) = \frac{1}{\Gamma_i}\int_{\mathbb{S}^2} F_i^\eps(t,\mathbf{x}) \zeta_i^\eps(t,\mathbf{x})\dd\boldsymbol{\sigma}(\mathbf{x}).
    \end{equation}
    Therefore, by using \eqref{mass conserv}, the bound~\eqref{maj:F_Lip} and the Cauchy-Schwarz inequality, we obtain
    \begin{align*}
        \left|\der{}{t} c_i^\eps(t) - \mathcal{F}_i\big(t,\vec{c}^{\,\varepsilon}(t)\big)\right|_{\mathbb{R}^3} & = \left| \frac{1}{\Gamma_i}\int_{\mathbb{S}^2} F_i^\eps(t,\mathbf{x}) \zeta_i^\eps(t,\mathbf{x})\dd\boldsymbol{\sigma}(\mathbf{x}) - \mathcal{F}_i\big(t,\vec{c}^{\,\varepsilon}(t)\big) \right|_{\mathbb{R}^3}  \\
        & = \left| \frac{1}{\Gamma_i}\int_{\mathbb{S}^2} \Big(F_i^\eps(t,\mathbf{x}) -F_i^\eps\big(t,c_i^\eps(t)\big)\Big)\zeta_i^\eps(t,\mathbf{x})\dd\boldsymbol{\sigma}(\mathbf{x}) - \big(\mathcal{F}_i\big(t,\vec{c}^{\,\varepsilon}(t)\big)-F_i^\eps\big(t,c_i^\eps(t)\big)\Big) \right|_{\mathbb{R}^3} \\
        & \leqslant D \sqrt{I_i^\eps(t)} + \big|\mathcal{F}_i\big(t,\vec{c}^{\,\varepsilon}(t)\big)-F_i^\eps\big(t,c_i^\eps(t)\big)\big|_{\mathbb{R}^3}.
    \end{align*}
    Using the definitions \eqref{BS-kernel}, \eqref{def:Fi}, \eqref{def:vecC} and \eqref{def:PVvectorfield}, we can write
    \begin{align*}
        \mathcal{F}_i\big(t,\vec{c}^{\,\varepsilon}(t)\big)-F_i^{\varepsilon}\big(t,c_i^{\varepsilon}(t)\big) & = \sum_{j=1\atop j\neq i}^{N}\frac{\Gamma_j}{2\pi}K_{\mathbb{S}^2}\big(c_i^\eps(t),c_j^\eps(t)\big)+\gamma\,\mathbf{e}_3\wedge c_i^\eps(t)\\ 
        & \qquad - \frac{1}{2\pi}\sum_{j=1\atop j\neq i}^{N}\int_{\mathbb{S}^2}K_{\mathbb{S}^2}\big(c_i^{\varepsilon}(t),\mathbf{y}\big)\zeta_{j}^{\varepsilon}(t,\mathbf{y})\dd\boldsymbol{\sigma}(\mathbf{y})-\gamma \mathbf{e}_3\wedge c_i^{\varepsilon}(t)\\
        & = \frac{1}{2\pi}\sum_{j=1\atop j\neq i}^{N} \int_{\mathbb{S}^2} \Big( K_{\mathbb{S}^2}\big(c_i^\eps(t),c_j^\eps(t)\big) - K_{\mathbb{S}^2}\big(c_i^{\varepsilon}(t),\mathbf{y}\big)\Big)\zeta_{j}^{\varepsilon}(t,\mathbf{y})\dd\boldsymbol{\sigma}(\mathbf{y}).
    \end{align*}
    Thanks to \eqref{distance-supports-timet}, for $t\leqslant\tau_{\varepsilon,\beta},$ the function $\mathbf{y}\mapsto K\big(c_i^{\varepsilon}(t),\mathbf{y}\big)$ is of class $C^1$ on $\displaystyle\bigcup_{j=1\atop j\neq i}^{N}\textnormal{Conv}\Big(\supp\big(\zeta_j^{\varepsilon}(t,\cdot)\big)\Big)$. Hence, by mean value Theorem, we have for any $j\in\{1,\ldots,N\}\setminus\{i\}$ and any $\mathbf{y}\in\supp\big(\zeta_j^{\varepsilon}(t,\cdot)\big),$
    \begin{equation*}
        \left|K\big(c_i^\eps(t),c_j^\eps(t)\big) - K\big(c_i^{\varepsilon}(t),\mathbf{y}\big)\right|_{\mathbb{R}^3} \leqslant C\left|c_j^{\varepsilon}(t)-\mathbf{y}\right|_{\mathbb{R}^3}.
    \end{equation*}
    Thus, by means of triangle and Cauchy-Schwarz inequalities, we find
    \begin{align*}
        \left|\mathcal{F}_i\big(t,\vec{c}^{\,\varepsilon}(t)\big)-F_i^{\varepsilon}\big(t,c_i^{\varepsilon}(t)\big)\right|_{\mathbb{R}^3} & \leqslant \frac{C}{2\pi} \sum_{j=1\atop j\neq i}^{N}|\Gamma_j| \int_{\mathbb{S}^2} |c_j^\eps(t)-\mathbf{y}|_{\mathbb{R}^3} \sqrt{\frac{\zeta_{j}^{\varepsilon}(t,\mathbf{y})}{\Gamma_j}}\sqrt{\frac{\zeta_{j}^{\varepsilon}(t,\mathbf{y})}{\Gamma_j}}\dd\boldsymbol{\sigma}(\mathbf{y})\\
        & \leqslant C  \sum_{j=1\atop j\neq i}^{N} \sqrt{I_j^\eps(t)}.
    \end{align*}
    This ends the proof of Lemma \ref{lem:c}.
\end{proof}

\begin{lem}\label{lem:4n}
    For every $\eps$ small enough and for every $t \leqslant \tau_{\eps,\beta}$,
    \begin{equation*}
        \der{}{t} m_{n,i}^\eps(t) \leqslant \big(m_{n,i}^{\eps}(t)\big)^\frac{n-1}{n} \left(\frac{70n^2|\Gamma_i|}{\pi}I_i^\eps(t) + 4n D_\eps \left( \big(m_{n,i}^{\eps}(t)\big)^\frac{1}{n} + 2\sqrt{I_i^\eps(t)} \eps^{3\beta} \right) \right).
    \end{equation*}
\end{lem}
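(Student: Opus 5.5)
We follow the strategy of Lemma~\ref{lem:control_I}, adapted to the $4n$-th moment as in \cite{ISG99}. Differentiating \eqref{def moment 4n} gives a term involving $\der{}{t}c_i^\eps(t)$ and a transport term. The first is
\[
-\frac{4n}{\Gamma_i}\der{}{t}c_i^\eps(t)\cdot\int_{\mathbb{S}^2}|\mathbf{x}-c_i^\eps|^{4n-2}(\mathbf{x}-c_i^\eps)\zeta_i^\eps\dd\boldsymbol{\sigma}.
\]
For the second we integrate by parts with the divergence-free field $u_i^\eps+F_i^\eps$, which gives
\[
\frac{4n}{\Gamma_i}\int_{\mathbb{S}^2}|\mathbf{x}-c_i^\eps|^{4n-2}(\mathbf{x}-c_i^\eps)\cdot\big(u_i^\eps+F_i^\eps\big)\zeta_i^\eps\dd\boldsymbol{\sigma}.
\]
By \eqref{derivee de c}, $\der{}{t}c_i^\eps=\frac{1}{\Gamma_i}\int F_i^\eps\zeta_i^\eps$. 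Combining the two contributions, the $F$-part becomes
\[
\frac{4n}{\Gamma_i}\int|\mathbf{x}-c_i^\eps|^{4n-2}(\mathbf{x}-c_i^\eps)\cdot\Big(F_i^\eps(\mathbf{x})-\int F_i^\eps\tfrac{\zeta_i^\eps}{\Gamma_i}\Big)\zeta_i^\eps,
\]
which is exactly the quantity controlled by \eqref{eq:Flip2}.

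\emph{$F$-part.} By \eqref{eq:Flip2} it is bounded by $4nD_\eps\iint|\mathbf{x}-c_i|^{4n-1}|\mathbf{x}-\mathbf{y}|\,\mu(\mathbf{x})\mu(\mathbf{y})$, where $\mu=\zeta_i^\eps/\Gamma_i$ is a probability measure. We split $|\mathbf{x}-\mathbf{y}|\leqslant|\mathbf{x}-c_i|+|\mathbf{y}-c_i|$.
\begin{itemize}
\item The first piece gives $m_{n,i}$, which we write as $m^{(n-1)/n}m^{1/n}$ (trivially).
\item The second gives $\big(\int|\mathbf{x}-c_i|^{4n-1}\mu\big)\big(\int|\mathbf{y}-c_i|\mu\big)$. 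Here $\int|\mathbf{y}-c_i|\mu\leqslant\sqrt{I_i}$, and we bound $|\mathbf{x}-c_i|^{4n-1}\leqslant|\mathbf{x}-c_i|^{4n-4}\,|\mathbf{x}-c_i|^3$.
\end{itemize}
For $t\leqslant\tau_{\eps,\beta}$ the support lies in $B(\mathbf{x}_i(t),\eps^\beta)$, and $c_i$ lies in its convex hull, so $|\mathbf{x}-c_i|\leqslant2\eps^\beta$. The factor $|\mathbf{x}-c_i|^3$ therefore yields $8\eps^{3\beta}$, and Hölder gives $\int|\mathbf{x}-c_i|^{4n-4}\mu\leqslant m^{(n-1)/n}$. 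This reproduces the term $2\sqrt{I_i}\eps^{3\beta}$, with the constant absorbed as in the statement; we may need to be slightly careful about the factor $2^3$ against the stated $2$, possibly by using a sharper support bound.

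\emph{Self-interaction part.} This is the main obstacle: bounding $\frac{4n}{2\pi}\iint|\mathbf{x}-c|^{4n-2}(\mathbf{x}-c)\cdot\frac{\mathbf{x}\wedge\mathbf{y}}{|\mathbf{x}-\mathbf{y}|^2}\mu\mu\,\Gamma_i$.
\begin{itemize}
\item Since $\mathbf{x}\cdot(\mathbf{x}\wedge\mathbf{y})=0$, the integrand reduces to $-c\cdot(\mathbf{x}\wedge\mathbf{y})$. Writing $\mathbf{x}\wedge\mathbf{y}=(\mathbf{x}-c)\wedge(\mathbf{y}-c)+c\wedge(\mathbf{y}-\mathbf{x})$, we get $c\cdot(\mathbf{x}\wedge\mathbf{y})=c\cdot\big((\mathbf{x}-c)\wedge(\mathbf{y}-c)\big)$, which has the same structure as the planar $(\mathbf{x}-c)^\perp\cdot(\mathbf{y}-c)$ with $|c|\leqslant1$.
\item We symmetrize in $\mathbf{x},\mathbf{y}$ using antisymmetry: the integrand becomes $\frac12\big(|\mathbf{x}-c|^{4n-2}-|\mathbf{y}-c|^{4n-2}\big)\,\frac{c\cdot((\mathbf{x}-c)\wedge(\mathbf{y}-c))}{|\mathbf{x}-\mathbf{y}|^2}$.
\item Factor $a^{4n-2}-b^{4n-2}=(a^2-b^2)\sum_k a^{2k}b^{4n-4-2k}$ with $a=|\mathbf{x}-c|$, $b=|\mathbf{y}-c|$. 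Then $|a^2-b^2|\leqslant|\mathbf{x}-\mathbf{y}|(a+b)$ and $|(\mathbf{x}-c)\wedge(\mathbf{y}-c)|=|(\mathbf{x}-c)\wedge(\mathbf{y}-\mathbf{x})|\leqslant a|\mathbf{x}-\mathbf{y}|$ (and symmetrically with $b$), so the singular denominator cancels.
\item What remains is a sum of at most $2n$ terms of the form $a^{p}b^{q}$ with $p+q=4n-2$. Each term is bounded by Young/Hölder as $m^{(n-1)/n}I_i$, by putting the bigger power on one variable (e.g.\ $a^{4n-4}b^2$ gives exactly $m^{(n-1)/n}I_i$ after Hölder).
\end{itemize}
Counting the $O(n)$ terms together with the prefactor $4n|\Gamma_i|/(2\pi)$ gives a constant of order $n^2|\Gamma_i|/\pi$, consistent with the stated $70n^2|\Gamma_i|/\pi$. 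The careful bookkeeping of the mixed powers $a^pb^q$ is the delicate step.
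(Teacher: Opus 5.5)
Your treatment of the $F$-part is the same as the paper's, and your doubt about the constant is well founded. On the support, $|\mathbf{x}-c_i^\eps(t)|_{\R^3}\leqslant 2\eps^\beta$ gives $8\sqrt{I_i^\eps(t)}\,\eps^{3\beta}$. The ``$2$'' in the statement is therefore a harmless slip in the paper, since only the power $\eps^{3\beta}$ is used later. No sharper support bound is available to fix it.

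The genuine gap is in the self-interaction term. Write $a=|\mathbf{x}-c_i^\eps|$, $b=|\mathbf{y}-c_i^\eps|$, $\mu=\zeta_i^\eps/\Gamma_i$ and $M_r=\int|\mathbf{x}-c_i^\eps|^r\,\dd\mu$.

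\emph{Why your bound is too weak.} After symmetrizing over all of $\mathbb{S}^2\times\mathbb{S}^2$, your pointwise bound is $\tfrac12(a+b)\min(a,b)\sum_{k=0}^{2n-2}a^{2k}b^{4n-4-2k}\geqslant\tfrac12\max(a,b)^{4n-3}\min(a,b)\geqslant\tfrac12a^{4n-3}b$. This holds whichever of $a,b$ you use in the wedge estimate, so its integral is at least $\tfrac12 M_1M_{4n-3}$. The extreme terms $a^{4n-3}b$ and $ab^{4n-3}$ are \emph{not} controlled by $I_i^\eps m_{n-1,i}^\eps=M_2M_{4n-4}$. For $n\geqslant2$, log-convexity of $r\mapsto M_r$ gives $M_1M_{4n-3}\geqslant M_2M_{4n-4}$, and the ratio is unbounded. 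For instance, take a core of size $r$ carrying almost all the mass plus a mass $\delta$ at distance $R\gg r$, with $(r/R)^{4n-4}\ll\delta\ll(r/R)^2$; the ratio is then $\sim R/r$. Taking $\delta=(r/R)^{2+\theta}$ with $0<\theta<1$, the same failure occurs against $I_i^\eps(m_{n,i}^\eps)^{\frac{n-1}{n}}$ once $n\geqslant3$. Hölder cannot help, because the exponent-one factor only yields $M_1\leqslant\sqrt{I_i^\eps}$, half a power of $I_i^\eps$ short.

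\emph{What is lost, and the missing step.} Taking absolute values pointwise destroys the fact that $c_i^\eps$ is the center of vorticity. Far vorticity feels the core only through its vanishing dipole moment, and this cancellation appears only after integrating in $\mathbf{y}$. The paper's route (after \cite{ISG99}) has three steps.
\begin{enumerate}
\item Use $\int\mathbf{y}\,\zeta_i^\eps(t,\mathbf{y})\,\dd\boldsymbol{\sigma}(\mathbf{y})=\Gamma_i c_i^\eps(t)$ to replace $|\mathbf{x}-\mathbf{y}|^{-2}$ by $|\mathbf{x}-\mathbf{y}|^{-2}-|\mathbf{x}-c_i^\eps|^{-2}$. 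By \eqref{useful-identity00}, together with $(\mathbf{x}-c_i^\eps)\cdot(\mathbf{x}\wedge\mathbf{y})=(\mathbf{y}-c_i^\eps)\cdot(\mathbf{x}\wedge\mathbf{y})$, this produces two factors of $|\mathbf{y}-c_i^\eps|$.
\item Split the domain into $E_1,E_2,E_3$. On $E_1\cup E_3$, where $a$ and $b$ are not comparable, $|\mathcal{K}^\eps|\leqslant 6n\,b^2/a^2$, which gives $b^2a^{4n-4}$ directly.
\item Only on the symmetric set $E_2$, where $a\sim b$, symmetrize as you propose. There comparability turns $\max(a,b)^{4n-3}\min(a,b)$ into $O(n\,a^{4n-4}b^2)$.
\end{enumerate}
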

\begin{proof}
    Throughout the proof, we fix $n\in\mathbb{N}^*$. Differentiating in time \eqref{def moment 4n} and using \eqref{pb:zetai} leads to
    \begin{align*}
        \der{}{t} m_{n,i}^\eps(t) & = \frac{4n}{\Gamma_i} \int_{\mathbb{S}^2}|\mathbf{x}-c_i^\eps(t)|_{\R^3}^{4n-1} \left(\der{}{t}|\mathbf{x}-c_i^\eps(t)|\right)\zeta_i^\eps(t,\mathbf{x})\dd \boldsymbol{\sigma}(\mathbf{x})\\
        & \quad+\frac{1}{\Gamma_i}\int_{\mathbb{S}^2}|\mathbf{x}-c_i^\eps(t)|_{\R^3}^{4n}\partial_{t}\zeta_i^\eps(t,\mathbf{x})\dd \boldsymbol{\sigma}(\mathbf{x})\\
        &= \frac{4n}{\Gamma_i} \int_{\mathbb{S}^2}|\mathbf{x}-c_i^\eps(t)|_{\R^3}^{4n-1} \left(\der{}{t}|\mathbf{x}-c_i^\eps(t)|\right)\zeta_i^\eps(t,\mathbf{x})\dd \boldsymbol{\sigma}(\mathbf{x})\\
        & \quad -\frac{1}{\Gamma_i}\int_{\mathbb{S}^2}|\mathbf{x}-c_i^\eps(t)|_{\R^3}^{4n}\big(u_i^{\varepsilon}(t,\mathbf{x})+F_i^{\varepsilon}(t,\mathbf{x})\big)\cdot\nabla\zeta_i^\eps(t,\mathbf{x})\dd \boldsymbol{\sigma}(\mathbf{x}).
    \end{align*}
    Applying the divergence theorem and using that $u_i^\varepsilon,F_i^{\varepsilon}$ are solenoidal yields
    \begin{align*}
        \der{}{t} m_{n,i}^\eps(t) & = -\frac{4n}{\Gamma_i} \int_{\mathbb{S}^2} \der{}{t}c_i^\eps(t) \cdot \big(\mathbf{x}-c_i^\eps(t)\big) |\mathbf{x}-c_i^\eps(t)|_{\R^3}^{4n-2} \zeta_i^\eps(t,\mathbf{x})\dd \boldsymbol{\sigma}(\mathbf{x})\\
        & \quad +\frac{4n}{\Gamma_i} \int_{\mathbb{S}^2}\big(\mathbf{x} - c_i^\eps(t)\big) \cdot \big(u_i^\eps(t,\mathbf{x}) + F_i^\eps(t,\mathbf{x})\big) |\mathbf{x}-c_i^\eps(t)|_{\R^3}^{4n-2} \zeta_i^\eps(t,\mathbf{x}) \dd \boldsymbol{\sigma}(\mathbf{x}).
    \end{align*}
    Using \eqref{derivee de c} and \eqref{pb:zetai}, we can rearrange the previous expression as
    \begin{align*}
        \der{}{t} m_{n,i}^\eps(t) & = \frac{4n}{2\pi\Gamma_i} \iint_{\mathbb{S}^2\times\mathbb{S}^2}(\mathbf{x} - c_i^\eps(t)) \cdot \frac{\mathbf{x}\wedge \mathbf{y}}{|\mathbf{x}-\mathbf{y}|_{\R^3}^2}|\mathbf{x}-c_i^\eps(t)|_{\R^3}^{4n-2} \zeta_i^\eps(t,\mathbf{y})\zeta_i^\eps(t,\mathbf{x}) \dd \boldsymbol{\sigma}(\mathbf{y})\dd \boldsymbol{\sigma}(\mathbf{x}) \\
        &\quad + \frac{4n}{\Gamma_i}\int_{\mathbb{S}^2} \big(\mathbf{x} - c_i^\eps(t)\big) \cdot \left(F_i^\eps(t,\mathbf{x}) - \frac{1}{\Gamma_i}\int_{\mathbb{S}^2} F_i^\eps(t,\mathbf{y})\zeta_i^\eps(t,\mathbf{y})\dd \boldsymbol{\sigma}(\mathbf{y})\right)|\mathbf{x}-c_i^\eps(t)|_{\R^3}^{4n-2} \zeta_i^\eps(t,\mathbf{x}) \dd \boldsymbol{\sigma}(\mathbf{x}) \\
        &\triangleq A_1^{\eps}(t) + A_2^{\eps}(t).
    \end{align*}
    We first deal with the term $A_1^{\eps}(t).$ Notice that by definition \eqref{center of vort} and orthogonality argument, we have for every $\mathbf{x}\in\mathbb{S}^2$,
    \begin{equation*}
        \int_{\mathbb{S}^2} (\mathbf{x} - c_i^\eps(t)) \cdot \frac{\mathbf{x}\wedge \mathbf{y}}{|\mathbf{x}-c_i^\eps(t)|_{\R^3}^2} \zeta_i^\eps(t,\mathbf{y})\dd \boldsymbol{\sigma}(\mathbf{y}) = \Gamma(\mathbf{x} - c_i^\eps(t)) \cdot \frac{\mathbf{x}\wedge c_i^\eps(t)}{|\mathbf{x}-c_i^\eps(t)|_{\R^3}^2} = 0.
    \end{equation*} 
    Therefore,
    \begin{equation*}
        A_1^{\eps}(t) = \frac{2n}{\pi\Gamma_i} \iint_{\mathbb{S}^2\times\mathbb{S}^2}\mathcal{K}^{\eps}(t,\mathbf{x},\mathbf{y})|\mathbf{x}-c_i^\eps(t)|_{\R^3}^{4n-2} \zeta_i^\eps(t,\mathbf{y})\zeta_i^\eps(t,\mathbf{x}) \dd \boldsymbol{\sigma}(\mathbf{y})\dd \boldsymbol{\sigma}(\mathbf{x}),
    \end{equation*}
    with
    \begin{equation*}
        \mathcal{K}^{\eps}(t,\mathbf{x},\mathbf{y}) \triangleq \big(\mathbf{x} - c_i^\eps(t)\big) \cdot (\mathbf{x}\wedge \mathbf{y}) \left( \frac{1}{|\mathbf{x}-\mathbf{y}|_{\R^3}^2} - \frac{1}{|\mathbf{x}-c_i^\eps(t)|^2_{\R^3}}\right).
    \end{equation*}
    We now split the domain of integration $\mathbb{S}^2 \times \mathbb{S}^2$ into three subdomains
    \begin{align*}
        E_1^{\eps}(t) & \triangleq \left\{ (\mathbf{x},\mathbf{y}) \in \mathbb{S}^2 \times \mathbb{S}^2 \quad\textnormal{s.t.}\quad |\mathbf{y}-c_i^\eps(t)|_{\mathbb{R}^3} \leqslant \left(1-\frac{1}{2n} \right) |\mathbf{x}-c_i^\eps(t)|_{\mathbb{R}^3} \right\}, \\
        E_2^{\eps}(t) & \triangleq \left\{ (\mathbf{x},\mathbf{y}) \in \mathbb{S}^2 \times \mathbb{S}^2 \quad\textnormal{s.t.}\quad \left(1-\frac{1}{2n}\right)|\mathbf{x}-c_i^\eps(t)|_{\mathbb{R}^3} < |\mathbf{y}-c_i^\eps(t)|_{\mathbb{R}^3} < \left(1-\frac{1}{2n}\right)^{-1} |\mathbf{x}-c_i^\eps(t)|_{\mathbb{R}^3} \right\}, \\
        E_3^{\eps}(t) & \triangleq \left\{ (\mathbf{x},\mathbf{y}) \in \mathbb{S}^2 \times \mathbb{S}^2 \quad\textnormal{s.t.}\quad|\mathbf{x}-c_i^\eps(t)|_{\mathbb{R}^3} \leqslant \left(1-\frac{1}{2n} \right) |\mathbf{y}-c_i^\eps(t)|_{\mathbb{R}^3} \right\}.
    \end{align*}
    This leads to write
    \begin{equation*}
        A_1^{\eps} = A_{1,1}^{\eps} + A_{1,2}^{\eps} + A_{1,3}^{\eps},
    \end{equation*}
    where for any $k\in\{1,2,3\},$
    \begin{equation*}
        A_{1,k}^{\eps}(t) = \frac{2n}{\pi\Gamma_i} \iint_{E_k^{\eps}(t)}\mathcal{K}(t,\mathbf{x},\mathbf{y})|\mathbf{x}-c_i^\eps(t)|_{\R^3}^{4n-2} \zeta_i^\eps(t,\mathbf{y})\zeta_i^\eps(t,\mathbf{x}) \dd \boldsymbol{\sigma}(\mathbf{y})\dd \boldsymbol{\sigma}(\mathbf{x}).
    \end{equation*}
    We now work on the expression of $\mathcal{K}^{\eps}$.
    We get the identity
    \begin{equation}\label{useful-identity00}
        \frac{1}{|\mathbf{x}-\mathbf{y}|_{\R^3}^2} - \frac{1}{|\mathbf{x}-c_i^\eps(t)|^2_{\R^3}}= \frac{\Big(2\big(\mathbf{x}-c_i^\eps(t)\big) - \big(\mathbf{y} - c_i^\eps(t)\big)\Big) \cdot \big(\mathbf{y}-c_i^\eps(t)\big) }{|\mathbf{x}-c_i^\eps(t)|^2_{\R^3}|\mathbf{x}-\mathbf{y}|^2_{\R^3}},
    \end{equation}
    and since $\big(\mathbf{x}-c_i^\eps(t)\big) \cdot (\mathbf{x}\wedge \mathbf{y}) = \big(\mathbf{y}-c_i^\eps(t)\big) \cdot (\mathbf{x}\wedge \mathbf{y})$, we conclude that
    \begin{equation}\label{new-calK}
        \mathcal{K}^{\eps}(t,\mathbf{x},\mathbf{y}) = \big(\mathbf{y}-c_i^\eps(t)\big) \cdot (\mathbf{x}\wedge \mathbf{y})\frac{\Big(2\big(\mathbf{x}-c_i^\eps(t)\big) - \big(\mathbf{y} - c_i^\eps(t)\big)\Big) \cdot \big(\mathbf{y}-c_i^\eps(t)\big) }{|\mathbf{x}-c_i^\eps(t)|^2_{\R^3}|\mathbf{x}-\mathbf{y}|^2_{\R^3}}\cdot
    \end{equation}
    We now focus on $A_{1,1}^{\eps}(t)$. Take $(\mathbf{x},\mathbf{y}) \in E_1^{\eps}(t).$ By definition,
    \begin{equation}\label{estimE1}
        |\mathbf{y}-c_i^\eps(t)|_{\mathbb{R}^3} \leqslant \left(1-\frac{1}{2n} \right) |\mathbf{x}-c_i^\eps(t)|_{\mathbb{R}^3}.
    \end{equation}
    Hence, by left-triangular inequality, we deduce that
    \begin{equation}\label{minE1}
        \begin{aligned}
            |\mathbf{x}-\mathbf{y}|_{\R^3}&\geqslant|\mathbf{x}-c_i^\eps(t)|_{\R^3}-|\mathbf{y}-c_i^\eps(t)|_{\R^3}\\
        &\geqslant\frac{1}{2n}|\mathbf{x}-c_i^\eps(t)|_{\R^3}.
        \end{aligned}
    \end{equation}
    Also, by right-triangular inequality and \eqref{estimE1}, we have
    \begin{equation}\label{trieqE1}
        \begin{aligned}
        |2(\mathbf{x} - c_i^\eps(t)) - (\mathbf{y}-c_i^\eps(t))|_{\R^3} &\leqslant2|\mathbf{x} - c_i^\eps(t)|_{\R^3}+|\mathbf{y}-c_i^\eps(t)|_{\R^3}\\
        &\leqslant3 |\mathbf{x}-c_i^\eps(t)|_{\R^3}.
    \end{aligned}
    \end{equation}
    Putting together \eqref{new-calK}, \eqref{eq:maj_x_wedge_y}, \eqref{minE1} and \eqref{trieqE1} gives, by using Cauchy-Schwarz inequality,
    \begin{align*}
        \left|\mathcal{K}^{\eps}(t,\mathbf{x},\mathbf{y})\right|
        &\leqslant|\mathbf{y}-c_i^\eps(t)|_{\R^3}|\mathbf{x}-\mathbf{y}|_{\R^3}\frac{3|\mathbf{x}-c_i^\eps(t)|_{\R^3}|\mathbf{y}-c_i^\eps(t)|_{\R^3}}{|\mathbf{x}-c_i^\eps(t)|_{\R^3}^2|\mathbf{x}-\mathbf{y}|_{\R^3}^2}\\
        &\leqslant\frac{3|\mathbf{y}-c_i^\eps(t)|^2_{\R^3}}{|\mathbf{x} - c_i^\eps(t)|_{\R^3}|\mathbf{x}-\mathbf{y}|_{\R^3}}\\
        &\leqslant\frac{6n|\mathbf{y}-c_i^\eps(t)|^2_{\R^3}}{|\mathbf{x}-c_i^\eps(t)|^2_{\R^3}}\cdot
    \end{align*}
    Consequently, we obtain
    \begin{equation}\label{e-A11}
        \begin{aligned}
        |A_{1,1}^{\eps}(t)| &\leqslant \frac{12 n^2|\Gamma_i|}{\pi} \iint_{E_1^{\eps}(t)} |\mathbf{y}-c_i^\eps(t)|^2_{\R^3}|\mathbf{x}-c_i^\eps(t)|_{\R^3}^{4n-4} \frac{\zeta_i^\eps(t,\mathbf{y})}{\Gamma_i}\frac{\zeta_i^\eps(t,\mathbf{x})}{\Gamma_i} \dd \boldsymbol{\sigma}(\mathbf{y})\dd \boldsymbol{\sigma}(\mathbf{x})\\
        &\leqslant\frac{12 n^2|\Gamma_i|}{\pi} I_i^\eps(t) m_{n-1,i}^{\eps}(t).
    \end{aligned}
    \end{equation}
    Now, we deal with the term $A_{1,3}^{\eps}(t).$ Take $(\mathbf{x},\mathbf{y})\in E_3^{\eps}(t).$ By definition, 
    \begin{equation}\label{estimE3}
        |\mathbf{x}-c_i^\eps(t)|_{\mathbb{R}^3} \leqslant \left(1-\frac{1}{2n} \right) |\mathbf{y}-c_i^\eps(t)|_{\mathbb{R}^3}.
    \end{equation}
    Compared to \eqref{estimE1}, in \eqref{estimE3} we have exchanged the roles of $\mathbf{x}$ and $\mathbf{y}$, hence the bound \eqref{minE1} becomes
    \begin{equation}\label{minE3}
        |\mathbf{x}-\mathbf{y}|_{\mathbb{R}^3}\geqslant\frac{1}{2n}|\mathbf{y}-c_i^\eps(t)|_{\mathbb{R}^3},
    \end{equation}
    while \eqref{trieqE1} becomes
    \begin{equation}\label{trieqE3}
        |2(\mathbf{x} - c_i^\eps(t)) - (\mathbf{y}-c_i^\eps(t))|_{\R^3}\leqslant3 |\mathbf{y}-c_i^\eps(t)|_{\R^3}.
    \end{equation}
    Putting together \eqref{new-calK}, \eqref{eq:maj_x_wedge_y}, \eqref{minE3} and \eqref{trieqE3} gives, by Cauchy-Schwarz inequality,
    \begin{align*}
        \left|\mathcal{K}^{\eps}(t,\mathbf{x},\mathbf{y})\right|&\leqslant|\mathbf{y}-c_i^\eps(t)|_{\R^3}|\mathbf{x}-\mathbf{y}|_{\R^3}\frac{3|\mathbf{y}-c_i^\eps(t)|_{\R^3}^2}{|\mathbf{x}-c_i^\eps(t)|_{\R^3}^2|\mathbf{x}-\mathbf{y}|_{\R^3}^2}\\
        &\leqslant\frac{3|\mathbf{y}-c_i^\eps(t)|^3_{\R^3}}{|\mathbf{x} - c_i^\eps(t)|_{\R^3}^2|\mathbf{x}-\mathbf{y}|_{\R^3}}\\
        &\leqslant\frac{6n|\mathbf{y}-c_i^\eps(t)|^2_{\R^3}}{|\mathbf{x}-c_i^\eps(t)|^2_{\R^3}}\cdot
    \end{align*}
    Thus, we obtain that
    \begin{equation}\label{e-A13}
        \begin{aligned}
        |A_{1,3}^{\eps}(t)|&\leqslant \frac{12n^2|\Gamma_i|}{\pi} \iint_{E_3^{\eps}(t)} |\mathbf{y}-c_i^\eps(t)|^2_{\R^3}|\mathbf{x}-c_i^\eps(t)|_{\R^3}^{4n-4} \frac{\zeta_i^\eps(t,\mathbf{y})}{\Gamma_i}\frac{\zeta_i^\eps(t,\mathbf{x})}{\Gamma_i} \dd \boldsymbol{\sigma}(\mathbf{y})\dd \boldsymbol{\sigma}(\mathbf{x})\\
        &\leqslant\frac{12 n^2|\Gamma_i|}{\pi} I_i^\eps(t) m_{n-1,i}(t).
    \end{aligned}
    \end{equation}
    Regarding $A_{1,2}^{\eps}(t)$, we split into two terms
    \begin{align*}
        A_{1,2}^{\eps}(t) & = \frac{2n}{\pi \Gamma_i} \iint_{E_2(t)} \big(\mathbf{x} - c_i^\eps(t)\big) \cdot \frac{\mathbf{x}\wedge \mathbf{y}}{|\mathbf{x}-\mathbf{y}|_{\R^3}^2}|\mathbf{x}-c_i^\eps(t)|_{\R^3}^{4n-2} \zeta_i^\eps(t,\mathbf{y})\zeta_i^\eps(t,\mathbf{x}) \dd \boldsymbol{\sigma}(\mathbf{y})\dd \boldsymbol{\sigma}(\mathbf{x}) \\
        & \quad-\frac{2n}{\pi \Gamma_i} \iint_{E_2(t)} \big(\mathbf{x} - c_i^\eps(t)\big) \cdot (\mathbf{x}\wedge \mathbf{y})|\mathbf{x}-c_i^\eps(t)|_{\R^3}^{4n-4} \zeta_i^\eps(t,\mathbf{y})\zeta_i^\eps(t,\mathbf{x}) \dd \boldsymbol{\sigma}(\mathbf{y})\dd \boldsymbol{\sigma}(\mathbf{x})\\
        &\triangleq A_{1,2,1}^{\eps}(t)+A_{1,2,2}^{\eps}(t).
    \end{align*}
    Take $(\mathbf{x},\mathbf{y})\in E_2^{\eps}(t).$ By definition, we have
    \begin{equation}\label{estimE2}
        \left(1-\frac{1}{2n}\right)|\mathbf{x}-c_i^\eps(t)|_{\mathbb{R}^3} < |\mathbf{y}-c_i^\eps(t)|_{\mathbb{R}^3} < \left(1-\frac{1}{2n}\right)^{-1} |\mathbf{x}-c_i^\eps(t)|_{\mathbb{R}^3}.
    \end{equation}
    In particular,
    $$|\mathbf{x}-c_i^\eps(t)| \leqslant 2 |\mathbf{y}-c_i^\eps(t)|.$$
    Invoking one more time the Cauchy-Schwarz inequality and using \eqref{eq:maj_x_wedge_y} together with the right-triangle inequality, we infer
    \begin{align*}
        \left|\big(\mathbf{x} - c_i^\eps(t)\big) \cdot (\mathbf{x}\wedge \mathbf{y})\right|&\leqslant|\mathbf{x}-c_i^\eps(t)|_{\mathbb{R}^3}|\mathbf{x}-\mathbf{y}|_{\R^3}\\
        &\leqslant2|\mathbf{y}-c_i^\eps(t)|_{\R^3}\left(|\mathbf{x}-c_i^\eps(t)|_{\mathbb{R}^3}+|\mathbf{y}-c_i^\eps(t)|_{\mathbb{R}^3}\right)\\
        &\leqslant6|\mathbf{y}-c_i^\eps(t)|_{\mathbb{R}^3}^2.
    \end{align*}
    We deduce that
    \begin{equation}\label{e-A122}
        \begin{aligned}
        \left|A_{1,2,2}^{\eps}(t)\right|&\leqslant\frac{12n|\Gamma_i|}{\pi}\iint_{E_2^{\eps}(t)}|\mathbf{y}-c_i^\eps(t)|^2_{\R^3}|\mathbf{x}-c_i^\eps(t)|_{\R^3}^{4n-4} \frac{\zeta_i^\eps(t,\mathbf{y})}{\Gamma_i}\frac{\zeta_i^\eps(t,\mathbf{x})}{\Gamma_i} \dd \boldsymbol{\sigma}(\mathbf{y})\dd \boldsymbol{\sigma}(\mathbf{x})\\
        &\leqslant\frac{12 n|\Gamma_i|}{\pi} I_i^\eps(t) m_{n-1,i}(t).
    \end{aligned}
    \end{equation}
    As for $A_{1,2,1}^{\eps}(t),$ noticing that 
    \begin{equation*}
        (\mathbf{x} - c_i^\eps(t)) \cdot \mathbf{x}\wedge \mathbf{y} = - c_i^\eps(t) \cdot \mathbf{x}\wedge \mathbf{y},
    \end{equation*}
    we then symmetrize and get
    \begin{align*}
        A_{1,2,1}^{\eps}(t)&= -\frac{2nc_i^\eps(t)}{2\pi\Gamma_i} \cdot \iint_{E_2^{\eps}(t)}\frac{\mathbf{x}\wedge \mathbf{y}}{|\mathbf{x}-\mathbf{y}|_{\R^3}^2}\Big(|\mathbf{x}-c_i^\eps(t)|_{\R^3}^{4n-2}-|\mathbf{y}-c_i^\eps(t)|_{\R^3}^{4n-2}\Big) \zeta_i^\eps(t,\mathbf{y})\zeta_i^\eps(t,\mathbf{x}) \dd \boldsymbol{\sigma}(\mathbf{y})\dd \boldsymbol{\sigma}(\mathbf{x}) \\
        &= -\frac{n}{\pi\Gamma_i}\iint_{E_2^{\eps}(t)} (\mathbf{y} - c_i^\eps(t)) \cdot\frac{\mathbf{x}\wedge \mathbf{y}}{|\mathbf{x}-\mathbf{y}|_{\R^3}^2}\Big(|\mathbf{x}-c_i^\eps(t)|_{\R^3}^{4n-2}-|\mathbf{y}-c_i^\eps(t)|_{\R^3}^{4n-2}\Big) \zeta_i^\eps(t,\mathbf{y})\zeta_i^\eps(t,\mathbf{x}) \dd \boldsymbol{\sigma}(\mathbf{y})\dd \boldsymbol{\sigma}(\mathbf{x}).
    \end{align*}
    Now, using \eqref{estimE2}, we obtain that
    \begin{align*}
        \Big||\mathbf{x}-c_i^\eps(t)|_{\R^3}^{4n-2}-|\mathbf{y}-c_i^\eps(t)|_{\R^3}^{4n-2} \Big| & \leqslant |\mathbf{x}-\mathbf{y}|_{\R^3} \sum_{j=0}^{4n-3} |\mathbf{x}-c_i^\eps(t)|_{\R^3}^{4n-3-j} |\mathbf{y}-c_i^\eps(t)|_{\R^3}^j \\
        & \leqslant |\mathbf{x}-\mathbf{y}|_{\R^3}|\mathbf{x}-c_i^\eps(t)|_{\R^3}^{4n-4} |\mathbf{y}-c_i^\eps(t)|_{\R^3}  \left(\sum_{j=1}^{4n-3} \left(1-\frac{1}{2n}\right)^{-(j-1)} + \left(1-\frac{1}{2n}\right)^{-1} \right).
    \end{align*}
    Standard computations give
    \begin{align*}
        \sum_{j=1}^{4n-3} \left(1-\frac{1}{2n}\right)^{-(j-1)}&=\sum_{j=0}^{4n-4} \left(1-\frac{1}{2n}\right)^{-j}\\
        &=\frac{\left(1-\frac{1}{2n}\right)^{-(4n-3)}-1}{\frac{2n}{2n-1}-1}\\
        &=(2n-1)\left(\left(1-\frac{1}{2n}\right)^{-4n+3}-1\right)\\
        &\leqslant 2n\left(1-\frac{1}{2n}\right)^{-4n}\\
        &\leqslant 32n.
    \end{align*}
    To get the last inequality, we have used the fact that $n\mapsto\left(1-\frac{1}{2n}\right)^{-2n}$ is decreasing. Added to the fact that
    $$\left(1-\frac{1}{2n}\right)^{-1}\leqslant2\leqslant2n,$$
    we deduce that
    \begin{align*}
        \Big||\mathbf{x}-c_i^\eps(t)|_{\R^3}^{4n-2}-|\mathbf{y}-c_i^\eps(t)|_{\R^3}^{4n-2} \Big|\leqslant 34n|\mathbf{x}-\mathbf{y}|_{\R^3}|\mathbf{x}-c_i^\eps(t)|_{\R^3}^{4n-4} |\mathbf{y}-c_i^\eps(t)|_{\R^3}.
    \end{align*}
    With this in hand, then the Cauchy-Schwarz inequality and \eqref{eq:maj_x_wedge_y} imply
    \begin{equation}\label{e-A121}
        \begin{aligned}
        |A_{1,2,1}^{\eps}(t)|&\leqslant\frac{34n^2|\Gamma_i|}{\pi}\iint_{E_2^{\eps}(t)}|\mathbf{y}-c_i^\eps(t)|^2_{\R^3}|\mathbf{x}-c_i^\eps(t)|_{\R^3}^{4n-4} \frac{\zeta_i^\eps(t,\mathbf{y})}{\Gamma_i}\frac{\zeta_i^\eps(t,\mathbf{x})}{\Gamma_i} \dd \boldsymbol{\sigma}(\mathbf{y})\dd \boldsymbol{\sigma}(\mathbf{x})\\
        &\leqslant\frac{34n^2|\Gamma_i|}{\pi} I_i^\eps(t) m_{n-1,i}^\eps(t).
    \end{aligned}
    \end{equation}
    Putting together \eqref{e-A122} and \eqref{e-A121}, we infer
    \begin{equation}\label{e-A12}
        |A_{1,2}^{\eps}(t)| \leqslant \frac{46n^2|\Gamma_i|}{\pi}I_i^\eps(t)m_{n-1,i}^\eps(t) .
    \end{equation}
    Combining \eqref{e-A11}, \eqref{e-A13} and \eqref{e-A12}, we obtain
    \begin{equation}\label{e-A1}
        |A_1^{\eps}(t)| \leqslant \frac{70n^2|\Gamma_i|}{\pi}I_i^\eps(t)m_{n-1,i}^\eps(t).
    \end{equation}
We now turn to the analysis of $A_2^{\eps}(t)$, which we recall to be
\begin{equation*}
    A_2^{\eps}(t) = \frac{4n}{\Gamma_i}\int_{\mathbb{S}^2} \big(\mathbf{x} - c_i^\eps(t)\big) \cdot \left(F_i^\eps(t,\mathbf{x}) - \frac{1}{\Gamma_i}\int_{\mathbb{S}^2} F_i^\eps(t,\mathbf{y})\zeta_i^\eps(t,\mathbf{y})\dd \boldsymbol{\sigma}(\mathbf{y})\right)|\mathbf{x}-c_i^\eps(t)|_{\R^3}^{4n-2} \zeta_i^\eps(t,\mathbf{x}) \dd \boldsymbol{\sigma}(\mathbf{x}).
\end{equation*}By definition~\ref{def:Dprime} of $D_\eps$, more precisely by relation~\eqref{eq:Flip2}, we have that
    \begin{equation*}
        |A_2^{\eps}(t)| \leqslant 4n D_\eps \iint_{\mathbb{S}^2\times\mathbb{S}^2}|\mathbf{x}-\mathbf{y}|_{\R^3} \frac{\zeta_i^\eps(t,\mathbf{y})}{\Gamma_i}\dd \boldsymbol{\sigma}(\mathbf{y})|\mathbf{x}-c_i^\eps(t)|_{\R^3}^{4n-1} \frac{\zeta_i^\eps(t,\mathbf{x})}{\Gamma_i} \dd \boldsymbol{\sigma}(\mathbf{x}).
    \end{equation*}
    By right-triangular inequality, $|\mathbf{x}-\mathbf{y}|_{\R^3} \leqslant |\mathbf{x}-c_i^\eps(t)|_{\R^3} + |\mathbf{y}-c_i^\eps(t)|_{\R^3}$ and therefore
    \begin{equation*}
        |A_2^{\eps}(t)| \leqslant 4n D_\eps \left( m_{n,i}^\eps(t) + \int_{\mathbb{S}^2}|\mathbf{y}-c_i^\eps(t)|_{\R^3} \frac{\zeta_i^\eps(t,\mathbf{y})}{\Gamma_i}\dd \boldsymbol{\sigma}(\mathbf{y})\int_{\mathbb{S}^2}|\mathbf{x}-c_i^\eps(t)|_{\R^3}^{4n-1} \frac{\zeta_i^\eps(t,\mathbf{x})}{\Gamma_i} \dd \boldsymbol{\sigma}(\mathbf{x})\right).
    \end{equation*}
    By Cauchy-Schwarz inequality and the definition of $\Gamma_i$, we get
    $$\int_{\mathbb{S}^2}|\mathbf{y}-c_i^\eps(t)|_{\R^3} \frac{\zeta_i^\eps(t,\mathbf{y})}{\Gamma_i}\dd \boldsymbol{\sigma}(\mathbf{y})=\int_{\mathbb{S}^2}|\mathbf{y}-c_i^\eps(t)|_{\R^3} \sqrt{\frac{\zeta_i^\eps(t,\mathbf{y})}{\Gamma_i}}\sqrt{\frac{\zeta_i^\eps(t,\mathbf{y})}{\Gamma_i}}\dd \boldsymbol{\sigma}(\mathbf{y})\leqslant\sqrt{I_i^{\eps}(t)}.$$
    Besides, given $\mathbf{x}\in\textnormal{supp}\big(\zeta_i^{\eps}(t,\cdot)\big)$, since $c_i^\eps$ belongs to the convex envelop of $\textnormal{supp}\big(\zeta_i^{\eps}(t,\cdot)\big)$, we have that
    $$|\mathbf{x}-c_i^\eps(t)|_{\R^3}\leqslant 2 \eps^\beta.$$
    Combining the foregoing calculations leads to
    \begin{equation}\label{e-A2}
        |A_2^{\eps}(t)| \leqslant 4n D_\eps \left( m_{n,i}^\eps(t) + 2\sqrt{I_i^\eps(t)} \eps^{3\beta} m_{n-1,i}^\eps(t)\right).
    \end{equation}
    Gathering \eqref{e-A1} and \eqref{e-A2}, we deduce
    \begin{equation*}
        \der{}{t} m_{n,i}^\eps(t) \leqslant \frac{70n^2|\Gamma_i|}{\pi}I_i^\eps(t)m_{n-1,i}^{\eps}(t) + 4n D_\eps \left( m_{n,i}^\eps(t) + 2\sqrt{I_i^\eps(t)} \eps^{3\beta} m_{n-1,i}^\eps(t)\right).
    \end{equation*}
    Using Hölder's inequality, we notice that
    \begin{equation*}
        m_{n-1,i}^\eps(t)  = \int_{\mathbb{S}^2}\left(\frac{1}{\Gamma_i}|\mathbf{x}-c_i^\eps(t)|_{\R^3}^{4n} \zeta_i^\eps(t,\mathbf{x})\right)^{\frac{n-1}{n}} \left( \frac{1}{\Gamma_i} \zeta_i^\eps(t,\mathbf{x})\right)^{\frac{1}{n}}\dd \boldsymbol{\sigma}(\mathbf{x}) \leqslant \big(m_{n,i}^{\eps}(t)\big)^\frac{n-1}{n},
    \end{equation*}
    which concludes the proof of this lemma.
    \end{proof}

All these estimates are weak confinement properties. We now introduce the strong confinement tools by controlling the growth of the support of the absolute vorticity.

\subsection{Growth of the support}\label{sec:growth}
    Let us introduce for all $t\geqslant0,$
\begin{equation}\label{def:radmax}
    R_i^{\varepsilon}(t) \triangleq \inf \big\{ r > 0\quad\textnormal{s.t.}\quad \supp\big(\zeta_i^\eps(t,\cdot)\big)\subset B\big(c_i^\eps(t),r\big) \big\}.
\end{equation}
By compactness of the support, we get the existence of $X_i^{\varepsilon}(t)\in\supp\big(\zeta_i^{\varepsilon}(t,\cdot)\big)$ such that 
\begin{equation}\label{formula:radmax}
    |X_i^{\varepsilon}(t)-c_i^\eps(t)|_{\mathbb{R}^3}=R_i^{\varepsilon}(t).
\end{equation}\\
We denote by $s\mapsto X_{t,i}^{\varepsilon}(s)$ the trajectory passing through $X_i^{\varepsilon}(t)$ at time $t$, i.e. the solution of the Cauchy problem
\begin{equation}\label{def:flowt}
    \der{}{s}X_{t,i}^{\varepsilon}(s)=u_i^{\varepsilon}\big(s,X_{t,i}^{\varepsilon}(s)\big)+F_i^{\varepsilon}\big(s,X_{t,i}^{\varepsilon}(s)\big),\qquad X_{t,i}^{\varepsilon}(t)=X_i^{\varepsilon}(t).
\end{equation}
These trajectories are continuous, so $t\mapsto R_i^\eps(t)$ is also continuous.
Then, we have the following lemma used to estimate the growth of the support.
\begin{lem}\label{lem:vitesse_radiale}
For any $t\leqslant \tau_{\eps,\beta}$ we have that
\begin{equation*}
    \der{}{s}\left|X_{t,i}^{\varepsilon}(s)-c_i^\eps(s)\right|_{\mathbb{R}^3}\Big|_{s=t} \leqslant D_\eps R_i^{\varepsilon}(t) + \frac{11I_i^\eps(t)}{2(R_i^\eps(t))^3} + \left(\frac{M\eps^{-\eta}|\Gamma_i|}{\pi}\mathtt{m}_{t,i}^\eps\left(\frac{R_i^{\varepsilon}(t)}{2}\right)\right)^\frac{1}{2},
\end{equation*}
where
\begin{equation}\label{defttm}
    \mathtt{m}_{t,i}^\eps(r)\triangleq\int_{\mathbb{S}^2 \setminus \mathcal{C}(c_i^\eps(t),r)}\zeta_i^\eps(t,\mathbf{x})\dd \boldsymbol{\sigma} (\mathbf{x}),
\end{equation}
with
$$\forall x\in\mathbb{R}^3,\quad\forall r>0,\quad\mathcal{C}(x,r)\triangleq\Big\{y\in\mathbb{S}^2\quad\textnormal{s.t.}\quad|x-y|_{\mathbb{R}^3}\leqslant r\Big\}=B(x,r)\cap\mathbb{S}^2.$$
\end{lem}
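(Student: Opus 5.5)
Set $X\triangleq X_i^\eps(t)$, $c\triangleq c_i^\eps(t)$, $R\triangleq R_i^\eps(t)=|X-c|_{\R^3}$ and $\mathbf{n}\triangleq (X-c)/R$. The plan is to differentiate $s\mapsto |X_{t,i}^\eps(s)-c_i^\eps(s)|_{\R^3}$ at $s=t$ and to split the resulting radial velocity into three pieces, one for each term of the claimed bound.

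By \eqref{def:flowt} and \eqref{derivee de c}, the derivative at $s=t$ equals
\begin{equation*}
\mathbf{n}\cdot\Big(F_i^\eps(t,X)-\frac{1}{\Gamma_i}\int_{\mathbb{S}^2}F_i^\eps(t,\mathbf{y})\zeta_i^\eps(t,\mathbf{y})\dd\boldsymbol{\sigma}(\mathbf{y})\Big)+\mathbf{n}\cdot u_i^\eps(t,X).
\end{equation*}

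\emph{Exterior field.} For the first term I apply \eqref{eq:Flip2} with $x=X$. This bounds it by $D_\eps\int_{\mathbb{S}^2}|X-\mathbf{y}|_{\R^3}\zeta_i^\eps/\Gamma_i\dd\boldsymbol{\sigma}$. Since $c$ lies in the convex hull of the support, every $\mathbf{y}$ in the support satisfies $|X-\mathbf{y}|\leqslant|X-c|+|c-\mathbf{y}|$. This yields a bound of the form $D_\eps(R+\sqrt{I_i^\eps})$, and I would hope to sharpen it to exactly $D_\eps R$. One route is to use $|X-\mathbf{y}|^2$ expansions together with the centering identity \eqref{center cancel}. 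If that fails, I would absorb the $\sqrt{I}$ contribution, since $\sqrt{I}\leqslant R$. I expect the stated constant to come out of the averaged form of \eqref{eq:Flip2}.

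\emph{Self-induced field.} Next I write
\begin{equation*}
\mathbf{n}\cdot u_i^\eps(t,X)=\frac{1}{2\pi}\int_{\mathbb{S}^2}\mathbf{n}\cdot\frac{X\wedge\mathbf{y}}{|X-\mathbf{y}|^2}\zeta_i^\eps(t,\mathbf{y})\dd\boldsymbol{\sigma}(\mathbf{y})
\end{equation*}
and split the domain into the near part $\mathcal{C}(c,R/2)$ and its complement.

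On the complement, I use $|\mathbf{n}\cdot(X\wedge\mathbf{y})|\leqslant|X-\mathbf{y}|$ from \eqref{eq:maj_x_wedge_y}. The integral is then at most $\frac{1}{2\pi}\int\frac{|\zeta_i^\eps|}{|X-\mathbf{y}|}$ over a set of mass $\mathtt{m}_{t,i}^\eps(R/2)$ with density bounded by $M\eps^{-\eta}$. The usual rearrangement argument concentrates that mass in a cap around $X$ of area $\mathtt{m}/(M\eps^{-\eta})$. This gives a bound of order $\sqrt{M\eps^{-\eta}\,\mathtt{m}}$ and produces the square-root term; the constant follows from $\int_{\mathcal{C}(X,\rho)}|X-\mathbf{y}|^{-1}\dd\boldsymbol{\sigma}\leqslant 2\pi\rho$. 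If needed I would use the $|\Gamma_i|$ factor through $\zeta_i^\eps/\Gamma_i$.

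On the near part, $|\mathbf{y}-c|\leqslant R/2$, so $|X-\mathbf{y}|\geqslant R/2$ and the kernel is smooth there. I subtract the zeroth-order term, as in the treatment of $A_1^\eps$. The key identity is $\mathbf{n}\cdot(X\wedge c)=0$, because $(X-c)\cdot(X\wedge c)=0$. Hence
\begin{equation*}
\mathbf{n}\cdot\frac{X\wedge\mathbf{y}}{|X-\mathbf{y}|^2}=\mathbf{n}\cdot(X\wedge(\mathbf{y}-c))\Big(\frac{1}{|X-\mathbf{y}|^2}-\frac{1}{R^2}\Big)+\frac{\mathbf{n}\cdot(X\wedge(\mathbf{y}-c))}{R^2}.
\end{equation*}
The last term is linear in $\mathbf{y}-c$. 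Over the full domain it integrates to zero by \eqref{center cancel}, so over the near part it equals minus its integral over the far part. On the far part $|\mathbf{y}-c|\geqslant R/2$, so that contribution is bounded by $\frac{1}{R^2}\int|\mathbf{y}-c|\leqslant\frac{2}{R^3}\int|\mathbf{y}-c|^2$, which is $\lesssim I/R^3$. For the difference term, I use \eqref{useful-identity00} with $c$ as the reference point. The product is then controlled by
\begin{equation*}
|\mathbf{y}-c|\cdot|\mathbf{y}-c|\,(2R+R/2)\big/\big(R^2(R/2)^2\big)\lesssim |\mathbf{y}-c|^2/R^3,
\end{equation*}
using $|\mathbf{n}\cdot(X\wedge(\mathbf{y}-c))|\leqslant|\mathbf{y}-c|$. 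Integrating gives a $CI/R^3$ bound, and bookkeeping the constants should lead to $11/2$.

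\emph{Main obstacle.} I expect the hardest step to be this near-field cancellation on the sphere. The vector $X\wedge\mathbf{y}$ is not translation invariant, so I must check carefully that the linear term truly vanishes using $X\wedge c$. The other delicate point is obtaining exactly $D_\eps R$ in the exterior term.
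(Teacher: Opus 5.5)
Your proposal follows the paper's proof essentially step for step. The radial velocity is split into $\mathbf n\cdot\big(F_i^\eps(t,X)-\der{}{t}c_i^\eps(t)\big)$, controlled by \eqref{eq:Flip2}, and $\mathbf n\cdot u_i^\eps(t,X)$, cut at $\mathcal C(c,R/2)$. On the near part the paper also subtracts the kernel frozen at $|X-c|^{-2}=R^{-2}$, and uses $\mathbf n\cdot(X\wedge c)=0$ together with \eqref{center cancel} to move the linear remainder to the far region (its terms $H_{1,1}$ and $H_{1,2}$). The far part is handled by the same rearrangement into a cap of area $\pi r^2$ around the singularity.

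The differences are only in bookkeeping. On the near part, your bound $|X\wedge(\mathbf y-c)|\leqslant|\mathbf y-c|$ is cruder than the paper's $|X\wedge\mathbf y|\leqslant|X-\mathbf y|$, which lets $|X-\mathbf y|^2$ cancel instead of being bounded below by $R^2/4$. In units of $|\Gamma_i|I_i^\eps/R^3$, this gives coefficient $3/(2\pi)$ for the paper versus $5/\pi$ for you. On the far part, your estimate $|\mathbf y-c|\leqslant 2|\mathbf y-c|^2/R$ beats the paper's Chebyshev step ($1/\pi$ versus $4/\pi$).

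The constants you could not close are not a defect of your argument; the statement is slightly too optimistic.

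\emph{Exterior term.} \eqref{eq:Flip2} at $x=X$ yields exactly $D_\eps\int_{\mathbb S^2}|X-\mathbf y|_{\R^3}\frac{\zeta_i^\eps(t,\mathbf y)}{\Gamma_i}\dd\boldsymbol\sigma(\mathbf y)$. The paper's passage from this to $D_\eps R$ is unjustified. The integral is always at least $|X-c|=R$, and it is close to $4R/\pi$ when the mass is spread along a small circle of radius $R$ centred at $c$. Your $|X-\mathbf y|^2$ expansion gives the correct replacement $D_\eps\sqrt{R^2+I_i^\eps}\leqslant\sqrt2\,D_\eps R$.

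\emph{The $I_i^\eps/R^3$ coefficient.} The paper's own bounds $|H_{1,1}|\leqslant\frac{3|\Gamma_i|I_i^\eps}{2\pi R^3}$ and $|H_{1,2}|\leqslant\frac{4|\Gamma_i|I_i^\eps}{\pi R^3}$ add up to $\frac{11|\Gamma_i|}{2\pi}\frac{I_i^\eps}{R^3}$, not $\frac{11}{2}\frac{I_i^\eps}{R^3}$.

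\emph{The square-root term.} The factor $|\Gamma_i|$ under the square root is right only if $\mathtt m_{t,i}^\eps$ is normalised by $\Gamma_i$, as it is where it is actually estimated (Lemma~\ref{lem:4n_estExt}). With \eqref{defttm} as written, your rearrangement correctly produces $\big(M\eps^{-\eta}|\mathtt m_{t,i}^\eps(R/2)|/\pi\big)^{1/2}$.

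None of this matters later. The lemma is only used through the comparison ODE \eqref{EDOfi}, where every term carries a factor $2$ and $C_2$ is a generic constant. So there is no need to reach $D_\eps R$ and $11/2$ literally: stating the bound with $\sqrt2\,D_\eps R$ and a constant $C|\Gamma_i|$ in front of $I_i^\eps/R^3$ is both what is provable and what is needed.
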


\begin{proof}
    We follow the original proof by Buttà-Marchioro in the planar case \cite[Lem 2.5]{BM18} and adapt it to our situation. We fix $t\leqslant\tau_{\varepsilon,\beta}.$ By definition \eqref{def:flowt}, one readily has by differenciating
    $$\der{}{s}|X_{t,i}^{\varepsilon}(s)-c_i^\eps(s)|_{\mathbb{R}^3}\Big|_{s=t}=\left(u_i^{\varepsilon}\big(t,X_{i}^{\varepsilon}(t)\big)+F_i^{\varepsilon}\big(t,X_{i}^{\varepsilon}(t)\big)-\der{}{t}c_i^{\varepsilon}(t)\right)\cdot\frac{X_{i}^{\varepsilon}(t)-c_i^{\varepsilon}(t)}{\left|X_{i}^{\varepsilon}(t)-c_i^{\varepsilon}(t)\right|_{\mathbb{R}^3}}\cdot$$
    Using \eqref{pb:zetai}, \eqref{mass conserv} and \eqref{derivee de c}, we can write
    \begin{align*}
        \der{}{s}\left|X_{t,i}^{\varepsilon}(s)-c_i^\eps(s)\right|_{\mathbb{R}^3}\Big|_{s=t}&=\frac{1}{2\pi}\int_{\mathbb{S}^2}\frac{X_{i}^{\varepsilon}(t)\wedge\mathbf{y}}{|X_{i}^{\varepsilon}(t)-\mathbf{y}|_{\mathbb{R}^3}^2}\zeta_i^{\varepsilon}(t,\mathbf{y})d\boldsymbol{\sigma}(\mathbf{y})\cdot\frac{X_{i}^{\varepsilon}(t)-c_i^{\varepsilon}(t)}{\left|X_{i}^{\varepsilon}(t)-c_i^{\varepsilon}(t)\right|_{\mathbb{R}^3}}\\
        &\quad+\frac{1}{\Gamma_i}\left(\int_{\mathbb{S}^2}\left[F_i^{\varepsilon}\big(t,X_{i}^{\varepsilon}(t)\big)-F_i^{\varepsilon}(t,\mathbf{y})\right]\zeta_i^{\varepsilon}(t,\mathbf{x})d\boldsymbol{\sigma}(\mathbf{y})\right)\cdot\frac{X_{i}^{\varepsilon}(t)-c_i^{\varepsilon}(t)}{\left|X_{i}^{\varepsilon}(t)-c_i^{\varepsilon}(t)\right|_{\mathbb{R}^3}}\cdot
    \end{align*}
    Then, by definition of $D_{\varepsilon}$ in \ref{def:Dprime}, using relations~\eqref{eq:Flip2}, \eqref{mass conserv} and the definition \eqref{def:radmax}, we get
    \begin{align*}
        \left|\int_{\mathbb{S}^2}\left[F_i^{\varepsilon}\big(t,X_{i}^{\varepsilon}(t)\big)-F_i^{\varepsilon}(t,\mathbf{y})\right]\frac{\zeta_i^{\varepsilon}(t,\mathbf{y})}{\Gamma_i}d\boldsymbol{\sigma}(\mathbf{y})\right|&\leqslant D_\eps\int_{\mathbb{S}^2}|X_{i}^{\varepsilon}(t)-\mathbf{y}|_{\mathbb{R}^3}\frac{\zeta_i^{\varepsilon}(t,\mathbf{y})}{\Gamma_i}d\boldsymbol{\sigma}(\mathbf{y})\\
        &\leqslant D_\eps R_i^{\varepsilon}(t).
    \end{align*}
    Now, we split
    \begin{align*}
        \frac{1}{2\pi}\int_{\mathbb{S}^2}\frac{X_{i}^{\varepsilon}(t)\wedge\mathbf{y}}{|X_{i}^{\varepsilon}(t)-\mathbf{y}|_{\mathbb{R}^3}^2}\zeta_i^{\varepsilon}(t,\mathbf{y})d\boldsymbol{\sigma}(\mathbf{y})\cdot\frac{X_{i}^{\varepsilon}(t)-c_i^{\varepsilon}(t)}{\left|X_{i}^{\varepsilon}(t)-c_i^{\varepsilon}(t)\right|_{\mathbb{R}^3}}&=H_1+H_2,
    \end{align*}
    where
    \begin{align*}
        H_1&\triangleq\frac{1}{2\pi}\int_{\mathcal{C}\left(c_i^{\varepsilon}(t),\frac{R_i^{\varepsilon}(t)}{2}\right)}\frac{X_{i}^{\varepsilon}(t)-c_i^{\varepsilon}(t)}{\left|X_{i}^{\varepsilon}(t)-c_i^{\varepsilon}(t)\right|_{\mathbb{R}^3}}\cdot\frac{X_{i}^{\varepsilon}(t)\wedge\mathbf{y}}{|X_{i}^{\varepsilon}(t)-\mathbf{y}|_{\mathbb{R}^3}^2}\zeta_i^{\varepsilon}(t,\mathbf{y})d\boldsymbol{\sigma}(\mathbf{y}),\\
        H_2&\triangleq\frac{1}{2\pi}\int_{\mathbb{S}^2\setminus\mathcal{C}\left(c_i^{\varepsilon}(t),\frac{R_i^{\varepsilon}(t)}{2}\right)}\frac{X_{i}^{\varepsilon}(t)-c_i^{\varepsilon}(t)}{\left|X_{i}^{\varepsilon}(t)-c_i^{\varepsilon}(t)\right|_{\mathbb{R}^3}}\cdot\frac{X_{i}^{\varepsilon}(t)\wedge\mathbf{y}}{|X_{i}^{\varepsilon}(t)-\mathbf{y}|_{\mathbb{R}^3}^2}\zeta_i^{\varepsilon}(t,\mathbf{y})d\boldsymbol{\sigma}(\mathbf{y}).
    \end{align*}
    Observe that 
    \begin{align*}
        \big(X_{i}^{\varepsilon}(t)-c_i^{\varepsilon}(t)\big)\cdot\int_{\mathbb{S}^2}\Big(X_{i}^{\varepsilon}(t)\wedge\mathbf{y}\big)\zeta_i^{\varepsilon}(t,\mathbf{y})d\boldsymbol{\sigma}(\mathbf{y})&=\big(X_{i}^{\varepsilon}(t)-c_i^{\varepsilon}(t)\big)\cdot\left(X_{i}^{\varepsilon}(t)\wedge\int_{\mathbb{S}^2}\mathbf{y}\zeta_i^{\varepsilon}(t,\mathbf{y})d\boldsymbol{\sigma}(\mathbf{y})\right)\\
        &=\big(X_{i}^{\varepsilon}(t)-c_i^{\varepsilon}(t)\big)\cdot\big(X_{i}^{\varepsilon}(t)\wedge c_i^{\varepsilon}(t)\big)\\
        &=0.
    \end{align*}
    Consequently, we can write
    $$H_1=H_{1,1}-H_{1,2},$$
    with
    \begin{align*}
        H_{1,1}&=\frac{1}{2\pi}\int_{\mathcal{C}\left(c_i^{\varepsilon}(t),\frac{R_i^{\varepsilon}(t)}{2}\right)}\frac{X_{i}^{\varepsilon}(t)-c_i^{\varepsilon}(t)}{\left|X_{i}^{\varepsilon}(t)-c_i^{\varepsilon}(t)\right|_{\mathbb{R}^3}}\cdot\big(X_{i}^{\varepsilon}(t)\wedge\mathbf{y}\big)\left(\frac{1}{|X_{i}^{\varepsilon}(t)-\mathbf{y}|_{\mathbb{R}^3}^2}-\frac{1}{|X_{i}^{\varepsilon}(t)-c_i^{\varepsilon}(t)|_{\mathbb{R}^3}^2}\right)\zeta_i^{\varepsilon}(t,\mathbf{y})d\boldsymbol{\sigma}(\mathbf{y}),\\
        H_{1,2}&\triangleq\frac{1}{2\pi}\int_{\mathbb{S}^2\setminus\mathcal{C}\left(c_i^{\varepsilon}(t),\frac{R_i^{\varepsilon}(t)}{2}\right)}\frac{X_{i}^{\varepsilon}(t)-c_i^{\varepsilon}(t)}{\left|X_{i}^{\varepsilon}(t)-c_i^{\varepsilon}(t)\right|_{\mathbb{R}^3}^3}\cdot\big(X_{i}^{\varepsilon}(t)\wedge\mathbf{y}\big)\zeta_i^{\varepsilon}(t,\mathbf{y})d\boldsymbol{\sigma}(\mathbf{y}).
    \end{align*}
    Using the identities \eqref{useful-identity00} and \eqref{formula:radmax}, the term $H_{1,1}$ becomes
    $$H_{1,1}=\frac{1}{2\pi\left(R_i^{\varepsilon}(t)\right)^3}\int_{\mathcal{C}\left(c_i^{\varepsilon}(t),\frac{R_i^{\varepsilon}(t)}{2}\right)}\big(X_{i}^{\varepsilon}(t)-c_i^{\varepsilon}(t)\big)\cdot\big(X_{i}^{\varepsilon}(t)\wedge\mathbf{y}\big)\frac{\Big(2\big(X_{i}^{\varepsilon}(t)-c_i^\eps(t)\big) - \big(\mathbf{y} - c_i^\eps(t)\big)\Big) \cdot \big(\mathbf{y}-c_i^\eps(t)\big) }{|X_{i}^{\varepsilon}(t)-\mathbf{y}|^2_{\R^3}}\zeta_i^{\varepsilon}(t,\mathbf{y})d\boldsymbol{\sigma}(\mathbf{y}).$$
    Notice that $\big(X_{i}^{\varepsilon}(t)-c_i^{\varepsilon}(t)\big)\cdot\big(X_{i}^{\varepsilon}(t)\wedge\mathbf{y}\big)=\big(\mathbf{y}-c_i^{\varepsilon}(t)\big)\cdot\big(X_{i}^{\varepsilon}(t)\wedge\mathbf{y}\big)$. Hence,
    $$H_{1,1}=\frac{1}{2\pi\left(R_i^{\varepsilon}(t)\right)^3}\int_{\mathcal{C}\left(c_i^{\varepsilon}(t),\frac{R_i^{\varepsilon}(t)}{2}\right)}\big(\mathbf{y}-c_i^{\varepsilon}(t)\big)\cdot\big(X_{i}^{\varepsilon}(t)\wedge\mathbf{y}\big)\frac{\Big(2\big(X_{i}^{\varepsilon}(t)-c_i^\eps(t)\big) - \big(\mathbf{y} - c_i^\eps(t)\big)\Big) \cdot \big(\mathbf{y}-c_i^\eps(t)\big) }{|X_{i}^{\varepsilon}(t)-\mathbf{y}|^2_{\R^3}}\zeta_i^{\varepsilon}(t,\mathbf{y})d\boldsymbol{\sigma}(\mathbf{y}).$$
    Now, remark that for $\mathbf{y}\in\mathcal{C}\left(c_i^{\varepsilon}(t),\frac{R_i^{\varepsilon}(t)}{2}\right)$, we have $|\mathbf{y}-c_i^{\varepsilon}(t)|_{\mathbb{R}^3}\leqslant\frac{R_i^{\varepsilon}(t)}{2}\cdot$ Thus, by \eqref{formula:radmax} and left-triangular inequality, we infer
    $$|X_{i}^{\varepsilon}(t)-\mathbf{y}|_{\R^3}\geqslant|X_{i}^{\varepsilon}(t)-c_i^{\varepsilon}(t)|_{\R^3}-|\mathbf{y}-c_i^{\varepsilon}(t)|_{\R^3}\geqslant\frac{R_i^{\varepsilon}(t)}{2}\cdot$$
    As a consequence, by triangular inequality and \eqref{formula:radmax},
    $$\left|2\big(X_{i}^{\varepsilon}(t)-c_i^\eps(t)\big) - \big(\mathbf{y} - c_i^\eps(t)\big)\right|_{\mathbb{R}^3}\leqslant|X_{i}^{\varepsilon}(t)-\mathbf{y}|_{\mathbb{R}^3}+|X_{i}^{\varepsilon}(t)-c_i^{\varepsilon}(t)|_{\mathbb{R}^3}\leqslant 3|X_{i}^{\varepsilon}(t)-\mathbf{y}|_{\mathbb{R}^3}.$$
    Applying one more time the Cauchy-Schwarz inequality, we deduce that
    $$|H_{1,1}|\leqslant\frac{3|\Gamma_i|I_{i}^{\varepsilon}(t)}{2\pi\left(R_i^{\varepsilon}(t)\right)^3}\cdot$$
    Now we shall estimate $H_{1,2}.$ By construction \eqref{def:radmax}-\eqref{formula:radmax}, for $\mathbf{y}\in\supp\big(\zeta_i^{\varepsilon}(t,\cdot)\big),$
    $$|X_i^{\varepsilon}(t)-\mathbf{y}|_{\mathbb{R}^3}\leqslant|X_i^{\varepsilon}(t)-c_i^{\varepsilon}(t)|_{\mathbb{R}^3}+|c_i^{\varepsilon}(t)-\mathbf{y}|_{\mathbb{R}^3}\leqslant 2R_i^{\varepsilon}(t).$$
    Hence, Cauchy-Schwarz ineaquality implies
    $$\left|\frac{X_{i}^{\varepsilon}(t)-c_i^{\varepsilon}(t)}{\left|X_{i}^{\varepsilon}(t)-c_i^{\varepsilon}(t)\right|_{\mathbb{R}^3}^3}\cdot\big(X_{i}^{\varepsilon}(t)\wedge\mathbf{y}\big)\right|\leqslant\frac{2}{R_i^{\varepsilon}(t)}\cdot$$
    Therefore,
    $$|H_{1,2}|\leqslant\frac{|\Gamma_i|}{\pi R_i^{\varepsilon}(t)}\int_{\mathbb{S}^2\setminus\mathcal{C}\left(c_i^{\varepsilon}(t),\frac{R_i^{\varepsilon}(t)}{2}\right)}\frac{\zeta_i^{\varepsilon}(t,\mathbf{y})}{\Gamma_i}d\boldsymbol{\sigma}(\mathbf{y}).$$
    We consider the measure $\mu_{t,i}^{\varepsilon}$ defined on the measurable space $\big(\mathbb{S}^2,\mathcal{B}(\mathbb{S}^2)\big)$ given by
    $$\mu_{t,i}^{\varepsilon}(A)=\int_{A}\frac{\zeta_i^{\varepsilon}(t,\mathbf{y})}{\Gamma_i}\dd \sigma(\mathbf{y}).$$
    According to \eqref{mass conserv}, $\mu_{t,i}^{\varepsilon}$ is a probability measure on $\mathbb{S}^2$, which is absolutely continuous with respect to $\boldsymbol{\sigma}.$ The classical Markov inequality gives
    \begin{align*}
        |H_{1,2}|&\leqslant\frac{|\Gamma_i|}{\pi R_i^{\varepsilon}(t)}\mu_{t,i}^{\varepsilon}\left(\left\{\mathbf{y}\in\mathbb{S}^2\quad\textnormal{s.t.}\quad|\mathbf{y}-c_i^{\varepsilon}(t)|_{\mathbb{R}^3}\geqslant\frac{R_i^{\varepsilon}(t)}{2}\right\}\right)\\
        &=\frac{|\Gamma_i|}{\pi R_i^{\varepsilon}(t)}\mu_{t,i}^{\varepsilon}\left(\left\{\mathbf{y}\in\mathbb{S}^2\quad\textnormal{s.t.}\quad|\mathbf{y}-c_i^{\varepsilon}(t)|_{\mathbb{R}^3}^2\geqslant\left(\frac{R_i^{\varepsilon}(t)}{2}\right)^2\right\}\right)\\
        &\leqslant\frac{4|\Gamma_i|}{\pi\left(R_i^{\varepsilon}(t)\right)^3}\int_{\mathbb{S}^2}|\mathbf{y}-c_i^{\varepsilon}(t)|_{\mathbb{R}^3}^2d\mu_{t,i}^{\varepsilon}(\mathbf{y})\\
        &=\frac{4|\Gamma_i|}{\pi\left(R_i^{\varepsilon}(t)\right)^3}\int_{\mathbb{S}^2}|\mathbf{y}-c_i^{\varepsilon}(t)|_{\mathbb{R}^3}^2\frac{\zeta_i^{\varepsilon}(t,\mathbf{y})}{\Gamma_i}\dd\sigma(\mathbf{y})\\
        &=\frac{4|\Gamma_i|I_i^{\varepsilon}(t)}{\pi\left(R_i^{\varepsilon}(t)\right)^3}\cdot
    \end{align*}
    At last, we focus on the term $H_2.$ One readily gets
    $$|H_2|\leqslant\frac{|\Gamma_i|}{2\pi}\mathcal{I}_i^{\varepsilon}(t),\qquad\mathcal{I}_i^{\varepsilon}(t)\triangleq\int_{\mathbb{S}^2\setminus\mathcal{C}\left(c_i^{\varepsilon}(t),\frac{R_i^{\varepsilon}(t)}{2}\right)}\frac{1}{|X_i^{\varepsilon}(t)-\mathbf{y}|_{\mathbb{R}^3}}\frac{\zeta_i^{\varepsilon}(t,\mathbf{y})}{\Gamma_i}d\boldsymbol{\sigma}(\mathbf{y}).$$
    The integrand is monotonically unbounded as $y\to X_{i}^{\varepsilon}(t)$, so the maximum of the integral is obtained when we rearrange the vorticity mass as close as possible to the singularity. In view of the Hypothesis \ref{hyp:zeta} and since, by \eqref{defttm}, $\mathtt{m}_{t,i}^\eps\left(\frac{R_i^{\varepsilon}(t)}{2}\right)$ is equal to the total amount of vorticity in $\mathbb{S}^2\setminus\mathcal{C}\left(c_i^{\varepsilon}(t),\frac{R_i^{\varepsilon}(t)}{2}\right)$, this rearrangement gives
    $$\mathcal{I}_i^{\varepsilon}(t)\leqslant\max\left\{\int_{\mathbb{S}^2}\frac{1}{|\mathtt{N}-\mathbf{y}|_{\mathbb{R}^3}}\zeta(\mathbf{y})d\boldsymbol{\sigma}(\mathbf{y}),\qquad\int_{\mathbb{S}^2}\zeta(\mathbf{y})d\boldsymbol{\sigma}(\mathbf{y})=\mathtt{m}_{t,i}^\eps\left(\frac{R_i^{\varepsilon}(t)}{2}\right),\qquad0\leqslant\zeta\leqslant \frac{M\varepsilon^{-\eta}}{|\Gamma_i|}\right\}.$$
    Let us recall that $\mathtt{N}$ denotes the North pole of the sphere. The previous maximum is obtained for $\zeta\equiv\frac{M\varepsilon^{-\eta}}{|\Gamma_i|}$ on the spherical cap $\mathcal{C}(\mathtt{N},r)$ for $r>0$ such that
    \begin{equation}\label{eq:areacap}
        \frac{M\varepsilon^{-\eta}}{|\Gamma_i|}\int_{\mathcal{C}(\mathtt{N},r)}d\boldsymbol{\sigma}(\mathbf{y})=\mathtt{m}_{t,i}^\eps\left(\frac{R_i^{\varepsilon}(t)}{2}\right).
    \end{equation}
    Let us consider on the unit sphere, the geodesic distance $\mathtt{d}_{\mathbb{S}^2}.$ By definition of the function $\sin$, one has
    $$\forall\mathbf{x},\mathbf{y}\in\mathbb{S}^2,\quad|\mathbf{x}-\mathbf{y}|_{\R^3}=2\sin\left(\frac{\mathtt{d}_{\mathbb{S}^2}(\mathbf{x},\mathbf{y})}{2}\right).$$
    Therefore,
    \begin{equation}\label{areacap}
        \int_{\mathcal{C}(\mathtt{N},r)}d\boldsymbol{\sigma}(\mathbf{y})=2\pi\int_{0}^{2\arcsin\left(\frac{r}{2}\right)}\sin(\theta)d\theta=2\pi\Big(1-\cos\big(2\arcsin\left(\tfrac{r}{2}\right)\big)\Big)=\pi r^2.
    \end{equation}
    Inserting \eqref{areacap} into \eqref{eq:areacap}, we find
    $$r=\sqrt{\frac{\varepsilon^{\eta}|\Gamma_i|}{M\pi}\mathtt{m}_{t,i}^\eps\left(\frac{R_i^{\varepsilon}(t)}{2}\right)}.$$
    Besides,
    $$\int_{\mathcal{C}(\mathtt{N},r)}\frac{1}{|\mathtt{N}-\mathbf{y}|_{\mathbb{R}^3}}d\boldsymbol{\sigma}(\mathbf{y})=2\pi\int_{0}^{2\arcsin\left(\frac{r}{2}\right)}\frac{\sin(\theta)}{2\sin\left(\frac{\theta}{2}\right)}d\theta=2\pi\int_{0}^{2\arcsin\left(\frac{r}{2}\right)}\cos\left(\frac{\theta}{2}\right)d\theta=2\pi r.$$
    Hence,
    $$\mathcal{I}_{i}^{\varepsilon}(t)\leqslant \frac{2\pi r M\varepsilon^{-\eta}}{|\Gamma_i|}=2\pi\sqrt{\frac{M\varepsilon^{-\eta}}{\pi|\Gamma_i|}\mathtt{m}_{t,i}^\eps\left(\frac{R_i^{\varepsilon}(t)}{2}\right)}.$$
    Thus, 
    $$|H_2|\leqslant\sqrt{\frac{M\varepsilon^{-\eta}|\Gamma_i|}{\pi}\mathtt{m}_{t,i}^\eps\left(\frac{R_i^{\varepsilon}(t)}{2}\right)}.$$
    This achieves the proof of Lemma \ref{lem:vitesse_radiale}.
\end{proof}

\section{Logarithmic confinement results}\label{sec:confinement_general}
Here, we use the results of the previous section to prove Theorems \ref{theo:general_conf} and \ref{theo:optimal}.
\subsection{Proof of Theorem \ref{theo:general_conf}}\label{sec:proofgenerale}
Recall the definition~\eqref{maj:F_Lip} of $D$, and that in all generality, $D_\eps \leqslant D$. Let $\alpha > 0$ to be chosen later, and let
\begin{equation}\label{choice delta}
    \delta \triangleq 2\alpha D.
\end{equation}

We thus have the following estimates on the vorticity moments.

\begin{lem}\label{lem:moments_generaux}
    There exists a constant $C$ such that for $\eps>0$ small enough, for any $i \in \{1,\ldots,N\}$ and any $t \leqslant \min(\tau_{\eps,\beta},\alpha |\ln \eps|)$,
    \begin{equation*}
        I_i^\eps(t) \leqslant C \eps^{2-\delta}, \qquad |c_i^\eps(t)-\mathbf{x}_i(t)|_{\mathbb{R}^3} \leqslant C \eps^{1-\frac{\delta}{2}-\alpha}, \qquad m_{n,i}^\eps(t) \leqslant C_n \eps^{(2-\delta-Cn \alpha)n}.
    \end{equation*}
\end{lem}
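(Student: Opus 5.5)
The three estimates will be obtained in order, each from the corresponding differential inequality of Section~\ref{sec:estimates}, combined with the initial bounds of Lemma~\ref{lem:initestim} and Grönwall's lemma on the interval $[0,\min(\tau_{\eps,\beta},\alpha|\ln\eps|)]$.

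\textbf{Second moment.} By Lemma~\ref{lem:control_I} and $D_\eps\leqslant D$, we have $I_i^\eps(t)\leqslant I_i^\eps(0)e^{2Dt}$. Since $I_i^\eps(0)\leqslant 4\eps^2$ and $t\leqslant\alpha|\ln\eps|$, this gives
\[
I_i^\eps(t)\leqslant 4\eps^2\eps^{-2D\alpha}=4\eps^{2-\delta},
\]
by the choice \eqref{choice delta}.

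\textbf{Centers.} Set $\Delta(t)\triangleq\sum_j|c_j^\eps(t)-\mathbf{x}_j(t)|_{\R^3}$. I differentiate and use Lemma~\ref{lem:c} together with $\der{}{t}\mathbf{x}_i=\mathcal{F}_i(\vec{\mathbf{x}}(t))$:
\[
\Big|\der{}{t}\big(c_i^\eps-\mathbf{x}_i\big)\Big|\leqslant \big|\mathcal{F}_i(\vec c^{\,\eps})-\mathcal{F}_i(\vec{\mathbf{x}})\big|+C\sum_j\sqrt{I_j^\eps}.
\]
The main point is a Lipschitz bound for $\mathcal{F}_i$ on a neighbourhood of the point-vortex trajectory, uniform in time. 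For $t\leqslant\tau_{\eps,\beta}$, each $c_j^\eps(t)$ lies in the convex hull of $\supp\zeta_j^\eps(t)\subset B(\mathbf{x}_j(t),\eps^\beta)$, so $|c_j^\eps-\mathbf{x}_j|\leqslant\eps^\beta$. By \eqref{dist-min-points}, the points $c_i^\eps,c_j^\eps,\mathbf{x}_i,\mathbf{x}_j$ then stay at mutual distance at least $d_0/2$ across different indices. The kernel $\frac{\mathbf{x}\wedge\mathbf{y}}{|\mathbf{x}-\mathbf{y}|^2}$, extended to $\R^3\times\R^3$ off the diagonal, has bounded derivative on $\{|\mathbf{x}-\mathbf{y}|\geqslant d_0/2,\ |\mathbf{x}|,|\mathbf{y}|\leqslant1\}$; the rotation term is linear. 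Hence there is $L=L(d_0)$ such that $|\mathcal{F}_i(\vec c^{\,\eps})-\mathcal{F}_i(\vec{\mathbf{x}})|\leqslant L\Delta$.

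Summing over $i$ and inserting the bound on $I_j^\eps$ gives $\Delta'\leqslant NL\Delta+C\eps^{1-\delta/2}$, with $\Delta(0)\leqslant N\eps$. Grönwall then yields
\[
\Delta(t)\leqslant C\big(\eps+t\,\eps^{1-\delta/2}\big)e^{NLt}\leqslant C|\ln\eps|\,\eps^{1-\delta/2-NL\alpha}.
\]
This is the stated bound after absorbing the logarithm and the factor $NL$: either shrink the exponent slightly, or normalise constants so that $D\geqslant NL$, which keeps the exponent of the form $\eps^{1-\frac{\delta}{2}-\alpha}$ up to the constant convention. I expect this bookkeeping of the exponent to be the only delicate point. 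The cleanest route is to note that one may enlarge $D$ in \eqref{maj:F_Lip}, so that $L\leqslant D$, and to use $|\ln\eps|\leqslant\eps^{-\alpha'}$ for any small $\alpha'$.

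\textbf{Higher moments.} I proceed by induction on $n$. For $n=1$, write $y=m_{1,i}^\eps$; Lemma~\ref{lem:4n} gives $y'\leqslant C I_i^\eps+4D(y+2\sqrt{I_i^\eps}\eps^{3\beta})$. Since $\sqrt{I}\eps^{3\beta}\leqslant C\eps^{1-\delta/2+3\beta}\leqslant C\eps^{2-\delta}$ for $\beta$ close to $1/2$ (otherwise use the cruder bound $m_{n,i}\leqslant(2\eps^\beta)^{4n}$ to adjust), Grönwall gives $y\leqslant C(\eps^4+t\eps^{2-\delta})e^{4Dt}\leqslant C\eps^{2-\delta-C\alpha}$.

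For general $n$, set $z=(m_{n,i}^\eps)^{1/n}$; when $m_{n,i}^\eps>0$, Lemma~\ref{lem:4n} gives
\[
z'=\tfrac1n m^{\frac1n-1}m'\leqslant C_nI_i^\eps+4Dz+C\sqrt{I_i^\eps}\,\eps^{3\beta}.
\]
This linear inequality integrates exactly as in the case $n=1$: $z(t)\leqslant C_n\eps^{2-\delta}|\ln\eps|e^{4Dt}$, and raising to the power $n$ gives $m_{n,i}^\eps\leqslant C_n\eps^{(2-\delta-C\alpha)n}$ after absorbing the logarithm. This is stronger than the claimed $\eps^{(2-\delta-Cn\alpha)n}$.
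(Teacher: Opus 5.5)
Your three steps are the paper's: Grönwall on Lemma~\ref{lem:control_I} with $D_\eps\leqslant D$, Lemma~\ref{lem:c} plus Grönwall for the centers, and Lemma~\ref{lem:4n} linearised by $z=(m_{n,i}^\eps)^{1/n}$, which is what Lemma~\ref{lem:4nODE} does internally.

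The difference is bookkeeping in the third step, and it favours you. The paper bounds every coefficient in Lemma~\ref{lem:4n} by $Cn^2$, which produces the factor $e^{Cnt}$ and the loss $\eps^{-Cn\alpha}$. You keep the coefficient $4nD_\eps$ of $m_{n,i}^\eps$ separate, so $z$ grows at the $n$-independent rate $4D$ and you get the sharper bound $\eps^{(2-\delta-C\alpha)n}$. This is correct. You can even drop the $|\ln\eps|$, since $\int_0^t e^{4D(t-s)}\dd s\leqslant e^{4Dt}/(4D)$. Your local Lipschitz bound for $\mathcal{F}$, via mutual distances $\geqslant d_0/2$, is also more careful than the paper's use of Lemma~\ref{lem:gronwall}, which is stated for globally Lipschitz fields.

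Two points need correcting.

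First, your parenthetical fallback for small $\beta$ does not work. Absorbing $\sqrt{I_i^\eps}\,\eps^{3\beta}\lesssim\eps^{1-\delta/2+3\beta}$ into $\eps^{2-\delta}$ requires $3\beta\geqslant 1-\delta/2$, essentially $\beta>1/3$. For $\beta\leqslant 1/3$, the support bound $m_{n,i}^\eps\leqslant(2\eps^\beta)^{4n}$ only gives $\eps^{4\beta n}$ with $4\beta<2$, which is far too weak. The paper simply restricts to $\beta>1/3$. This suffices for Theorem~\ref{theo:general_conf}, because $\tau_{\eps,\beta}\geqslant\tau_{\eps,\beta'}$ whenever $\beta<\beta'$. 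If you want the lemma itself for every $\beta<1/2$, use Jensen's inequality for the probability measure $\frac{\zeta_i^\eps}{\Gamma_i}\dd\boldsymbol{\sigma}$ in the $A_2$ term of Lemma~\ref{lem:4n}:
\[
\int|\mathbf{y}-c_i^\eps|\tfrac{\zeta_i^\eps}{\Gamma_i}\dd\boldsymbol{\sigma}\cdot\int|\mathbf{x}-c_i^\eps|^{4n-1}\tfrac{\zeta_i^\eps}{\Gamma_i}\dd\boldsymbol{\sigma}\leqslant (m_{n,i}^\eps)^{\frac{1}{4n}}(m_{n,i}^\eps)^{\frac{4n-1}{4n}}=m_{n,i}^\eps .
\]
This gives $|A_2|\leqslant 8nD_\eps m_{n,i}^\eps$, and the $\eps^{3\beta}$ term disappears.

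Second, for the centers, Grönwall with rate $NL$ gives $\eps^{1-\delta/2-NL\alpha}$, up to a logarithm. Enlarging $D$ cannot turn this into $\eps^{1-\delta/2-\alpha}$: it changes $\delta$, not the rate $NL$. The paper's proof has the same slip, since its factor $e^{Ct}$ silently becomes $\eps^{-\alpha}$. What both arguments actually prove is $\eps^{1-\delta/2-C\alpha}$. That is all that is used later, namely $|c_i^\eps(t)-\mathbf{x}_i(t)|_{\R^3}\leqslant\eps^\beta/4$ for $\alpha$ small.
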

\begin{proof} Fix $i\in\{1,\ldots,N\}$ and $t \leqslant\min(\tau_{\varepsilon,\beta},\alpha|\ln \eps|)$.\\
$\blacktriangleright$ \textit{First estimate :} Since, $t\leqslant\tau_{\varepsilon,\beta},$ we can apply Lemma~\ref{lem:control_I} and use Gronwall's Lemma and the relation~\eqref{maj:F_Lip} to get  
\begin{equation*}
    |I_i^\eps(t)| \leqslant I_i^\eps(0) \exp\big( 2 D t).
\end{equation*}
Making appeal to Lemma~\ref{lem:initestim}, using the fact that $t \leqslant\alpha|\ln \eps|$ and the definition of $\delta$ in \eqref{choice delta}, we conclude that
\begin{equation}\label{e-It}
    I_i^\eps(t) \leqslant C \eps^{2-\delta}.
\end{equation}
$\blacktriangleright$ \textit{Second estimate :} Since $t\leqslant\tau_{\varepsilon,\beta},$ we can apply Lemma~\ref{lem:c} together with \eqref{e-It}, to get
\begin{equation*}
    \left|\der{}{t}\vec{c}^{\,\eps}(t) - \mathcal{F}\big(\vec{c}^{\,\eps}(t)\big)\right|_{\mathbb{R}^{3N}} \leqslant C \eps^{1-\frac{\delta}{2}}.
\end{equation*} 
Denoting $\mathbf{X}(t)$ the point vortex solution
$$\der{}{t}\mathbf{X}(t)=\mathcal{F}\big(\mathbf{X}(t)\big),$$
we can apply the variant of Gronwall's lemma provided in Lemma~\ref{lem:gronwall} and deduce that
\begin{equation*}
    |\vec{c}^{\,\eps}(t) - \mathbf{X}(t)|_{\mathbb{R}^{3N}} \leqslant \big(t C \eps^{1-\frac{\delta}{2}}+ |\vec{c}^{\,\eps}(0) - \mathbf{X}(0)|_{\mathbb{R}^{3N}} \big) \exp(C t).
\end{equation*}
Therefore, using Lemma~\ref{lem:initestim}, we conclude that for $\eps$ small enough, for every $i \in \{1,\ldots,N\}$ and every $t \leqslant \min(\tau_{\eps,\beta},\alpha |\ln \eps|)$ that
\begin{equation*}
    |c_i^\eps(t)-\mathbf{x}_i(t)| \leqslant C \eps^{1-\frac{\delta}{2} - \alpha}.
\end{equation*}
$\blacktriangleright$ \textit{Third estimate :} Combining the estimate of Lemma~\ref{lem:4n} together with \eqref{e-It} and \eqref{maj:F_Lip}, we infer
    \begin{equation*}
        \der{}{t} m_{n,i}^\eps(t) \leqslant C n^2\big(m_{n,i}^{\eps}(t)\big)^\frac{n-1}{n} \left(\eps^{2-\delta} + \big(m_{n,i}^{\eps}(t)\big)^\frac{1}{n} + \eps^{3\beta+1-\frac{\delta}{2}} \right).
    \end{equation*}
    Assuming that $3\beta+1 > 2$, namely that $\beta >1/3$, which we can do since proving Theorem~\ref{theo:general_conf} for some $\beta$ implies it for any $\beta' < \beta$, then the rightmost term is negligible compared to $\eps^{2-\delta}$. Therefore, we obtain
    \begin{equation*}
        \der{}{t} m_{n,i}^\eps(t) \leqslant C n^2\big(m_{n,i}^{\eps}(t)\big)^\frac{n-1}{n} \left(\eps^{2-\delta} + \big(m_{n,i}^{\eps}(t)\big)^\frac{1}{n} \right).
    \end{equation*}
    We apply Lemma~\ref{lem:4nODE} together with Lemma~\ref{lem:initestim}, leading to
    \begin{equation*}
              m_{n,i}^\eps(t) \leqslant \left( -\eps^{2-\delta} + \left(\eps^{2-\delta} + 16\eps^4 \right) e^{Cn t} \right)^n.
    \end{equation*}
    Thus, since $t \leqslant \alpha |\ln \eps|$, by compared growth for $\varepsilon$ small enough, we have that
    \begin{equation*}
        m_{n,i}^\eps(t) \leqslant C_n \eps^{(2-\delta-Cn \alpha)n}.
    \end{equation*}
This ends the proof of Lemma \ref{lem:moments_generaux}.

\end{proof}

The first result that we infer from these estimates is the following control on the vorticity far from the center of mass, which we call \emph{weak} confinement. Recall the definition of $\mathtt{m}_{t,i}^\eps$ given in~\eqref{defttm}. 
\begin{lem}\label{lem:4n_estExt}
    Let $\beta'\triangleq\frac{\beta + \frac{1}{2}}{2} \in \left(\beta,\tfrac{1}{2}\right)$. For every $\nu > 0$, there exists $\eps > 0$ depending only on $\beta$ and $\nu$ such that provided $\alpha$ is chosen such small enough (depending on $\nu$ and $\beta$ only), for every $\eps \in (0,\eps_0)$, every $t\leqslant  \min(\tau_{\eps,\beta},\alpha |\ln \eps|)$ and any $r \geqslant\tfrac{\eps^{\beta'}}{2}$,
    \begin{equation*}
    \max_{1\leqslant i\leqslant N}\mathtt{m}_{t,i}^\eps(r) \leqslant \frac{\eps^{5+\nu}}{r^6},
\end{equation*}
where $\mathtt{m}_{t,i}^{\varepsilon}(r)$ is defined in \eqref{defttm}.
\end{lem}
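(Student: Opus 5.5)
The natural route is a Chebyshev--Markov inequality applied to the high-order moments $m_{n,i}^\eps$ of Lemma~\ref{lem:moments_generaux}. Fix $i$ and $t\leqslant\min(\tau_{\eps,\beta},\alpha|\ln\eps|)$. The probability measure $\mu_{t,i}^\eps=\frac{\zeta_i^\eps(t,\cdot)}{\Gamma_i}\dd\boldsymbol{\sigma}$ has total mass $1$ by \eqref{mass conserv}, so $\mathtt{m}_{t,i}^\eps(r)=\Gamma_i\,\mu_{t,i}^\eps\big(\{|\mathbf{x}-c_i^\eps(t)|_{\R^3}>r\}\big)$ (up to the sign convention on $\Gamma_i$; the estimate concerns $|\mathtt{m}|$). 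Markov's inequality then gives
\begin{equation*}
|\mathtt{m}_{t,i}^\eps(r)|\leqslant |\Gamma_i|\,\frac{m_{n,i}^\eps(t)}{r^{4n}}\leqslant C_n|\Gamma_i|\,\frac{\eps^{(2-\delta-Cn\alpha)n}}{r^{4n}},
\end{equation*}
using the third estimate of Lemma~\ref{lem:moments_generaux}, where $\delta=2\alpha D$.

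We want to compare this with $\eps^{5+\nu}r^{-6}$, that is, to show
\begin{equation*}
C_n|\Gamma_i|\,\eps^{(2-\delta-Cn\alpha)n-5-\nu}\,r^{6-4n}\leqslant 1 .
\end{equation*}
Once $4n>6$, the exponent of $r$ is negative, so the left side is largest at the smallest admissible radius $r=\eps^{\beta'}/2$. There it equals, up to the constant $2^{4n-6}C_n|\Gamma_i|$, the power
\begin{equation*}
\eps^{\,n(2-\delta-Cn\alpha-4\beta')-5-\nu+6\beta'} .
\end{equation*}
Since $\beta'<1/2$, we have $2-4\beta'>0$; it equals $1-2\beta$ by the definition of $\beta'$.

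Choose $n$ large (depending on $\beta,\nu$) so that $n(2-4\beta')/2>5+\nu$. Then choose $\alpha$ small (depending on $n$, hence on $\beta,\nu$) so that $\delta+Cn\alpha=\alpha(2D+Cn)\leqslant(2-4\beta')/2$. With these choices the exponent above is at least
\begin{equation*}
n(1-2\beta')-5-\nu+6\beta'>0 .
\end{equation*}
Consequently the whole expression is $\leqslant C\eps^{\kappa}$ for some $\kappa>0$, and this is $\leqslant 1$ for $\eps<\eps_0(\beta,\nu)$. Taking the maximum over the finitely many $i$ finishes the argument.

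The main obstacle is the order of choices. The constant in the exponent $Cn\alpha$ grows with $n$, so one must fix $n$ first and only then shrink $\alpha$; also, $C_n$ must be absorbed by a positive power of $\eps$, which forces a strict inequality in the exponent. One should also double-check two points. First, that the bound of Lemma~\ref{lem:moments_generaux} is legitimately available for this $n$: it relies on Lemma~\ref{lem:4nODE} with initial datum $16^n\eps^{4n}\leqslant\eps^{(2-\delta)n}$. Second, that $\beta>1/3$ was only a harmless reduction there.
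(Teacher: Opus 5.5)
Your proposal is correct and is essentially the paper's argument. You apply a Markov inequality to the $4n$-th moment bound of Lemma~\ref{lem:moments_generaux} and evaluate at the worst radius $r=\eps^{\beta'}/2$. You fix $n$ first in terms of $(\beta,\nu)$, then shrink $\alpha$ so that $\delta+Cn\alpha\leqslant\tfrac12-\beta$ (exactly the paper's $\delta^*$), and finally take $\eps$ small to absorb $2^{4n-6}C_n$. Your explicit tracking of the factor $|\Gamma_i|$ is a harmless, slightly more careful variant; the paper bounds the normalized mass $\frac{1}{\Gamma_i}\int\zeta_i^\eps$ instead.
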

\begin{proof}
    We set
    $$\delta^*\triangleq \frac{2-4\beta}{4}\cdot$$ 
    Fix $t \leqslant \min\big(\tau_{\varepsilon,\beta},\alpha |\ln \eps|\big)$. Invoking Lemma \ref{lem:moments_generaux}, we get by compared growth as $\varepsilon\to0$ there exists $\alpha_n>0$ small enough that if $\alpha \leqslant \alpha_n$, then
    $$m_{n,i}^\eps(t) \leqslant C_n \eps^{(2-\delta^*)n}.$$ 
    Therefore, for any $r\geqslant\frac{\varepsilon^{\beta'}}{2}$ and any $\nu>0$, we have
\begin{align*}
    \frac{1}{\Gamma_i}\int_{\mathbb{S}^2 \setminus \mathcal{C}(c_i^\eps(t),r)} \zeta_i^\eps(t,\mathbf{x}) \dd \boldsymbol{\sigma}(\mathbf{x}) & =  \frac{1}{\Gamma_i}\int_{\mathbb{S}^2 \setminus \mathcal{C}(c_i^\eps(t),r)} \frac{|\mathbf{x} - c_i^\eps(t)|_{\mathbb{R}^3}^{4n}}{|\mathbf{x} - c_i^\eps(t)|_{\mathbb{R}^3}^{4n}} \zeta_i^\eps(t,\mathbf{x}) \dd \boldsymbol{\sigma}(\mathbf{x}) \\
         & \leqslant \frac{m_{n,i}^\eps(t)}{r^{4n}} \\
         & \leqslant \frac{C_n \eps^{n(2-\delta^*)}}{r^{4n}}\\
         & \leqslant C_n\frac{\eps^{5+\nu}}{r^6} \frac{\eps^{(2-\delta^*)n-5-\nu}}{\left(\frac{\eps^{\beta'}}{2}\right)^{4n-6}} \\
         & \leqslant C_n 2^{4n-6} \frac{\eps^{5+\nu}}{r^6} \eps^{(2-\delta^*-4\beta')n - 5 - \nu + 6\beta'}.
\end{align*}
Since $2-\delta^* - 4\beta' = \tfrac{1}{2}-\beta$, taking $n = \left\lfloor \frac{5+\nu}{\frac{1}{2}-\beta}\right\rfloor +2$ there only remains to take $\eps$ small enough such that
\begin{equation*}
    \eps C_n 2^{4n-6} < 1.
\end{equation*}
Since $n$ depends only on $\beta$ and $\nu$, so does $\eps_0$ such that this holds for every $ \eps \in (0,\eps_0)$.
\end{proof}

We are now ready to prove Theorem~\ref{theo:general_conf}. The idea is to use the previous estimates to show that for every time $t \leqslant \min( \tau_{\eps,\beta},\alpha |\ln \eps|) $ and for any $1\leqslant i\leqslant N$, we have that $\supp\big(\zeta_i^\eps(t)\big) \subset B\left(c_i^{\varepsilon}(t),\tfrac{3\eps^\beta}{4}\right)$. Thus, necessarily, $\tau_{\eps,\beta} \geqslant\alpha |\ln \eps|$.

\subsubsection*{Conclusion of the proof of Theorem \ref{theo:general_conf}}

Fix $i\in\{1,\ldots,N\}.$ For every $\eps > 0$ and $t \geqslant0$, recall the notations of Section~\ref{sec:growth} concerning the existence of a point $X_i^\eps$ and a trajectory $s\mapsto X_{t,i}^\eps(s)$ such that relations~\eqref{formula:radmax} and~\eqref{def:flowt} hold. Applying Lemma~\ref{lem:moments_generaux} to Lemma~\ref{lem:vitesse_radiale}, we have that 
\begin{equation}\label{control ecartvar}
    \der{}{s}\left|X_{t,i}^{\varepsilon}(s)-c_i^\eps(s)\right|_{\mathbb{R}^3}\Big|_{s=t} \leqslant D R_i^{\varepsilon}(t) + C_2\frac{\eps^{2-\delta}}{(R_i^\eps(t))^3} + \left(\frac{M\eps^{-\eta}|\Gamma_i|}{\pi}\mathtt{m}_{t,i}^\eps\left(\frac{R_i^{\varepsilon}(t)}{2}\right)\right)^\frac{1}{2}.
\end{equation}
Let $f_i$ be the solution of the ODE:
\begin{equation}\label{EDOfi}
    f_i'(t) =  2 D f_i(t) + 2 C_2\frac{\eps^{2-\delta}}{f_i^3(t)} + 2\left(\frac{M\eps^{-\eta}|\Gamma_i|}{\pi}\mathtt{m}_{t,i}^\eps\left(\frac{f_i(t)}{2}\right)\right)^\frac{1}{2},
\end{equation}
with initial data 
\begin{equation}\label{init fi}
    f_i(0) = 2 R_i^{\varepsilon}(0).
\end{equation} 
First observe that \eqref{control ecartvar} and \eqref{EDOfi} imply
\begin{equation}\label{maj0402}
    f_i'(t)>\der{}{s}\left|X_{t,i}^{\varepsilon}(s)-c_i^\eps(s)\right|_{\mathbb{R}^3}\Big|_{s=t} .
\end{equation}
Then, we claim that
\begin{equation}\label{claim-fi}
    \forall t\leqslant\min(\tau_{\varepsilon,\beta},\alpha|\ln(\varepsilon)|), \quad f_i(t) > R_i^{\varepsilon}(t).
\end{equation}
Indeed, assume that the converse is true and take 
$$t'\triangleq\min\{s\in[0,\min(\tau_{\varepsilon,\beta},\alpha|\ln(\varepsilon)|)]\quad\textnormal{s.t.}\quad f_i(s)=R_i^{\varepsilon}(s)\}.$$
According to \eqref{init fi}, we have $t'>0.$ In view of the definition \eqref{formula:radmax}, one has
\begin{equation}\label{eq0402}
    \left|X_i^{\varepsilon}(t')-c_i^{\varepsilon}(t')\right|_{\mathbb{R}^3}=R_i^{\varepsilon}(t')=f_i(t').
\end{equation}
By construction, for any $0<h<t'$
\begin{equation}\label{bnd0402}
    \left|X_{t',i}^{\varepsilon}(t'-h)-c_i^{\varepsilon}(t'-h)\right|_{\mathbb{R}^3}\leqslant R_i^{\varepsilon}(t'-h)<f_i(t'-h).
\end{equation}
Combining \eqref{eq0402} and \eqref{bnd0402}, we obtain
\begin{equation}\label{taux accroissement}
    \frac{\left|X_{t',i}^{\varepsilon}(t'-h)-c_i^{\varepsilon}(t'-h)\right|_{\mathbb{R}^3}-\left|X_i^{\varepsilon}(t')-c_i^{\varepsilon}(t')\right|_{\mathbb{R}^3}}{-h}>\frac{f_i(t'-h)-f_i(t')}{-h}\cdot
\end{equation}
Passing to the limit $h\to0$ in \eqref{taux accroissement}, we get a contradiction with \eqref{maj0402}. This proves the claim \eqref{claim-fi}.

Let us now prove that there does not exist an index $i \in \{1,\ldots,N\}$ and a time $t_2 \leqslant \alpha|\ln \eps|$ such that $f_i(t_2) \geqslant \eps^{\beta}/2$. We proceed by contradiction: assume there exists such time. Let $\beta' = \frac{\beta + 1/2}{2} \in (\beta,1/2)$ and let $t_1$ be the last time prior to $t_2$ such that for every $t \in [t_1,t_2]$, we have that $\eps^{\beta'} \leqslant f_i(t) \leqslant \eps^{\beta}/2$.
Since $f_i(t)/2 \geqslant \eps^{\beta'}/2$, we can apply Lemma~\ref{lem:4n_estExt} with $r = f_i(t)$ and any positive $\nu$ such that $\nu \geqslant \eta +  6\beta' -4$ to get that for all $t \in [t_1,t_2]$,
\begin{align*}
    f_i'(t) & \leqslant  2 D f_i(t) + 2 C_2\frac{\eps^{2-\delta}}{f_i^3(t)} + 2\left(\frac{M\eps^{-\eta}|\Gamma_i|}{\pi}2^6\frac{\eps^{5+\nu}}{f_i^6(t)}\right)^\frac{1}{2} \\
    & \leqslant  2 D f_i(t) + 2 C_2\frac{\eps^{2-\delta}}{f_i^3(t)} + C\eps^{(5+\nu-\eta-6\beta')/2} \\
    & \leqslant C \left(f_i(t) +\frac{\eps^{2-\delta}}{f_i^3(t)}\right),
\end{align*}
where in the last inequality we used that $\tfrac{1}{2}(5+\nu-\eta-6\beta')> \beta'$ and thus $\eps^{\frac{1}{2}(5+\nu-\eta-6\beta')} \ll \eps^\beta \leqslant f_i(t)$ as $\eps \to 0$.
Multiplying by $f_i^3(t)$ gives that
\begin{equation*}
    (f_i^4)'(t) \leqslant C (f_i^4(t) + \eps^{2-\delta}).
\end{equation*}
This in turns gives that
\begin{equation*}
    f_i^4(t_2) \leqslant f_i^4(t_1) e^{C(t_2-t_1)} + C \eps^{2-\delta} (e^{C(t_2-t_1)}-1) = \eps^{4\beta' - C\alpha} + C \eps^{2-\delta} (\eps^{-C\alpha}-1).
\end{equation*}
Provided $\alpha$ is small enough so that $\beta' - C\alpha/4 > \beta$, and $\eps$ is small enough, we have that
\begin{equation*}
    f_i(t_2) < \tfrac{\eps^\beta}{2},
\end{equation*}
which is a contradiction. No such time $t_2$ exists for any index $i$, and thus provided $\alpha$ and $\eps$ are small enough, for every $i \in\{1,\ldots,N\}$, for every $t \leqslant \min(\tau_{\eps,\beta},\alpha|\ln \eps|),$ $R_i^{\varepsilon}(t) < f_i(t) < \tfrac{\eps^\beta}{2}$. Therefore, for every such $t$ and $i$, for every point $\mathbf{x} \in \supp \big(\zeta_i^\eps(t,\cdot)\big)$, we have that
\begin{equation*}
    |\mathbf{x}-\mathbf{x}_i(t)|_{\mathbb{R}^3} \leqslant |\mathbf{x}-c_i^\eps(t)|_{\mathbb{R}^3} + |\mathbf{x}_i(t) - c_i^\eps(t)|_{\mathbb{R}^3} \leqslant R_i^{\varepsilon}(t) + |\mathbf{x}_i(t) - c_i^\eps(t)|_{\mathbb{R}^3} \leqslant \frac{3}{4}\eps^\beta,
\end{equation*}
where we used Lemma~\ref{lem:moments_generaux} provided $\eps$ and $\alpha$ are small enough. Applying this last equality in $\tau_{\eps,\beta}$ would then be a contradiction as by definition of $\tau_{\eps,\beta}$, there exists $i_0\in\{1,\ldots,N\}$ and $\overline{\mathbf{x}} \in \supp \big(\zeta_{i_0}^\eps(\tau_{\eps,\beta},\cdot)\big)$ such that 
\begin{equation*}
    |\overline{\mathbf{x}}-\mathbf{x}_{i_0}(t)|_{\mathbb{R}^3} = \eps^\beta.
\end{equation*}
Therefore, this inequality cannot be applied in $\tau_{\eps,\beta}$, so $\min(\tau_{\eps,\beta}, \alpha |\ln \eps|) = \alpha|\ln \eps|$, meaning that $\tau_{\eps,\beta} \geqslant \alpha|\ln\eps|$.
Theorem~\ref{theo:general_conf} is now proved.

\subsection{Optimality of the bound}\label{sec:optimalityproof}

We now prove that the logarithmic bound is optimal conditionally to the existence of a proper configuration of point-vortices.
Let  $(\mathbf{x}_1^0,\ldots,\mathbf{x}_N^0)$ be pairwise distinct points on the sphere $\mathbb{S}^2$, intensities $\Gamma_1,\ldots,\Gamma_N$ satisfying~\eqref{Gauss:PV}, and $\big(t\mapsto \mathbf{x}_i(t)\big)_{1\leqslant i \leqslant N}$ be the solution to the point-vortex dynamics~\eqref{PVS}. We assume the following.
\begin{hypothesis}\label{hyp:instable}
    There exists a constant $\mu_\beta > 0$ and $\eps_0 > 0$ such that for all $\eps \in (0,\eps_0)$, there exist a time $\tau_{\eps,\beta}^* \leqslant \mu_\beta|\ln\eps|$ and a set of points $\mathbf{x}_1^\eps,\ldots,\mathbf{x}_N^\eps$
    such that the solution $\big(t \mapsto \mathbf{x}_i^\eps(t) \big)_{1\leqslant i \leqslant N}$ of the point-vortex dynamics~\eqref{PVS} with initial data $\big(\mathbf{x}_{i,0}^\eps\big)_ {1\leqslant i \leqslant N}$ and intensities $\Gamma_1,\ldots,\Gamma_N$ satisfies
\begin{equation*}
    \begin{cases}
        |\mathbf{x}_i^0 - \mathbf{x}_{i,0}^\eps|_{\mathbb{R}^3} \leqslant\tfrac{\eps}{2}, \\
        |\mathbf{x}_i(\tau_{\eps,\beta}^*) - \mathbf{x}_{i}^\eps(\tau_{\eps,\beta}^*)|_{\mathbb{R}^3} \geqslant 4\eps^\beta.
    \end{cases}
\end{equation*}
\end{hypothesis}
 We prove the following.
\begin{theo}\label{theo:conditional_optimal}
    Assuming Hypothesis~\ref{hyp:instable}, there exists $\beta_0 < 1/2$, $\eta \geqslant 2$ such that for any $\beta \in (\beta_0,1)$, there exists $\alpha_0 > 0$ such that for any $\eps > 0$ small enough, there exists $\zeta_0^\eps$ satisfying Hypothesis~\ref{hyp:zeta} such that
    \begin{equation*}
        \tau_{\eps,\beta} \leqslant \alpha_0 |\ln \eps|.
    \end{equation*}
\end{theo}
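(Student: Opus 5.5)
The proof takes as initial datum a family of small blobs centred on the perturbed points $\mathbf{x}_{i,0}^\eps$ of Hypothesis~\ref{hyp:instable}. It then argues by contradiction, assuming $\tau_{\eps,\beta} > \tau^*_{\eps,\beta}$. Concretely, for each $i$ we set
\[
\zeta_{0,i}^\eps \triangleq \frac{\Gamma_i}{\pi (\eps/2)^2}\,\Ind_{\mathcal{C}(\mathbf{x}_{i,0}^\eps,\eps/2)},
\]
which by \eqref{areacap} has circulation exactly $\Gamma_i$. Its $L^\infty$ norm is of order $\eps^{-2}$, so Hypothesis~\ref{hyp:zeta} holds with $\eta=2$ and $M=4\max|\Gamma_i|/\pi$. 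Since $|\mathbf{x}_i^0-\mathbf{x}_{i,0}^\eps|\leqslant \eps/2$, the support lies in $B(\mathbf{x}_i^0,\eps)$, and the Gauss constraint is inherited from \eqref{Gauss:PV}. We take $\alpha_0\triangleq\mu_\beta$ and suppose $\tau_{\eps,\beta}>\tau^*_{\eps,\beta}$, so that the whole analysis of Section~\ref{sec:estimates} is available on $[0,\tau^*_{\eps,\beta}]$.

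The core step compares the centres of vorticity $c_i^\eps(t)$ with the perturbed trajectory $\mathbf{x}_i^\eps(t)$ rather than with $\mathbf{x}_i(t)$. On $[0,\tau^*_{\eps,\beta}]$ the supports stay within $\eps^\beta$ of $\mathbf{x}_i(t)$, so \eqref{distance-supports-timet} gives $D_\eps\leqslant D$. Lemma~\ref{lem:control_I} then yields $I_i^\eps(t)\leqslant \eps^2 e^{2Dt}\leqslant \eps^{2-2D\mu_\beta}$. Lemma~\ref{lem:c} combined with Lemma~\ref{lem:gronwall}, applied this time with initial distance $|c_i^\eps(0)-\mathbf{x}_{i,0}^\eps|\leqslant \eps/2$ (the proof of Lemma~\ref{lem:initestim} applied to the cap centred at $\mathbf{x}_{i,0}^\eps$), gives
\[
|c_i^\eps(t)-\mathbf{x}_i^\eps(t)|\leqslant C\big(t\,\eps^{1-D\mu_\beta}+\eps\big)e^{Lt}\leqslant C\eps^{1-(D+L)\mu_\beta}|\ln\eps|,
\]
where $L$ is the Lipschitz constant of $\mathcal{F}$ on the relevant region. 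One must make sure that $\mathbf{x}_i^\eps(t)$ stays $d_0/2$-separated on this interval. This should follow because it remains within $O(\eps^\beta)$ of the supports, hence of $\mathbf{x}_i(t)$, up to the time where the separation of $4\eps^\beta$ occurs; alternatively one can run the argument up to the first time $|\mathbf{x}_i(t)-\mathbf{x}_i^\eps(t)|$ reaches $4\eps^\beta$.

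To conclude, $c_i^\eps(t)$ is a barycenter of the support, so it lies in the convex hull of $\supp\zeta_i^\eps(t)\subset B(\mathbf{x}_i(t),\eps^\beta)$, giving $|c_i^\eps(t)-\mathbf{x}_i(t)|<\eps^\beta$. The triangle inequality then gives
\[
4\eps^\beta\leqslant|\mathbf{x}_i(\tau^*)-\mathbf{x}_i^\eps(\tau^*)|\leqslant \eps^\beta+C\eps^{1-(D+L)\mu_\beta}|\ln\eps|,
\]
which is a contradiction for $\eps$ small as soon as $1-(D+L)\mu_\beta>\beta$. We therefore choose $\beta_0 \triangleq 1-(D+L)\mu_\beta$ minus a margin (made smaller than $1/2$ if necessary) and require $\beta>\beta_0$. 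This yields $\tau_{\eps,\beta}\leqslant \tau^*_{\eps,\beta}\leqslant \mu_\beta|\ln\eps|$.

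The main obstacle is the interplay of constants. The threshold depends on $\mu_\beta$, which itself depends on $\beta$, so the proof needs a uniform choice. For a hyperbolic (unstable) configuration, $\mu_\beta\approx (1-\beta)/\lambda$ with $\lambda$ the Lyapunov exponent, so the condition reads $\beta<1-(D+L)(1-\beta)/\lambda$. This should be arranged in Section~\ref{sec:configurations} by having $\lambda$ close to $D+L$. Accordingly, I would first try to state the bounds with the sharp growth rates $e^{\lambda t}$ and $e^{2Dt}$ rather than crude constants, so that $\beta_0<1/2$ becomes achievable; the same bookkeeping also fixes the admissible range of $\beta$ in the statement.
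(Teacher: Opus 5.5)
There is a genuine gap, and it sits exactly in the step you flag as the ``main obstacle'': with blobs of radius $\eps/2$ (so $\eta=2$), the condition $1-(D+L)\mu_\beta>\beta$ that your contradiction needs can \emph{never} hold. The perturbed trajectory starts at distance $\eps/2$ from $\mathbf{x}_i^0$, so Gronwall gives $|\mathbf{x}_i(t)-\mathbf{x}_i^\eps(t)|\leqslant C\eps e^{Lt}$. Since this distance must reach $4\eps^\beta$ at $\tau^*_{\eps,\beta}\leqslant\mu_\beta|\ln\eps|$, you necessarily have $\mu_\beta\geqslant(1-\beta)/L$ up to $o(1)$. Hence $1-(D+L)\mu_\beta\leqslant\beta-D(1-\beta)/L<\beta$.

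Equivalently, your proposed remedy would need an unstable eigenvalue $\lambda>D+L$, whereas $\lambda\leqslant\|\Diff\mathcal{F}\|\leqslant L$. Sharper growth rates do not help either. The blob size and the initial perturbation are both of order $\eps$, so the deviation of $c_i^\eps$ from $\mathbf{x}_i^\eps$ is amplified by the same instability that separates $\mathbf{x}_i^\eps$ from $\mathbf{x}_i$. There is no scale separation to exploit. Your final choice of $\beta_0$ is also inverted: the contradiction needs $\beta$ \emph{below} $1-(D+L)\mu_\beta$, while the theorem asks for all $\beta$ \emph{above} $\beta_0$. Finally, the theorem is conditional on Hypothesis~\ref{hyp:instable} with an arbitrary $\mu_\beta$, so it cannot rely on tuning the configuration in Section~\ref{sec:configurations}.

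The missing idea is to use the freedom in $\eta$ that the statement grants. The paper takes super-concentrated blobs: $\zeta_{0,i}^\eps$ is a multiple of $\Ind_{\mathcal{C}(\mathbf{x}_{i,0}^\eps,\eps^{\eta/2})}$ normalized to circulation $\Gamma_i$. Its $L^\infty$ norm is of order $\eps^{-\eta}$, and it still satisfies Hypothesis~\ref{hyp:zeta} because $\eps^{\eta/2}\ll\eps/2$. Then $I_i^\eps(0)\leqslant C\eps^\eta$ and $|c_i^\eps(0)-\mathbf{x}_{i,0}^\eps|\leqslant\eps^{\eta/2}$. The same chain you use (Lemma~\ref{lem:control_I}, Lemma~\ref{lem:c}, Lemma~\ref{lem:gronwall}) gives $|c_i^\eps(t)-\mathbf{x}_i^\eps(t)|\leqslant C\eps^{\eta/2-2Dt|\ln\eps|^{-1}}$. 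This is at most $\eps^\beta$ up to $t=\tau^*_{\eps,\beta}$ as soon as $(\eta/2-\beta)/(2D)>\mu_\beta$. Since $\mu_\beta$ can be taken uniformly bounded (e.g.\ $\mu_\beta\leqslant 1/\lambda$ in Proposition~\ref{prop:instable_implique_hyp}), $\eta$ can be fixed independently of $\beta$. Your concluding triangle-inequality argument, with $c_i^\eps(t)$ in the convex hull of the support, then goes through unchanged for every $\beta$. Your idea of stopping at the first time $|\mathbf{x}_i-\mathbf{x}_i^\eps|$ reaches $4\eps^\beta$, to keep a Lipschitz bound on $\mathcal{F}$ along $\mathbf{x}^\eps$, is sound and fills a point the paper glosses over.
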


\begin{proof}
For $\eta$ to be chosen later, consider
\begin{equation*}
    \zeta_{0,i}^\eps = \frac{\Gamma_i}{\pi\varepsilon^{2\eta}}\Ind_{\mathcal{C} \left(\mathbf{x}_{i,0}^\eps,\eps^{\frac{\eta}{2}}\right)}.
\end{equation*}
In view of \eqref{areacap}, we have
$$\int_{\mathbb{S}^2} \zeta_{0,i}^\eps(\mathbf{y})\dd\boldsymbol{\sigma}(\mathbf{y})=\Gamma_i.$$
One can check by direct computation that the maps $\zeta_{0,i}^\eps$ satisfy Hypothesis~\eqref{hyp:zeta} provided $\eps$ is small enough. But in addition, it satisfies that
\begin{equation*}
    I_i^\eps(0) \leqslant C\eps^{\eta}.
\end{equation*}
Therefore, applying Lemma~\ref{lem:control_I} and integrating by Gronwall's Lemma, recalling relation~\eqref{maj:F_Lip}, we obtain that for every $t \leqslant \tau_{\eps,\beta}$,
\begin{equation*}
    I_i^\eps(t) \leqslant C\eps^{\eta} e^{2Dt}.
\end{equation*}
Then, using Lemma~\ref{lem:c}, we have that
    \begin{equation*}
        \left|\der{}{t} \Big(c_i^\eps(t) - \mathcal{F}_i\big(\vec{c}^{\,\eps}(t)\big)\Big)\right|_{\mathbb{R}^3} \leqslant C\eps^{\frac{\eta}{2}} e^{Dt}.
    \end{equation*}
Up to renaming $D$, assume that $D$ also bounds the Lipschitz norm of $\mathcal{F}$ over $\mathscr{D}_N$. Using the variant of Gronwall's Lemma~\ref{lem:gronwall}, we obtain that
\begin{align*}
    \Big|c_i^\eps(t) - \mathbf{x}_i^\eps(t)\Big|_{\mathbb{R}^3} & \leqslant  \left( \int_0^t C\eps^{\frac{\eta}{2}} e^{Ds} \dd s + |c_i^\eps(0) - \mathbf{x}_i^\eps(0)|_{\mathbb{R}^3}\right)e^{D t} \\
    & \leqslant C\left( \eps^{\frac{\eta}{2}} e^{Dt}+ \eps^{\eta}\right)e^{D t}
\end{align*}
and thus provided $\eps$ is small enough,
\begin{equation}\label{eq:majCinstable}
     \Big|c_i^\eps(t) - \mathbf{x}_i^\eps(t)\Big|_{\mathbb{R}^3}  \leqslant C \eps^{\frac{\eta}{2} -2 D t |\ln \eps|^{-1}}
\end{equation}
Recall that by hypothesis~\eqref{hyp:instable}, there exists $\tau_{\eps,\beta}^* \leqslant \mu_\beta |\ln \eps|$ such that
\begin{equation*}
    \Big|\mathbf{x}_i(\tau_{\eps,\beta}^*) - \mathbf{x}_i^\eps(\tau_{\eps,\beta}^*)\Big| \geqslant4\eps^\beta.
\end{equation*}
Let $\eta$ be large enough such that
\begin{equation*}
    \frac{\frac{\eta}{2}-\beta}{2D} > \mu_\beta.
\end{equation*}
Then,
\begin{equation*}
    \frac{\eta}{2} - 2D \tau_{\eps,\beta}^* |\ln \eps|^{-1} > \beta.
\end{equation*}
Assume now that $\tau_{\eps,\beta} \geqslant \tau_{\eps,\beta}^*$. We deduce from the previous relation and from \eqref{eq:majCinstable} that for $\eps$ small enough,
\begin{equation*}
    \Big|c_i^\eps(\tau_{\eps,\beta}^*) - \mathbf{x}_i^\eps(\tau_{\eps,\beta}^*)\Big| \leqslant \eps^\beta
\end{equation*}
and thus by triangular inequality,
\begin{equation*}
    \Big|c_i^\eps(\tau_{\eps,\beta}^*) - \mathbf{x}_i(\tau_{\eps,\beta}^*)\Big| \geqslant2\eps^\beta.
\end{equation*}
This implies that $\tau_{\eps,\beta} < \tau_{\eps,\beta}^*$, which is a contradiction, proving indeed that 
\begin{equation*}
     \tau_{\eps,\beta} < \tau_{\eps,\beta}^* \leqslant \mu_\beta |\ln \eps|,
\end{equation*}
which concludes the proof of Theorem~\ref{theo:conditional_optimal}
\end{proof}

\section{Power-law confinement result}\label{sec:confinement_ameliore}

   Similarly to what we did in Section~\ref{sec:optimalityproof}, we prove Theorem~\ref{theo:strong_conf} conditionnally to the existence of suitable point-vortex configurations.

    \subsection{Super-stability hypotheses}

    Recall the notation \eqref{BS-kernel}, and the fact that a priori the function $K_{\mathbb{S}^2}$ is well-defnied on
    $$\mathcal{D}_K^{\mathbb{S}^2}\triangleq\Big\{(\mathbf{x},\mathbf{y})\in\mathbb{S}^2\times\mathbb{S}^2\quad\textnormal{s.t.}\quad\mathbf{x}\neq\mathbf{y}\Big\}.$$
    Fixing $\mathbf{y}\in\mathbb{S}^2$, the partial application $\mathbf{x}\mapsto K_{\mathbb{S}^2}(\mathbf{x},\mathbf{y)}$ is an application from $\mathbb{S}^2\setminus\{\mathbf{y}\}$ into $\mathbb{R}^3$ that is smooth. Given $\mathbf{x}\in\mathbb{S}^2\setminus\{\mathbf{y}\},$ the associated tangent linear map is denoted
$$\Diff_1K_{\mathbb{S}^2}(\mathbf{x},\mathbf{y}):\begin{array}[t]{rcl}
     T_{\mathbf{x}}\mathbb{S}^2 & \rightarrow & \mathbb{R}^3\\
     \mathbf{z} & \mapsto & \Diff_1K_{\mathbb{S}^2}(\mathbf{x},\mathbf{y})[\mathbf{z}]. 
\end{array}$$
Observe that the notation \eqref{BS-kernel} also makes sense in the ambient Euclidean space so that actually $K_{\mathbb{S}^2}$ is well-defined on
$$\mathcal{D}_{K}^{\mathbb{R}^3}\triangleq\Big\{(x,y)\in\mathbb{R}^3\times\mathbb{R}^3\quad\textnormal{s.t.}\quad x\neq y\Big\}.$$
If one denote by $K_{\mathbb{R}^3}$ the extension, since $\mathbb{S}^2$ is a submanifold of $\mathbb{R}^3$, we have 
$$\Diff_1K_{\mathbb{S}^2}(\mathbf{x},\mathbf{y})=\Diff_1 K_{\mathbb{R}^3}(\mathbf{x},\mathbf{y})|_{T_{\mathbf{x}}\mathbb{S}^2}.$$
We then continue to keep the notation $K_{\mathbb{S}^2}$ for its $\R^3$ extension.
Let us introduce the following Hypothesis, similar to Hypothesis 3.1 of \cite{Donati_2025_Crystal} in the planar case, but somewhat relaxed to a weaker constraint, as discussed below.
    \begin{hypothesis}\label{hyp:supstable}
        Assume that  $(\mathbf{x}_1^0,\ldots,\mathbf{x}_N^0)$ is such that the solution of the point-vortex dynamics satisfies
        \begin{equation*}
            \forall t \geqslant 0,\quad \forall i \in \{1,\ldots,N\}, \quad \forall h \in T_{\mathbf{x}_i(t)}\mathbb{S}^2, \quad \sum_{j=1\atop j \neq i}^{N} \Gamma_j \Diff_1K_{\mathbb{S}^2}\big(\mathbf{x}_i(t),\mathbf{x}_j(t)\big)[h]\cdot h = 0 .
        \end{equation*}
    \end{hypothesis}
    Looking for instance to vortex crystal configurations as in \cite{Donati_2025_Crystal}, one can see that Hypothesis~\ref{hyp:supstable} is actually equivalent to Hypothesis~3.1 of \cite{Donati_2025_Crystal}, as the required equality can only happen if the first vector in the scalar product is vanishing. But on the sphere, due to the particular geometry, it is not the case, and this scalar product can vanish in a non-trivial way, as seen in the constructed example in Section~\ref{sec:config_stable}. We then introduce a second stability hypothesis, which, as will be observed in Section~\ref{sec:prop_stable}, is implied by the linear stability of the configuration, as required in \cite{Donati_2025_Crystal}.
    \begin{hypothesis}\label{hyp:stable}
        Assume that  $(\mathbf{x}_1^0,\ldots,\mathbf{x}_N^0)$ satisfies that for every $\eps > 0$ small enough, and every family of functions $\big(t\mapsto\mathbf{x}_i^\eps(t))_{1\leqslant i \leqslant N}$  defined on a time interval $[0,T_\eps]$, such that
        \begin{equation}\label{eq:hyp:stable1}
            \forall i \in \{1,\ldots,N\}, \quad |\mathbf{x}_i^0 - \mathbf{x}_{i}^\eps(0) |_{\mathbb{R}^3} \leqslant \eps
        \end{equation}
        and satisfying the existence of a constant $C$ such that
        \begin{equation}\label{eq:hyp:stable2}
            \forall i\in\{1,\ldots,N\},\quad\left| \der{}{t} \mathbf{x}_i(t) - \mathcal{F}_i\big(\mathbf{x}_1^\eps(t),\ldots,\mathbf{x}_N^\eps(t) \big)\right|_{\mathbb{R}^3} \leqslant C \eps,
        \end{equation}
        then this family of trajectories satisfies that for every $\beta < 1/2$ and every $\alpha < \beta$,
        \begin{equation*}
            \bar{\tau}_{\eps,\beta} \triangleq \inf \Big\{t \in [0,T_\eps]\quad \text{s.t.}\quad\exists i \in \{1,\ldots,N\},\quad |\mathbf{x}_i(t) - \mathbf{x}_i^\eps(t)|_{\mathbb{R}^3} = \tfrac{\eps^\beta}{2} \Big\} \geqslant \min(\eps^{-\alpha},T_\eps).
        \end{equation*}

\end{hypothesis}

\subsection{The conditional theorem}

    \begin{theo}\label{theo:strong_conf_conditional}\textbf{(Improved confinement time for special configurations)}\\
        There exists a choice of $N$, of $(\mathbf{x}_1,\ldots,\mathbf{x}_N)$ and intensities $\Gamma_1,\ldots,\Gamma_N$ satisfying \eqref{Gauss:PV} and \eqref{dist-min-points} as well as Hypotheses~\ref{hyp:supstable} and~\ref{hyp:stable}, such that for every $\beta<1/2$ there exists $\eps_0>0$ and $\alpha > 0$ such that for every $\eps \in (0,\eps_0)$, the solution $\zeta^\eps$ of \eqref{EulerAV} with initial condition $\zeta_0^\eps$ subjected to the Hypothesis \ref{hyp:zeta} near the points $(\mathbf{x}_i^0)_{1\leqslant i\leqslant N}$ satisfies
        \begin{equation*}
            \tau_{\eps,\beta} \geqslant \eps^{-\alpha}.
        \end{equation*}
    \end{theo}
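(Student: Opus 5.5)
The proof is conditional: the existence of a configuration satisfying Hypotheses~\ref{hyp:supstable} and~\ref{hyp:stable} is deferred to Section~\ref{sec:configurations}. Here I would show that any configuration satisfying \eqref{Gauss:PV}, \eqref{dist-min-points} and these two hypotheses yields $\tau_{\eps,\beta}\geqslant\eps^{-\alpha}$. The scheme is the bootstrap of Section~\ref{sec:proofgenerale}, with the horizon $\alpha|\ln\eps|$ replaced by $\eps^{-\alpha}$. The key new input is that $D_\eps=\mathcal{O}(\eps^\beta)$ in Definition~\ref{def:Dprime} rather than merely $D_\eps\leqslant D$.

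\textbf{Step 1: bound on $D_\eps$.} For $t\leqslant\tau_{\eps,\beta}$ and $x,y\in B(\mathbf{x}_i(t),\eps^\beta)$, I would Taylor expand $F_i^\eps$ at $\mathbf{x}_i(t)$.
\begin{itemize}
\item First replace the $\zeta_j^\eps$ by $\Gamma_j\delta_{\mathbf{x}_j(t)}$. The error in the derivative is $\mathcal{O}(\eps^\beta)$ because the supports lie in $B(\mathbf{x}_j(t),\eps^\beta)$ at distance $\geqslant d_0/2$.
\item The rotation term $\gamma\mathbf{e}_3\wedge x$ is antisymmetric, so it contributes nothing to $(F(x)-F(y))\cdot(x-y)$.
\item The linear part is $\sum_j\Gamma_j\Diff_1K_{\mathbb{S}^2}(\mathbf{x}_i,\mathbf{x}_j)$, tested on $x-y$. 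Hypothesis~\ref{hyp:supstable} kills this on tangent vectors.
\item $x-y$ is tangent to $T_{\mathbf{x}_i}\mathbb{S}^2$ up to a normal component of size $\mathcal{O}(\eps^\beta|x-y|)$, since $x,y$ lie on the sphere near $\mathbf{x}_i$. The normal part therefore contributes $\mathcal{O}(\eps^\beta)|x-y|^2$.
\item The second-order remainder is also $\mathcal{O}(\eps^\beta|x-y|)$.
\end{itemize}
This gives \eqref{eq:Flip1} with $D_\eps\leqslant C\eps^\beta$. The bound \eqref{eq:Flip2} follows the same way after writing $F_i(x)-\int F_i\,\zeta_i/\Gamma_i$ as an average of $F_i(x)-F_i(\mathbf{y})$. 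The Taylor point may have to be $c_i^\eps$ or a projection of it, but $c_i^\eps$ lies within $\mathcal{O}(\eps^{2\beta})$ of the sphere, so this is harmless.

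\textbf{Step 2: moment estimates up to $t\leqslant\min(\tau_{\eps,\beta},\eps^{-\alpha})$.} By Lemma~\ref{lem:control_I}, $I_i^\eps(t)\leqslant4\eps^2e^{C\eps^{\beta}t}\leqslant C\eps^2$ once $\alpha<\beta$. Lemma~\ref{lem:c} then gives $|\dot c_i^\eps-\mathcal{F}_i(\vec c^{\,\eps})|\leqslant C\eps$, and $|c_i^\eps(0)-\mathbf{x}_i^0|\leqslant\eps$ by Lemma~\ref{lem:initestim}. Applying Hypothesis~\ref{hyp:stable} to $\mathbf{x}_i^\eps:=c_i^\eps$ on $[0,\min(\tau_{\eps,\beta},\eps^{-\alpha})]$ gives $|c_i^\eps(t)-\mathbf{x}_i(t)|<\eps^\beta/2$ there.

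For the higher moments, Lemma~\ref{lem:4n} gives
\[
\dot m_n\leqslant Cn^2 m_n^{(n-1)/n}\big(\eps^2+\eps^\beta m_n^{1/n}+\eps^{1+4\beta}\big).
\]
Setting $g=m_n^{1/n}$ yields $\dot g\leqslant C(\eps^2+\eps^\beta g)$. Hence $g(t)\leqslant C\eps^2 t\,e^{C\eps^\beta t}$, so $m_n\leqslant C_n(\eps^{2-\alpha})^n$.

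\textbf{Step 3: strong confinement.} I would redo Lemma~\ref{lem:4n_estExt} with $\delta^*$ replaced by $\alpha$. This requires $2-\alpha-4\beta'>0$, which is where the condition $\alpha<2-4\beta$ enters, after choosing $\beta'$ close to $\beta$. The outcome is $\mathtt{m}^\eps_{t,i}(r)\leqslant\eps^{5+\nu}/r^6$ for $r\geqslant\eps^{\beta'}/2$.

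I would then rerun the comparison ODE $f_i$ from Section~\ref{sec:proofgenerale} with $D$ replaced by $C\eps^\beta$. On an interval where $\eps^{\beta'}\leqslant f_i\leqslant\eps^\beta/2$,
\[
(f_i^4)'\leqslant C\big(\eps^\beta f_i^4+\eps^2+\eps^{\text{large}}f_i^3\big).
\]
Over a time $\eps^{-\alpha}$ this gives $f_i^4\leqslant\eps^{4\beta'}e^{C\eps^{\beta-\alpha}}+C\eps^{2-\alpha}\ll\eps^{4\beta}$. The last bound again uses $\alpha<2-4\beta$ and $\beta'$ close to $\beta$.

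This gives $R_i^\eps<\eps^\beta/4$. Combined with $|c_i^\eps-\mathbf{x}_i|<\eps^\beta/2$, the support stays strictly inside the balls of radius $\eps^\beta$. The contradiction argument at $\tau_{\eps,\beta}$ then yields $\tau_{\eps,\beta}\geqslant\eps^{-\alpha}$.

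\textbf{Main obstacle.} I expect Step 1 to be the hardest part. One must check that superstability on tangent vectors, plus the curvature correction from off-tangent displacements and from $c_i^\eps$ lying off the sphere, really gives $D_\eps=\mathcal{O}(\eps^\beta)$ in both \eqref{eq:Flip1} and \eqref{eq:Flip2}. The first thing to try is a Taylor expansion in $\R^3$ of the extended field, splitting $x-y$ into its tangent component and a normal component of size $\mathcal{O}(\eps^\beta|x-y|)$.
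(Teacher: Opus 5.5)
Your proposal is correct and follows the paper's proof step for step: Step~1 is Lemma~\ref{lem:D'eps}, Step~2 is Lemma~\ref{lem:moments_improved} with Hypothesis~\ref{hyp:stable} applied to the centers $c_i^\eps$, and Step~3 is Lemma~\ref{lem:4n_ext_improved} plus the comparison ODE for $f_i$, with your intermediate exponent $\beta'$ only replacing the paper's thresholds $\eps^\beta/4$ and $\eps^\beta/2$; moreover, your perturbative control of the normal component in Step~1 is safer than the paper's exact identity \eqref{eq:projetee}, because $\Diff_1K_{\mathbb{S}^2}(\mathbf{x},\mathbf{y})[h]\cdot\mathbf{x}=-K_{\mathbb{S}^2}(\mathbf{x},\mathbf{y})\cdot h$, so the differential does not preserve $T_{\mathbf{x}}\mathbb{S}^2$. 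The one point to repair is your claim that $c_i^\eps$ being $\mathcal{O}(\eps^{2\beta})$-close to the sphere is harmless: this matters in \eqref{eq:Flip2} and in Lemma~\ref{lem:control_I}, which uses \eqref{eq:Flip1} with $y=c_i^\eps$, because that additive error leaves a cross term proportional to $\big((x-c_i^\eps)\cdot\mathbf{x}_i\big)\big(\sum_{j\neq i}\Gamma_jK_{\mathbb{S}^2}(\mathbf{x}_i,\mathbf{x}_j)\big)\cdot(x-c_i^\eps)$ whose bound contains a piece $\eps^{2\beta}|x-c_i^\eps|$, which no longer yields $I_i^\eps=\mathcal{O}(\eps^2)$ up to time $\eps^{-\alpha}$ unless that sum vanishes (it does vanish for the antipodal pair); to fix this, replace $F_i^\eps(t,c_i^\eps)$ in Lemma~\ref{lem:control_I} by the average $\int_{\mathbb{S}^2}F_i^\eps\zeta_i^\eps/\Gamma_i\,\dd\boldsymbol{\sigma}$ (allowed since $\int_{\mathbb{S}^2}(\mathbf{x}-c_i^\eps)\zeta_i^\eps\,\dd\boldsymbol{\sigma}=0$) so that only \eqref{eq:Flip2} is needed, prove \eqref{eq:Flip2} with the multiplicative bound $|(x-c_i^\eps)\cdot\mathbf{x}_i(t)|\leqslant\eps^\beta\int_{\mathbb{S}^2}|x-\mathbf{y}|_{\R^3}\frac{\zeta_i^\eps(t,\mathbf{y})}{\Gamma_i}\dd\boldsymbol{\sigma}(\mathbf{y})$, obtained by averaging the identity $(x-\mathbf{y})\cdot\mathbf{x}_i=(x-\mathbf{y})\cdot\big(\mathbf{x}_i-\tfrac{x+\mathbf{y}}{2}\big)$ valid for $x,\mathbf{y}\in\mathbb{S}^2$, and note that absorbing $\eps^{1+4\beta}$ into $\eps^2$ in Step~2 uses $\beta\geqslant1/4$, which is harmless since $\tau_{\eps,\beta}$ is nonincreasing in $\beta$.
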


    \paragraph{Proof of Theorem~\ref{theo:strong_conf_conditional}.}
    \text{~}
    
        The proof follows the same outline as that of Theorem~\ref{theo:general_conf}, except that we use Hypotheses~\ref{hyp:supstable} and~\ref{hyp:stable} to improve the estimates of the vorticity moments and obtain these estimates for longer times. We start by observing how Hypothesis~\ref{hyp:supstable} allows to bound the constant $D_\eps$ as follows.
        \begin{lem}\label{lem:D'eps}
            There exists a constant $D$ such that for every $\eps$ small enough,
            \begin{equation*}
                D_\eps \leqslant D \eps^\beta.
            \end{equation*}
        \end{lem}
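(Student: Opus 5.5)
We need to bound $D_\eps$, the smallest constant such that \eqref{eq:Flip1} and \eqref{eq:Flip2} hold for $t\leqslant\tau_{\eps,\beta}$. The plan is a second-order Taylor expansion of $F_i^\eps(t,\cdot)$ around the point-vortex position $\mathbf{x}_i(t)$. The linear part is then compared with the point-vortex linearization $\sum_{j\neq i}\frac{\Gamma_j}{2\pi}\Diff_1K_{\mathbb{S}^2}(\mathbf{x}_i(t),\mathbf{x}_j(t))$, whose quadratic form vanishes on tangent vectors by Hypothesis~\ref{hyp:supstable}. All remaining contributions should be $\mathcal{O}(\eps^\beta)$ relative to $|x-y|^2$ (resp. $|x-c_i^\eps|\,|x-\mathbf{y}|$).

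\textbf{Step 1: the Jacobian of $F_i^\eps$ at $\mathbf{x}_i(t)$.} Write $A_i(t)\triangleq\Diff F_i^\eps(t,\mathbf{x}_i(t))$, using the $\R^3$ extension \eqref{extension:Fi}. It consists of the rotation term $\gamma\,\mathbf{e}_3\wedge\cdot$, which is antisymmetric so that $(\gamma\,\mathbf{e}_3\wedge h)\cdot h=0$, plus
\begin{equation*}
\frac{1}{2\pi}\sum_{j\neq i}\int \Diff_1K(\mathbf{x}_i(t),\mathbf{y})\,\zeta_j^\eps(t,\mathbf{y})\,\dd\boldsymbol{\sigma}(\mathbf{y}).
\end{equation*}
We then replace $\mathbf{y}$ by $\mathbf{x}_j(t)$. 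Since $\mathbf{y}\mapsto\Diff_1K(\mathbf{x}_i,\mathbf{y})$ is smooth at distance $\geqslant d_0/2$ from $\mathbf{x}_i$, and $\supp\zeta_j^\eps(t)\subset B(\mathbf{x}_j(t),\eps^\beta)$ for $t\leqslant\tau_{\eps,\beta}$, the replacement error is $\mathcal{O}(\eps^\beta)$ in operator norm. Hypothesis~\ref{hyp:supstable} then gives $|A_i(t)h\cdot h|\leqslant C\eps^\beta|h|^2$ for $h\in T_{\mathbf{x}_i(t)}\mathbb{S}^2$.

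\textbf{Step 2: the main obstacle, namely that $x-y$ is not exactly tangent.} For $x,y\in B(\mathbf{x}_i(t),\eps^\beta)$, which in the relevant cases lie on $\mathbb{S}^2$ or are centers inside the ball, the vector $x-y$ is not in $T_{\mathbf{x}_i}\mathbb{S}^2$. We decompose $x-y=h+\lambda\mathbf{x}_i$ with $h$ tangent.
\begin{itemize}
\item If $x,y\in\mathbb{S}^2$, then $(x-y)\cdot(x+y)=0$, hence $|\lambda|\leqslant C\eps^\beta|x-y|$. The cross terms $\lambda A_i\mathbf{x}_i\cdot h$ and $\lambda^2 A_i\mathbf{x}_i\cdot\mathbf{x}_i$ are therefore $\mathcal{O}(\eps^\beta|x-y|^2)$.
\item For \eqref{eq:Flip2}, where $c_i^\eps(t)$ lies off the sphere, we use that $\mathbf{x}_i\cdot(x-c_i^\eps)=\mathcal{O}(\eps^{2\beta})$, since $1-\mathbf{x}_i\cdot c_i^\eps$ is quadratic in the blob size. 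If this is not enough, we instead use directly $|\mathbf{x}_i\cdot(x-c_i^\eps)|\leqslant C\eps^\beta\,\overline{|x-\mathbf{y}|}$, which follows from averaging the sphere identity. This step is where care is needed.
\end{itemize}
Taylor's remainder is $|F_i^\eps(x)-F_i^\eps(y)-A_i(x-y)|\leqslant C\eps^\beta|x-y|$, since $\Diff^2F_i^\eps$ is bounded on the ball of radius $\eps^\beta$. Together these yield \eqref{eq:Flip1} with constant $C\eps^\beta$.

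\textbf{Step 3: the averaged inequality \eqref{eq:Flip2}.} Write $F_i^\eps(x)-\int F_i^\eps(\mathbf{y})\,\frac{\zeta_i^\eps(\mathbf{y})}{\Gamma_i}\,\dd\boldsymbol{\sigma}(\mathbf{y})=\int\big(F_i^\eps(x)-F_i^\eps(\mathbf{y})\big)\,\frac{\zeta_i^\eps(\mathbf{y})}{\Gamma_i}\,\dd\boldsymbol{\sigma}(\mathbf{y})$. Expand each difference as $A_i(x-\mathbf{y})+\mathcal{O}(\eps^\beta|x-\mathbf{y}|)$. Averaging the linear part gives $A_i(x-c_i^\eps)$, and dotting with $x-c_i^\eps$ falls under Step 2, giving $C\eps^\beta|x-c_i^\eps|^2$. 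Finally $|x-c_i^\eps|\leqslant\int|x-\mathbf{y}|\,\frac{\zeta_i^\eps(\mathbf{y})}{\Gamma_i}\,\dd\boldsymbol{\sigma}(\mathbf{y})$ by Jensen's inequality. The remainder term is directly bounded by $C\eps^\beta|x-c_i^\eps|\int|x-\mathbf{y}|\,\frac{\zeta_i^\eps(\mathbf{y})}{\Gamma_i}\,\dd\boldsymbol{\sigma}(\mathbf{y})$. Hence $D_\eps\leqslant D\eps^\beta$.
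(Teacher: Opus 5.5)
Your outline matches the paper's: you Taylor-expand $F_i^\eps$ about $\mathbf{x}_i(t)$, replace $\mathbf{z}\in\supp\zeta_j^\eps$ by $\mathbf{x}_j(t)$ at cost $\mathcal{O}(\eps^\beta)$, drop $\gamma\,\mathbf{e}_3\wedge\cdot$ by antisymmetry, and apply Hypothesis~\ref{hyp:supstable} to the tangential part. The one difference is how you treat the normal component of $x-y$, and there your treatment is the sounder one.

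The paper asserts the exact identity \eqref{eq:projetee} for all $h\in\R^3$. Its ingredient $\Diff_1K_{\mathbb{S}^2}(\mathbf{x},\mathbf{y})[\mathbf{x}]=0$ is correct. But the claim that $\Diff_1K_{\mathbb{S}^2}(\mathbf{x}_i,\mathbf{x}_j)$ maps $T_{\mathbf{x}_i}\mathbb{S}^2$ into itself is false: differentiating $K_{\mathbb{S}^2}(\mathbf{x},\mathbf{y})\cdot\mathbf{x}=0$ gives $\Diff_1K_{\mathbb{S}^2}(\mathbf{x},\mathbf{y})[h]\cdot\mathbf{x}=-K_{\mathbb{S}^2}(\mathbf{x},\mathbf{y})\cdot h$. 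Set $A_i\triangleq\Diff F_i^\eps(t,\mathbf{x}_i(t))$ and write $p=h+\lambda\mathbf{x}_i(t)$ with $h$ tangent. Then one gets exactly
\begin{equation*}
A_ip\cdot p=A_ih\cdot h-\lambda\,V_i\cdot h,\qquad V_i\triangleq\frac{1}{2\pi}\sum_{j\neq i}\int_{\mathbb{S}^2}K_{\mathbb{S}^2}\big(\mathbf{x}_i(t),\mathbf{y}\big)\,\zeta_j^\eps(t,\mathbf{y})\,\dd\boldsymbol{\sigma}(\mathbf{y}).
\end{equation*}
Your bounds absorb this cross term: $|\lambda|\leqslant\eps^\beta|x-y|$ for $x,y\in\mathbb{S}^2$, and $|\lambda|\leqslant\eps^\beta\int_{\mathbb{S}^2}|x-\mathbf{y}|\frac{\zeta_i^\eps(t,\mathbf{y})}{\Gamma_i}\dd\boldsymbol{\sigma}(\mathbf{y})$ for $x\in\mathbb{S}^2$, $y=c_i^\eps(t)$. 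So your Steps 1--3 do prove \eqref{eq:Flip1} for $x,y\in\mathbb{S}^2$ and \eqref{eq:Flip2} for $x\in\mathbb{S}^2$.

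The gap is the sentence ``Together these yield \eqref{eq:Flip1}''. Definition~\ref{def:Dprime} quantifies over the ball of $\R^3$. The one place \eqref{eq:Flip1} is used, Lemma~\ref{lem:control_I}, takes $y=c_i^\eps(t)\notin\mathbb{S}^2$. For that pair your normal bound gives only $C\eps^\beta|x-c_i^\eps|\int|x-\mathbf{y}|\frac{\zeta_i^\eps}{\Gamma_i}$, not $C\eps^\beta|x-c_i^\eps|^2$.

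No sharper estimate exists. By the identity above, the cross term has size $\tfrac12|V_i|\,|p|^2$ when $h$ is antiparallel to $V_i$ and $|h|=|\lambda|$. Two choices realise this:
\begin{itemize}
\item $y=\mathbf{x}_i(t)$ and $x-y$ small and parallel to $\mathbf{x}_i(t)-V_i/|V_i|$;
\item $y=c_i^\eps$ and $x\in\mathbb{S}^2$ at distance about $1-|c_i^\eps|$ from $c_i^\eps/|c_i^\eps|$.
\end{itemize}
Hence $D_\eps\geqslant\tfrac12|V_i|-\mathcal{O}(\eps^\beta)$. Since $V_i=\frac{1}{2\pi}\sum_{j\neq i}\Gamma_jK_{\mathbb{S}^2}(\mathbf{x}_i,\mathbf{x}_j)+\mathcal{O}(\eps^\beta)$, the lemma as literally stated fails whenever some vortex is moved by the others. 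Hypothesis~\ref{hyp:supstable} does not exclude this; the ring vortices of the crystals of Section~\ref{sec:polar_VC} with $\Omega\neq\gamma$ are an example.

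There are two ways out.
\begin{itemize}
\item[(a)] Require \eqref{eq:Flip1}--\eqref{eq:Flip2} only for points of $\mathbb{S}^2$. In Lemma~\ref{lem:control_I}, subtract the average $\int F_i^\eps\frac{\zeta_i^\eps}{\Gamma_i}$ instead of $F_i^\eps(t,c_i^\eps)$; this is legitimate because $\int(\mathbf{x}-c_i^\eps)\zeta_i^\eps=0$. Then \eqref{eq:Flip2} and Cauchy--Schwarz give $|\der{}{t}I_i^\eps|\leqslant 4D_\eps I_i^\eps$, and with this definition your proof is complete.
\item[(b)] For the antipodal pair actually used in Theorem~\ref{theo:strong_conf}, one has $\Diff_1K_{\mathbb{S}^2}(\pm\mathbf{e}_3,\mp\mathbf{e}_3)[p]\cdot p=0$ for every $p\in\R^3$ and $V_i=\mathcal{O}(\eps^\beta)$. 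So for that configuration the statement holds verbatim over the $\R^3$ ball.
\end{itemize}
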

        
    \begin{proof}
        Let $i\in\{1,\ldots,N\}$ and $t\leqslant\tau_{\varepsilon,\beta}$. Take $x,y\in B\big(\mathbf{x}_i(t),\varepsilon^{\beta}\big)$, and let $h = x-y$.
        By definition,
        $$F_i^{\varepsilon}(t,x)-F_i^{\varepsilon}(t,y)=\frac{1}{2\pi}\sum_{j=1\atop j\neq i}^{N}\int_{\mathbb{S}^2}\big(K_{\mathbb{S}^2}(x,\mathbf{z})-K_{\mathbb{S}^2}(y,\mathbf{z})\big)\zeta_j^{\varepsilon}(t,\mathbf{z})d\boldsymbol{\sigma}(\mathbf{z}) +\gamma\,\mathbf{e}_3\wedge h.$$
        Then, fix $j\in\{1,\ldots,N\}\setminus\{i\}$ and $\mathbf{z}\in\supp\big(\zeta_{j}^{\varepsilon}(t,\cdot)\big)\subset B\big(\mathbf{x}_j(t),\varepsilon^{\beta}\big)$. We compute by Taylor expanding the last expression that
        \begin{align*}
            K_{\mathbb{S}^2}(x,\mathbf{z})-K_{\mathbb{S}^2}(y,\mathbf{z})&=\Diff_1K_{\mathbb{S}^2}(y,\mathbf{z})[h]+\frac{1}{2}\int_{0}^{1}(1-\tau)\Diff_1^2K_{\mathbb{S}^2}(y+\tau h,\mathbf{z})[h,h]\dd \tau\\
            &=\Diff_1K_{\mathbb{S}^2}\big(\mathbf{x}_i(t),\mathbf{z}\big)[h]+\Big(\Diff_1K_{\mathbb{S}^2}(y,\mathbf{z})-\Diff_1K_{\mathbb{S}^2}\big(\mathbf{x}_i(t),\mathbf{z}\big)\Big)[h]\\
            &\quad+\frac{1}{2}\int_{0}^{1}(1-\tau)\Diff_1^2 K_{\mathbb{S}^2}(y+\tau h,\mathbf{z})[h,h]\dd \tau.
        \end{align*}
        By the Mean Value Theorem, we get that
        \begin{align*}
            \left|\Big(\Diff_1K_{\mathbb{S}^2}(y,\mathbf{z})-\Diff_1K_{\mathbb{S}^2}\big(\mathbf{x}_i(t),\mathbf{z}\big)\Big)[h]\right|_{\mathbb{R}^3}&\leqslant C|y-\mathbf{x}_i(t)|_{\mathbb{R}^3}|h|_{\mathbb{R}^3}\\
            &\leqslant C\varepsilon^{\beta}|h|_{\mathbb{R}^3}.
        \end{align*}
        Then, we compute that
        \begin{align*}
            \left|\frac{1}{2}\int_{0}^{1}(1-\tau)\Diff_1^2 K_{\mathbb{S}^2}(y+\tau h,\mathbf{z})[h,h]\dd \tau\right|&\leqslant C|h|_{\mathbb{R}^3}^2\\
        &\leqslant C\varepsilon^{\beta}|h|_{\mathbb{R}^3}.
        \end{align*}
        Besides,
        $$\Diff_1K_{\mathbb{S}^2}\big(\mathbf{x}_i(t),\mathbf{z}\big)(h)=\Diff_1K_{\mathbb{S}^2}\big(\mathbf{x}_i(t),\mathbf{x}_j(t)\big)h+\Big(\Diff_1K_{\mathbb{S}^2}\big(\mathbf{x}_i(t),\mathbf{z}\big)-\Diff_1K_{\mathbb{S}^2}\big(\mathbf{x}_i(t),\mathbf{x}_j(t)\big)\Big)h.$$
        By the Mean Value Theorem,
        \begin{align*}
            \left|\Big(\Diff_1K_{\mathbb{S}^2}\big(\mathbf{x}_i(t),\mathbf{z}\big)-\Diff_1K_{\mathbb{S}^2}\big(\mathbf{x}_i(t),\mathbf{x}_j(t)\big)\Big)[h]\right|&\leqslant C\left|\mathbf{z}-\mathbf{x}_j(t)\right|_{\mathbb{R}^3}|h|_{\mathbb{R}^3}\\
            &\leqslant C\varepsilon^{\beta}|h|_{\mathbb{R}^3}.
        \end{align*}
        Combining the foregoing calculations yields
        
        \begin{equation}\label{eq:devF}
            F_i^{\varepsilon}(t,x)-F_i^{\varepsilon}(t,y) =   \frac{1}{2\pi}\sum_{j=1\atop j \neq i}^{N} \Gamma_j \Diff_1K_{\mathbb{S}^2}(\mathbf{x}_i(t),\mathbf{x}_j(t))[h] +\gamma \mathbf{e}_3 \wedge h + \mathcal{O}(\eps^\beta|h|_{\mathbb{R}^3}).
        \end{equation} 
        Let $\Pi_{\mathbf{x}_i(t)}$ be the $\R^3$ orthogonal projection onto the tangent plane $T_{\mathbf{x}_i(t)}\mathbb{S}^2$. Then, we have that
        \begin{multline*}
            \Diff_1K_{\mathbb{S}^2}(\mathbf{x}_i(t),\mathbf{x}_j(t))[h] \cdot h = \Diff_1K_{\mathbb{S}^2}(\mathbf{x}_i(t),\mathbf{x}_j(t))[\Pi_{\mathbf{x}_i(t)}h] \cdot \Pi_{\mathbf{x}_i(t)}h \\
            + \Diff_1K_{\mathbb{S}^2}(\mathbf{x}_i(t),\mathbf{x}_j(t))[h- \Pi_{\mathbf{x}_i(t)}h] \cdot h + \Diff_1K_{\mathbb{S}^2}(\mathbf{x}_i(t),\mathbf{x}_j(t))[\Pi_{\mathbf{x}_i(t)}h] \cdot (h-\Pi_{\mathbf{x}_i(t)}h),
        \end{multline*}
        but we now claim that the two last terms are vanishing, so that in the end,
        \begin{equation}\label{eq:projetee}
            \Diff_1K_{\mathbb{S}^2}(\mathbf{x}_i(t),\mathbf{x}_j(t))[h] \cdot h = \Diff_1K_{\mathbb{S}^2}(\mathbf{x}_i(t),\mathbf{x}_j(t))[\Pi_{\mathbf{x}_i(t)}h] \cdot \Pi_{\mathbf{x}_i(t)}h.
        \end{equation}
        Indeed, by using that $K_{\mathbb{S}^2}(\mathbf{x}_i(t),\mathbf{y}_i(t)) \in T_{\mathbf{x}_i(t)}\mathbb{S}^2$, so the differential $\Diff_1K_{\mathbb{S}^2}(\mathbf{x}_i(t),\mathbf{x}_j(t))$ maps $T_{\mathbf{x}_i(t)}\mathbb{S}^2$ to itself, and thus
        \begin{equation*}
            \Diff_1K_{\mathbb{S}^2}(\mathbf{x}_i(t),\mathbf{x}_j(t))[\Pi_{\mathbf{x}_i(t)}h] \cdot (h-\Pi_{\mathbf{x}_i(t)}h) = 0.
        \end{equation*}
        In addition, the second term also vanishes, which can be seen by direct computation: using the explicit formula of the differential matrix, obtained in details at relation~\eqref{sec:differential}, we have, since $h-\Pi_{\mathbf{x}_i(t)}h = \mathbf{x}_i(t) (h \cdot \mathbf{x}_i(t) )$, that
        \begin{align*}
            \Diff_1K_{\mathbb{S}^2}(\mathbf{x}_i(t),\mathbf{x}_j(t))[h- \Pi_{\mathbf{x}_i(t)}h] \cdot h & = h \cdot \mathbf{x}_i(t) \frac{\mathbf{x}_i(t) \wedge\mathbf{x}_j(t)}{|\mathbf{x}_i(t)-\mathbf{x}_j(t)|_{\R^3}^2} \left( 1 - 2 \frac{\mathbf{x}_i(t) \cdot (\mathbf{x}_i(t) - \mathbf{x}_j(t))}{|\mathbf{x}_i(t)-\mathbf{x}_j(t)|_{\R^3}^2} \right) \\
            & =  h \cdot \mathbf{x}_i(t) \frac{\mathbf{x}_i(t) \wedge\mathbf{x}_j(t)}{|\mathbf{x}_i(t)-\mathbf{x}_j(t)|_{\R^3}^2} \left( 1 - 2 \frac{1 -\mathbf{x}_i(t) \cdot \mathbf{x}_j(t)}{2-2 \mathbf{x}_i(t) \cdot \mathbf{x}_j(t)} \right) \\
            & = 0,
        \end{align*}
        so that the relation~\eqref{eq:projetee} is now proved. Taking the scalar product of equation~\eqref{eq:devF} with $h = x-y$, then using relation~\eqref{eq:projetee} we get that
        \begin{equation}\label{eq:devF}
            \big(F_i^{\varepsilon}(t,x)-F_i^{\varepsilon}(t,y)\big)\cdot (x-y) =   \frac{1}{2\pi}\sum_{j=1\atop j \neq i}^{N} \Gamma_j \Diff_1K_{\mathbb{S}^2}(\mathbf{x}_i(t),\mathbf{x}_j(t))[\Pi_{\mathbf{x}_i(t)}h]\cdot \Pi_{\mathbf{x}_i(t)}h + \mathcal{O}(\eps^\beta|h|_{\mathbb{R}^3}),
        \end{equation}
        and using now Hypothesis~\ref{hyp:supstable} to $h' = \Pi_{\mathbf{x}_i(t)}h \in T_{\mathbf{x}_i(t)}\mathbb{S}^2$, we proved that
        \begin{equation}\label{eq:Flip_a}
            \Big|\big(F_i^{\varepsilon}(t,x)-F_i^{\varepsilon}(t,y)\big)\cdot (x-y) \Big| \leqslant C\eps^\beta |x-y|_{\mathbb{R}^3}^2.
        \end{equation}
        Coming back to~\eqref{eq:devF}, integrating in $\mathbf{y}$ against the measure $\frac{1}{\Gamma_i}\zeta_i^\eps(t,\cdot)$, then taking the scalar product with $x-c_i^\eps(t)$, we have that
        \begin{align*}
            &\left(F_i^\eps(t,x) -\int_{\mathbb{S}^2} F_i^\eps(t,\mathbf{y}) \frac{\zeta_i^\eps(t,\mathbf{y})}{\Gamma_i}\dd\boldsymbol{\sigma}(\mathbf{y})\right) \cdot(x-c_i^\eps(t)\big) \\
            &= \frac{1}{2\pi}\sum_{j=1\atop j \neq i}^{N} \Gamma_j \Diff_1K_{\mathbb{S}^2}\big(\mathbf{x}_i(t),\mathbf{x}_j(t)\big)[x-c_i^\eps(t)]\cdot\big(x-c_i^\eps(t)\big) 
            + \mathcal{O}\left(\eps^\beta \int_{\mathbb{S}^2}|x-\mathbf{y}|_{\mathbb{R}^3}\frac{\zeta_i^\eps(t,\mathbf{y})}{\Gamma_i}\dd\boldsymbol{\sigma}(\mathbf{y})|x-c_i^\eps(t)|_{\mathbb{R}^3}\right).
        \end{align*}
       Using again relation~\eqref{eq:projetee} with $h = x-c_i^\eps(t)$, then Hypothesis~\ref{hyp:supstable}, we conclude that
        \begin{equation}\label{eq:Flip_b}
            \left| \left(F_i^\eps(t,x) -\int_{\mathbb{S}^2} F_i^\eps(t,\mathbf{y}) \frac{\zeta_i^\eps(t,\mathbf{y})}{\Gamma_i}\dd\boldsymbol{\sigma}(\mathbf{y})\right) \cdot\big(x-c_i^\eps(t)\big)\right| \leqslant C\eps^\beta |x-c_i^\eps(t)|_{\mathbb{R}^3} \int_{\mathbb{S}^2} |x-\mathbf{y}|_{\mathbb{R}^3}\frac{\zeta_i^\eps(t,\mathbf{y})}{\Gamma_i}\dd\boldsymbol{\sigma}(\mathbf{y}).
        \end{equation}
        Up to renaming $D$, equations~\eqref{eq:Flip_a} and~\eqref{eq:Flip_b} prove, referring to Definition~\ref{def:Dprime}, that $D_\eps \leqslant D\eps^\beta$.
    \end{proof}
    Then we bound the vorticity moments. Let us denote by $\alpha > 0$ a positive number to be chosen later.
    \begin{lem}\label{lem:moments_improved}
        There exists a constant $C$ such that for $\eps>0$ small enough, for any $i \in \{1,\ldots,N\}$ and any $t \leqslant \min(\tau_{\eps,\beta},\eps^{-\alpha})$,
    \begin{equation*}
        I_i^\eps(t) \leqslant 5 \eps^{2}, \qquad |c_i^\eps(t)-\mathbf{x}_i(t)|_{\mathbb{R}^3} \leqslant \tfrac{\eps^\beta}{2}, \qquad m_{n,i}^\eps(t) \leqslant C_n \eps^{(2-\alpha)n}.
    \end{equation*}
    \end{lem}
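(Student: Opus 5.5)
The plan mirrors the proof of Lemma~\ref{lem:moments_generaux}, with $D$ replaced by the improved constant $D_\eps\leqslant D\eps^\beta$ from Lemma~\ref{lem:D'eps}. We fix $i$ and $t\leqslant \min(\tau_{\eps,\beta},\eps^{-\alpha})$, and we require $\alpha<\beta$ so that $D_\eps t\leqslant D\eps^{\beta-\alpha}\to 0$.

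\textbf{First estimate.} Lemma~\ref{lem:control_I} combined with Gronwall's lemma gives
\[
I_i^\eps(t)\leqslant I_i^\eps(0)e^{2D\eps^{\beta}t}\leqslant 4\eps^2 e^{2D\eps^{\beta-\alpha}}.
\]
Here the initial bound $I_i^\eps(0)\leqslant 4\eps^2$ comes from Lemma~\ref{lem:initestim}. For $\eps$ small the exponential factor is at most $5/4$, which yields $I_i^\eps(t)\leqslant 5\eps^2$.

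\textbf{Second estimate.} This is where Hypothesis~\ref{hyp:stable} is used instead of the crude Gronwall bound. Lemma~\ref{lem:c} together with the first estimate gives
\[
\Bigl|\der{}{t}c_i^\eps-\mathcal{F}_i(\vec c^{\,\eps})\Bigr|\leqslant C\sqrt{5}\,\eps .
\]
By Lemma~\ref{lem:initestim}, $|c_i^\eps(0)-\mathbf{x}_i^0|\leqslant\eps$. So the family $\mathbf{x}_i^\eps:=c_i^\eps$ satisfies \eqref{eq:hyp:stable1}--\eqref{eq:hyp:stable2} on $[0,T_\eps]$ with $T_\eps=\min(\tau_{\eps,\beta},\eps^{-\alpha})$.

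There is one technical point: the $c_i^\eps$ live in the unit ball, not on $\mathbb{S}^2$. We interpret $\mathcal{F}_i$ through its $\R^3$ extension on $\mathscr{D}_N$, as in Section~\ref{sec:optimalityproof}. We assume Hypothesis~\ref{hyp:stable} is stated, and verified in Section~\ref{sec:prop_stable}, for such off-sphere families.

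Hypothesis~\ref{hyp:stable} then gives $\bar\tau_{\eps,\beta}\geqslant \min(\eps^{-\alpha},T_\eps)=T_\eps$. Hence $|c_i^\eps(t)-\mathbf{x}_i(t)|\leqslant\eps^\beta/2$ for all $t\leqslant T_\eps$. A continuity argument closes this: the quantity starts at $\leqslant\eps<\eps^\beta/2$, and by definition of $\bar\tau$ it cannot reach $\eps^\beta/2$ before $T_\eps$.

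\textbf{Third estimate.} We insert $D_\eps\leqslant D\eps^\beta$ and $I_i^\eps\leqslant 5\eps^2$ into Lemma~\ref{lem:4n}:
\[
\der{}{t}m_{n,i}^\eps\leqslant C n^2 (m_{n,i}^\eps)^{\frac{n-1}{n}}\Bigl(\eps^2+\eps^\beta (m_{n,i}^\eps)^{1/n}+\eps^{1+4\beta}\Bigr).
\]
For $\beta>1/4$ the last term is dominated by $\eps^2$; we may assume this, since the result for larger $\beta$ implies it for smaller ones. Set $g=(m_{n,i}^\eps)^{1/n}$. Then
\[
g'\leqslant Cn\bigl(\eps^2+\eps^\beta g\bigr),
\]
which is the setting of Lemma~\ref{lem:4nODE}. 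Integrating, with $g(0)\leqslant 4\eps^4$,
\[
g(t)\leqslant \bigl(4\eps^4+\eps^{2-\beta}\bigr)e^{Cn\eps^\beta t}-\eps^{2-\beta}.
\]
Since $\eps^\beta t\leqslant\eps^{\beta-\alpha}\ll1$, we linearize $e^x-1\leqslant 2x$ to get
\[
g(t)\leqslant C\eps^4+Cn\eps^{2}t\leqslant C_n\eps^{2-\alpha},
\]
using $t\leqslant\eps^{-\alpha}$. Raising to the power $n$ gives $m_{n,i}^\eps(t)\leqslant C_n\eps^{(2-\alpha)n}$.

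The main obstacle is the second estimate: legitimately feeding the off-sphere centers $c_i^\eps$ into Hypothesis~\ref{hyp:stable}, and handling the interplay between $\tau_{\eps,\beta}$ and $\bar\tau_{\eps,\beta}$. The first and third estimates are routine once $D_\eps=\mathcal{O}(\eps^\beta)$ and $\alpha<\beta$.
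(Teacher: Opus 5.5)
Your proposal is correct and follows the paper's proof essentially step by step: Gronwall with $D_\eps\leqslant D\eps^\beta$ for $I_i^\eps$; Lemma~\ref{lem:c} plus Hypothesis~\ref{hyp:stable} applied to the centers $c_i^\eps$ for the localization; and Lemma~\ref{lem:4n} integrated via Lemma~\ref{lem:4nODE} with the exponential linearized for the moments (the only slip, which is harmless, is that $g(0)\leqslant 16\eps^4$ rather than $4\eps^4$). The paper also applies Hypothesis~\ref{hyp:stable} directly to the off-sphere centers without comment, and the concern you flag is mild anyway, since $1-|c_i^\eps(t)|_{\R^3}^2=I_i^\eps(t)\leqslant 5\eps^2$.
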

    \begin{proof}
    $\blacktriangleright$ \textit{First estimate :} using Lemma~\ref{lem:control_I}, with the bound on $D_\eps$ given by Lemma~\ref{lem:D'eps}, we have that for all $i \in \{1,\ldots,N\}$ and for any $t\leqslant \tau_{\eps,\beta}$,
        \begin{equation*}
            \left| \der{}{t} I_i^{\varepsilon}(t)\right|\leqslant 2D \eps^\beta I_i^{\varepsilon}(t),
        \end{equation*}
        which we integrate, recalling Lemma~\ref{lem:initestim}, as
        \begin{equation*}
            I_i^\eps(t) \leqslant 4 \eps^2 e^{2D \eps^\beta t}.
        \end{equation*}
        Therefore, provided $\eps$ is small enough and $\alpha < \beta$, for every $t \leqslant \min(\tau_{\eps,\beta},\eps^{-\alpha})$, we have that
        \begin{equation}\label{eq:Iimproved}
            I_i^\eps(t) \leqslant 5 \eps^2.
        \end{equation}
        $\blacktriangleright$ \textit{Second estimate :} this one differs significantly from Lemma~\ref{lem:moments_generaux}. We use Lemma~\ref{lem:c} to get that
    \begin{equation*}
        \left|\der{}{t}c_i^\eps(t) - \mathcal{F}_i\big(\vec{c}^{\,\eps}(t)\big)\right|_{\mathbb{R}^3} \leqslant C \eps ,
    \end{equation*}
    and recalling from Lemma~\ref{lem:initestim} that $|c_i^\eps(t) - \mathbf{x}_i^0|_{\mathbb{R}^3} \leqslant \eps $, we deduce that the trajectories $(t \mapsto c_i^\eps(t) \big)_{1 \leqslant i \leqslant N}$ defined until $T_\eps = \min(\tau_{\eps,\beta},\eps^{-\alpha})$ satisfy relations~\eqref{eq:hyp:stable1} and~\eqref{eq:hyp:stable2}, and thus from Hypothesis~\ref{hyp:stable}, we have, at the condition that $\alpha < \beta/2$, that
    \begin{equation*}
        \forall t \leqslant \min(\tau_{\eps,\beta},\eps^{-\alpha}), \quad \left|\der{}{t}c_i^\eps(t) - x_i(t)\right|_{\mathbb{R}^3} \leqslant \tfrac{\eps^\beta}{2}\cdot
    \end{equation*}
    $\blacktriangleright$ \textit{Third estimate :} plugging estimate~\eqref{eq:Iimproved} and Lemma~\ref{lem:D'eps} into Lemma~\ref{lem:4n}, we have that
        \begin{equation*}
        \der{}{t} m_{n,i}^\eps(t) \leqslant C n^2\big(m_{n,i}^{\eps}(t)\big)^\frac{n-1}{n} \left(\eps^2 + \eps^\beta  \left( \big(m_{n,i}^{\eps}(t)\big)^\frac{1}{n} +\eps^{1+3\beta} \right) \right),
    \end{equation*}
    provided $\eps$ is small enough. If $1+4\beta > 2$, namely $\beta > 1/4$ which we recall is an assumption we can make as proving Theorem~\ref{theo:strong_conf_conditional} for a certain $\beta < 1/2$ also proves it for every $\beta'<\beta$, then we are reduced to
    \begin{equation*}
        \der{}{t} m_{n,i}^\eps(t) \leqslant \eps^\beta C n^2\big(m_{n,i}^{\eps}(t)\big)^\frac{n-1}{n} \left(\eps^{2-\beta} +  \big(m_{n,i}^{\eps}(t)\big)^\frac{1}{n}  \right).
    \end{equation*}
    Integrating this inequality using Lemma~\ref{lem:4nODE} and Lemma~\ref{lem:initestim}, we obtain that
    \begin{equation*}
        m_{n,i}^\eps(t) \leqslant \left( -\eps^{2-\beta} + \left( \eps^{2-\beta} + 16\eps^4 \right) e^{C n\eps^\beta t} \right)^n.
    \end{equation*}
    For $\eps$ small enough, for $\alpha < \beta$ and $t \leqslant \eps^{-\alpha}$, we thus have that
    \begin{equation*}
        e^{Cn \eps^\beta t} \leqslant 1+2Cn \eps^{\beta-\alpha}
    \end{equation*}
    and therefore
    \begin{equation*}
        m_{n,i}^\eps(t) \leqslant (2C n \eps^{2-\alpha})^n.
    \end{equation*}
    
    \end{proof}

We now obtain the corollary of the estimate of the higher order moments in terms of a control on $\mathtt{m}_{t,i}^{\varepsilon}$.

\begin{lem}\label{lem:4n_ext_improved}
    For every $\nu>0$, provided $\alpha<2-4\beta$, there exists $\eps_0 > 0$ such that for all $\eps \in (0,\eps_0)$, and for any $t\leqslant  \min(\tau_{\eps,\beta},\eps^{-\alpha})$ and any $r \geqslant \tfrac{\eps^{\beta}}{4}$,
    \begin{equation}\label{estim outside_improved}
    \max_{1\leqslant i\leqslant N}\mathtt{m}_{t,i}^\eps(r) \leqslant \frac{\eps^{5+\nu}}{r^6},
\end{equation}
where $\mathtt{m}_{t,i}^{\varepsilon}(r)$ is defined in \eqref{defttm}.
\end{lem}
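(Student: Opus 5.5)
The plan mirrors the proof of Lemma~\ref{lem:4n_estExt}, with Lemma~\ref{lem:moments_improved} in place of Lemma~\ref{lem:moments_generaux}. Fix $i$, $\nu>0$, and a time $t\leqslant\min(\tau_{\eps,\beta},\eps^{-\alpha})$. For every $n\in\mathbb{N}^*$, the Chebyshev--Markov argument from the proof of Lemma~\ref{lem:4n_estExt} gives, for any $r>0$,
\begin{equation*}
\frac{1}{\Gamma_i}\int_{\mathbb{S}^2\setminus\mathcal{C}(c_i^\eps(t),r)}\zeta_i^\eps(t,\mathbf{x})\dd\boldsymbol{\sigma}(\mathbf{x})\leqslant\frac{m_{n,i}^\eps(t)}{r^{4n}}\leqslant\frac{C_n\eps^{(2-\alpha)n}}{r^{4n}}.
\end{equation*}
Here the second inequality is the third estimate of Lemma~\ref{lem:moments_improved}. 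The constant $|\Gamma_i|$ that converts this normalized mass into $\mathtt{m}_{t,i}^\eps(r)$ will be absorbed into $C_n$.

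Next we factor out the target quantity, using $r\geqslant\eps^\beta/4$ and $4n-6\geqslant0$:
\begin{equation*}
\frac{C_n\eps^{(2-\alpha)n}}{r^{4n}}=\frac{\eps^{5+\nu}}{r^6}\,\frac{C_n\eps^{(2-\alpha)n-5-\nu}}{r^{4n-6}}\leqslant\frac{\eps^{5+\nu}}{r^6}\,C_n4^{4n-6}\,\eps^{(2-\alpha-4\beta)n-5-\nu+6\beta}.
\end{equation*}
This is the only place the hypothesis $\alpha<2-4\beta$ enters: it makes $\kappa\triangleq2-\alpha-4\beta>0$. We then choose $n=\lfloor (5+\nu)/\kappa\rfloor+2$, so that the exponent $\kappa n-5-\nu+6\beta$ is at least $\kappa>0$. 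This $n$ depends only on $\nu$, $\alpha$ and $\beta$.

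Finally, we take $\eps_0$ small enough that $|\Gamma_i|C_n4^{4n-6}\eps^{\kappa}<1$ for all $i$ and all $\eps\in(0,\eps_0)$, and that Lemma~\ref{lem:moments_improved} applies. This yields \eqref{estim outside_improved}, uniformly in $t$ and in $r\geqslant\eps^\beta/4$, and taking the maximum over $i$ concludes.

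There is no real obstacle here. The only points needing care are that the moment bound from Lemma~\ref{lem:moments_improved} must hold on the entire interval $t\leqslant\min(\tau_{\eps,\beta},\eps^{-\alpha})$ with a constant $C_n$ independent of $t$, and that $n$ must be fixed before $\eps_0$ is chosen.
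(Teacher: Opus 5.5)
Your proposal is correct and follows the paper's own route. Both arguments bound the mass outside the cap by $m_{n,i}^\eps(t)/r^{4n}$ via Markov, apply the third estimate of Lemma~\ref{lem:moments_improved}, factor out $\eps^{5+\nu}/r^6$ using $r\geqslant\eps^\beta/4$, and take $n=\lfloor (5+\nu)/(2-4\beta-\alpha)\rfloor+2$; your explicit tracking of $|\Gamma_i|$, which the paper drops by bounding only the normalized mass, is a small extra bit of care.
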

\begin{proof}
    Fix $t \leqslant \min\big(\tau_{\varepsilon,\beta},\eps^{-\alpha}\big)$. Invoking Lemma \ref{lem:moments_improved}, for any $r\geqslant\frac{\varepsilon^{\beta}}{4}$ and any $\nu>0$, we have
\begin{align*}
    \frac{1}{\Gamma_i}\int_{\mathbb{S}^2 \setminus \mathcal{C}(c_i^\eps(t),r)} \zeta_i^\eps(t,\mathbf{x}) \dd \boldsymbol{\sigma}(\mathbf{x}) & =  \frac{1}{\Gamma_i}\int_{\mathbb{S}^2 \setminus \mathcal{C}(c_i^\eps(t),r)} \frac{|\mathbf{x} - c_i^\eps(t)|_{\mathbb{R}^3}^{4n}}{|\mathbf{x} - c_i^\eps(t)|_{\mathbb{R}^3}^{4n}} \zeta_i^\eps(t,\mathbf{x}) \dd \boldsymbol{\sigma}(\mathbf{x}) \\
         & \leqslant \frac{m_{n,i}^\eps(t)}{r^{4n}} \\
         & \leqslant \frac{C_n \eps^{n(2-\alpha)}}{r^{4n}}\\
         & \leqslant C_n\frac{\eps^{5+\nu}}{r^6} \frac{\eps^{(2-\alpha)n-5-\nu}}{\left(\frac{\eps^{\beta}}{4}\right)^{4n-6}} \\
         & \leqslant C_n 4^{4n-6} \frac{\eps^{5+\nu}}{r^6} \eps^{(2-\alpha-4\beta)n - 5 - \nu + 6\beta}.
\end{align*}
Taking $n = \left\lfloor \frac{5+\nu}{2-4\beta-\alpha}\right\rfloor +2$ we get that there exists $\eps_0$ depending only on $\beta$, $\nu$ and $\alpha$ such that for every $\eps \in (0,\eps_0)$, \eqref{estim outside_improved} holds.
\end{proof}
We are now ready to prove Theorem~\ref{theo:strong_conf_conditional} in a similar way to the end of the proof of Theorem~\ref{theo:general_conf}.

\subsubsection*{Conclusion of the proof of Theorem \ref{theo:strong_conf_conditional}}

Fix $i\in\{1,\ldots, N\}.$ For every $\eps > 0$ and $t \geqslant0$, recall the notations of Section~\ref{sec:growth} concerning the existence of a point $X_i^\eps$ and a trajectory $s\mapsto X_{t,i}^\eps(s)$ such that relations~\eqref{formula:radmax} and~\eqref{def:flowt} hold. Applying Lemmas~\ref{lem:D'eps} and~\ref{lem:moments_improved} to Lemma~\ref{lem:vitesse_radiale}, we have that for all $t \leqslant \min(\tau_{\eps,\beta},\eps^{-\alpha})$,
\begin{equation*}
    \der{}{s}\left|X_{t,i}^{\varepsilon}(s)-c_i^\eps(s)\right|_{\mathbb{R}^3}\Big|_{s=t} \leqslant D\eps^\beta R_i^{\varepsilon}(t) + C_2\frac{\eps^{2}}{(R_i^\eps(t))^3} + \left(\frac{M\eps^{-\eta}|\Gamma_i|}{\pi}\mathtt{m}_{t,i}^\eps\left(\frac{R_i^{\varepsilon}(t)}{2}\right)\right)^\frac{1}{2}.
\end{equation*}
Let $f_i$ be the solution of the ODE:
\begin{equation*}
    f_i'(t) =  2 D \eps^\beta f_i(t) + 2 C_2\frac{\eps^{2}}{f_i^3(t)} + 2\left(\frac{M\eps^{-\eta}|\Gamma_i|}{\pi}\mathtt{m}_{t,i}^\eps\left(\frac{f_i(t)}{2}\right)\right)^\frac{1}{2},
\end{equation*}
with initial data 
\begin{equation*}
    f_i(0) = 2 R_i^{\varepsilon}(0).
\end{equation*} 
Then by the same argument that was performed in Section~\ref{sec:proofgenerale}, we have that
\begin{equation*}
    \forall t\leqslant\min(\tau_{\varepsilon,\beta},\varepsilon^{-\alpha}), \quad f_i(t) > R_i^{\varepsilon}(t).
\end{equation*}

Let us now prove that there does not exist an index $i \in \{1,\ldots,N\}$ and a time $t_2 \leqslant \eps^{-\alpha}$ such that $f_i(t_2) \geqslant \eps^{\beta}/2$. We proceed by contradiction: assume there exists such time. Let $t_1$ be the last time prior to $t_2$ such that for every $t \in [t_1,t_2]$, we have that $\eps^{\beta}/4 \leqslant f_i(t) \leqslant \eps^{\beta}/2$.
Since $f_i(t)/2 \geqslant \eps^{\beta}/8$, we can apply Lemma~\ref{lem:4n_ext_improved} with $r = f_i(t)$ and any positive $\nu$ such that $\nu \geqslant \eta +  6\beta -3$ to get that for all $t \in [t_1,t_2]$,
\begin{align*}
    f_i'(t) & \leqslant  2 D \eps^\beta f_i(t) + 2 C_2\frac{\eps^{2}}{f_i^3(t)} + 2\left(\frac{M\eps^{-\eta}|\Gamma_i|}{\pi}2^6\frac{\eps^{5+\nu}}{f_i^6(t)}\right)^\frac{1}{2} \\
    & \leqslant  2 D \eps^\beta f_i(t) + 2 C_2\frac{\eps^{2}}{f_i^3(t)} + C\eps^{\tfrac{1}{2}(5+\nu-\eta-6\beta)} \\
    & \leqslant C \left(\eps^\beta f_i(t) +\frac{\eps^{2}}{f_i^3(t)}\right),
\end{align*}
where in the last inequality we used that $\tfrac{1}{2}(5+\nu-\eta-6\beta) > \beta$ and thus $\eps^{\tfrac{1}{2}(5+\nu-\eta-6\beta)} \ll \eps^{2\beta} \leqslant \eps^\beta f_i(t)$ as $\eps \to 0$.
Multiplying by $f_i^3(t)$ gives that
\begin{equation*}
    (f_i^4)'(t) \leqslant C \eps^\beta( f_i^4(t) + \eps^{2-\beta}).
\end{equation*}
This in turns gives that
\begin{equation*}
    f_i^4(t_2) \leqslant f_i^4(t_1) e^{C\eps^\beta(t_2-t_1)} + \eps^{2-\beta} (e^{C\eps^\beta(t_2-t_1)}-1) \leqslant \frac{1}{4^4}\eps^{4\beta} e^{C\eps^{\beta-\alpha}} + \eps^{2-\beta}2C\eps^{\beta-\alpha}.
\end{equation*}
Hence, provided $2-\alpha > 4\beta$, and $\beta > \alpha$, for $\eps$ small enough,
\begin{equation*}
    f_i(t_2) < \tfrac{\eps^\beta}{2},
\end{equation*}
which is a contradiction. No such time $t_2$ exists for any index $i$, and thus for every $i \in\{1,\ldots,N\}$, for every $t \leqslant \min(\tau_{\eps,\beta},\eps^{-\alpha}),$ $R_i^{\varepsilon}(t) < f_i(t) < \tfrac{\eps^\beta}{2}$. Therefore, for every such $t$ and $i$, for every point $\mathbf{x} \in \supp \big(\zeta_i^\eps(t,\cdot)\big)$, we have that
\begin{equation*}
    |\mathbf{x}-\mathbf{x}_i(t)|_{\mathbb{R}^3} \leqslant |\mathbf{x}-c_i^\eps(t)|_{\mathbb{R}^3} + |\mathbf{x}_i(t) - c_i^\eps(t)|_{\mathbb{R}^3} \leqslant R_i^{\varepsilon}(t) + |\mathbf{x}_i(t) - c_i^\eps(t)|_{\mathbb{R}^3} \leqslant \frac{3}{4}\eps^\beta,
\end{equation*}
where we used Lemma~\ref{lem:moments_improved} provided $\eps$ and $\alpha$ is small enough. Applying this last equality in $\tau_{\eps,\beta}$ would then be a contradiction as by definition of $\tau_{\eps,\beta}$, there exists $i_0\in\{1,\ldots,N\}$ and $\overline{\mathbf{x}} \in \supp \big(\zeta_{i_0}^\eps(\tau_{\eps,\beta},\cdot)\big)$ such that 
\begin{equation*}
    |\overline{\mathbf{x}}-\mathbf{x}_{i_0}(\tau_{\eps,\beta})|_{\mathbb{R}^3} = \eps^\beta.
\end{equation*}
Therefore, this inequality cannot be applied in $\tau_{\eps,\beta}$, and thus $\tau_{\eps,\beta} \geqslant \eps^{-\alpha}$.
Theorem~\ref{theo:strong_conf_conditional} is now proved.

\section{Existence of point-vortex configurations leading to each stability hypothesis}\label{sec:configurations}
In this section, we construct configurations satisfying Hypothesis~\ref{hyp:instable}, as well as Hypotheses~\ref{hyp:supstable} and~\ref{hyp:stable} and conclude that the conditional Theorem~\ref{theo:conditional_optimal} and Theorem~\ref{theo:strong_conf_conditional} prove Theorem~\ref{theo:optimal} and Theorem~\ref{theo:strong_conf}.

We start with the sufficient conditions to satisfy Hypotheses~\ref{hyp:stable} and~\ref{hyp:instable} in terms of linear stability and instability.
\subsection{Link between Hypotheses \ref{hyp:instable}-\ref{hyp:stable} and linear stability}
    \subsubsection{Linear stability or neutrality implies Hypothesis~\ref{hyp:stable}}\label{sec:prop_stable}
    Let $\mathbf{X^0} = (\mathbf{x}_1^0,\ldots,\mathbf{x}_N^0)$ be a rotative relative equilibrium of the point vortex dynamics on the rotating sphere, which is aligned with the sphere axis of rotation, namely that the solution $t\mapsto \mathbf{X}(t)$ satisfies that for all $i \in \{1,\ldots,N\}$,
    \begin{equation*}
        \mathbf{x}_i(t) = \mathcal{R}(\Omega t) \mathbf{x}_i^0,
    \end{equation*}
    for some $\Omega \in \R$, where $\mathcal{R}(\theta)$ is defined at relation~\eqref{def:Rtheta}, a fact that we write
    \begin{equation}\label{eq:VC}
        \mathbf{X}(t) = \mathcal{\mathbf{R}}(\Omega t) \mathbf{X}^0,
    \end{equation}
    with the appropriate definition of the multidimensional rotation $\mathcal{\mathbf{R}}(\theta)$.
    Then, we have the following.
    \begin{prop}\label{prop:stable_implique_hypstable} 
        If $t\mapsto \mathbf{X}(t)$ is a vortex crystal solution aligned with the rotation axis of the sphere, namely satisfies~\eqref{eq:VC} for some $\nu \in \R$, and that $\Diff \mathcal{F}(\mathbf{X}^0)$ satisfies that for every $H \in \mathbb{T}_{\mathbf{x}_1^0} \mathbb{S}^2 \times \ldots \times \mathbb{T}_{\mathbf{x}_N^0} \mathbb{S}^2$ that
        \begin{equation}\label{eq:dissipatif}
            \Diff \mathcal{F}(\mathbf{X}^0)[H]\cdot H \leqslant 0,
        \end{equation}
        then $\mathbf{X}^0$ satisfies Hypothesis~\ref{hyp:stable}.
    \end{prop}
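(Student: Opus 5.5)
We pass to the co-rotating frame and run an energy estimate, using \eqref{eq:dissipatif} to kill the linear growth. Let $\mathbf{x}_i^\eps(t)$ satisfy \eqref{eq:hyp:stable1}--\eqref{eq:hyp:stable2}. Here \eqref{eq:hyp:stable2} is understood as $|\dot{\mathbf{x}}_i^\eps - \mathcal{F}_i(\mathbf{X}^\eps)|\leqslant C\eps$. Set
$$\mathbf{Y}(t) \triangleq \mathbf{R}(-\Omega t)\mathbf{X}^\eps(t), \qquad \mathbf{E}(t)\triangleq \mathbf{Y}(t)-\mathbf{X}^0 .$$
Since $\mathcal{F}$ is equivariant under rotations about $\mathbf{e}_3$ (both the Biot--Savart part and $\gamma\mathbf{e}_3\wedge\mathbf{x}$ commute with $\mathcal{R}(\theta)$), we get
$$\dot{\mathbf{Y}} = \mathcal{F}(\mathbf{Y}) - \Omega\,\mathbf{e}_3\wedge\mathbf{Y} + \mathcal{O}(\eps),$$
where $\mathbf{e}_3\wedge$ acts componentwise. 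The relative equilibrium relation reads $\mathcal{F}(\mathbf{X}^0)=\Omega\,\mathbf{e}_3\wedge\mathbf{X}^0$. Hence
$$\dot{\mathbf{E}} = \Diff\mathcal{F}(\mathbf{X}^0)[\mathbf{E}] - \Omega\,\mathbf{e}_3\wedge \mathbf{E} + \mathcal{O}(|\mathbf{E}|^2) + \mathcal{O}(\eps).$$
This holds as long as $|\mathbf{E}|\leqslant \eps^\beta/2$, using smoothness of $\mathcal{F}$ on a neighborhood of $\mathbf{X}^0$, where the points stay at distance at least $d_0/2$. Note that $\mathbf{R}(-\Omega t)$ is an isometry, so $|\mathbf{x}_i(t)-\mathbf{x}_i^\eps(t)| = |\mathbf{E}_i(t)|$.

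Next we differentiate $|\mathbf{E}|^2$. The rotation term drops out since $(\mathbf{e}_3\wedge \mathbf{E}_i)\cdot\mathbf{E}_i=0$. The linear term requires care, because \eqref{eq:dissipatif} is only assumed for tangent $H$, while $\mathbf{E}_i$ need not lie in $T_{\mathbf{x}_i^0}\mathbb{S}^2$ (the functions $\mathbf{x}_i^\eps$ need not even lie on the sphere, e.g.\ the centers $c_i^\eps$). We handle this in the same way as \eqref{eq:projetee} in the proof of Lemma~\ref{lem:D'eps}. Write $\mathbf{E}_i = \Pi_{\mathbf{x}_i^0}\mathbf{E}_i + (\mathbf{E}_i\cdot\mathbf{x}_i^0)\mathbf{x}_i^0$. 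The cross terms involving the normal component should vanish by the same explicit computation of $\Diff_1K_{\mathbb{S}^2}$ in the normal direction, and similarly for the $\Diff_2$ terms. The differential of $\gamma\,\mathbf{e}_3\wedge\mathbf{x}$ is antisymmetric and contributes nothing. If some normal cross term does not vanish exactly, I would instead bound it using $|\mathbf{E}_i\cdot\mathbf{x}_i^0|\lesssim |\mathbf{E}_i|^2+\eps$, which holds as long as the trajectories remain within $\mathcal{O}(\eps)$ of the unit sphere. This gives a quadratic remainder, which is enough. In either case we obtain
$$\Diff\mathcal{F}(\mathbf{X}^0)[\mathbf{E}]\cdot\mathbf{E}\leqslant C\big(|\mathbf{E}|^3+\eps|\mathbf{E}|\big),$$
and therefore
$$\der{}{t}|\mathbf{E}|^2 \leqslant C\big(|\mathbf{E}|^3 + \eps|\mathbf{E}|\big).$$

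Dividing by $|\mathbf{E}|$ gives $\der{}{t}|\mathbf{E}|\leqslant C(|\mathbf{E}|^2+\eps)$. Before $\bar\tau_{\eps,\beta}$ we have $|\mathbf{E}|\leqslant \eps^\beta/2$, so the right-hand side is at most $C(\eps^{2\beta}+\eps)\leqslant C\eps^{2\beta}$ because $\beta<1/2$. Starting from $|\mathbf{E}(0)|\leqslant \sqrt N\eps$, we get
$$|\mathbf{E}(t)|\leqslant \sqrt N\eps + Ct\eps^{2\beta}.$$
For $t\leqslant \eps^{-\alpha}$ with $\alpha<\beta$ this is $\leqslant C\eps^{2\beta-\alpha} < \eps^\beta/2$ for $\eps$ small. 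A continuity (bootstrap) argument then gives $\bar\tau_{\eps,\beta}\geqslant\min(\eps^{-\alpha},T_\eps)$.

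The main obstacle is the second step: checking that the quadratic form of $\Diff\mathcal{F}(\mathbf{X}^0)$ on the normal components vanishes or is of higher order. The equivariance reduction in the first step is routine.
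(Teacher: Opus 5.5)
Your overall scheme matches the paper's proof. You run an energy estimate on the distance to the crystal, use \eqref{eq:dissipatif} to remove the tangential linear term, use the sphere constraint to make the normal part quadratic, and close with a bootstrap up to $\bar\tau_{\eps,\beta}$. Working in the co-rotating frame is equivalent to the paper's remark that \eqref{eq:dissipatif} is transported along the orbit. Keeping the $C\eps$ forcing (which the paper's computation drops) and integrating crudely as $|\mathbf{E}(t)|\leqslant\sqrt N\eps+Ct\eps^{2\beta}$ are both fine.

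The unclosed point is step 2, and it is a real gap. First, the cross terms do not vanish. Differentiating $K_{\mathbb{S}^2}(x,y)\cdot x\equiv 0$ gives $\Diff_1K_{\mathbb{S}^2}(x,y)[h]\cdot x=-K_{\mathbb{S}^2}(x,y)\cdot h$. So the assertion you borrow from the proof of Lemma~\ref{lem:D'eps}, that $\Diff_1K_{\mathbb{S}^2}$ maps $T_x\mathbb{S}^2$ into itself, is false in general; it is harmless there only because $K_{\mathbb{S}^2}(\mathbf{e}_3,-\mathbf{e}_3)=0$. Write $\mathbf{E}_i=h_i+\nu_i\mathbf{x}_i^0$ and use the relative-equilibrium relation. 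The radial derivatives of $K_{\mathbb{S}^2}$ and the normal components of the $\Diff_2$ terms do vanish, as you expect. However, the total normal contribution to $\Diff\mathcal{F}(\mathbf{X}^0)[\mathbf{E}]\cdot\mathbf{E}$ is exactly $(\gamma-\Omega)\sum_i\nu_i\,(\mathbf{e}_3\wedge\mathbf{x}_i^0)\cdot h_i$. This is nonzero as soon as $\gamma\neq\Omega$ and some vortex is off the axis, so you genuinely need your fallback $|\nu_i|\lesssim|\mathbf{E}|^2+\eps$.

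Second, that fallback needs $\bigl|\,|\mathbf{x}_i^\eps(t)|_{\mathbb{R}^3}-1\bigr|\lesssim\eps$ for all $t\leqslant\eps^{-\alpha}$, and this is not among \eqref{eq:hyp:stable1}--\eqref{eq:hyp:stable2}. Since $\mathcal{F}_i(\mathbf{X})\cdot\mathbf{x}_i=0$ for every $\mathbf{X}$, those assumptions only give $\der{}{t}|\mathbf{x}_i^\eps|^2=2\big(\der{}{t}\mathbf{x}_i^\eps-\mathcal{F}_i(\mathbf{X}^\eps)\big)\cdot\mathbf{x}_i^\eps=\mathcal{O}(\eps)$, that is, a radial drift $\nu\sim\eps t$. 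This is not just a loss in your estimate. To first order, a radial offset $\nu$ changes the angular speed by $(\gamma-\Omega)\nu$, so such a drift produces a phase error of order $\eps t^2$. That error reaches $\eps^\beta/2$ before $\eps^{-\alpha}$ whenever $\alpha>(1-\beta)/2$, which leaves a nonempty range of $\alpha<\beta$ once $\beta>1/3$.

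The paper's proof escapes this only by asserting $\mathbf{x}_i^\eps(t)\in\mathbb{S}^2$, which fails for the intended trajectories $c_i^\eps$. The repair is available where the hypothesis is used: $|c_i^\eps(t)|^2=1-I_i^\eps(t)\geqslant1-5\eps^2$ by the first estimate of Lemma~\ref{lem:moments_improved}, which is proved before Hypothesis~\ref{hyp:stable} is invoked. Add such a near-sphere bound to Hypothesis~\ref{hyp:stable}, or verify it at that point. Your fallback, and with it the rest of your argument, then becomes rigorous.
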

    \begin{proof}
    First, let us notice that since $t\mapsto \mathbf{X}(t)$ is a vortex crystal solution, and that relation~\eqref{eq:dissipatif} is invariant by the action of the rotations $\mathcal{\mathbf{R}}(\theta)$, then for all $t\geqslant 0$,
    \begin{equation}\label{eq:dissipatif2}
            \Diff \mathcal{F}\big(\mathbf{X}(t)\big)[H]\cdot H \leqslant 0.
        \end{equation}
        The interested reader can also see this fact through the formula \eqref{rep-diffF} given later. 
        Throughout the proof, we denote
        $$|\mathbf{X}|_{N}^2\triangleq\sum_{i=1}^{N}|\mathbf{x}_i|_{\mathbb{R}^3}^2.$$
        Let us compute
        \begin{align*}
            \der{}{t} |\mathbf{X}^\eps(t)-\mathbf{X}(t)|_N^2 & = 2\der{}{t} \big(\mathbf{X}^\eps(t)-\mathbf{X}(t) \big)\cdot \big(\mathbf{X}^\eps(t)-\mathbf{X}(t)\big) \\
            & = 2\Big(\mathcal{F}\big(\mathbf{X}^\eps(t)\big) - \mathcal{F}\big(\mathbf{X}(t)\big)\Big) \cdot \big(\mathbf{X}^\eps(t)-\mathbf{X}(t)\big).
        \end{align*}
        We then split the solution 
        $$\mathbf{X}^\eps(t) = \mathbf{X}^\eps_\perp(t) + \mathbf{X}^\eps_\top(t),\qquad with \qquad\mathbf{X}^\eps_\top-\mathbf{X}(t) \in T_{\mathbf{x}_1^\eps(t)}\mathbb{S}^2 \times\ldots\times T_{\mathbf{x}_N^\eps(t)}\mathbb{S}^2,$$ 
        so that
        \begin{align*}
            \der{}{t} |\mathbf{X}^\eps(t)-\mathbf{X}(t)|_N^2 & = 2\Big(\mathcal{F}\big(\mathbf{X}^\eps_\top(t)\big) - \mathcal{F}\big(\mathbf{X}(t)\big)\Big) \cdot \big(\mathbf{X}_\top^\eps(t)-\mathbf{X}(t)\big) \\
            & \quad+2\Big(\mathcal{F}\big(\mathbf{X}^\eps(t)\big)-\mathcal{F}\big(\mathbf{X}(t)\big)\Big) \cdot \mathbf{X}_\perp^\eps(t)\\
            & \quad+2\Big(\mathcal{F}\big(\mathbf{X}^\eps(t)\big)-\mathcal{F}\big(\mathbf{X}^\eps_\top(t)\big)\Big) \cdot \big(\mathbf{X}_\top^\eps(t)-\mathbf{X}(t)\big).
        \end{align*}
        Using relation~\eqref{eq:dissipatif2}, we find
        \begin{align*}
            \Big(\mathcal{F}\big(\mathbf{X}^\eps_\top(t)\big)-\mathcal{F}\big(\mathbf{X}(t)\big)\Big)\cdot \big(\mathbf{X}_\top^\eps(t)-\mathbf{X}(t)\big) & = \Diff \mathcal{F}\big(\mathbf{X}(t)\big)[\mathbf{X}_\top^\eps(t)-\mathbf{X}(t)]\cdot\big(\mathbf{X}_\top^\eps(t)-\mathbf{X}(t)\big)+C|\mathbf{X}_\top^\eps(t)-\mathbf{X}(t)|_N^3\\
            & \leqslant C|\mathbf{X}_\top^\eps(t)-\mathbf{X}(t)|_N^3.
        \end{align*}
        Then, by Lipschitz and Cauchy-Schwarz estimates, we get
        \begin{align*}
            \der{}{t} |\mathbf{X}^\eps(t)-\mathbf{X}(t)|_N^2 & \leqslant  C|\mathbf{X}^\eps(t)-\mathbf{X}(t)|_N|\mathbf{X}^\eps_\perp(t)|_N + C |\mathbf{X}^\eps(t)-\mathbf{X}^\eps_\top(t)|_N|\mathbf{X}^\eps_\top(t)-\mathbf{X}(t)|_N + C|\mathbf{X}_\top^\eps(t)-\mathbf{X}(t)|_N^3.
        \end{align*}
        Now recall that $\mathbf{X}_{\perp}^{\varepsilon}(t)=\mathbf{X}^{\varepsilon}(t)-\mathbf{X}_{\top}^{\varepsilon}(t)$ and that, by definition of the orthogonal projection, we have
        $$|\mathbf{X}_{\top}^{\varepsilon}(t)-\mathbf{X}(t)|_N\leqslant|\mathbf{X}^{\varepsilon}(t)-\mathbf{X}(t)|_N.$$
        With this in hand, we obtain
\begin{align*}
    \der{}{t} |\mathbf{X}^\eps(t)-\mathbf{X}(t)|_N^2\leqslant C|\mathbf{X}^\eps(t)-\mathbf{X}(t)|_N|\mathbf{X}^\eps_\perp(t)|_N + C|\mathbf{X}_\top^\eps(t)-\mathbf{X}(t)|_N^3.
\end{align*}
Additionally, since for all $i \in \{1,\ldots,N\}$, $\mathbf{x}_i^\eps(t),\mathbf{x}_i(t) \in \mathbb{S}^2$, we have by virtue of \eqref{norm-diff-carre} that
        \begin{equation*}
            |\mathbf{x}_i^\eps(t) - \mathbf{x}_i(t) |_{\mathbb{R}^3}^2 = 2\big(1-\mathbf{x}_i^\eps(t)\cdot \mathbf{x}_i(t)\big),
        \end{equation*}
        and by definition, $\mathbf{x}_i^\eps(t) = \mathbf{x}_{i,\bot}^\eps(t)+\mathbf{x}_{i,\top}^\eps(t)$, with 
        $$\big(\mathbf{x}_{i,\top}^\eps(t)-\mathbf{x}_i(t) \big) \cdot \mathbf{x}_i(t) = 0,\qquad\textnormal{i.e.}\qquad \mathbf{x}_{i,\top}^\eps(t)\cdot\mathbf{x}_{i}(t)=1.$$
        Therefore, combining the last three identities and using the fact that $\mathbf{x}_{i,\perp}^{\varepsilon}(t)$ is colinear to $\mathbf{x}_i(t),$ we obtain
        \begin{equation*}
            |\mathbf{x}_i^\eps(t) - \mathbf{x}_i(t) |_{\mathbb{R}^3}^2 = -2 \mathbf{x}_{i,\perp}^\eps(t)\cdot\mathbf{x}_i(t) = 2  |\mathbf{x}_{i,\perp}^\eps(t)|_{\mathbb{R}^3}.
        \end{equation*}
        In conclusion, the normal perturbation is quadratic
        \begin{equation*}
            |\mathbf{X}^\eps_\perp(t)|_N \leqslant C |\mathbf{X}^\eps(t) - \mathbf{X}(t) |_N^2.
        \end{equation*}
        We have proved that
        \begin{equation*}
            \der{}{t} |\mathbf{X}^\eps(t)-\mathbf{X}(t)|_N^2 \leqslant C |\mathbf{X}^\eps(t)-\mathbf{X}(t)|_N^3.
        \end{equation*}
        We then make a quick bootstrap argument. For all $t \leqslant \bar{\tau}_{\eps,\beta}$, $|\mathbf{x}_i(t) - \mathbf{x}_i^\eps(t)| < \eps^\beta/2$, thus $|\mathbf{X}^\eps(t)-\mathbf{X}(t)|_2 \leqslant \frac{1}{2}\sqrt{N} \eps^\beta$ so that
        \begin{equation*}
            \der{}{t} |\mathbf{X}^\eps(t)-\mathbf{X}(t)|_N\leqslant C \eps^\beta |\mathbf{X}^\eps(t)-\mathbf{X}(t)|_N,
        \end{equation*}
        which yields that
        \begin{equation*}
            |\mathbf{X}^\eps(t)-\mathbf{X}(t)|_N\leqslant |\mathbf{X}^\eps(0)-\mathbf{X}^0|_N e^{C\eps^\beta t} \leqslant \eps e^{C\eps^\beta t}.
        \end{equation*}
        Therefore, for any $\alpha < \beta$, for every $t \leqslant \eps^{-\alpha}$ and provided $\eps$ is small enough, we have that
        \begin{equation*}
            \max_{i\in\{1,\ldots,N\}}|\mathbf{x}_i^\eps(t)-\mathbf{x}_i(t)|_\infty \leqslant  |\mathbf{X}^\eps(t)-\mathbf{X}(t)|_N \leqslant 2\eps.
        \end{equation*}
        which proves by continuity of the trajectories that $\bar{\tau}_{\eps,\beta} \geqslant \eps^{-\alpha}.$
    \end{proof}

    \subsubsection{Linear Instability implies Hypothesis~\ref{hyp:instable}}

    Let us recall Theorem 6.1, Chapter 9 of \cite{hartman1982ordinary}.
\begin{theo}\label{theo:center_manifold}
Let $f : (\mathbb{S}^2)^N \to (\mathbb{S}^2)^N$. We consider the differential equation 
\begin{equation}\label{eq:diff_eq_generale}
    \der{}{t} \widetilde{\mathbf{X}}(t) = \mathcal{F}\big(\widetilde{\mathbf{X}}(t)\big).
\end{equation}
Assume that there exists $\mathbf{X}^* \in (\R^2)^N$ is such that $f(\mathbf{X}^*) = 0$. Assume furthermore that $\Diff f(Z^*)$ has an eigenvalue with positive real part $\lambda_0 > 0$. Then there exists a solution $\tilde{\mathbf{X}}$ of \eqref{eq:diff_eq_generale} such that $\tilde{\mathbf{X}}(t)$ exists some fixed neighborhood of $\mathbf{X}^*$, that
\begin{equation*}
    \widetilde{\mathbf{X}}(t) \tend{t}{-\infty} \mathbf{X}^*,
\end{equation*}
and that
\begin{equation*}
    \frac{1}{t}\ln |\widetilde{\mathbf{X}}(t)-\mathbf{X}^*|_{\infty,N} \tend{t}{-\infty} \lambda_0.
\end{equation*}
where $|\cdot|_{\infty,N}$ is the norm defined by
$$|\mathbf{X}|_{\infty,N}\triangleq\max_{i\in\{1,\ldots,N\}}|\mathbf{x}_i|_{\mathbb{R}^3}.$$
\end{theo}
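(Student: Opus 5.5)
This is a classical result, the existence of an unstable (strong unstable) manifold at a hyperbolic-type equilibrium. I will prove it by the Perron/Lyapunov–Perron integral equation method, working in local coordinates. Since $(\mathbb{S}^2)^N$ is a smooth manifold, I first take a chart around $\mathbf{X}^*$, for instance the tangent-plane projection (stereographic coordinates) at each $\mathbf{x}_i^*$. This turns the equation into an ODE $\dot y = Ay + g(y)$ on $\mathbb{R}^{2N}$. Here $A = \Diff f(\mathbf{X}^*)$ is written in the chart, $g$ is $C^1$, and $g(0)=0$, $\Diff g(0)=0$. Eigenvalues and the growth exponent are chart-invariant, since the chart is a local diffeomorphism. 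Moreover $|\cdot|_{\infty,N}$ is locally equivalent to the chart norm, so the logarithmic rate is unaffected.

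Next I split the spectrum according to $\lambda_0$. Pick $\lambda_0$ to be the largest real part, or more carefully isolate the spectral set $\sigma_+ = \{\lambda : \mathrm{Re}\,\lambda \geqslant \lambda_0\}$. Choose the gap so that the remaining eigenvalues have real part at most $\lambda_0 - 2\kappa$ for some $\kappa > 0$. Let $P_+$ and $P_-$ be the corresponding spectral projections. With adapted norms, $|e^{tA}P_+| \leqslant C e^{(\lambda_0-\kappa/2)t}$ for $t \leqslant 0$, and $|e^{tA}P_-| \leqslant C e^{(\lambda_0-\kappa)t}$ for $t \geqslant 0$. I then look for solutions on $(-\infty,0]$ in the weighted space of $y$ with $\sup_{t\leqslant0} e^{-(\lambda_0-\kappa/4)t}|y(t)| < \infty$, defined as fixed points of
\begin{equation*}
    y(t) = e^{tA}\xi + \int_0^t e^{(t-s)A}P_+ g(y(s))\,\dd s + \int_{-\infty}^t e^{(t-s)A}P_- g(y(s))\,\dd s,
\end{equation*}
where $\xi \in \mathrm{Ran}\,P_+$ is small. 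Restricting $g$ to a small ball and using $\mathrm{Lip}(g) \to 0$ near $0$ (a cutoff makes this global), the right-hand side is a contraction for small $|\xi|$. This gives a solution $\tilde y$ that stays in a fixed neighborhood and satisfies $|\tilde y(t)| \leqslant C|\xi| e^{(\lambda_0-\kappa/4)t} \to 0$ as $t \to -\infty$.

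The remaining and main step is the sharp rate $\frac1t \ln|\tilde y(t)| \to \lambda_0$, not merely a bound. For this I choose $\xi$ to be an eigenvector (or real part of one) associated with the eigenvalue of real part exactly $\lambda_0$, taking $\lambda_0$ to be the real part of the top of $\sigma_+$. If several eigenvalues lie in $\sigma_+$, I instead shrink $\sigma_+$ to the eigenvalues with real part exactly $\lambda_0$ and put everything above it into a third block, integrated from $0$ like $P_+$. I then bootstrap. Because $|g(y)| \leqslant o(1)|y|$ and the a priori decay is $e^{(\lambda_0-\kappa/4)t}$, the integral terms are $O(|\xi|^2 e^{(\lambda_0-\kappa/4)t})$-small relative to $e^{tA}\xi$, and iterating the estimate improves the exponent toward $\lambda_0$. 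Meanwhile $|e^{tA}\xi|$ behaves like $e^{\lambda_0 t}$ times at most a polynomial factor coming from Jordan blocks. From this I get a lower bound $|\tilde y(t)| \geqslant c\,e^{(\lambda_0+\theta)t}$ and an upper bound $C e^{(\lambda_0-\theta)t}$ for every $\theta > 0$ and $t \ll 0$, which yields the limit. Controlling the lower bound is the delicate point: the nonlinear terms must not cancel the leading linear part. I expect this to be the hardest step, and I would handle it exactly as in Hartman's proof by working with the solution projected onto the $\lambda_0$-block, which satisfies a scalar differential inequality for its norm. Undoing the chart then gives $\tilde{\mathbf{X}}$.
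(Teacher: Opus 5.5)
Your chart reduction is exactly what the paper does. The paper does not prove this statement: it quotes Theorem~6.1 of Chapter~9 of \cite{hartman1982ordinary} in $\R^n$ and asserts that the result transfers to $(\mathbb{S}^2)^N$ because it is local. Your remark that $|\cdot|_{\infty,N}$ is locally equivalent to the chart norm, so that $\frac1t\ln|\cdot|$ has the same limit, is precisely the justification of that assertion. Had you stopped there and cited Hartman for the Euclidean case, your proposal would coincide with the paper. Your Lyapunov--Perron construction is standard, up to a slip in the constants. With the weight $e^{(\lambda_0-\kappa/4)t}$, the bound on $e^{tA}P_+$ for $t\leqslant 0$ needs an exponent strictly between $\lambda_0-\kappa/4$ and $\lambda_0$ (e.g.\ $\lambda_0-\kappa/8$). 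With $\lambda_0-\kappa/2$, your estimate of $\int_0^t e^{(t-s)A}P_+g(y(s))\,\dd s$ is only $O\big(e^{-\kappa t/4}e^{(\lambda_0-\kappa/4)t}\big)$, and the contraction estimate does not close.

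The genuine gap is the lower bound $|\tilde y(t)|\geqslant c\,e^{(\lambda_0+\theta)t}$, which is the entire content of the exact rate.
\begin{itemize}
\item Your bootstrap cannot produce it. Using only $|g(y)|=o(|y|)$, as you do, the Duhamel terms are bounded at best by $e^{(\lambda_0-\theta)t}$. This is much \emph{larger} than $|e^{tA}\xi|\simeq|\xi|e^{\lambda_0 t}$ as $t\to-\infty$, and iterating only sends $\theta\downarrow 0$. In fact the linear term need not dominate at all: already $\dot y=\lambda_0 y+y/\ln(1/|y|)$ gives $|y(t)|e^{-\lambda_0 t}\to 0$ polynomially.
\item Your fallback is insufficient as stated. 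The inequality for $|P_0\tilde y|$ has forcing $|P_0 g(\tilde y)|\leqslant \ell|\tilde y|$, so it closes only once $|\tilde y|$ is controlled by a projected component.
\end{itemize}
The missing idea is a cone (dominated-splitting) argument using the gap above $\lambda_0$. Take $\mathrm{Re}\,\sigma_>\geqslant\lambda_0+2\kappa$, set $u=P_>\tilde y$ and $v=(P_0+P_<)\tilde y$, and use adapted norms. On $\{|u|=|v|\}$ one gets $\der{}{t}(|u|-|v|)\geqslant(2\kappa-2\delta-C\ell)|u|>0$. Hence the cone $\{|u|\leqslant|v|\}$ is invariant under the backward flow in a small ball. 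Your own three-block construction gives $u(0)=P_>\tilde y(0)=0$, so $\tilde y$ stays in this cone for all $t\leqslant 0$. There $\der{}{t}|v|\leqslant(\lambda_0+\delta+C\ell)|v|$, so $|\tilde y(t)|\geqslant |v(0)|e^{(\lambda_0+\delta+C\ell)t}$ with $|v(0)|\geqslant c|\xi|>0$. Letting $\delta\to 0$, and using $\ell\to 0$ along the orbit as $t\to-\infty$, gives every $\theta>0$. For the upper bound, use the symmetric cone $\{|P_<\tilde y|\leqslant|(P_0+P_>)\tilde y|\}$: $\tilde y$ cannot leave it backward without contradicting the a priori decay $e^{(\lambda_0-\kappa/4)t}$, and the same computation then applies. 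This is in substance the filtration $W^{>\lambda_0}\subset W^{\geqslant\lambda_0}$ behind Hartman's theorem, and with it your proof is complete.

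Your direct comparison does work in one special case. Here $\mathcal{F}$ is smooth, so $g=O(|y|^2)$. If the $\lambda_0$-block is semisimple, $\lambda_0>0$ gives $\int_t^0 e^{\lambda_0(t-s)}|\tilde y(s)|^2\,\dd s\lesssim|\xi|^2e^{\lambda_0 t}$, which suffices, for instance for a simple eigenvalue as in the application to Theorem~\ref{theo:optimal}.
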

We claim that since Theorem~\ref{theo:center_manifold} is purely local, it is also true on $(\mathbb{S}^2)^N$ by writing in a local chart, and with $|Z(t)-Z^*|$ replaced with the $\mathbb{S}^2$ distance, or equivalently and suiting better our notations, the $\R^3$ distance. Let us now prove the following.
\begin{prop}\label{prop:instable_implique_hyp}
    Assume that $\mathbf{X}^*$ is an equilibrium of the point-vortex dynamics such that $\Diff \mathcal{F}(X^*)$ has an eigenvalue with positive real part. Then Hypothesis~\ref{hyp:instable} is satisfied.
\end{prop}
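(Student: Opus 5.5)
We take $\mathbf{x}_i^0 = \mathbf{x}_i^*$, so that the reference solution of \eqref{PVS} is stationary: $\mathbf{x}_i(t)=\mathbf{x}_i^*$ for all $t$. It satisfies \eqref{dist-min-points} trivially because the points are pairwise distinct. The nearby initial data $\mathbf{x}_{i,0}^\eps$ will be chosen on the unstable solution $\widetilde{\mathbf{X}}$ given by Theorem~\ref{theo:center_manifold}, transported to $(\mathbb{S}^2)^N$ through a local chart. The claim recorded after that theorem lets us measure distances with $|\cdot|_{\infty,N}$ in $\R^3$.

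From $\frac{1}{t}\ln|\widetilde{\mathbf{X}}(t)-\mathbf{X}^*|_{\infty,N}\to\lambda_0$ as $t\to-\infty$, we fix $\lambda_0/2<\lambda_1<\lambda_0<\lambda_2$ with $\lambda_2<2\lambda_1$. There is then $T_0>0$ such that for all $t\leqslant -T_0$,
$$e^{\lambda_2 t}\leqslant|\widetilde{\mathbf{X}}(t)-\mathbf{X}^*|_{\infty,N}\leqslant e^{\lambda_1 t}.$$
The trajectory $\widetilde{\mathbf{X}}$ stays in a fixed neighborhood of $\mathbf{X}^*$. Hence, shrinking the neighborhood if needed, $\delta_0\triangleq|\widetilde{\mathbf{X}}(-T_0)-\mathbf{X}^*|_{\infty,N}>0$ is a fixed quantity independent of $\eps$.

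For $\eps$ small, let $s_\eps\triangleq \frac{1}{\lambda_1}\ln(\eps/2)$, which is negative and of size comparable to $|\ln\eps|$. Set $\mathbf{X}^\eps_0\triangleq\widetilde{\mathbf{X}}(s_\eps)$, so that $|\mathbf{x}_{i,0}^\eps-\mathbf{x}_i^*|\leqslant e^{\lambda_1 s_\eps}=\eps/2$, which is the first requirement of Hypothesis~\ref{hyp:instable}. The system \eqref{PVS} is autonomous, so the solution issued from $\mathbf{X}^\eps_0$ is $t\mapsto\widetilde{\mathbf{X}}(s_\eps+t)$. Define $\tau^*_{\eps,\beta}\triangleq -T_0-s_\eps$. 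Then
$$|\mathbf{X}^\eps(\tau^*_{\eps,\beta})-\mathbf{X}^*|_{\infty,N}=\delta_0\geqslant 4\eps^\beta$$
for $\eps$ small, since $\beta>0$. This is the second requirement, with the maximum over $i$ attained at some index.

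The hypothesis asks for the bound at every $i$, but only one index is needed to exit the $\eps^\beta$-balls, which is all the proof of Theorem~\ref{theo:conditional_optimal} uses. Read with $\max_i$, the requirement is met. Finally,
$$\tau^*_{\eps,\beta}\leqslant \frac{1}{\lambda_1}|\ln(\eps/2)|\leqslant \frac{2}{\lambda_1}|\ln\eps|$$
for $\eps$ small, so $\mu_\beta=2/\lambda_1$ works, independently of $\beta$.

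The main obstacle is the transfer of Theorem~\ref{theo:center_manifold} to the manifold. One must check that, in a chart near $\mathbf{X}^*$, the pulled-back vector field has linearization conjugate to $\Diff\mathcal{F}(\mathbf{X}^*)$ restricted to the tangent space, so that the unstable eigenvalue persists. This holds because $\mathcal{F}$ is tangent to $(\mathbb{S}^2)^N$. One must also check that the chart distortion only changes the norms by bounded factors, which preserves the exponential rate $\lambda_0$. If the eigenvalue is complex, the limit of $\frac1t\ln|\cdot|$ still holds and the argument is unchanged. Only the lower bound at the fixed time $-T_0$ is needed, and that bound is automatic.
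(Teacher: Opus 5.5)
Your argument is correct, but it is organized differently from the paper's. The paper chooses two $\eps$-dependent times $t_0<t_1$, both tending to $-\infty$, at which $|\widetilde{\mathbf{X}}-\mathbf{X}^*|_\infty$ equals $\eps/2$ and $4\eps^\beta$ respectively. It then uses the two-sided asymptotic rate at both times to get $\tau^*_{\eps,\beta}\leqslant t_1-t_0\leqslant\frac{1-\beta}{\lambda}|\ln\eps|$ for any $\lambda<\lambda_0$.

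You instead put the exit at the $\eps$-independent time $-T_0$, where the deviation is a fixed $\delta_0>0$, so $\delta_0\geqslant 4\eps^\beta$ is automatic. You then only need the one-sided bound $|\widetilde{\mathbf{X}}(t)-\mathbf{X}^*|_{\infty,N}\leqslant e^{\lambda_1 t}$ to place the initial datum within $\eps/2$. This spares you the intermediate-value construction of $t_0,t_1$ and the two-sided rate estimate at two moving times. Your lower bound $e^{\lambda_2 t}$ only serves to get $\delta_0>0$, which uniqueness gives anyway, and the constraints $\lambda_1>\lambda_0/2$, $\lambda_2<2\lambda_1$ are never used.

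The price is a cruder, $\beta$-independent constant $\mu_\beta=2/\lambda_1$, essentially the time for the deviation to grow from $\eps$ to order one. The paper's $(1-\beta)/\lambda$ is the sharper time to grow from $\eps$ to $\eps^\beta$. But Theorem~\ref{theo:conditional_optimal} only needs some $\mu_\beta$, and yours even lets $\eta$ there be chosen uniformly in $\beta$.

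One thing to keep in mind: your perturbed trajectory reaches a fixed distance $\delta_0$ from $\mathbf{X}^*$ rather than a distance of order $\eps^\beta$. So the Lipschitz bound on $\mathcal{F}$ used in the Gronwall step of that theorem must hold on a fixed neighborhood of $\mathbf{X}^*$. This is harmless, since enlarging $T_0$ makes $\delta_0\leqslant e^{-\lambda_1 T_0}$ as small as needed.

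Your reading of the second requirement as a maximum over $i$ is the one the paper itself uses through $|\cdot|_\infty$.
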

\begin{proof}
    Let $\beta \in (0,1)$ and let $\tilde{\mathbf{X}}$ a solution of the point-vortex dynamics given by Theorem \ref{theo:center_manifold}. Since $\tilde{\mathbf{X}}(t) \tend{t}{-\infty} \mathbf{X}^*$ and since $\tilde{\mathbf{X}}$ exits some fixed neighborhood of $\mathbf{X}^*$, for $\eps$ small enough, there exist $t_0$ and $t_1$ such that
\begin{equation*}
    \begin{cases}
    -\infty < t_0 < t_1 \\
    t_1 \to -\infty & \text{ as } \eps \to 0\\
    |\tilde{\mathbf{X}}(t_1)-\mathbf{X}^*|_\infty = 4\eps^\beta \\
    |\tilde{\mathbf{X}}(t_0)-\mathbf{X}^*|_\infty = \tfrac{\eps}{2}.
    \end{cases}
\end{equation*}
Let $\mathbf{X}^\eps(t) = \tilde{\mathbf{X}}(t + t_0)$, we have that $|\mathbf{X}^\eps(0) - \mathbf{X}^*|_\infty = \tfrac{\eps}{2}$ and that $\tau^*_{\eps,\beta} \leqslant t_1-t_0$ since $\mathbf{X}^{\varepsilon}(t_1-t_0) = 4 \eps^\beta$. Moreover, since 
\begin{equation*}
    \ln |\mathbf{X}(t)-\mathbf{X}^*|_\infty = \lambda_0 t + o_{t \to -\infty}(t),
\end{equation*}
then for any $\kappa \in (0,1)$, for $-t$ big enough we have that
\begin{equation*}
    1 - \kappa < \frac{\ln |\mathbf{X}^{\varepsilon}(t)-\mathbf{X}^*|_\infty}{\lambda_0 t} < 1 + \kappa .
\end{equation*}
Therefore, for $\eps$ small enough, applying in $t_0$ and $t_1$ (we recall that $t_1 \to -\infty$ as $\eps \to 0$) we have that 
\begin{equation*}
    t_1 < \frac{\ln |\mathbf{X}^{\varepsilon}(t_1)-\mathbf{X}^*|_\infty}{\lambda_0(1+\kappa)} = \frac{\beta\ln \eps + \ln 2}{\lambda_0 (1+\kappa)} = \frac{- \beta|\ln \eps|+\ln 2}{\lambda_0 (1+\kappa)}
\end{equation*}
and
\begin{equation*}
    -t_0 < -\frac{\ln |\mathbf{X}^{\varepsilon}(t_0)-\mathbf{X}^*|_\infty}{\lambda_0(1-\kappa)} = \frac{|\ln\eps|+\ln 2}{\lambda_0 (1-\kappa)}\cdot
\end{equation*}
Thus,
\begin{equation*}
    t_1 - t_0 < |\ln\eps| \left( \frac{1}{\lambda_0(1-\kappa)} - \frac{\beta}{\lambda_0(1+\kappa)} + \frac{\ln 2 }{\lambda_0|\ln \eps| } \left( \frac{1}{1+\kappa} + \frac{1}{1-\kappa}\right) \right).
\end{equation*}
Therefore, by letting $\kappa \to 0$, for any $\lambda < \lambda_0$, for $\eps$ small enough, 
\begin{equation*}
    t_1 - t_0 \leqslant \frac{1-\beta}{\lambda} |\ln \eps|.
\end{equation*}
By definition, $\tau_{\eps,\beta}^* \leqslant t_1 - t_0 \leqslant \frac{1-\beta}{\lambda} |\ln \eps|$.
This concludes the proof.
\end{proof}

\subsection{Computing the differential matrix}\label{sec:differential}
The goal of this subsection is to provide the explicit general computation of the matrix $\Diff\mathcal{F}(\mathbf{X})$ associated with the point vortex functional in \eqref{def:PVvectorfield}. Here one must deal with difficulties coming from the non-Euclidean geometry that appear through a projection analysis on the tangent bundle.\\

Let us compute the Jacobian matrix $\Diff_1 K_{\mathbb{S}^2}(\mathbf{x},\mathbf{y})$. Let $p \in \R^3$. We have that
\begin{align*}
    K_{\mathbb{S}^2}(\mathbf{x}+p,\mathbf{y}) - K_{\mathbb{S}^2}(\mathbf{x},\mathbf{y}) & = \frac{(\mathbf{x}+p) \wedge \mathbf{y}}{|\mathbf{x}+p-\mathbf{y}|_{\mathbb{R}^3}^2}- \frac{\mathbf{x} \wedge \mathbf{y}}{|\mathbf{x}-\mathbf{y}|_{\mathbb{R}^3}^2} \\
    & = \frac{p \wedge \mathbf{y}}{|\mathbf{x}+p-\mathbf{y}|_{\mathbb{R}^3}^2} + \frac{\mathbf{x} \wedge \mathbf{y}}{|\mathbf{x}-\mathbf{y}|_{\mathbb{R}^3}^2}\left(\frac{1}{1+ 2p \cdot\frac{\mathbf{x}-\mathbf{y}}{|\mathbf{x}-\mathbf{y}|_{\mathbb{R}^3}^2} + o(|p|)} -1 \right) \\
    & = \frac{p \wedge \mathbf{y}}{|\mathbf{x}-\mathbf{y}|_{\mathbb{R}^3}^2} -2 p\cdot(\mathbf{x}-\mathbf{y}) \frac{\mathbf{x} \wedge \mathbf{y}}{|\mathbf{x}-\mathbf{y}|_{\mathbb{R}^3}^4} + o(|p|_{\mathbb{R}^3}).
\end{align*}
Therefore, for every $h \in T_{\mathbf{x}}\mathbb{S}^2$,
\begin{equation}\label{eq:differential}
    \Diff_1 K_{\mathbb{S}^2}(\mathbf{x},\mathbf{y})[h] = \frac{h \wedge \mathbf{y}}{|\mathbf{x}-\mathbf{y}|_{\mathbb{R}^3}^2} -2 h\cdot(\mathbf{x}-\mathbf{y}) \frac{\mathbf{x} \wedge \mathbf{y}}{|\mathbf{x}-\mathbf{y}|_{\mathbb{R}^3}^4}\cdot
\end{equation}
By symmetry, the differential with respect to the second variable can be obtained
by exchanging the roles of $\mathbf x$ and $\mathbf y$ and using the antisymmetry
of the wedge product. More precisely, since
\[
K_{\mathbb S^2}(\mathbf x,\mathbf y)
=-K_{\mathbb{S}^2}(\mathbf{y},\mathbf{x}),
\]
we obtain, for every $k\in T_{\mathbf y}\mathbb S^2$,
\begin{equation}\label{eq:differential_D2}
\Diff_25 K_{\mathbb S^2}(\mathbf x,\mathbf y)[k]
=\frac{\mathbf x\wedge k}{|\mathbf x-\mathbf y|_{\mathbb{R}^3}^2}
+2\,k\cdot(\mathbf x-\mathbf y)\,
\frac{\mathbf x\wedge\mathbf y}{|\mathbf x-\mathbf y|_{\mathbb{R}^3}^4}\cdot
\end{equation}
The Jacobian matrix of the function $\mathcal{F}$ is by definition
$$\Diff\mathcal{F}(\mathbf{X})=\left(\frac{\partial\mathcal{F}_i}{\partial\mathbf{x}_j}(\mathbf{X})\right)_{1\leqslant i,j\leqslant N}\triangleq(A_{ij})_{1\leqslant i,j\leqslant N},$$
where each component is the projection on the tangent space of the differential in the ambient space, namely,
\begin{equation*}
    A_{ij}=
\begin{cases}
\displaystyle
\Pi_{\mathbf x_i}\left[\sum_{k=1\atop k\neq i}^{N}\frac{\Gamma_k}{2\pi}\,\Diff_1K(\mathbf x_i,\mathbf x_k)
+\gamma(\mathbf{e}_3\wedge\cdot)\right]_{T_{\mathbf x_i}\mathbb{S}^2\to\R^3},
& \textnormal{if }i=j,\\[0.8em]
\displaystyle
\Pi_{\mathbf x_i}\left[\frac{\Gamma_j}{2\pi}\,\Diff_25K(\mathbf x_i,\mathbf x_j)\right]_{T_{\mathbf x_j}\mathbb{S}^2\to\R^3},
& \textnormal{if } i\neq j,
\end{cases}
\end{equation*}
where recall that $\Pi_{\mathbf{x}}\triangleq\textnormal{Id}-\mathbf{x}\otimes\mathbf{x}$ is the projection onto the tangent plane $T_{\mathbf{x}}\mathbb{S}^2$. 
Combining the exact expression \eqref{def:PVvectorfield} with \eqref{eq:differential} and \eqref{eq:differential_D2}, given a direction $H=(h_1,\ldots,h_N)\in T_{\mathbf{x}_1}\mathbb{S}^2\times\ldots\times T_{\mathbf{x}_N}\mathbb{S}^2$, we have that for any $i,j\in\{1,\ldots,N\},$
\begin{equation}\label{rep-diffF}
    A_{ij}[h_j]=\begin{cases}
         \displaystyle\Pi_{\mathbf{x}_i}\left[\sum_{k=1\atop k\neq i}^{N}\frac{\Gamma_k}{2\pi}\left(\frac{h_i\wedge\mathbf{x}_k}{|\mathbf{x}_i-\mathbf{x}_k|_{\mathbb{R}^3}^2}-2h_i\cdot(\mathbf{x}_i-\mathbf{x}_k)\frac{\mathbf{x}_i\wedge\mathbf{x}_k}{|\mathbf{x}_i-\mathbf{x}_k|_{\mathbb{R}^3}^4}\right)+\gamma\mathbf{e}_3\wedge h_i\right], & \textnormal{if } i=j, \vspace{2mm}\\
         \displaystyle \Pi_{\mathbf{x}_i}\left[\frac{\Gamma_j}{2\pi}\left(\frac{\mathbf{x}_i\wedge h_j}{|\mathbf{x}_i-\mathbf{x}_j|_{\mathbb{R}^3}^2}+2h_j\cdot(\mathbf{x}_i-\mathbf{x}_j)\frac{\mathbf{x}_i\wedge\mathbf{x}_j}{|\mathbf{x}_i-\mathbf{x}_j|_{\mathbb{R}^3}^4}\right)\right], & \textnormal{if }i\neq j.
    \end{cases}
\end{equation}
By a quick computation, one can check that for any $\mathbf{x},\mathbf{y} \in \mathbb{S}^2$, and any $h \in T_{\mathbf{x}}\mathbb{S}^2$, we have that
\begin{equation*}
    \Pi_{\mathbf{x}} (\mathbf{y} \wedge h) = (\mathbf{y} \cdot \mathbf{x}) (\mathbf{x} \wedge h).
\end{equation*}
Also, for any $\mathbf{x}\in\mathbb{S}^2$ and any $h\in\mathbb{R}^3$,
\begin{equation*}
    \Pi_{\mathbf{x}}(\mathbf{x} \wedge h) = \mathbf{x}\wedge \Pi_\mathbf{x} h.
\end{equation*}
Noticing that the application
\begin{equation*}
    \mathtt{R}_{\mathbf{x}}:\begin{array}[t]{rcl}
        T_\mathbf{x}\mathbb{S}^2 &  \rightarrow & T_\mathbf{x}\mathbb{S}^2\\
         h & \mapsto & \mathbf{x} \wedge h 
    \end{array}
\end{equation*}
is exactly the rotation of angle $\tfrac{\pi}{2}$ in $T_\mathbf{x}\mathbb{S}^2$. Given $B=(b_1,b_2)$ an orthonormal basis of $T_{\mathbf{x}}\mathbb{S}^2$ such that $(\mathbf{x},b_1,b_2)$ is a direct orthonormal basis of $\mathbb{R}^3$, the matrix of $\mathtt{R}_{\mathbf{x}}$ in the basis $B$ is
$$\underset{B}{\textnormal{Mat}}(\mathtt{R}_{\mathbf{x}})=\begin{pmatrix}
    0 & -1\\
    1 & 0
\end{pmatrix}\triangleq J.$$
We now choose once for all for any $i\in\{1,\ldots,N\}$ an orthonormal basis $B_i\triangleq(b_{1,i},b_{2,i})$ such that the triplet $(\mathbf{x}_i,b_{1,i},b_{2,i})$ is a direct orthonormal basis of $\mathbb{R}^3$. Next, denote by
\begin{equation*}
    M_{ik}\triangleq\underset{B_i}{\textnormal{Mat}}(\mathtt{f}_{ik}),\qquad \mathtt{f}_{ik}:\begin{array}[t]{rcl}
        T_{\mathbf{x}_i}\mathbb{S}^2 &  \rightarrow & T_{\mathbf{x}_i}\mathbb{S}^2\\
         h & \mapsto &  \displaystyle 2h\cdot(\mathbf{x}_i-\mathbf{x}_k)\frac{\mathbf{x}_i\wedge\mathbf{x}_k}{|\mathbf{x}_i-\mathbf{x}_k|_{\mathbb{R}^3}^2}, 
    \end{array}
\end{equation*}

\begin{equation*}
    N_{ij}\triangleq\underset{B_j,B_i}{\textnormal{Mat}}(\mathtt{g}_{ij}),\qquad\mathtt{g}_{ij}:\begin{array}[t]{rcl}
        T_{\mathbf{x}_j}\mathbb{S}^2 &  \rightarrow & T_{\mathbf{x}_i}\mathbb{S}^2\\
         h & \mapsto &  \displaystyle 2h\cdot(\mathbf{x}_i-\mathbf{x}_j)\frac{\mathbf{x}_i\wedge\mathbf{x}_j}{|\mathbf{x}_i-\mathbf{x}_j|_{\mathbb{R}^3}^2}
    \end{array}
\end{equation*}
and 
\begin{equation*}
    P_{ij}\triangleq\underset{B_j,B_i}{\textnormal{Mat}}(\mathtt{p}_{ij}),\qquad \mathtt{p}_{ij}:\begin{array}[t]{rcl}
        T_{\mathbf{x}_j}\mathbb{S}^2 &  \rightarrow & T_{\mathbf{x}_i}\mathbb{S}^2\\
         h & \mapsto &  \Pi_{\mathbf{x}_i}(h)=\displaystyle h-(\mathbf{x}_i\cdot h)\mathbf{x}_i. 
    \end{array}
\end{equation*}
In conclusion, in the basis $B_1 \times\ldots \times B_N$ of $T_{\mathbf{x}_1}\mathbb{S}^2\times\ldots\times T_{\mathbf{x}_N}\mathbb{S}^2$, the block $A_{ij}$ is represented by the matrix
\begin{equation}\label{eq:differentielle_complete}
        A_{ij}=\begin{cases}
         \displaystyle\sum_{k=1\atop k\neq i}^{N}\frac{-\Gamma_k}{2\pi|\mathbf{x}_i-\mathbf{x}_k|_{\mathbb{R}^3}^2}\left(\mathbf{x}_i\cdot\mathbf{x}_k J + M_{ik}\right)+\gamma \mathbf{e}_3 \cdot \mathbf{x}_i J, & \textnormal{if } i=j, \vspace{2mm}\\
         \displaystyle \frac{\Gamma_j}{2\pi|\mathbf{x}_i-\mathbf{x}_j|_{\mathbb{R}^3}^2}( J P_{ij} +  N_{ij})  , & \textnormal{if }i\neq j.
    \end{cases}
\end{equation}

\subsection{Proof of Theorem~\ref{theo:strong_conf}}\label{sec:config_stable}

    Let us recall that in the planar case, without boundaries, \cite{BM18} obtained two configurations that realize the improved bound $\tau_{\eps,\beta} \geqslant \eps^{-\alpha}$: by taking $N=1$, or by taking self-similar expanding configurations of point-vortices. Both are not possible on the sphere. Indeed, due to the Gauss constraint, the point-vortex problem cannot be stated with $N=1$. As for the second situation, the compactness of the sphere prevents expansion.\\

    Let us consider the case of polar counter-rotating vortices, namely 
    \begin{equation}\label{conf2pt}
        N=2,\qquad (\mathbf{x}_1,\mathbf{x}_2) = (\mathbf{e}_3,-\mathbf{e}_3),\qquad(\Gamma_1,\Gamma_2) = (\Gamma,-\Gamma).
    \end{equation}
    By construction, the Gauss constraint \eqref{Gauss:PV} is satisfied. In addition, in view of Lemma \ref{lem rot sph}, the configuration \eqref{conf2pt} is stationary and \eqref{dist-min-points} holds with $d_0=2.$\\
    $\blacktriangleright$ \textit{Hypothesis \ref{hyp:supstable} :} Using the relation~\eqref{eq:differential}, we compute that for any $h\in\mathbb{R}^3,$
\begin{equation*}
    \Diff_1 K_{\mathbb{S}^2}(\mathbf{e}_3,-\mathbf{e}_3)[h] = -\frac{h \wedge \mathbf{e}_3}{4}=-\Diff_1 K_{\mathbb{S}^2}(-\mathbf{e}_3,\mathbf{e}_3)[h].
\end{equation*}
In particular, that for any $h \in \R^3$, we have
\begin{equation*}
    \Diff_1 K_{\mathbb{S}^2}(\mathbf{e}_3,-\mathbf{e}_3)[h] \cdot h = 0=\Diff_1 K_{\mathbb{S}^2}(-\mathbf{e}_3,\mathbf{e}_3)[h] \cdot h.
\end{equation*}
Therefore, this configuration satisfies Hypothesis~\ref{hyp:supstable}.\\
$\blacktriangleright$ \textit{Hypothesis \ref{hyp:stable} :} With the notations of Section \ref{sec:differential}, since $\mathbf{e}_3 \wedge (-\mathbf{e}_3) = 0$, we readily get 
$$M_{12} = M_{21} = N_{12} = N_{21} = 0.$$ 
Also, due to the orientation convention, one has 
$$P_{12} = P_{21} = \begin{pmatrix}
    1 & 0 \\  0 & -1\end{pmatrix}.$$
Therefore, for any $(h_1,h_2)\in T_{\mathbf{e}_3}\mathbb{S}^2\times T_{-\mathbf{e}_3}\mathbb{S}^2$ we can write matricially
\begin{equation}\label{diffF2pt}
    \Diff \mathcal{F}(\mathbf{e}_3,-\mathbf{e}_3)[h_1,h_2] = \begin{pmatrix} \vspace{1mm} (-\frac{\Gamma}{8\pi}+\gamma) J & -\frac{\Gamma}{8\pi}S \\ \frac{\Gamma}{8\pi}S & (\frac{\Gamma}{8\pi}-\gamma)J \end{pmatrix}\begin{pmatrix}h_1\\h_2\end{pmatrix},
\end{equation}
where 
$$S \triangleq JP_{12} = \begin{pmatrix} 0 & -1 \\ -1 & 0\end{pmatrix}.$$
The matrix in \eqref{diffF2pt} is skew-symmetric, hence satisfies
\begin{equation*}
    \Diff \mathcal{F}(\mathbf{x},-\mathbf{x})[h_1,h_2] \cdot (h_1,h_2) = 0.
\end{equation*}
Applying Proposition~\ref{prop:stable_implique_hypstable}, we conclude that the configuration \eqref{conf2pt} satisfies Hypothesis~\ref{hyp:stable}.\\

In conclusion, for any couple $\Gamma, \gamma \in \R$, the configuration of point-vortices $\mathbf{X}^0 = (\mathbf{e}_3,-\mathbf{e}_3)$ with intensities $(\Gamma_1,\Gamma_2) = (\Gamma,-\Gamma)$ satisfies Hypotheses~\ref{hyp:supstable} and~\ref{hyp:stable}. We can thus apply conditional Theorem~\ref{theo:strong_conf_conditional} to this configuration, which proves Theorem~\ref{theo:strong_conf}.

\subsection{Proof of Theorem~\ref{theo:optimal}}\label{sec:config_instable}

We now consider the following configuration: let $N =4$, $a \in (0,1)$, $\Gamma\neq 0$, $\gamma \in \R$ and let us consider the configuration:
\begin{equation*}
    \begin{alignedat}{3}
\mathbf{x}_1^0 = \mathbf{e}_3, \quad 
\mathbf{x}_2^0 = -\mathbf{e}_3, \quad
\mathbf{x}_3^0 = \begin{pmatrix}a \\ 0 \\ \sqrt{1-a^2}\end{pmatrix}, \quad 
\mathbf{x}_4^0 = \begin{pmatrix}-a \\ 0 \\ \sqrt{1-a^2}\end{pmatrix},
\end{alignedat}
\end{equation*}
where we impose that $\Gamma_3=\Gamma_4 \triangleq \Gamma \neq 0$, that $\Gamma_1 = \kappa \Gamma$ where $\kappa$ is to be determined and $\Gamma_2$ is such that $\Gamma_1 + \Gamma_2 + \Gamma_3 + \Gamma_4 = 0$ so that the Gauss constraint in satisfied.

\begin{lem}\label{lem:stationary}
    There exists a unique choice of $\kappa$ such that the previously described configuration is a stationary solution of the point-vortex dynamics~\eqref{PVS} for any choice of $\Gamma \neq 0$ and $\gamma \in \R$.
\end{lem}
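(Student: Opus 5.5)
The plan is to write out the right-hand side of \eqref{PVS} at each of the four points. Stationarity means that all four velocities vanish. The cross products involved are explicit, so the condition should reduce to a single scalar equation that is affine in $\kappa$. Throughout I set $s\triangleq\sqrt{1-a^2}\in(0,1)$, $\Gamma_1=\kappa\Gamma$ and $\Gamma_2=-(\kappa+2)\Gamma$, the latter imposed by \eqref{Gauss:PV}.

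\textbf{The poles.} First I treat $\mathbf{x}_1^0=\mathbf{e}_3$. The rotation term $\gamma\,\mathbf{e}_3\wedge\mathbf{e}_3$ vanishes, and so does $\mathbf{e}_3\wedge(-\mathbf{e}_3)$. The remaining cross products are $\mathbf{e}_3\wedge\mathbf{x}_3^0=(0,a,0)$ and $\mathbf{e}_3\wedge\mathbf{x}_4^0=(0,-a,0)$. Since $|\mathbf{e}_3-\mathbf{x}_3^0|_{\R^3}=|\mathbf{e}_3-\mathbf{x}_4^0|_{\R^3}$ and $\Gamma_3=\Gamma_4$, these two contributions cancel. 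The same computation at $\mathbf{x}_2^0=-\mathbf{e}_3$ gives cancelling contributions $(0,\mp a,0)$ with equal weights. Hence both poles are fixed whatever the value of $\kappa$.

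\textbf{The point $\mathbf{x}_3^0$.} The relevant cross products are
\[
\mathbf{x}_3^0\wedge\mathbf{e}_3=(0,-a,0),\qquad \mathbf{x}_3^0\wedge(-\mathbf{e}_3)=(0,a,0),\qquad \mathbf{x}_3^0\wedge\mathbf{x}_4^0=(0,-2as,0),\qquad \mathbf{e}_3\wedge\mathbf{x}_3^0=(0,a,0).
\]
The corresponding squared distances are $2-2s$, $2+2s$ and $4a^2$. Every term is therefore parallel to $\mathbf{e}_2$, and stationarity of $\mathbf{x}_3^0$ is equivalent to the scalar equation
\[
\frac{1}{2\pi}\left(-\frac{a\kappa\Gamma}{2(1-s)}-\frac{a(\kappa+2)\Gamma}{2(1+s)}-\frac{2as\Gamma}{4a^2}\right)+\gamma a=0 .
\]
Using $\frac{1}{2(1-s)}+\frac{1}{2(1+s)}=\frac{1}{1-s^2}=\frac1{a^2}$, the coefficient of $\kappa$ equals $-\frac{\Gamma}{2\pi a}$. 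This is nonzero because $\Gamma\neq0$ and $a\in(0,1)$. The equation therefore has exactly one solution $\kappa=\kappa(a,\Gamma,\gamma)$, which can be written explicitly.

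\textbf{The point $\mathbf{x}_4^0$ and conclusion.} The same computation at $\mathbf{x}_4^0$ gives
\[
\mathbf{x}_4^0\wedge\mathbf{e}_3=(0,a,0),\qquad \mathbf{x}_4^0\wedge(-\mathbf{e}_3)=(0,-a,0),\qquad \mathbf{x}_4^0\wedge\mathbf{x}_3^0=(0,2as,0),\qquad \mathbf{e}_3\wedge\mathbf{x}_4^0=(0,-a,0).
\]
The distances are the same as before, so this equation is exactly the negative of the previous one and imposes no new constraint. Consequently the configuration is stationary if and only if $\kappa$ takes this unique value. The argument holds for every $\Gamma\neq0$ and $\gamma\in\R$, with $\kappa$ depending on $\gamma/\Gamma$ and $a$.

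The main point of care is the sign bookkeeping in the cross products and in the second half-angle identity, $|\mathbf{x}_3^0+\mathbf{e}_3|^2_{\R^3}=2+2s$. The fact that the coefficient of $\kappa$ simplifies to a nonzero multiple of $\Gamma$ is precisely what gives uniqueness. I expect no other obstacle.
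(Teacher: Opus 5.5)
Your proposal is correct and follows the same argument as the paper. The pole contributions cancel by the $\mathbf{x}_3^0\leftrightarrow\mathbf{x}_4^0$ symmetry, stationarity of $\mathbf{x}_3^0$ reduces to a single affine equation in $\kappa$ whose coefficient (proportional to $\alpha_-+\alpha_+=1/a^2$) is nonzero, and $\mathbf{x}_4^0$ gives the same equation with opposite sign. Your explicit check at $\mathbf{x}_4^0$ and your remark that $\kappa$ depends on $\gamma/\Gamma$ and $a$ are slightly more detailed than the paper's ``by symmetry''.
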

\begin{proof}
    We need to prove that for all $i \in \{1,\ldots,4\}$,
\begin{equation*}
     0=\sum_{j=1\atop j\neq i}^{4}\frac{\Gamma_j}{2\pi}\frac{\mathbf{x}_i^0\wedge\mathbf{x}_j^0}{|\mathbf{x}_i^0-\mathbf{x}_j^0|_{\R^3}^2}+\gamma\,\mathbf{e}_3\wedge\mathbf{x}_i^0.
\end{equation*}
For $i = 1,2$, we have that $|\pm \mathbf{e}_3 -\mathbf{x}_3^0| = |\pm \mathbf{e}_3 -\mathbf{x}_4^0|$ and $\mathbf{x}_3^0 +\mathbf{x}_4^0 = 2\sqrt{1-a^2} \mathbf{e}_3$, so that
\begin{align*}
    \sum_{j=1\atop j\neq i}^{4}\frac{\Gamma_j}{2\pi}\frac{\mathbf{x}_i^0\wedge\mathbf{x}_j^0}{|\mathbf{x}_i^0-\mathbf{x}_j^0|_{\R^3}^2}+\gamma\,\mathbf{e}_3\wedge\mathbf{x}_i^0 = \frac{\Gamma}{2\pi} \frac{\pm \mathbf{e}_3 \wedge(\mathbf{x}_3^0 +\mathbf{x}_4^0)}{|\pm\mathbf{e}_3 -\mathbf{x}_3^0|_{\mathbb{R}^3}^2} = 0.
\end{align*}
For $i=3$ we compute first that $\mathbf{x}_3^0 \wedge \mathbf{x}_4^0 = 2\sqrt{1-a^2} \mathbf{x}_3^0 \wedge \mathbf{e}_3$, and thus
\begin{align*}
    \sum_{j=1\atop j\neq 3}^{4}\frac{\Gamma_j}{2\pi}\frac{\mathbf{x}_3^0\wedge\mathbf{x}_j^0}{|\mathbf{x}_3^0-\mathbf{x}_j^0|_{\R^3}^2}+\gamma\,\mathbf{e}_3\wedge\mathbf{x}_3^0 & = \mathbf{x}_3^0 \wedge \mathbf{e}_3 \left( \frac{\Gamma_1}{2\pi|\mathbf{x}_3^0 - \mathbf{e}_3|_{\mathbb{R}^3}^2} - \frac{\Gamma_2}{2\pi|\mathbf{x}_3^0 + \mathbf{e}_3|_{\mathbb{R}^3}^2} + \frac{\Gamma_4 2\sqrt{1-a^2}}{2\pi |\mathbf{x}_3^0-\mathbf{x}_4^0|_{\mathbb{R}^3}^2} -\gamma\right).
\end{align*}
We introduce the notation
\[
s = \sqrt{1-a^2}, \qquad 
\alpha_-=\frac{1}{|\mathbf x_3^0-\mathbf{e}_3|_{\mathbb{R}^3}^2} = \frac{1}{{2(1-s)}},\qquad
\alpha_+=\frac{1}{|\mathbf x_3^0+\mathbf{e}_3|_{\mathbb{R}^3}^2} =  \frac{1}{2(1+s)},\qquad
\Upsilon=\frac{1}{|\mathbf x_3^0-\mathbf x_4^0|_{\mathbb{R}^3}^2}=\frac{1}{4a^2}\cdot
\]
Therefore, this is 0 if and only if
\begin{equation*}
    \Gamma_1\alpha_- - \Gamma_2\alpha_+ + \Gamma_4 \Upsilon 2 \sqrt{1-a^2} -2\pi\gamma = 0,
\end{equation*}
which is equivalent to
\begin{equation*}
    \kappa \left( \alpha_- +\alpha_+\right) = - 2s\Upsilon-2\alpha_+ + 2\pi\frac{\gamma}{\Gamma}\cdot
\end{equation*}
This gives a unique choice of $\kappa$ such that this equation holds. By symmetry, we have the same result for $i = 4$.
\end{proof}
Let us make this choice of $\kappa$. Since only the ratio $\gamma/\Gamma$ is relevant, up to renaming $\gamma$, we can choose $\Gamma=1$, leading to $\Gamma_2 = -(2+\kappa)$. Let us denote the resulting equilibrium by $\mathbf{X}^*$, and let us drop the $0$ superscript so that we consider the configuration  $(\mathbf{x}_1,\ldots,\mathbf{x}_4)$.
With the method presented in Section~\ref{sec:differential}, we can construct $\Diff \mathcal{F}(\mathbf{X}^*)$ for this configuration. Serving as an example, let us compute $A_{11}$.
Relation~\eqref{eq:differentielle_complete} yields
\begin{equation}\label{eq:A11_general}
A_{11}
=
\sum_{k\in\{2,3,4\}}
\frac{-\Gamma_k}{2\pi|\mathbf x_1-\mathbf x_k|_{\mathbb{R}^3}^2}
\Bigl((\mathbf x_1\cdot \mathbf x_k)J + M_{1k}\Bigr)
+\gamma\,(\mathbf{e}_3\cdot\mathbf x_1)\,J.
\end{equation}
Since $\mathbf x_1=\mathbf{e}_3$, we have $\mathbf{e}_3\cdot\mathbf x_1=1$, hence
\[
\gamma(\mathbf{e}_3\cdot\mathbf x_1)J=\gamma J.
\]
We now turn to the term $k=2$ in the sum.
We have
\[
|\mathbf x_1-\mathbf x_2|_{\mathbb{R}^3}^2=|\mathbf{e}_3-(-\mathbf{e}_3)|_{\mathbb{R}^3}^2=4,
\qquad
\mathbf x_1\cdot\mathbf x_2=\mathbf{e}_3\cdot(-\mathbf{e}_3)=-1,
\qquad
\mathbf x_1\wedge\mathbf x_2=0.
\]
In particular, $\mathbf x_1\wedge\mathbf x_2=0$ implies $M_{12}=0$.
Therefore the $k=2$ term in~\eqref{eq:A11_general} is
\[
\frac{-\Gamma_2}{2\pi\cdot 4}\Bigl((\mathbf x_1\cdot\mathbf x_2)J+M_{12}\Bigr)
=
\frac{-\Gamma_2}{8\pi}\,(-J)
=
\frac{-(\kappa+2)}{8\pi}\,J.
\]
Let us now look at the term $k=3$.
We first collect the geometric quantities
\[
\mathbf x_1-\mathbf x_3=( -a,0,1-s),
\qquad
|\mathbf x_1-\mathbf x_3|_{\mathbb{R}^3}^2
=
a^2+(1-s)^2
=
2(1-s),
\qquad
\mathbf x_1\cdot\mathbf x_3=s,
\]
\[
\mathbf x_1\wedge\mathbf x_3=\mathbf{e}_3\wedge(a,0,s)=(0,a,0)=a\,\mathbf e_2.
\]
Let $h\in T_{\mathbf{e}_3}\mathbb S^2$, we have that $h\cdot \mathbf{e}_3=0$.
Then
\[
h\cdot(\mathbf x_1-\mathbf x_3)
=
-a\,h\cdot\mathbf e_1
\]
(where $\mathbf e_1$ is the ambient unit vector).
Hence the vector-valued map defining $M_{13}$ reads
\[
h\longmapsto
2\,h\cdot(\mathbf x_1-\mathbf x_3)\,
\frac{\mathbf x_1\wedge\mathbf x_3}{|\mathbf x_1-\mathbf x_3|_{\mathbb{R}^3}^2}
=
2\,(-a\,h\cdot\mathbf e_1)\,
\frac{a\,\mathbf e_2}{2(1-s)}
=
-\frac{a^2}{1-s}\,(h\cdot\mathbf e_1)\,\mathbf e_2.
\]
Choosing $B_1=(\mathbf e_1,\mathbf e_2)$ (the simplest oriented choice),
then the above map is represented by the matrix
\[
M_{13}=
\begin{pmatrix}
0&0\\[0.2em]
-\dfrac{a^2}{1-s}&0
\end{pmatrix}.
\]
Using now~\eqref{eq:A11_general}, the full $k=3$ contribution is
\[
\frac{-\Gamma_3}{2\pi|\mathbf x_1-\mathbf x_3|_{\mathbb{R}^3}^2}\Bigl((\mathbf x_1\cdot\mathbf x_3)J+M_{13}\Bigr)
=
\frac{-1}{2\pi\cdot 2(1-s)}\Bigl(sJ+M_{13}\Bigr).
\]
We turn to the term $k=4$. The computation is identical up to the sign change $a\mapsto -a$ in $\mathbf x_4$.
In particular,
\[
|\mathbf x_1-\mathbf x_4|_{\mathbb{R}^3}^2=2(1-s),\qquad \mathbf x_1\cdot\mathbf x_4=s,
\qquad \mathbf x_1\wedge\mathbf x_4=\mathbf{e}_3\wedge(-a,0,s)=(0,-a,0)=-a\,\mathbf e_2.
\]
Moreover $h\cdot(\mathbf x_1-\mathbf x_4)=+a\,h\cdot\mathbf e_1$, so that the two
signs compensate and one obtains the \emph{same} rank-one operator as for $k=3$.
In particular, for $B_1=(\mathbf e_1,\mathbf e_2)$ we again have
\[
M_{14}=M_{13},
\qquad\text{hence}\qquad
\frac{-\Gamma_4}{2\pi|\mathbf x_1-\mathbf x_4|_{\mathbb{R}^3}^2}\Bigl((\mathbf x_1\cdot\mathbf x_4)J+M_{14}\Bigr)
=
\frac{-1}{2\pi\cdot 2(1-s)}\Bigl(sJ+M_{13}\Bigr).
\]
Summing the contributions of $k=2,3,4$ and adding the Coriolis term, we obtain
\begin{align*}
A_{11} & =\left(\gamma-\frac{\kappa+2}{8\pi}-\frac{s}{2\pi(1-s)}\right)J-\frac{1}{2\pi(1-s)}M_{13} \\
& = \begin{pmatrix}
    0 &  -\gamma+\frac{\kappa+2}{8\pi}+\frac{s}{2\pi(1-s)} \\ \gamma-\frac{\kappa+2}{8\pi}-\frac{s}{2\pi(1-s)} + \frac{a^2}{2\pi(1-s)^2} & 0
\end{pmatrix}.
\end{align*}

We now see that the full $8\times 8$ matrix will be very hard to manipulate since it depends on the two paramters $a$ and $\gamma$. Instead of looking to the albebraic properties of this matrix, we look instead for a set of parameters simplifying it enough so that it is computable. We claim the following.
\begin{lem}
    Take $a = 1$ and $\gamma = 1/2$. Then $\Diff \mathcal{F}(\mathbf{X}^*)$ has a positive eigenvalue.
\end{lem}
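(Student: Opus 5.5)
The plan is to specialise everything to $a=1$, $\gamma=1/2$ and compute $\Diff\mathcal{F}(\mathbf{X}^*)$ explicitly. For $a=1$ we have $s=0$, so the configuration is $\mathbf{x}_1=\mathbf{e}_3$, $\mathbf{x}_2=-\mathbf{e}_3$, $\mathbf{x}_3=\mathbf{e}_1$, $\mathbf{x}_4=-\mathbf{e}_1$. In Lemma~\ref{lem:stationary}, $\alpha_\pm=\tfrac12$, $\Upsilon=\tfrac14$ and $\Gamma=1$ give $\kappa=-1+2\pi\gamma=\pi-1$. Hence the intensities are $(\Gamma_1,\Gamma_2,\Gamma_3,\Gamma_4)=(\pi-1,-(\pi+1),1,1)$. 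All mutual distances are now either $2$ (antipodal pairs, where $\mathbf{x}_i\wedge\mathbf{x}_j=0$ and $\mathbf{x}_i\cdot\mathbf{x}_j=-1$) or $\sqrt2$ (orthogonal pairs, where $\mathbf{x}_i\cdot\mathbf{x}_j=0$). This kills most terms in \eqref{eq:differentielle_complete}: every term $(\mathbf{x}_i\cdot\mathbf{x}_k)J$ with $k$ orthogonal to $i$ vanishes, and $M_{ik}$, $N_{ij}$ vanish for antipodal pairs.

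I will fix direct orthonormal bases of the four tangent planes:
\begin{itemize}
\item $B_1=(\mathbf{e}_1,\mathbf{e}_2)$ at $\mathbf{e}_3$;
\item $B_2=(\mathbf{e}_2,\mathbf{e}_1)$ at $-\mathbf{e}_3$;
\item $B_3=(\mathbf{e}_2,\mathbf{e}_3)$ at $\mathbf{e}_1$;
\item $B_4=(\mathbf{e}_3,\mathbf{e}_2)$ at $-\mathbf{e}_1$.
\end{itemize}
Then I compute each $2\times2$ block from \eqref{rep-diffF} directly, which is safer for signs than \eqref{eq:differentielle_complete}. For each block I evaluate the explicit vector formula on the basis vectors and project with $\Pi_{\mathbf{x}_i}$. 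The $A_{11}$ formula already derived in the text (with $s=0$, $a=1$) serves as a consistency check.

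To avoid diagonalising an $8\times8$ matrix by brute force, I will use symmetry. The reflection $(x_1,x_2,x_3)\mapsto(-x_1,x_2,x_3)$ swaps $\mathbf{x}_3\leftrightarrow\mathbf{x}_4$, fixes $\mathbf{x}_1,\mathbf{x}_2$, and preserves the equal intensities $\Gamma_3=\Gamma_4$. It reverses orientation, so it is a time-reversing symmetry, possibly combined with the rotation by $\pi$ about $\mathbf{e}_3$ to get an orientation-preserving symmetry. That rotation by $\pi$ about $\mathbf{e}_3$ also fixes $\mathbf{x}_1,\mathbf{x}_2$ and swaps $\mathbf{x}_3,\mathbf{x}_4$, and it is a genuine symmetry of the dynamics. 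So its linearisation commutes with $\Diff\mathcal{F}(\mathbf{X}^*)$, and the tangent space splits into its $+1$ and $-1$ eigenspaces, each $4$-dimensional. On $T_{\pm\mathbf{e}_3}$ this rotation acts as $-\mathrm{Id}$, so $h_1,h_2$ lie entirely in the $-1$ eigenspace. The $+1$ eigenspace consists of $(0,0,h,Rh)$ with $h\in T_{\mathbf{e}_1}$, where $R$ is the rotation-induced map, and is $2$-dimensional. Hence the $-1$ eigenspace is $6$-dimensional, and the reduction leaves a $2\times2$ block and a $6\times6$ block.

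I expect the $2\times2$ block on the $+1$ eigenspace to have trace zero, because the flow is Hamiltonian and hence volume preserving on each symmetric block. If its determinant is negative, its eigenvalues are $\pm\lambda$ with $\lambda>0$, which gives the claim. If instead its determinant is positive, I will turn to the $6\times6$ block. That block contains a zero eigenvalue coming from the rotation invariance about $\mathbf{e}_3$, in the direction $(\mathbf{e}_3\wedge\mathbf{x}_i)_i$, and the $\pm$ pairing of eigenvalues reduces it to a cubic in $\lambda^2$. There I would look for a positive real root of the characteristic polynomial by an intermediate value argument, for instance $p(0)$ and $p(+\infty)$ of opposite signs after removing the trivial factor.

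The main obstacle is purely computational: getting the orientations of $B_2$ and $B_4$ and the signs of the off-diagonal blocks $\frac{\Gamma_j}{2\pi|\mathbf{x}_i-\mathbf{x}_j|^2}(JP_{ij}+N_{ij})$ right. A single sign error flips the determinant of the small block. As a safeguard I will check that the assembled matrix $L$ satisfies $L^\top\Omega+\Omega L=0$, where $\Omega=\mathrm{diag}(\Gamma_iJ)$, as it must since the flow is Hamiltonian with respect to $\sum_i\Gamma_i\,\mathrm{d}\sigma_i$. I will also check that $L$ annihilates the rotation generator, before reading off the positive eigenvalue.
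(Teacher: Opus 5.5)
Your plan is built to exhibit a \emph{real} positive eigenvalue, and this matrix has none, so neither branch of your argument can close. The paper's $8\times 8$ matrix is itself correct (in the bases $(\mathbf e_1,\mathbf e_2)$, $(\mathbf e_1,-\mathbf e_2)$, $(\mathbf e_2,\mathbf e_3)$, $(\mathbf e_2,-\mathbf e_3)$), and your splitting by the rotation by $\pi$ is legitimate. The trouble appears already on the $+1$ eigenspace. Writing $h=q\mathbf e_2+r\mathbf e_3$, one finds $\dot q=r/(4\pi)$ and $\dot r=0$. This is a nilpotent Jordan block with determinant $0$, not $\pm\lambda$ with $\lambda>0$.

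This block also contains the rotation generator $(\mathbf e_3\wedge\mathbf x_i)_i=(0,0,\mathbf e_2,-\mathbf e_2)$. That vector is fixed by the rotation by $\pi$, because rotations about $\mathbf e_3$ commute. So your claim that the trivial zero mode sits in the $6\times 6$ block is wrong. The other direction of the $2\times 2$ block is the family of relative equilibria $\mathbf x_{3,4}=(\pm a,0,s)$, whose angular speed varies at rate $1/(4\pi)$ in $s$. The trivial factor of the $6\times6$ block is instead $\lambda^2+\gamma^2=\lambda^2+\tfrac14$. It comes from rigid rotations about horizontal axes: for $\xi_{\mathbf n}=(\mathbf n\wedge\mathbf x_i)_i$ one has $\Diff\mathcal F(\mathbf X^*)\,\xi_{\mathbf n}=\gamma\,\xi_{\mathbf e_3\wedge\mathbf n}$.

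In the variable $\mu=\lambda^2$, the $6\times 6$ block has characteristic polynomial proportional to $(4\mu+1)\big(128\pi^2\mu^2+(32+8\pi^2)\mu+3\big)$, which is the paper's $\chi(\lambda)/\lambda^2$. All its coefficients are positive, so it has no root $\mu\geqslant 0$. Your intermediate value search on $(0,+\infty)$ will therefore never find a sign change.

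The instability is oscillatory. The discriminant of the quadratic factor is $64(\pi^4-16\pi^2+16)=64\big((\pi^2-8)^2-48\big)<0$, so its two roots $\mu,\bar\mu$ are non-real. A real $\lambda$ would give $\mu\geqslant 0$ and a purely imaginary $\lambda$ would give $\mu\leqslant 0$. Hence the four square roots $\pm\lambda,\pm\bar\lambda$ have nonzero real parts; numerically $\lambda\approx 0.049+0.215\,i$.

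So the lemma is true only in the sense ``eigenvalue with positive real part''. That is exactly what Proposition~\ref{prop:instable_implique_hyp} and Theorem~\ref{theo:center_manifold} need. Note that the paper's own assertion $\mu_+\approx 0.049>0$ is inconsistent with its displayed $\chi$: there is no positive root, and $0.049$ matches $\mathrm{Re}\,\lambda$. To repair your proof, drop the sign-change argument. Factor out $4\mu+1$ and show that the remaining quadratic in $\mu$ has negative discriminant.
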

\begin{proof}
    With this set of parameters, we compute that
    \[
\Diff\mathcal F(\mathbf X^*)
=
\begin{pmatrix}
0 & \dfrac{1-3\pi}{8\pi} & 0 & -\dfrac{\pi+1}{8\pi} & -\dfrac{1}{4\pi} & 0 & -\dfrac{1}{4\pi} & 0\\[0.9em]
\dfrac{3(\pi+1)}{8\pi} & 0 & -\dfrac{\pi+1}{8\pi} & 0 & 0 & \dfrac{1}{4\pi} & 0 & \dfrac{1}{4\pi}\\[0.9em]
0 & \dfrac{\pi-1}{8\pi} & 0 & \dfrac{3\pi+1}{8\pi} & \dfrac{1}{4\pi} & 0 & \dfrac{1}{4\pi} & 0\\[0.9em]
\dfrac{\pi-1}{8\pi} & 0 & \dfrac{3(1-\pi)}{8\pi} & 0 & 0 & -\dfrac{1}{4\pi} & 0 & -\dfrac{1}{4\pi}\\[0.9em]
\dfrac{1-\pi}{4\pi} & 0 & -\dfrac{\pi+1}{4\pi} & 0 & 0 & \dfrac{3}{8\pi} & 0 & \dfrac{1}{8\pi}\\[0.9em]
0 & \dfrac{\pi-1}{4\pi} & 0 & \dfrac{\pi+1}{4\pi} & \dfrac{1}{8\pi} & 0 & \dfrac{1}{8\pi} & 0\\[0.9em]
\dfrac{1-\pi}{4\pi} & 0 & -\dfrac{\pi+1}{4\pi} & 0 & 0 & \dfrac{1}{8\pi} & 0 & \dfrac{3}{8\pi}\\[0.9em]
0 & \dfrac{\pi-1}{4\pi} & 0 & \dfrac{\pi+1}{4\pi} & \dfrac{1}{8\pi} & 0 & \dfrac{1}{8\pi} & 0
\end{pmatrix}.
\]
One can then compute that
\[
\chi(\lambda)=\det\big(\lambda I_8-\Diff\mathcal F(\mathbf X^*)\big) 
=\frac{1}{512\pi^2}\,\lambda^2\,(4\lambda^2+1)\,\Bigl(128\pi^2\lambda^4+(32+8\pi^2)\lambda^2+3\Bigr)
\]
and conclude that
\begin{equation*}
\operatorname{spec}\!\left(\Diff\mathcal F(\mathbf X^*)\big|_{\gamma=\frac12}\right)
=
\left\{
0,\ \pm \frac{i}{2},\ 
\pm \sqrt{\mu_{+}},\ \pm i\sqrt{-\mu_{-}}
\right\},
\qquad
\mu_{\pm}=\frac{-(\pi^2+4)\pm\sqrt{\pi^4-16\pi^2+16}}{32\pi^2},
\end{equation*}
where $\mu_+ \approx 0.049 > 0$.
Therefore, by Proposition~\ref{prop:instable_implique_hyp}, we can apply the conditional Theorem~\ref{theo:conditional_optimal} to prove Theorem~\ref{theo:optimal}.

\end{proof}

\begin{remark}
    By continuity of the spectrum, one also finds instability for $a\approx1$ and $\gamma\approx1/2.$
\end{remark}

\subsection{Polar vortex crystals}\label{sec:polar_VC}

On the poles of Jupiter (see \cite{Adriani2018Nature}, and see \cite{SieYouIng22} for numerical simulation on how those structure of vortices can emerge from turbulence, and \cite{Modin26} for a mathematical study of this phenomenon), are evolving rather stable large vortices arranged in a relative equilibrium of vortices, consisting in a polar vortex surrounded by equally distributed vortices around it. The stability properties of these vortex crystals have been studied, see for instance \cite{Cabral_Schmidt_1999_Stability_N+1_vortex}, and for some of them with well-chosen intensities, \cite{Donati_2025_Crystal} proved the improved confinement bound. 

On the sphere, we can easily construct a similar configuration by adding more vortices and taking $a$ close to 0, while ensuring the Gauss constraint with a vortex on the opposite pole. More precisely, consider the configuration consisting
\begin{align}\label{eq:VCN}
    \mathbf{x}_i^0 = \mathcal{R}_{\frac{2\pi}{N-2}i} \begin{pmatrix} a \\ 0 \\ s\end{pmatrix}, \quad \forall i \in \{1,\ldots,N-2\}, \quad \mathbf{x}_{N-1}^0 = \mathbf{e}_3, \quad \mathbf{x}_N^0 = \mathbf{-\mathbf{e}_3}
\end{align}
with $\Gamma_1 = \ldots = \Gamma_{N-2} = 1$, $\Gamma_{N-1} = \kappa$ and $\Gamma_N = -(N-2) - \kappa$. One can check that this configuration is a relative equilibrium of the point-vortex dynamics. In Figure~\ref{fig:sim}, a numerical simulation of this configuration with $N=8$ was made of the fluid equation, where very small viscosity is added for numerical purposes, which illustrates Theorem~\ref{theo:general_conf}: the vortices remain concentrated around the vortex crystal solution of the point-vortex dynamics.

\begin{figure}
    \centering
    \includegraphics[width=0.7\linewidth]{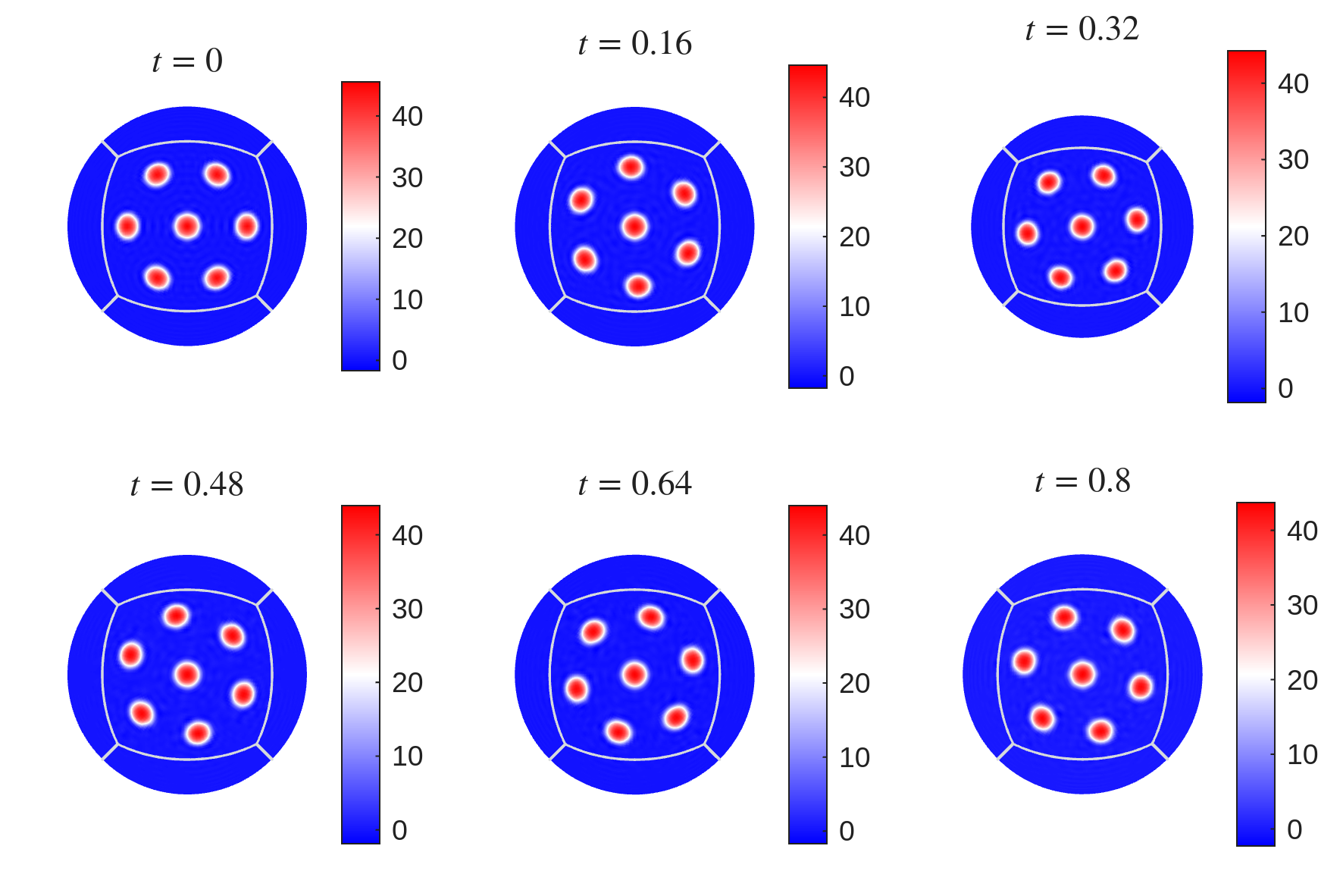}
    \caption{Numerical simulation of the Navier-Stokes equations on the non-rotating sphere with Reynold's number $10^5$, and an initial data concentrated near the vortex crystal~\eqref{eq:VCN} with $N=8$ and $\kappa =1$. Credit: Matthieu Brachet, Université de Poitiers, CNRS, LMA, Poitiers, France.}
    \label{fig:sim}
\end{figure}

Taking $N=4$, we recover the configuration constructed in Section~\ref{sec:config_instable}. We observe that in that case, with the stationary condition computed in Lemma~\ref{lem:stationary}, letting $a\to 0$, we find that $\kappa \to -\frac{1}{2}$, which leads to the well-known equilibria of three aligned vortices in the plane, which is unstable. One can check numerically that for small values of $a$, the configuration with the choice of $\kappa$ made in Section~\ref{sec:config_instable} is unstable for every value of $\gamma$ of order 1, when for larger values of $a$, non-trivial behavior appears, as observed in Figure~\ref{fig:a03}. The problem degenerates near $a=1$, see Figure~\ref{fig:a1}.
\begin{figure}
    \centering
    \includegraphics[width=0.45\linewidth]{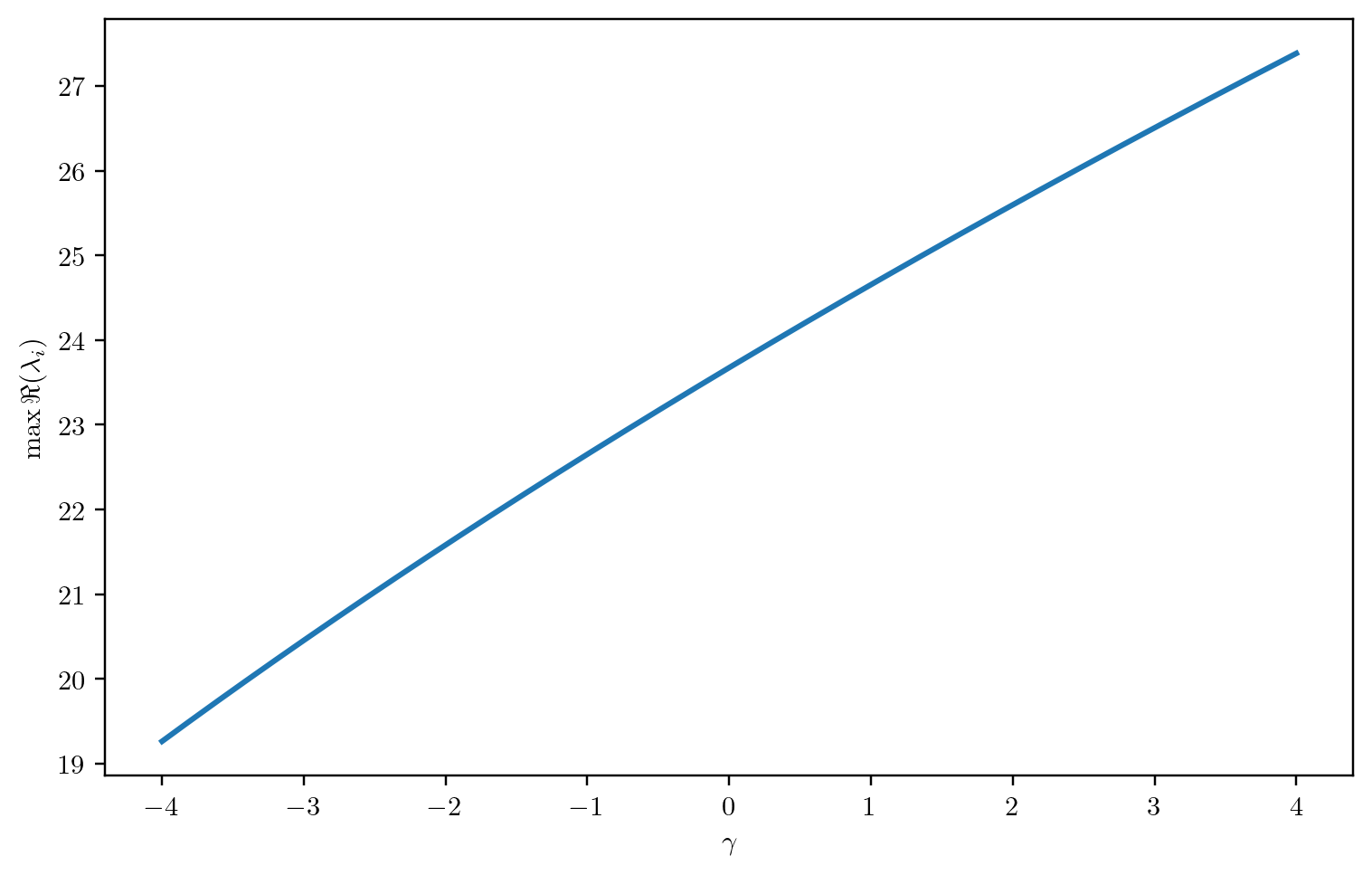} \hspace{2mm}
    \includegraphics[width=0.45\linewidth]{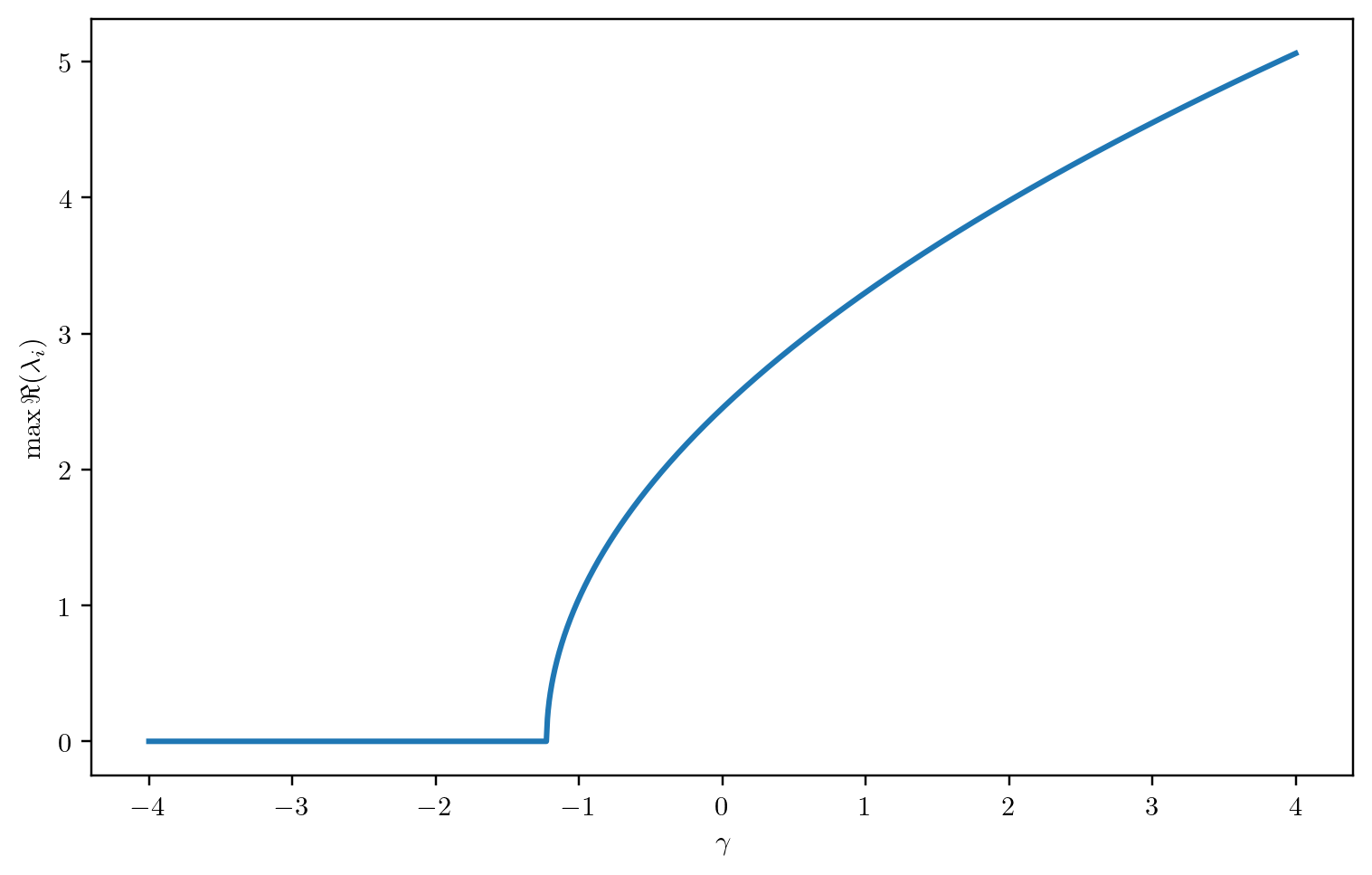} \\ 
    \caption{Maximum real part of the eigenvalues of $\Diff \mathcal{F}(\mathbf{X}^*)$ for the $N=4$ stationary configuration in the case $a=0.1$ (left), $a=0.3$ (right), depending on $\gamma$.}
    \label{fig:a03}
\end{figure}
\begin{figure}
    \centering
    \includegraphics[width=0.95\linewidth]{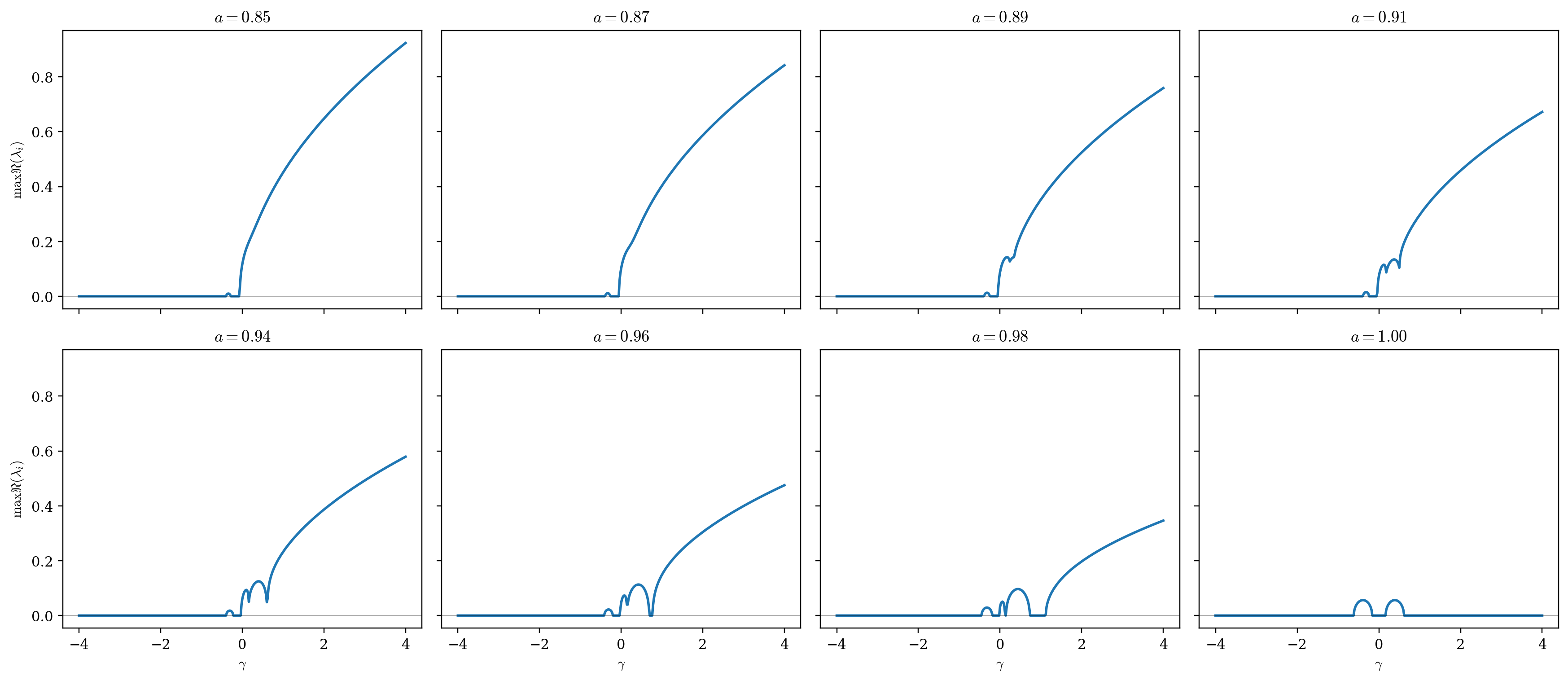}
    \caption{Maximum real part of the eigenvalues of $\Diff \mathcal{F}(\mathbf{X}^*)$ for the $N=4$ stationary configuration for values of $a$ close to 1, depending on $\gamma$.}
    \label{fig:a1}
\end{figure}
On the contrary, one can wonder whether for every value of $\gamma$, it is possible to find $\kappa$ such that Hypothesis~\ref{hyp:supstable} and~\ref{hyp:stable} are satisfied. Let us explore whether, for this polar vortex configuration, there exists a choice of parameters such that Hypothesis~\ref{hyp:supstable} holds. 
\begin{prop}
    The configuration~\eqref{eq:VCN} satisfies Hypothesis~\ref{hyp:supstable} if and only if
\begin{equation*}
    \kappa = \frac{1}{s} \left[ \frac{(N-3)(N-7)}{12} + (1-s^2) \frac{(N-3)}{4}  + (1-s)^2\right].
\end{equation*}
\end{prop}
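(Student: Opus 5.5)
Hypothesis~\ref{hyp:supstable} is a pointwise condition on the trajectory, and~\eqref{eq:VCN} evolves by rigid rotation $\mathcal{R}(\Omega t)$ about $\mathbf{e}_3$. Every ingredient is equivariant under such rotations: $K_{\mathbb{S}^2}$ is built from $\wedge$ and norms, and tangent planes rotate with the points. So it suffices to check the condition at $t=0$. Using~\eqref{eq:differential}, the first term $(h\wedge\mathbf{x}_j)\cdot h$ vanishes, so for $h\in T_{\mathbf{x}_i}\mathbb{S}^2$ we get
\begin{equation*}
Q_i(h)\triangleq\sum_{j\neq i}\Gamma_j \Diff_1K_{\mathbb{S}^2}(\mathbf{x}_i,\mathbf{x}_j)[h]\cdot h=-2\sum_{j\neq i}\Gamma_j\frac{\big(h\cdot(\mathbf{x}_i-\mathbf{x}_j)\big)\big((\mathbf{x}_i\wedge\mathbf{x}_j)\cdot h\big)}{|\mathbf{x}_i-\mathbf{x}_j|_{\R^3}^4}.
\end{equation*}
This is a quadratic form on the plane $T_{\mathbf{x}_i}\mathbb{S}^2$, built from a sum of products $u\otimes v$ with $v=\mathbf{x}_i\wedge\mathbf{x}_j$ tangent and $u\cdot v=0$. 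Its trace on $T_{\mathbf{x}_i}\mathbb{S}^2$ is therefore $\sum(\Pi_{\mathbf{x}_i}u)\cdot v=\sum u\cdot v=0$. So each $Q_i$ is traceless, and the hypothesis reduces, for each $i$, to the vanishing of one remaining symmetric $2\times2$ traceless matrix, which has two parameters.

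\textbf{Pole vortices.} For the poles $i\in\{N-1,N\}$, the pole–pole term vanishes since $\mathbf{e}_3\wedge(-\mathbf{e}_3)=0$. The ring contributions are invariant under the rotation $\mathcal{R}_{2\pi/(N-2)}$, which fixes the pole and acts on $T_{\pm\mathbf{e}_3}\mathbb{S}^2$ as a planar rotation. For $N-2\geqslant3$, a traceless symmetric form invariant under a rotation of angle $2\pi/(N-2)\notin\pi\mathbb{Z}$ must vanish, because such forms transform with twice the angle. The case $N=4$ (angle $\pi$, acting as $-\mathrm{Id}$) gives no information this way, so I would check it by hand from the explicit vectors of Section~\ref{sec:config_instable}. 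My guess is that it also vanishes there, by the reflection argument below.

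\textbf{Ring vortices.} By rotational symmetry it suffices to treat $\mathbf{x}_{N-2}^0=(a,0,s)$. I would use the oriented tangent basis $b=(-s,0,a)$, $\mathbf{e}_2$ and write $Q=\begin{pmatrix}p&q\\q&-p\end{pmatrix}$. Under the reflection $y\mapsto-y$ the configuration is invariant up to relabeling $i\leftrightarrow N-2-i$, while the wedge product flips sign. Hence $Q(h_1,-h_2)=-Q(h_1,h_2)$, which forces $p=0$ automatically, and the only condition left is $q=0$.

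\textbf{Computing $q$.} Next I compute $q$ by summing three kinds of contributions:
\begin{itemize}
\item the north pole, with weight $\kappa$ and $|\mathbf{x}-\mathbf{e}_3|^2=2(1-s)$;
\item the south pole, with weight $-(N-2)-\kappa$ and $|\mathbf{x}+\mathbf{e}_3|^2=2(1+s)$;
\item the other ring vortices at angles $\theta_k=2\pi k/(N-2)$, with $|\mathbf{x}_i-\mathbf{x}_k|^2=4a^2\sin^2(\theta_k/2)$.
\end{itemize}
The pole terms are linear in $\kappa$, so $q=0$ is a single affine equation $A\kappa+B=0$. Its coefficient is of the form $A\propto s\cdot(\text{nonzero})$, which accounts for the $1/s$ prefactor, and it gives the unique admissible $\kappa$.

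\textbf{Main obstacle.} The hard step is evaluating the ring sum in closed form. After expanding $h\cdot(\mathbf{x}_i-\mathbf{x}_k)$ and $(\mathbf{x}_i\wedge\mathbf{x}_k)\cdot h$ in trigonometric functions of $\theta_k$, the sum should reduce to the classical identities $\sum_{k=1}^{n-1}\csc^2(\pi k/n)=\frac{n^2-1}{3}$ and $\sum_{k=1}^{n-1}\csc^4(\pi k/n)=\frac{(n^2-1)(n^2+11)}{45}$, together with trivial sums of $\cos\theta_k$, applied with $n=N-2$. I expect these to produce the polynomial terms $(N-3)(N-7)/12$ and $(1-s^2)(N-3)/4$, with the pole terms supplying $(1-s)^2$. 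Finally I would verify that $A\neq0$ for $s\in(0,1)$, which gives the ``if and only if''.
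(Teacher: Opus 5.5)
Your reductions are sound and essentially the paper's. Equivariance reduces the question to $t=0$, and $Q_i$ is traceless on $T_{\mathbf{x}_i}\mathbb{S}^2$. For $N-2\geqslant 3$, invariance under $\mathcal{R}(2\pi/(N-2))$ kills the pole forms. This is cleaner than the paper's Hessian argument, which invokes harmonicity of $\Phi_i$; that is false on $\mathbb{S}^2$, since $\Delta_{\mathbb{S}^2}\ln|\mathbf{x}-\mathbf{y}|_{\mathbb{R}^3}=-\tfrac12$ away from $\mathbf{y}$, though only isotropy of the Hessian is actually needed. The reflection $y\mapsto -y$ then leaves only the $uv$-coefficient at a ring vortex.

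The genuine gap is that the two steps you leave as a guess and as an expectation are both false. First, take $N=4$. At $\mathbf{e}_3$, with $h=(h_1,h_2,0)$, each ring vortex $(\pm a,0,s)$ contributes $\frac{a^2}{2(1-s)^2}h_1h_2$ with the \emph{same} sign, and the antipodal vortex contributes nothing. Hence $Q_{N-1}(h)=\frac{1+s}{1-s}h_1h_2\neq 0$. The reflections only remove the diagonal entry, so for $N=4$ the hypothesis fails for every $\kappa$ (likewise for $N=3$). The paper's proof has the same hole: a symmetric matrix commuting with a rotation by $\pi$ need not be scalar.

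Second, actually compute $q$, writing $n=N-2$ and $h=u\tau_1+v\nu_1$.
\begin{itemize}
\item After the $m\leftrightarrow n-m$ pairing, each ring partner gives $\frac{uv}{2a^2}\big(\cot^2\frac{\theta_k}{2}-s^2\big)$. Only $\sum_{m=1}^{n-1}\cot^2(m\pi/n)=\frac{(n-1)(n-2)}{3}$ is needed; no $\csc^4$ sum appears.
\item The poles give $-\big(\kappa\frac{1+s}{2(1-s)}-(n+\kappa)\frac{1-s}{2(1+s)}\big)uv$.
\end{itemize}
Multiplying $q=0$ by $2a^2=2(1-s^2)$ gives $4s\kappa=(n-1)\big(\frac{n-2}{3}-s^2\big)+n(1-s)^2$, that is,
\[
\kappa=\frac{1}{s}\left[\frac{(N-3)(N-7)}{12}+(1-s^2)\frac{N-3}{4}+\frac{(N-2)(1-s)^2}{4}\right].
\]
This agrees with the displayed formula only when $N=6$. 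For instance, with $N=5$ and $s=\tfrac12$, a direct vector computation gives ring sum $\tfrac19uv$ and pole terms $-\tfrac{8\kappa-3}{6}uv$. So $\kappa=\tfrac{11}{24}$, whereas the statement gives $\tfrac{7}{12}$.

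Your last step (``the pole terms supplying $(1-s)^2$'') therefore cannot be completed: the statement as written is not what the computation proves. The paper reaches it only through algebraic slips. It uses $\alpha_-^2-\alpha_+^2=-s/a^4$ (it is $+s/a^4$), then loses the factor $n$ in front of $(1-s)^2$. Its final factored expression also disagrees with the line just before it.

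With the corrected formula, your outline together with your pole argument is a complete proof for $N\geqslant 5$, because the coefficient of $\kappa$ in $q$ is $-2s/a^2\neq 0$.
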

\begin{proof}
Let us denote by
\begin{equation*}
    \mathcal{A}_i(h,t) = \sum_{j=1\atop j \neq i}^{N} \Gamma_j \Diff_1K_{\mathbb{S}^2}\big(\mathbf{x}_i(t),\mathbf{x}_j(t)\big)[h]\cdot h = 0.
\end{equation*}
Hypothesis~\ref{hyp:supstable} reads that for every $i \in \{1,\ldots,N\}$, for every  $ h \in T_{\mathbf{x}_i}\mathbb{S}^2$ and for every $t \geqslant 0$, 
\begin{equation*}
    \mathcal{A}_i(h,t) = 0.
\end{equation*}
Since the configuration is a relative equilibrium, it satisfies relation~\eqref{eq:VC}, and thus
\begin{align*}
    \mathcal{A}_i(h,t) & = \sum_{j=1\atop j \neq i}^{N} \Gamma_j \Diff_1K_{\mathbb{S}^2}\big(\mathbf{x}_i(t),\mathbf{x}_j(t)\big)[h]\cdot h \\
    & = \sum_{j=1\atop j \neq i}^{N} \Gamma_j\Diff_1K_{\mathbb{S}^2}\big(\mathcal{R}(\Omega t)\mathbf{x}_i^0,\mathcal{R}(\Omega t)\mathbf{x}_j^0\big)[h]\cdot h \\
    & = \sum_{j=1\atop j \neq i}^{N} \Gamma_j \mathcal{R}(\Omega t)\Diff_1K_{\mathbb{S}^2}\big(\mathcal{R}(\Omega t)\mathbf{x}_i^0,\mathcal{R}(\Omega t)\mathbf{x}_j^0\big)[ \mathcal{R}(-\Omega t)h]\cdot h \\
    & = \sum_{j=1\atop j \neq i}^{N} \Gamma_j \Diff_1K_{\mathbb{S}^2}\big(\mathcal{R}(\Omega t)\mathbf{x}_i^0,\mathcal{R}(\Omega t)\mathbf{x}_j^0\big)[ \mathcal{R}(-\Omega t)h]\cdot \mathcal{R}(-\Omega t) h \\
    & = \mathcal{A}_i(\mathcal{R}(-\Omega t) h,0),
\end{align*}
where we used that
\begin{equation*}
    \Diff_1K_{\mathbb{S}^2}\big(\mathcal{R}(\theta)\mathbf{x},\mathcal{R}(\theta)\mathbf{y}\big) = \mathcal{R}(\theta)\Diff_1K_{\mathbb{S}^2}(\mathbf{x},\mathbf{y})\mathcal{R}(-\theta),
\end{equation*}
which is obtained by differentiating the relation $K_{\mathbb{S}^2}(\mathcal{R}(\theta)\mathbf{x},\mathcal{R}(\theta)\mathbf{y}) = \mathcal{R}(\theta) K_{\mathbb{S}^2}(\mathbf{x},\mathbf{y}) $, and that
\begin{equation*}
    \mathcal{R}(\theta) h \cdot h' =  h \cdot \mathcal{R}(-\theta) h'.
\end{equation*}
In conclusion, we have that
\begin{equation*}
     \Big( \forall h \in T_{\mathbf{x}_i}\mathbb{S}^2, \; \forall t \geqslant 0, \; \mathcal{A}_i(h,t) = 0 \Big) \Longleftrightarrow  \Big( \forall h \in T_{\mathbf{x}_i}\mathbb{S}^2, \; \mathcal{A}_i(h,0) = 0 \Big).
\end{equation*}
Let us now consider the cases $i \in \{N-1,N\}$. Recall that $\mathbf{x}_i^0 = \pm \mathbf{e}_3$.  
Define the scalar potential
\begin{equation*}
    \Phi_i(\mathbf{x}) \triangleq \Gamma \sum_{j=1}^{N-2} G_{\mathbb{S}^2}(\mathbf{x},\mathbf{x}_j^0)+C_0,
\end{equation*}
where $G_{\mathbb{S}^2}(\mathbf{x},\mathbf{y}) = \ln ( |\mathbf{x}-\mathbf{y}|_{\R^3})$ is (up to a multiplication and addition of constants) the Green function of $\Delta_{\mathbb{S}^2}$ and $C_0\triangleq C_0(\Gamma,N,\mathbf{x}_1^0,...,\mathbf{x}_{N-2}^0)\in\mathbb{R}$ is a constant chosen so that
$$\int_{\mathbb{S}^2}\Phi_i(\mathbf{x})d\boldsymbol{\sigma}(\mathbf{x})=0.$$ In a neighbourhood of $\mathbf{x}_i^0$, the function $\Phi_i$ is smooth and satisfies
\begin{equation*}
    \Delta_{\mathbb{S}^2} \Phi_i = 0,
\end{equation*}
so that $\Phi_i$ is harmonic near $\mathbf{x}_i^0$.
Using now the cyclic symmetry of the configuration, we have
\begin{equation*}
    \mathcal{R}\Big(\frac{2\pi}{N-2}\Big)\mathbf{x}_j^0 =
    \begin{cases}
        \mathbf{x}_{j+1}^0, & \forall j \in \{1,\ldots,N-3\}, \\
        \mathbf{x}_1^0, & \text{ if } j = N-2
    \end{cases}
\end{equation*}
and
\begin{equation*}
    \mathcal{R}\Big(\frac{2\pi}{N-2}\Big)\mathbf{x}_i^0 = \mathbf{x}_i^0.
\end{equation*}
Therefore the potential $\Phi_i$ is invariant under this rotation, namely
\begin{equation*}
    \Phi_i\Big(\mathcal{R}\Big(\frac{2\pi}{N-2}\Big)x\Big) = \Phi_i(x).
\end{equation*}
Since $\mathbf{x}_i^0$ is a fixed point of the rotation, this invariance implies
\begin{equation*}
    \nabla_{\mathbb{S}^2} \Phi_i(\mathbf{x}_i^0) = 0 .
\end{equation*}
Let
\begin{equation*}
    H_i\triangleq\Diff^2_{\mathbb{S}^2}\Phi_i(\mathbf{x}_i^0)
\end{equation*}
denote the Hessian restricted to the tangent space $T_{\mathbf{x}_i^0}\mathbb{S}^2$.  
Since $\Phi_i$ is invariant under the rotation $\mathcal{R}\big(\frac{2\pi}{N-2}\big)$, the matrix $H_i$ commutes with this rotation acting on $T_{\mathbf{x}_i^0}\mathbb{S}^2$. As $H_i$ is symmetric, this implies that
\begin{equation*}
    H_i = \mu_i I_2
\end{equation*}
for some $\mu_i \in \mathbb{R}$.
Using the fact that $\Phi_i$ is harmonic, we obtain
\begin{equation*}
    0 = \Delta_{\mathbb{S}^2} \Phi_i(\mathbf{x}_i^0) = \mathrm{tr}(H_i) = 2\mu_i.
\end{equation*}
Hence,
\begin{equation*}
    H_i = 0.
\end{equation*}
Recall now that
\begin{equation*}
    K_{\mathbb{S}^2}(\mathbf{x},\mathbf{y}) = \nabla_{\mathbf{x}}^\perp G_{\mathbb{S}^2}(\mathbf{x},\mathbf{y}),
\end{equation*}
so that
\begin{equation*}
    \Gamma \sum_{j=1}^{N-2} K_{\mathbb{S}^2}(\mathbf{x},\mathbf{x}_j^0) = \nabla_x^\perp \Phi_i(x).
\end{equation*}
Differentiating with respect to $\mathbf{x}$ at $\mathbf{x}=\mathbf{x}_i^0$ gives
\begin{equation*}
    \Gamma \sum_{j=1}^{N-2} \Diff_1 K_{\mathbb{S}^2}(\mathbf{x}_i^0,\mathbf{x}_j^0)[h] = \Diff \big(\nabla^\perp \Phi_i\big)(\mathbf{x}_i^0)[h].
\end{equation*}
Since $\nabla\Phi_i(\mathbf{x}_i^0)=0$, as the configuration is a relative equilibrium where $\mathbf{x}_i^0$ lies on the axis of rotation, hence is constant, the above differential reduces to the rotated Hessian and therefore vanishes because $H_i=0$. Consequently,
\begin{equation*}
    \Gamma \sum_{j=1}^{N-2}\Diff_1 K_{\mathbb{S}^2}(\mathbf{x}_i^0,\mathbf{x}_j^0)[h] = 0
\end{equation*}
for all $h \in T_{\mathbf{x}_i^0}\mathbb{S}^2$, and thus\footnote{The previous argument is missing in the equivalent proof in \cite{Donati_2025_Crystal}. It also holds in the planar case.}
\begin{equation*}
    \mathcal{A}_i(h,0) = 0.
\end{equation*}
We now turn to the case $i \in \{1,\ldots,N-2\}$. Set
\begin{equation*}
    n\triangleq N-2, \qquad \varphi \triangleq \frac{2\pi}{n}\cdot
\end{equation*}
For every $i \in \{1,\ldots,n\}$, we have
\begin{equation*}
    \mathbf{x}_i^0 = \mathcal{R}\big((i-1)\varphi\big)\mathbf{x}_1^0.
\end{equation*}
Since the two polar vortices are fixed by every rotation around $\mathbf{e}_3$, and since
\begin{equation*}
    \mathbf{x}_{j+i-1}^0 = \mathcal{R}\big((i-1)\varphi\big)\mathbf{x}_j^0
\end{equation*}
for every $j \in \{1,\ldots,n\}$, the same covariance argument as before gives, for every
$h \in T_{\mathbf{x}_i^0}\mathbb{S}^2$,
\begin{align*}
    \mathcal{A}_i(h,0)
    &= \sum_{j=1\atop j\neq i}^{N} \Gamma_j
    \Diff_1K_{\mathbb{S}^2}\big(\mathbf{x}_i^0,\mathbf{x}_j^0\big)[h]\cdot h \\
    &= \sum_{j=1\atop j\neq 1}^{N} \Gamma_j
    \Diff_1K_{\mathbb{S}^2}\big(\mathbf{x}_1^0,\mathbf{x}_j^0\big)
    \big[\mathcal{R}(-(i-1)\varphi)h\big]\cdot \mathcal{R}(-(i-1)\varphi)h \\
    &= \mathcal{A}_1\big(\mathcal{R}(-(i-1)\varphi)h,0\big).
\end{align*}
Therefore,
\begin{equation*}
    \Big( \forall i \in \{1,\ldots,n\},\ \forall h \in T_{\mathbf{x}_i^0}\mathbb{S}^2, \quad \mathcal{A}_i(h,0)=0 \Big) \Longleftrightarrow \Big( \forall h \in T_{\mathbf{x}_1^0}\mathbb{S}^2,\quad \mathcal{A}_1(h,0)=0 \Big).
\end{equation*}
It remains to compute $\mathcal{A}_1(h,0)$.  
We introduce the orthonormal basis of $T_{\mathbf{x}_1^0}\mathbb{S}^2$
\begin{equation*}
    \tau_1\triangleq\frac{\mathbf{e}_3\wedge \mathbf{x}_1^0}{|\mathbf{e}_3\wedge \mathbf{x}_1^0|_{\mathbb{R}^3}} = \begin{pmatrix} 0 \\ 1 \\ 0 \end{pmatrix}, \qquad \nu_1 \triangleq \mathbf{x}_1^0 \wedge \tau_1 = \begin{pmatrix} -s \\ 0 \\ a \end{pmatrix}.
\end{equation*}
Thus every $h \in T_{\mathbf{x}_1^0}\mathbb{S}^2$ can be written uniquely as
\begin{equation*}
    h = u \tau_1 + v \nu_1, \qquad u,v \in \mathbb{R}.
\end{equation*}
We split
\begin{equation*}
    \mathcal{A}_1(h,0) = \sum_{j=2}^{n}\Diff_1K_{\mathbb{S}^2}(\mathbf{x}_1^0,\mathbf{x}_j^0)[h]\cdot h + \kappa\,\Diff_1K_{\mathbb{S}^2}(\mathbf{x}_1^0,\mathbf{e}_3)[h]\cdot h - (n+\kappa)\,\Diff_1K_{\mathbb{S}^2}(\mathbf{x}_1^0,-\mathbf{e}_3)[h]\cdot h.
\end{equation*}
Using relation~\eqref{eq:differential}, we have
\begin{equation*}
    \Diff_1K_{\mathbb{S}^2}(\mathbf{x},\mathbf{y})[h]\cdot h = -2\,\frac{h\cdot(\mathbf{x}-\mathbf{y})\; h\cdot(\mathbf{x}\wedge \mathbf{y})}{|\mathbf{x}-\mathbf{y}|_{\mathbb{R}^3}^4}.
\end{equation*}

We first compute the contributions of the two poles. Since
\begin{equation*}
    \mathbf{x}_1^0-\mathbf{e}_3 = \begin{pmatrix} a \\ 0 \\ s-1 \end{pmatrix},
    \qquad  \mathbf{x}_1^0\wedge \mathbf{e}_3 = \begin{pmatrix} 0 \\ -a \\ 0 \end{pmatrix},
\end{equation*}
we obtain
\begin{equation*}
    h\cdot(\mathbf{x}_1^0-\mathbf{e}_3)=-av,
    \qquad
    h\cdot(\mathbf{x}_1^0\wedge \mathbf{e}_3)=-au.
\end{equation*}
Hence (here and in the sequel, we use the notations $\alpha_-$ and $\alpha_+$ as in the previous subsection),
\begin{equation*}
    \Diff_1K_{\mathbb{S}^2}(\mathbf{x}_1^0,\mathbf{e}_3)[h]\cdot h = -2a^2\alpha_-^2\,uv.
\end{equation*}
Similarly, since
\begin{equation*}
    \mathbf{x}_1^0+\mathbf{e}_3
    = \begin{pmatrix} a \\ 0 \\ s+1 \end{pmatrix},
    \qquad
    \mathbf{x}_1^0\wedge (-\mathbf{e}_3) = \begin{pmatrix} 0 \\ a \\ 0 \end{pmatrix},
\end{equation*}
we get
\begin{equation*}
    h\cdot(\mathbf{x}_1^0+\mathbf{e}_3)=av, \qquad h\cdot\big(\mathbf{x}_1^0\wedge (-\mathbf{e}_3)\big)=au,
\end{equation*}
and therefore
\begin{equation*}
    \Diff_1K_{\mathbb{S}^2}(\mathbf{x}_1^0,-\mathbf{e}_3)[h]\cdot h = -2a^2\alpha_+^2\,uv.
\end{equation*}
Thus the polar contribution is
\begin{equation*}
    -2a^2\Big(\kappa\alpha_-^2-(n+\kappa)\alpha_+^2\Big)\,uv.
\end{equation*}
We now consider the contribution of the $n-1$ vortices on the ring. For every
$m \in \{1,\ldots,n-1\}$, let
\begin{equation*}
    \theta_m \triangleq m\varphi = \frac{2\pi m}{n}\cdot
\end{equation*}
Then
\begin{equation*}
    \mathbf{x}_{1+m}^0 = \begin{pmatrix} a\cos\theta_m \\ a\sin\theta_m \\ s \end{pmatrix},
\end{equation*}
where the index $m$ is understood modulo $n$. We get that
\begin{equation*}
    h\cdot(\mathbf{x}_1^0-\mathbf{x}_{1+m}^0) = -a\Big(sv(1-\cos\theta_m) + u\sin\theta_m\Big),
\end{equation*}
and
\begin{equation*}
    h\cdot(\mathbf{x}_1^0\wedge \mathbf{x}_{1+m}^0) = a\Big(-su(1-\cos\theta_m)+v\sin\theta_m\Big).
\end{equation*}
Moreover,
\begin{equation*}
    |\mathbf{x}_1^0-\mathbf{x}_{1+m}^0|_{\mathbb{R}^3}^2 = 2a^2(1-\cos\theta_m).
\end{equation*}
Therefore,
\begin{equation*}
    \Diff_1K_{\mathbb{S}^2}(\mathbf{x}_1^0,\mathbf{x}_{1+m}^0)[h]\cdot h = \frac{1}{2a^2} \frac{uv \big(-s^2(1-\cos \theta_m)^2 + \sin^2 \theta_m \big) + s(v^2-u^2)(1-\cos\theta_m)\sin(\theta_m)}{(1-\cos\theta_m)^2}\cdot
\end{equation*}
The idea is now to pair the terms corresponding to $m$ and $n-m$. Since $\cos \theta_m = \cos\theta_{-m}$ and $\sin \theta_m = - \sin(-\theta_m)$, one obtains
\begin{align*}
    \Diff_1K_{\mathbb{S}^2}(\mathbf{x}_1^0,\mathbf{x}_{1+m}^0)[h]\cdot h + \Diff_1K_{\mathbb{S}^2}(\mathbf{x}_1^0,\mathbf{x}_{1-m}^0)[h]\cdot h & = \frac{uv}{a^2}\frac{-s^2(1-\cos\theta_m)^2 + \sin^2 \theta_m}{(1-\cos\theta_m)^2}\\
    & = \frac{1}{a^2}\left(\cot^2\Big(\frac{m\pi}{n}\Big)-s^2\right)uv.
\end{align*}
Summing over all $m \in \{1,\ldots,n-1\}$, this yields
\begin{equation*}
    \sum_{j=2}^{n}\Diff_1K_{\mathbb{S}^2}(\mathbf{x}_1^0,\mathbf{x}_j^0)[h]\cdot h = \frac{uv}{2a^2}\sum_{m=1}^{n-1}\left(\cot^2\Big(\frac{m\pi}{n}\Big)-s^2\right).
\end{equation*}
Using the classical identity
\begin{equation*}
    \sum_{m=1}^{n-1}\cot^2\Big(\frac{m\pi}{n}\Big) = \frac{(n-1)(n-2)}{3},
\end{equation*}
we obtain
\begin{equation*}
    \sum_{j=2}^{n}\Diff_1K_{\mathbb{S}^2}(\mathbf{x}_1^0,\mathbf{x}_j^0)[h]\cdot h = \frac{n-1}{2a^2}\left(\frac{n-2}{3}-s^2\right)uv.
\end{equation*}
Combining the ring and polar contributions, we finally get
\begin{equation*}
    \mathcal{A}_1(h,0) = \left[\frac{n-1}{2a^2}\left(\frac{n-2}{3}-s^2\right) + 2a^2\Big(\kappa\alpha_-^2+(n+\kappa)\alpha_+^2\Big)
    \right]uv.
\end{equation*}
Therefore,
\begin{equation*}
    \forall h \in T_{\mathbf{x}_1^0}\mathbb{S}^2,\quad \mathcal{A}_1(h,0)=0
\end{equation*}
if and only if
\begin{equation*}
    \frac{n-1}{2a^2}\left(s^2-\frac{n-2}{3}\right) - 2a^2\Big(\kappa\alpha_-^2-(n+\kappa)\alpha_+^2\Big)=0.
\end{equation*}
Since $\alpha_-^2 - \alpha_+^2 = -\frac{s}{a^4}$, equivalently,
\begin{equation*}
 2\kappa\frac{s}{a^2} = \frac{n-1}{2a^2}\left(\frac{n-2}{3}-s^2\right) - \frac{1}{2}\frac{a^2n}{(1+s)^2}\cdot
\end{equation*}
Therefore, recalling that $a^2 = 1-s^2$, we infer
\begin{equation*}
    \kappa = \frac{1}{4s} \left[ (n-1)\left(\frac{n-2}{3}-s^2\right) - (1-s)^2 \right].
\end{equation*}
In order to make to limit $s \to 1$ well visible, we factor and obtain that 
\begin{equation*}
    \kappa = \frac{1}{s} \left[ \frac{(n-1)(n-5)}{12} + (1-s^2) \frac{(n-1)}{4}  + (1-s)^2\right],
\end{equation*}
which gives the desired result since $n=N-2$.
\end{proof}
When $s \to 1$, we recover the condition
\begin{equation*}
    \kappa = \frac{(n-1)(n-5)}{12}
\end{equation*}
which corresponds to the result of \cite{Donati_2025_Crystal} for the vortex crystal with $n$ outer vortices. To then prove that with this choice of $\kappa$, the configuration satisfies the power-law confinement bound, from Proposition~\ref{prop:stable_implique_hypstable}, it is enough to check that the stability matrix $\Diff \mathcal{F} (\mathbf{X}^0)$ has no eigenvalue with positive real part. In particular, it does not depend on $\gamma$, so the results of \cite{LPMR11} on the non-rotating sphere could apply. However, for this specific configuration, \cite{LPMR11} does not provide a sufficient and necessary condition for the spectral instability, so one can not conclude, but the interested reader can find in \cite{LPMR11} other related details on the stability of vortex equilibria on the sphere. We also refer to \cite{Roberts_2013_Stability_of_Relative_Eq} for a general method on how to compute the stability of point-vortex configurations (in the plane).

\appendix
\section{Technical lemmas}
In this appendix, we gather some technical results used throughout the manuscript.
\begin{lem}
For every $\mathbf{x},\mathbf{y} \in \mathbb{S}^2$, there holds that
        \begin{equation}\label{eq:maj_x_wedge_y}
        |\mathbf{x} \wedge \mathbf{y}|_{\R^3} \leqslant |\mathbf{x} - \mathbf{y}|_{\R^3}.
    \end{equation}
\end{lem}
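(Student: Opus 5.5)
The plan is to compare the two sides by squaring them and using the unit-norm constraint. Since $\mathbf{x},\mathbf{y}\in\mathbb{S}^2$, the Lagrange identity for the cross product gives
\begin{equation*}
    |\mathbf{x}\wedge\mathbf{y}|_{\R^3}^2=|\mathbf{x}|_{\R^3}^2|\mathbf{y}|_{\R^3}^2-(\mathbf{x}\cdot\mathbf{y})^2=1-(\mathbf{x}\cdot\mathbf{y})^2,
\end{equation*}
while expanding the square of the difference yields
\begin{equation*}
    |\mathbf{x}-\mathbf{y}|_{\R^3}^2=2-2\,\mathbf{x}\cdot\mathbf{y}=2(1-\mathbf{x}\cdot\mathbf{y}).
\end{equation*}

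Setting $c\triangleq\mathbf{x}\cdot\mathbf{y}\in[-1,1]$ (by Cauchy--Schwarz), the desired inequality is, after squaring both nonnegative sides, equivalent to
\begin{equation*}
    (1-c)(1+c)\leqslant 2(1-c).
\end{equation*}
Since $1-c\geqslant0$, this follows from $1+c\leqslant 2$, i.e. $c\leqslant1$, which holds. If $c=1$ then $\mathbf{x}=\mathbf{y}$ and both sides vanish, so there is no division issue; the factorization argument covers this case anyway since it does not divide by $1-c$.

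Taking square roots of both nonnegative quantities concludes. There is no real obstacle here; the only point requiring care is to invoke the identity $|\mathbf{x}\wedge\mathbf{y}|^2+(\mathbf{x}\cdot\mathbf{y})^2=|\mathbf{x}|^2|\mathbf{y}|^2$ and to keep track of the sign of $1-c$ when comparing the two squared quantities. Geometrically, with $\theta$ the angle between $\mathbf{x}$ and $\mathbf{y}$, the statement reads $\sin\theta\leqslant2\sin(\theta/2)$, i.e. $2\sin(\theta/2)\cos(\theta/2)\leqslant2\sin(\theta/2)$, which is a convenient sanity check.
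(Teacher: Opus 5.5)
Your proof is correct and follows the paper's argument: Lagrange's identity gives $|\mathbf{x}\wedge\mathbf{y}|_{\R^3}^2=(1-\mathbf{x}\cdot\mathbf{y})(1+\mathbf{x}\cdot\mathbf{y})$, expanding gives $|\mathbf{x}-\mathbf{y}|_{\R^3}^2=2(1-\mathbf{x}\cdot\mathbf{y})$, and Cauchy--Schwarz gives $1+\mathbf{x}\cdot\mathbf{y}\leqslant 2$. The sign bookkeeping on $1-\mathbf{x}\cdot\mathbf{y}\geqslant 0$ that you make explicit is left implicit in the paper.
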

\begin{proof}
    First of all, take $\mathbf{x},\mathbf{y}\in\mathbb{S}^2.$ On one hand
    $$|\mathbf{x} \wedge \mathbf{y}|_{\R^3}^2=|\mathbf{x}|_{\R^3}^2|\mathbf{y}|_{\R^3}^2-(\mathbf{x}\cdot\mathbf{y})^2=1-(\mathbf{x}\cdot\mathbf{y})^2=(1-\mathbf{x}\cdot\mathbf{y})(1+\mathbf{x}\cdot\mathbf{y}).$$
    On the other hand
    \begin{equation}\label{norm-diff-carre}
        |\mathbf{x}-\mathbf{y}|_{\R^3}^2=|\mathbf{x}|_{\R^3}^2+|\mathbf{y}|_{\R^3}^2-2\mathbf{x}\cdot\mathbf{y}=2(1-\mathbf{x}\cdot\mathbf{y}).
    \end{equation}
    In addition, by the Cauchy-Schwarz inequality,
    $$1+\mathbf{x}\cdot\mathbf{y}\leqslant1+|\mathbf{x}|_{\R^3}|\mathbf{y}|_{\R^3}=2.$$
    Combining the foregoing calculations leads to relation~\eqref{eq:maj_x_wedge_y}.
\end{proof}
\begin{lem}[Biot-Savart law on the rotating sphere]\label{lem:gradlog}
For any $\mathbf{x},\mathbf{y}\in\mathbb{S}^2,$ we have
    $$\nabla_{\mathbf{x}}^{\perp}(\mathbf{x}\cdot\mathbf{y})=-\mathbf{x}\wedge\mathbf{y}\qquad\textnormal{and}\qquad\nabla_{\mathbf{x}}^{\perp}\big(\ln|\mathbf{x}-\mathbf{y}|_{\mathbb{R}^3}\big)=\frac{\mathbf{x}\wedge\mathbf{y}}{\,\,|\mathbf{x}-\mathbf{y}|_{\mathbb{R}^3}^2}\cdot$$
    Consequently, one obtains the following Biot-Savart law on the rotating sphere at speed $\gamma$
    $$\forall\mathbf{x}\in\mathbb{S}^2,\quad u(t,\mathbf{x})=\frac{1}{2\pi}\int_{\mathbb{S}^2}\frac{\mathbf{x}\wedge\mathbf{y}}{\,\,|\mathbf{x}-\mathbf{y}|_{\mathbb{R}^3}^2}\zeta(t,\mathbf{y})d\boldsymbol{\sigma}(\mathbf{y})+\gamma\mathbf{e}_3\,\wedge\mathbf{x}.$$
\end{lem}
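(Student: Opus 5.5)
The plan is to reduce everything to tangential calculus on $\mathbb{S}^2$, viewed as a submanifold of $\R^3$ with the induced metric. On the sphere, $\nabla^\perp f(\mathbf{x}) = \mathbf{x}\wedge\nabla_{\mathbb{S}^2} f(\mathbf{x})$, where $\nabla_{\mathbb{S}^2} f = \Pi_{\mathbf{x}}\nabla_{\R^3}\tilde f$ for any smooth extension $\tilde f$ of $f$. This is the rotation by $\pi/2$ in $T_{\mathbf{x}}\mathbb{S}^2$, namely the map $\mathtt{R}_{\mathbf{x}}$ used in Section~\ref{sec:differential}. It matches the orientation convention already implicit in the Biot--Savart law stated in the introduction. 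Since $\mathbf{x}\wedge\mathbf{x}=0$, the projection can be dropped inside the wedge, which gives the working formula
\[
\nabla^\perp f(\mathbf{x}) = \mathbf{x}\wedge\nabla_{\R^3}\tilde f(\mathbf{x}).
\]

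For the first identity, take $\tilde f(\mathbf{x}) = \mathbf{x}\cdot\mathbf{y}$. Its ambient gradient is $\mathbf{y}$, so the working formula gives $\nabla^\perp_{\mathbf{x}}(\mathbf{x}\cdot\mathbf{y}) = \mathbf{x}\wedge\mathbf{y}$ up to the sign fixed by the orientation convention. The stated $-\mathbf{x}\wedge\mathbf{y}$ corresponds to taking $\nabla^\perp = -\mathtt{R}_{\mathbf{x}}\nabla$, i.e.\ the perpendicular $(-\partial_2,\partial_1)$ measured with the outward versus inward normal. I will fix the sign so that both stated formulas hold simultaneously, and check it once on $\mathbf{y}=\mathbf{e}_3$.

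For the second identity I will avoid using $\ln|\mathbf{x}-\mathbf{y}|$ as the extension, because its ambient gradient $(\mathbf{x}-\mathbf{y})/|\mathbf{x}-\mathbf{y}|^2$ hides the sign bookkeeping. Instead I will use \eqref{norm-diff-carre}, which gives $\ln|\mathbf{x}-\mathbf{y}|_{\R^3} = \tfrac12\ln\bigl(2(1-\mathbf{x}\cdot\mathbf{y})\bigr)$ on the sphere. The chain rule and the first identity then give
\[
\nabla^\perp_{\mathbf{x}}\ln|\mathbf{x}-\mathbf{y}| = -\frac{1}{2(1-\mathbf{x}\cdot\mathbf{y})}\,\nabla^\perp_{\mathbf{x}}(\mathbf{x}\cdot\mathbf{y}) = \frac{\mathbf{x}\wedge\mathbf{y}}{|\mathbf{x}-\mathbf{y}|^2_{\R^3}}.
\]
This shows that the two signs are consistent: with either orientation, the two identities hold together.

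For the Biot--Savart law I start from $u = \nabla^\perp(\Psi[\zeta] + \gamma x_3)$ as written in the introduction. The operator $\nabla^\perp$ passes under the integral: the kernel $K_{\mathbb{S}^2}(\cdot,\mathbf{y})$ is $O(|\mathbf{x}-\mathbf{y}|^{-1})$ by \eqref{eq:maj_x_wedge_y}, hence locally integrable on the sphere by \eqref{integrability_cond}, and $\zeta\in L^\infty$, so differentiation under the integral is justified by a standard cutoff argument. This produces the $\frac{1}{2\pi}\int K_{\mathbb{S}^2}\zeta$ term. For the rotation term I apply the first identity with $\mathbf{y}=\mathbf{e}_3$, which gives $\gamma\nabla^\perp x_3 = -\gamma\,\mathbf{x}\wedge\mathbf{e}_3 = \gamma\,\mathbf{e}_3\wedge\mathbf{x}$.

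The main obstacle is not analytic; it is fixing the orientation convention of $\nabla^\perp$ on the sphere so that all the signs agree. The differentiation under the integral is routine.
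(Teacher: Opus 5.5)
Your argument is correct in substance, and for the second identity and the Biot--Savart consequence it is the paper's own argument. The paper also uses $|\mathbf{x}-\mathbf{y}|_{\mathbb{R}^3}^2=2(1-\mathbf{x}\cdot\mathbf{y})$ to reduce the logarithm to $-\nabla^\perp(\mathbf{x}\cdot\mathbf{y})/|\mathbf{x}-\mathbf{y}|_{\mathbb{R}^3}^2$ by the chain rule, and the velocity formula is then the same one-line consequence. The difference is in the first identity:
\begin{itemize}
\item The paper \emph{defines} $\nabla^\perp$ in the colatitude/longitude chart, $\nabla^\perp f=\frac{\partial_\varphi f}{\sin\theta}\mathbf{e}_\theta-\partial_\theta f\,\mathbf{e}_\varphi$. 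It then verifies $\nabla^\perp(\mathbf{x}\cdot\mathbf{y})=-\mathbf{x}\wedge\mathbf{y}$ component by component in trigonometric coordinates.
\item You use the intrinsic description $\nabla^\perp=\pm\mathtt{R}_{\mathbf{x}}\nabla_{\mathbb{S}^2}$, with $\nabla_{\mathbb{S}^2}f=\Pi_{\mathbf{x}}\nabla_{\mathbb{R}^3}\tilde f$ and $\mathbf{x}\wedge\Pi_{\mathbf{x}}v=\mathbf{x}\wedge v$.
\end{itemize}
Your route is shorter and chart-free: there is no need for a second chart at the poles and the cut meridian, which the paper leaves to the reader. You also justify passing $\nabla^\perp$ under the integral, which the paper omits. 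What the chart computation buys the paper is an unambiguous sign convention, and that is exactly where your write-up needs repair.

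Your first paragraph has the orientation backwards. Since $(\mathbf{x},\mathbf{e}_\theta,\mathbf{e}_\varphi)$ is a direct frame, $\mathbf{x}\wedge\mathbf{e}_\theta=\mathbf{e}_\varphi$ and $\mathbf{x}\wedge\mathbf{e}_\varphi=-\mathbf{e}_\theta$. So the paper's operator is $\nabla^\perp f=-\mathtt{R}_{\mathbf{x}}\nabla_{\mathbb{S}^2}f=\nabla_{\mathbb{R}^3}\tilde f\wedge\mathbf{x}$, not $\mathbf{x}\wedge\nabla_{\mathbb{R}^3}\tilde f$.

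With $+\mathtt{R}_{\mathbf{x}}$, both stated identities and the introduction's Biot--Savart law change sign; for example $\gamma\nabla^\perp x_3$ would become $\gamma\,\mathbf{x}\wedge\mathbf{e}_3$. Your claim that $+\mathtt{R}_{\mathbf{x}}$ matches the introduction is therefore false. The introduction is also not independent evidence, since it quotes this very lemma.

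You do land on $-\mathtt{R}_{\mathbf{x}}$ in your second paragraph, but ``I will fix the sign so that both stated formulas hold'' is circular as written. State the convention $\nabla^\perp f=\nabla_{\mathbb{R}^3}\tilde f\wedge\mathbf{x}$ at the outset; it is one line from the chart formula above. The lemma is then immediate:
\begin{itemize}
\item $\nabla^\perp(\mathbf{x}\cdot\mathbf{y})=\mathbf{y}\wedge\mathbf{x}=-\mathbf{x}\wedge\mathbf{y}$.
\item $\nabla^\perp\ln|\mathbf{x}-\mathbf{y}|_{\mathbb{R}^3}=\frac{(\mathbf{x}-\mathbf{y})\wedge\mathbf{x}}{|\mathbf{x}-\mathbf{y}|_{\mathbb{R}^3}^2}=\frac{\mathbf{x}\wedge\mathbf{y}}{|\mathbf{x}-\mathbf{y}|_{\mathbb{R}^3}^2}$, computed directly.
\end{itemize}
So the ambient gradient of the logarithm hides no sign bookkeeping after all. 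Likewise, ``with either orientation, the two identities hold together'' should say this: in either orientation the second identity is $-|\mathbf{x}-\mathbf{y}|_{\mathbb{R}^3}^{-2}$ times the first, but only $-\mathtt{R}_{\mathbf{x}}$ gives the stated signs.
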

\begin{proof}
Let us first recall from \eqref{norm-diff-carre} that for $\mathbf{x},\mathbf{y}\in\mathbb{S}^2,$
$$|\mathbf{x}-\mathbf{y}|_{\mathbb{R}^3}^2=|\mathbf{x}|_{\mathbb{R}^3}^2+|\mathbf{y}|_{\mathbb{R}^3}^2-2\mathbf{x}\cdot\mathbf{y}=2(1-\mathbf{x}\cdot\mathbf{y}).$$
Therefore,
$$\nabla_{\mathbf{x}}^{\perp}\left(\ln|\mathbf{x}-\mathbf{y}|_{\mathbb{R}^3}\right)=\frac{1}{2}\nabla_{\mathbf{x}}^{\perp}\left(\ln|\mathbf{x}-\mathbf{y}|_{\mathbb{R}^3}^2\right)=-\frac{\nabla_{\mathbf{x}}^{\perp}(\mathbf{x}\cdot\mathbf{y})}{\,\,|\mathbf{x}-\mathbf{y}|_{\mathbb{R}^3}^2}\cdot$$
The unit sphere is a manifold with principal co-latitude/longitude local chart
$$\psi_1(\theta,\varphi)\triangleq\big(\sin(\theta)\cos(\varphi),\sin(\theta)\sin(\varphi),\cos(\theta)\big),\qquad\theta\in(0,\pi),\qquad\varphi\in(0,2\pi).$$
The interested reader can perform the same computations working in an other local chart, covering the missing points, and get the same result.
On each point $\mathbf{x}\in\mathbb{S}^2$ in the form $\mathbf{x}=\psi_1(\theta,\varphi)$ for some $(\theta,\varphi)\in(0,\pi)\times(0,2\pi),$ the tangent space $T_{\mathbf{x}}\mathbb{S}^2$ has an orthonormal basis $\mathbf{e}=\big(\mathbf{e}_{\theta}(\mathbf{x}),\mathbf{e}_{\varphi}(\mathbf{x})\big)$ given by
$$\mathbf{e}_{\theta}(\mathbf{x})\triangleq\partial_{\theta}\psi_1(\theta,\varphi)=\begin{pmatrix}
    \cos(\theta)\cos(\varphi)\\
    \cos(\theta)\sin(\varphi)\\
    -\sin(\theta)
\end{pmatrix},\qquad\mathbf{e}_{\varphi}(\mathbf{x})\triangleq\frac{\partial_{\varphi}\psi_1(\theta,\varphi)}{\sin(\theta)}=\begin{pmatrix}
    -\sin(\varphi)\\
    \cos(\varphi)\\
    0
\end{pmatrix}.$$
The operator $\nabla^{\perp}$ is defined  by 
$$\nabla^{\perp}f(\theta,\varphi)=\frac{\partial_{\varphi}f(\theta,\varphi)}{\sin(\theta)}\mathbf{e}_{\theta}-\partial_{\theta}f(\theta,\varphi)\mathbf{e}_{\varphi}.$$
We denote $\mathbf{y}=\psi_1(\theta',\varphi'),$ then
\begin{align*}
    \nabla^{\perp}_{\mathbf{x}}(\mathbf{x}\cdot\mathbf{y})&=\nabla^{\perp}\big(\psi_1(\theta,\varphi)\cdot\mathbf{y}\big)\\
    &=-\big(\mathbf{e}_{\theta}(\mathbf{x})\cdot\mathbf{y}\big)\mathbf{e}_{\varphi}(\mathbf{x})+\big(\mathbf{e}_{\varphi}(\mathbf{x})\cdot\mathbf{y}\big)\mathbf{e}_{\theta}(\mathbf{x}).
\end{align*}
But
\begin{align*}
    \mathbf{e}_{\varphi}(\mathbf{x})\cdot\mathbf{y}=\begin{pmatrix}
        -\sin(\varphi)\\
        \cos(\varphi)\\
        0
    \end{pmatrix}\cdot\begin{pmatrix}
        \sin(\theta')\cos(\varphi')\\
        \sin(\theta')\sin(\varphi')\\
        \cos(\theta')
    \end{pmatrix}&=\cos(\varphi)\sin(\theta')\sin(\varphi')-\sin(\varphi)\sin(\theta')\cos(\varphi')
\end{align*}
and
\begin{align*}
    \mathbf{e}_{\theta}(\mathbf{x})\cdot\mathbf{y}&=\begin{pmatrix}
    \cos(\theta)\cos(\varphi)\\
    \cos(\theta)\sin(\varphi)\\
    -\sin(\theta)
\end{pmatrix}\cdot\begin{pmatrix}
        \sin(\theta')\cos(\varphi')\\
        \sin(\theta')\sin(\varphi')\\
        \cos(\theta')
    \end{pmatrix}\\
    &=\cos(\theta)\cos(\varphi)\sin(\theta')\cos(\varphi')+\cos(\theta)\sin(\varphi)\sin(\theta')\sin(\varphi')-\sin(\theta)\cos(\theta').
\end{align*}
Therefore, after simplifications, we find
\begin{align*}
    \nabla_{\mathbf{x}}^{\perp}(\mathbf{x}\cdot\mathbf{y})&=-\begin{pmatrix}
        \sin(\theta)\sin(\varphi)\cos(\theta')-\cos(\theta)\sin(\theta')\sin(\varphi')\\
        \cos(\theta)\sin(\theta')\cos(\varphi')-\sin(\theta)\cos(\varphi)\cos(\theta')\\
        \sin(\theta)\cos(\varphi)\sin(\theta')\sin(\varphi')-\sin(\theta)\sin(\varphi)\sin(\theta')\cos(\varphi')
    \end{pmatrix}.
\end{align*}
Besides,
\begin{align*}
    \mathbf{x}\wedge\mathbf{y}&=\begin{pmatrix}
        \sin(\theta)\cos(\varphi)\\
        \sin(\theta)\sin(\varphi)\\
        \cos(\theta)
    \end{pmatrix}\wedge\begin{pmatrix}
        \sin(\theta')\cos(\varphi')\\
        \sin(\theta')\sin(\varphi')\\
        \cos(\theta')
    \end{pmatrix}\\
    &=\begin{pmatrix}
        \sin(\theta)\sin(\varphi)\cos(\theta')-\cos(\theta)\sin(\theta')\sin(\varphi')\\
        \cos(\theta)\sin(\theta')\cos(\varphi')-\sin(\theta)\cos(\varphi)\cos(\theta')\\
        \sin(\theta)\cos(\varphi)\sin(\theta')\sin(\varphi')-\sin(\theta)\sin(\varphi)\sin(\theta')\cos(\varphi')
    \end{pmatrix}.
\end{align*}
Thus, we have proven
\begin{equation}\label{gradperp:scalarprod}
    \nabla_{\mathbf{x}}^{\perp}(\mathbf{x}\cdot\mathbf{y})=-\mathbf{x}\wedge\mathbf{y}.
\end{equation}
Consequently,
$$\nabla_{\mathbf{x}}^{\perp}\big(\ln|\mathbf{x}-\mathbf{y}|_{\mathbb{R}^3}\big)=\frac{\mathbf{x}\wedge\mathbf{y}}{\,\,|\mathbf{x}-\mathbf{y}|_{\mathbb{R}^3}^2}\cdot$$
The proof of Lemma \ref{lem:gradlog} is now complete.
\end{proof}

Throughout the document, give $\theta\in\mathbb{R}$, the direct rotation of angle $\theta$ around the vertical axis is denoted
\begin{equation}\label{def:Rtheta}
    \mathcal{R}(\theta)\triangleq\begin{pmatrix}
            \cos(\theta) & -\sin(\theta) & 0\\
            \sin(\theta) & \cos(\theta) & 0\\
            0 & 0 & 1
        \end{pmatrix}.
\end{equation}
We have the following characterization of uniformly rotating point vortex configurations (vortex crystals).
 \begin{lem}\label{lem rot sph}
        Let $\big((\mathbf{x}_1(t),,\Gamma_1)\ldots,(\mathbf{x}_N(t),\Gamma_N)\big)$ be a point vortex dynamical system on $\mathbb{S}^2$. Then, this system performs a uniform rotation around the vertical axis at constant speed $\Omega\in\mathbb{R}$ if and only if for any $i\in\{1,\ldots,N\},$
        $$\big(\Omega-\gamma)\mathbf{e}_3\wedge\mathbf{x}_i(0)=\sum_{j=1\atop j\neq i}^{N}\frac{\Gamma_j}{2\pi}\frac{\mathbf{x}_i(0)\wedge\mathbf{x}_{j}(0)}{|\mathbf{x}_i(0)-\mathbf{x}_j(0)|_{\mathbb{R}^3}^2}\cdot$$
    \end{lem}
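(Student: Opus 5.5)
The statement to prove is Lemma~\ref{lem rot sph}: the configuration rotates uniformly at speed $\Omega$ around the vertical axis if and only if, at time $0$, the point-vortex velocity field equals $\Omega\,\mathbf{e}_3\wedge\mathbf{x}_i(0)$. The two ingredients are the covariance of the vector field \eqref{PVS} under rotations about $\mathbf{e}_3$, and uniqueness for the Cauchy problem on the admissible set $\mathcal{A}_N$.

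\emph{Necessity.} Assume $\mathbf{x}_i(t)=\mathcal{R}(\Omega t)\mathbf{x}_i(0)$ for all $i$. Differentiating at $t=0$ and using $\mathcal{R}'(0)\mathbf{z}=\mathbf{e}_3\wedge\mathbf{z}$ gives $\dot{\mathbf{x}}_i(0)=\Omega\,\mathbf{e}_3\wedge\mathbf{x}_i(0)$. Substituting \eqref{PVS} at $t=0$ and moving the Coriolis term $\gamma\,\mathbf{e}_3\wedge\mathbf{x}_i(0)$ to the left-hand side yields exactly the stated identity.

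\emph{Sufficiency.} Assume the identity holds at time $0$, and set $\mathbf{y}_i(t)\triangleq\mathcal{R}(\Omega t)\mathbf{x}_i(0)$. It suffices to show that $(\mathbf{y}_i)$ solves \eqref{PVS}; then $\mathbf{y}_i\equiv\mathbf{x}_i$ by Cauchy--Lipschitz uniqueness on $\mathcal{A}_N$. The initial points are pairwise distinct, and $\mathcal{R}(\Omega t)$ is an isometry, so $(\mathbf{y}_i(t))$ stays in $\mathcal{A}_N$ for all $t$ and the solution is global.

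To verify the equation, I use two facts about rotations $\mathcal{R}\in SO(3)$ fixing $\mathbf{e}_3$:
\begin{itemize}
\item $\mathcal{R}\mathbf{a}\wedge\mathcal{R}\mathbf{b}=\mathcal{R}(\mathbf{a}\wedge\mathbf{b})$;
\item $|\mathcal{R}\mathbf{a}-\mathcal{R}\mathbf{b}|_{\mathbb{R}^3}=|\mathbf{a}-\mathbf{b}|_{\mathbb{R}^3}$.
\end{itemize}
By the first fact together with $\mathcal{R}\mathbf{e}_3=\mathbf{e}_3$, we get $\mathbf{e}_3\wedge\mathcal{R}\mathbf{z}=\mathcal{R}(\mathbf{e}_3\wedge\mathbf{z})$. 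Hence the right-hand side of \eqref{PVS} evaluated at $(\mathbf{y}_j(t))_j$ equals $\mathcal{R}(\Omega t)$ applied to the right-hand side at $(\mathbf{x}_j(0))_j$. By hypothesis, the latter is $\Omega\,\mathbf{e}_3\wedge\mathbf{x}_i(0)$, so the field at $(\mathbf{y}_j(t))_j$ is $\Omega\,\mathcal{R}(\Omega t)(\mathbf{e}_3\wedge\mathbf{x}_i(0))$. On the other side, $\dot{\mathbf{y}}_i(t)=\Omega\,\mathbf{e}_3\wedge\mathcal{R}(\Omega t)\mathbf{x}_i(0)=\Omega\,\mathcal{R}(\Omega t)(\mathbf{e}_3\wedge\mathbf{x}_i(0))$, since $\mathcal{R}(\Omega t)$ commutes with $\mathbf{e}_3\wedge\cdot$. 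The two expressions coincide, so $(\mathbf{y}_i)$ solves \eqref{PVS}.

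There is no real obstacle in this argument. The only care needed is to check that the Coriolis term is also equivariant, which holds because the rotation axis is $\mathbf{e}_3$.
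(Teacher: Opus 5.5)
Your proof is correct and follows essentially the same route as the paper: both rest on the equivariance $\mathcal{R}\mathbf{a}\wedge\mathcal{R}\mathbf{b}=\mathcal{R}(\mathbf{a}\wedge\mathbf{b})$, the isometry property, and the fact that $\mathcal{R}(\Omega t)$ commutes with $\mathbf{e}_3\wedge\cdot$, so that inserting $\mathcal{R}(\Omega t)\mathbf{x}_i(0)$ into \eqref{PVS} reduces to the stated identity at time $0$. Your version is slightly more complete, since you make the converse direction explicit through Cauchy--Lipschitz uniqueness on $\mathcal{A}_N$, a step the paper leaves implicit.
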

    \begin{remark}
        The previous result gives in particular that two antipodal points with opposite circulations might rotate at the sphere rotation speed $\gamma$. 
    \end{remark}
    \begin{proof}
        Observe that a point $\mathbf{x}(t)$ on $\mathbb{S}^2$ performs a uniform rotation around the vertical axis at constant speed $\Omega$ if and only if
    $$\mathbf{x}(t)=\mathcal{R}(\Omega t)\mathbf{x}(0).$$
    Therefore,
    $$\frac{d}{dt}\mathbf{x}(t)=\partial_{t}\big(\mathcal{R}(\Omega t)\big)\mathbf{x}(0)=\Omega\begin{pmatrix}
        -\sin(\Omega t) & -\cos(\Omega t) & 0\\
        \cos(\Omega t) & -\sin(\Omega t) & 0\\
        0 & 0 & 0
    \end{pmatrix}\mathbf{x}(0).$$
    Denoting $\mathbf{x}(0)\triangleq(x_1^{0},x_2^0,x_3^0)^{\top},$ we get
    $$\frac{d}{dt}\mathbf{x}(t)=-\Omega\begin{pmatrix}
        x_1^0\sin(\Omega t)+x_2^0\cos(\Omega t)\\
        x_2^0\sin(\Omega t)-x_1^0\cos(\Omega t)\\
        0
    \end{pmatrix}.$$
    Now notice that
    $$\mathcal{R}(\Omega t)\big(\mathbf{e}_3\wedge\mathbf{x}(0)\big)=\mathbf{e}_3\wedge\mathcal{R}(\Omega t)\mathbf{x}(0)=\begin{pmatrix}
        0\\
        0\\
        1
    \end{pmatrix}\wedge\begin{pmatrix}
        x_1^0\cos(\Omega t)-x_2^0\sin(\Omega t)\\
        x_1^0\sin(\Omega t)+x_2^0\cos(\Omega t)\\
        x_3^0
    \end{pmatrix}=-\begin{pmatrix}
        x_1^0\sin(\Omega t)+x_2^0\cos(\Omega t)\\
        x_2^0\sin(\Omega t)-x_1^0\cos(\Omega t)\\
        0
    \end{pmatrix}.$$
    Thus, we get
    $$\frac{d}{dt}\mathbf{x}(t)=\Omega\,\mathcal{R}(\Omega t)\big(\mathbf{e}_3\wedge\mathbf{x}(0)\big).$$
    Inserting this information into the point vortex system gives that for any $i\in\{1,\ldots,N\},$
    \begin{align*}
        \Omega\mathcal{R}(\Omega t)\big(\mathbf{e}_3\wedge\mathbf{x}_i(0)\big)&=\sum_{j=1\atop j\neq i}^{N}\frac{\Gamma_j}{2\pi}\frac{\mathcal{R}(\Omega t)\mathbf{x}_i(0)\wedge\mathcal{R}(\Omega t)\mathbf{x}_j(0)}{\left|\mathcal{R}(\Omega t)\mathbf{x}_i(0)-\mathcal{R}(\Omega t)\mathbf{x}_j(0)\right|_{\mathbb{R}^3}^2}+\gamma\,\mathbf{e}_3\wedge\mathcal{R}(\Omega t)\mathbf{x}_i(0)\\
        &=\mathcal{R}(\Omega t)\left(\sum_{j=1\atop j\neq i}^{N}\frac{\Gamma_j}{2\pi}\frac{\mathbf{x}_i(0)\wedge\mathbf{x}_j(0)}{\left|\mathbf{x}_i(0)-\mathbf{x}_j(0)\right|_{\mathbb{R}^3}^2}+\gamma\,\mathbf{e}_3\wedge\mathbf{x}_i(0)\right).
    \end{align*}
    We have use the fact that $\mathcal{R}(\Omega t)\in SO_3(\mathbb{R}).$ In particular it is invertible, so composing on the left by its inverse gives the desired result. This concludes the proof of Lemma \ref{lem rot sph}
    \end{proof}

\begin{lem}[A variant of Gronwall's Lemma]\label{lem:gronwall}
Let $f : \R^n \to \R^n$ such that there exists $\kappa>0$ such that
\begin{equation*}
 \forall \, x,y \in \R^n, \quad \big|f(x)-f(y)\big| \leqslant \kappa|x-y|.
\end{equation*}
Let $g \in L^1(\R_+,\R_+)$ and $T \geqslant0$. We assume that $z : \R_+ \to \R^n$ satisfies
\begin{equation*}
 \forall t \in [0,T], \quad z'(t) = f\big(z(t)\big)
\end{equation*}
and that $y : \R_+ \to \R^n$ satisfies
\begin{equation*}
 \forall t \in [0,T], \quad \left|y'(t)-f\big(y(t)\big)\right| \leqslant g(t).
\end{equation*}
Then,
\begin{equation*}
 \forall t \in [0,T], \quad |y(t) - z(t)| \leqslant \left( \int_0^t g(s) \dd s + |y(0)-z(0)|\right)e^{\kappa t}.
\end{equation*}
\end{lem}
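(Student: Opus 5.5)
The plan is to compare $y$ and $z$ through the scalar function $e(t)\triangleq|y(t)-z(t)|$, and to derive a linear differential inequality for it that can then be integrated by the classical Gronwall argument. Since $|\cdot|$ is not differentiable at the origin, I will work with the integral form rather than the derivative of $e$; this is the only point that needs some care.

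First, I write $y'(t)-z'(t)=\big(y'(t)-f(y(t))\big)+\big(f(y(t))-f(z(t))\big)$ for $t\in[0,T]$. Integrating from $0$ to $t$ and using the triangle inequality, the hypothesis $|y'-f(y)|\leqslant g$, and the Lipschitz bound on $f$ gives
\begin{equation*}
e(t)\leqslant e(0)+\int_0^t g(s)\,\dd s+\kappa\int_0^t e(s)\,\dd s .
\end{equation*}

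Next I set $A(t)\triangleq e(0)+\int_0^t g(s)\,\dd s$, which is non-decreasing because $g\geqslant0$. The inequality then reads $e(t)\leqslant A(t)+\kappa\int_0^t e$. I introduce $\Psi(t)\triangleq\int_0^t e(s)\,\dd s$, which is $C^1$ because $e$ is continuous. It satisfies $\Psi'\leqslant A+\kappa\Psi$, hence
\begin{equation*}
\big(e^{-\kappa t}\Psi\big)'\leqslant A(t)e^{-\kappa t}.
\end{equation*}
Integrating, and using $A(s)\leqslant A(t)$ for $s\leqslant t$, I get $\Psi(t)\leqslant A(t)\,\dfrac{e^{\kappa t}-1}{\kappa}$. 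Plugging this back into the integral inequality yields $e(t)\leqslant A(t)+A(t)\big(e^{\kappa t}-1\big)=A(t)e^{\kappa t}$, which is exactly the claimed bound.

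The main point of care is to avoid differentiating $|y-z|$ directly and to use the monotonicity of $A$, which comes from the nonnegativity of $g$. Otherwise the argument is routine. I note that only the Lipschitz property along the two trajectories is used, which is how the lemma is applied in the paper on $\mathscr{D}_N$.
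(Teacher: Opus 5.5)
Your proof is correct and follows the paper's argument: the same integral inequality $e(t)\leqslant e(0)+\int_0^t g(s)\,\dd s+\kappa\int_0^t e(s)\,\dd s$, followed by the classical Gronwall argument. The only difference is that you carry out the Gronwall step explicitly, and you correctly rely on $A$ being non-decreasing, which is the property actually needed rather than the differentiability the paper invokes.
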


\begin{proof}
The proof was already provided in \cite[Lemma B.2]{D24a} and is recalled here for the sake of completeness. Fix $t \in [0,T]$. One readily has
\begin{align*}
 |y(t)-z(t)| & \leqslant \left|\int_0^t \big(y'(s) - z'(s) \big)\dd s \right| + |y(0)-z(0)|\\
 & \leqslant \int_0^t g(s)\dd s + \left| \int_0^t \big[f\big(y(s)\big) - f\big(z(s)\big)\big] \dd s \right| + |y(0)-z(0)|\\
 & \leqslant \int_0^t g(s)\dd s + |y(0)-z(0)| + \kappa \int_0^t |y(s)-z(s)|\dd s.
\end{align*}
Applying the classical Gronwall's inequality, since $t\mapsto \int_0^t g(s)\dd s + |y(0)-z(0)|$ is non-negative and differentiable, we obtain
\begin{equation*}
 |y(t)-z(t)| \leqslant \left( \int_0^t g(s) \dd s + |y(0)-z(0)|\right)e^{\kappa t}.
\end{equation*}
This concludes the proof of Lemma \ref{lem:gronwall}.
\end{proof}

\begin{lem}[Solving the high-order moments ODE]
\label{lem:4nODE}
Let $a > 0$, $b \geqslant 0$, $n\in\mathbb{N}^*$. Consider $y: [0, +\infty) \to \mathbb{R}_+$ a differentiable function satisfying the following differential inequality
\begin{equation*}
\forall t\geqslant0,\quad y'(t) \leqslant a y^{\frac{n-1}{n}}(t) \left( b + y^{\frac{1}{n}}(t) \right).
\end{equation*}
Then, the following upper bound holds
\begin{equation}
\label{eq:modified_upper_bound}
\forall t\geqslant0,\quad y(t) \leqslant \left( -b + \left( b + y^{\frac{1}{n}}(0) \right) e^{\frac{a}{n}t} \right)^n.
\end{equation}
\end{lem}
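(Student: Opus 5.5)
The plan is to reduce the differential inequality to a linear one by the substitution $z(t)\triangleq y^{\frac1n}(t)$. Formally, wherever $y(t)>0$, we have
\begin{equation*}
z'(t)=\frac1n y^{\frac1n-1}(t)\,y'(t)\leqslant \frac{a}{n}\,y^{\frac1n-1}(t)\,y^{\frac{n-1}{n}}(t)\big(b+y^{\frac1n}(t)\big)=\frac{a}{n}\big(b+z(t)\big).
\end{equation*}
Then $w\triangleq b+z$ satisfies $w'\leqslant \frac an w$, and the classical Gronwall lemma gives $w(t)\leqslant w(0)e^{\frac an t}$. This means $y^{\frac1n}(t)\leqslant -b+\big(b+y^{\frac1n}(0)\big)e^{\frac an t}$. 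The right-hand side is nonnegative because $e^{\frac an t}\geqslant1$ and $y(0)\geqslant0$, so raising to the power $n$ gives exactly \eqref{eq:modified_upper_bound}.

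The only delicate step is the lack of differentiability of $y\mapsto y^{\frac1n}$ at $y=0$, since $y$ may vanish (for instance initially). I would handle it by regularization. Fix $\delta>0$ and set $z_\delta\triangleq(y+\delta)^{\frac1n}$, which is differentiable everywhere. Using $y^{\frac{n-1}{n}}\leqslant (y+\delta)^{\frac{n-1}{n}}$ and $y^{\frac1n}\leqslant (y+\delta)^{\frac1n}$ (the exponents are nonnegative, so $a>0$ and $b\geqslant0$ keep the signs right), we get
\begin{equation*}
z_\delta'=\frac1n(y+\delta)^{\frac1n-1}y'\leqslant \frac an (y+\delta)^{\frac1n-1}(y+\delta)^{\frac{n-1}{n}}\big(b+z_\delta\big)=\frac an\big(b+z_\delta\big).
\end{equation*}
This bound also holds when $y'<0$, since then the left side is negative and the right side is positive.

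Gronwall then yields $z_\delta(t)\leqslant -b+\big(b+(y(0)+\delta)^{\frac1n}\big)e^{\frac an t}$. Since $y(t)^{\frac1n}\leqslant z_\delta(t)$, letting $\delta\to0$ and raising to the $n$-th power (the right-hand side stays nonnegative) concludes the proof. The case $n=1$ is included with no change.
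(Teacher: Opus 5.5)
Your proof is correct, but it follows a different route from the paper.

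The paper introduces the comparison equation $z'=a z^{\frac{n-1}{n}}\big(b+z^{\frac1n}\big)$, $z(0)=y(0)$, and invokes a comparison lemma to get $y\leqslant z$. It then solves this ODE exactly through $w=z^{\frac1n}$, which turns it into the linear equation $w'=\frac an(b+w)$. You instead apply the substitution directly to the inequality, regularized as $(y+\delta)^{\frac1n}$. As a result you only need the elementary linear Gronwall inequality and a limit $\delta\to0$.

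The paper's version is shorter to write. It also makes visible that the bound is the exact solution of the extremal ODE, hence attained and sharp.

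Your version is fully self-contained and handles the degenerate point $y=0$ cleanly. For $n\geqslant2$ and $b>0$, the map $z\mapsto a z^{\frac{n-1}{n}}\big(b+z^{\frac1n}\big)$ is not Lipschitz at $0$. Consequently:
\begin{itemize}
\item the comparison ODE with $z(0)=0$ has several solutions, for instance $z\equiv0$ and the explicit one $\big(b(e^{\frac an t}-1)\big)^n$;
\item the comparison lemma therefore only holds against the maximal solution;
\item the change of variables $w=z^{\frac1n}$ implicitly divides by $z^{\frac{n-1}{n}}$.
\end{itemize}
The paper leaves these points implicit, and your $\delta$-regularization avoids them entirely.
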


\begin{proof}
Consider the associated differential equation
\begin{equation}
\label{eq:associated_ode_modified}
z'(t) = a z^{\frac{n-1}{n}}(t) \left( b + z^{\frac{1}{n}}(t) \right), \qquad z(0) = y(0).
\end{equation}
By the comparison lemma for differential equations, we have 
$$\forall t \geqslant 0,\quad y(t) \leqslant z(t).$$
In order to solve~\eqref{eq:associated_ode_modified}, we set $w\triangleq z^{\frac{1}{n}}$. Then,
\begin{equation*}
\forall t\geqslant0,\quad w'(t) = \frac{z'(t)}{n z^{\frac{n-1}{n}}(t)} = \frac{a z^{\frac{n-1}{n}}(t) \left( b + z^{\frac{1}{n}}(t) \right)}{n z^{\frac{n-1}{n}}(t)} = \frac{a}{n} \big( b + w(t) \big).
\end{equation*}
This is a linear differential equation of order 1 with constant coefficients whose solution is given by
\begin{equation*}
\forall t\geqslant0,\quad w(t) = -b + \big( b + w(0) \big) e^{\frac{a}{n} t}=-b + \left( b + z^{\frac{1}{n}}(0) \right) e^{\frac{a}{n} t}=-b + \left( b + y^{\frac{1}{n}}(0) \right) e^{\frac{a}{n} t}.
\end{equation*}
Coming back to $z$, we infer
\begin{equation*}
\forall t\geqslant0,\quad z(t) =w^{n}(t)= \left( -b + \left( b + y^{\frac{1}{n}}(0) \right) e^{\frac{a}{n} t} \right)^n.
\end{equation*}
Thus, the upper bound~\eqref{eq:modified_upper_bound} follows directly.
\end{proof}

\bibliographystyle{plain}
\bibliography{biblio}

\bigskip\noindent
{\bf Martin Donati}\\
CNRS, Université de Poitiers, LMA, Poitiers, France \\
Email\: {\tt martin.donati@math.univ-poitiers.fr}

\bigskip\noindent
{\bf Emeric Roulley}\\
Università degli Studi di Milano (UniMi), Dipartimento di Matematica Federigo Enriques, Via Cesare Saldini, 50, 20133 Milano, Italy\\
Email\: {\tt emeric.roulley@unimi.it}

\end{document}